\documentclass[a4paper]{article}
\usepackage[english]{babel}
\usepackage[utf8]{inputenc}
\usepackage{amsmath,amsthm}
\usepackage{amsfonts,amssymb}

\usepackage{mathtools}
\usepackage{enumitem}

\usepackage{graphicx} 

\usepackage[colorlinks=true,linktocpage=true,linkcolor=black,citecolor=black]{hyperref}
\usepackage[capitalise]{cleveref}

\usepackage{tikz,tikz-cd} 
 \usetikzlibrary{decorations.pathmorphing} 

\usepackage{xifthen} 


\setcounter{tocdepth}{2}

\swapnumbers

\theoremstyle{plain}
\newtheorem{theorem}{Theorem}[subsection]
\newtheorem{lemma}[theorem]{Lemma}
\newtheorem{prop}[theorem]{Proposition}
\newtheorem{cor}[theorem]{Corollary}

\newtheorem{retract_lemma}[theorem]{Retract lemma}

\theoremstyle{remark}

\theoremstyle{definition}
\newtheorem{definition}[theorem]{Definition}
\newtheorem{remark}[theorem]{Remark}

\newtheorem{notation}[theorem]{Notation}

\newtheorem{example}[theorem]{Example}
\newtheorem{construction}[theorem]{Construction}
\newtheorem{assumption}[theorem]{Assumption}


\newtheorem{QuillenSOA}[theorem]{Quillen's small object argument}
\newtheorem{GarnerSOA}[theorem]{Garner's small object argument}
\newtheorem{EasySOA}[theorem]{The good case of the small object argument}
\newtheorem{proof1}[theorem]{Proof of \cref{th:KanQuillen_WMS}}


\newcommand{\G}{\mathbb{G}}

\newcommand{\Ib}{\mathbb{I}}

\newcommand{\Acal}{\mathcal{A}} 
 
\newcommand{\Ecal}{\mathcal{E}}

\newcommand{\Ical}{\mathcal{I}}

\newcommand{\Scal}{\mathcal{S}} 
\newcommand{\Dcal}{\mathcal{D}} 
\newcommand{\Fcal}{\mathcal{F}}

\newcommand{\Jcal}{\mathcal{J}} 
\newcommand{\Kcal}{\mathcal{K}}

\newcommand{\Wcal}{\mathcal{W}} 
\newcommand{\Xcal}{\mathcal{X}} 
\newcommand{\Ccal}{\mathcal{C}}

\newcommand{\cof}{\textsc{Cof}}
\newcommand{\fib}{\textsc{Fib}}

\newcommand{\cofe}{\text{cof}}
\newcommand{\fibe}{\text{fib}}
\newcommand{\bfe}{\text{bf}}
\newcommand{\bothe}{c \cup f}
\newcommand{\scomp}[1]{\overline{#1}}

\DeclareMathOperator*{\fprod}{\scalebox{1.4}{$\times$}}







\newcommand{\corner}[1]{\,\tikz[baseline=(todotted.base)]{
        \node[inner sep=1pt,outer sep=0pt] (todotted) {$#1$};
        \draw (todotted.north west) -- (todotted.north east);
        \draw (todotted.south west) -- (todotted.north west);
    }}


\newcommand{\cornerl}[1]{\corner{#1}}

\newcommand{\refl}[1]{\text{refl}_{#1}}
\newcommand{\inv}[1]{\text{inv}(#1)}
\newcommand{\comp}[2]{#1 \circ #2}

\begin{document}


\pagestyle{plain}
\title{Weak model categories in classical and constructive mathematics}
\date{}

\author{Simon Henry}

\maketitle

\begin{abstract}
We introduce a notion of ``weak model category'' which is a weakening of the notion of Quillen model category, still sufficient to define a homotopy category, Quillen adjunctions, Quillen equivalences, and most of the usual construction of categorical homotopy theory. Both left and right semi-model categories are weak model categories, and the opposite of a weak model category is again a weak model category. 

The main advantages of weak model categories is that they are easier to construct than Quillen model categories. In particular we give some simple criteria on two weak factorization systems for them to form a weak model category. The theory is developed in a very weak constructive, even predicative, framework and we use it to give constructive proofs of the existence of  weak versions of various standard model categories, including the Kan-Quillen model structure, Lurie's variant of the Joyal model structure on marked simplicial sets, and the Verity model structure for weak complicial sets. We also construct semi-simplicial versions of all these.
\end{abstract}

\renewcommand{\thefootnote}{\fnsymbol{footnote}} 
\footnotetext{\emph{Keywords.} Model categories, constructive mathematics, simplicial sets, semi-simplicial sets, complicial sets.}
\footnotetext{\emph{2010 Mathematics Subject Classification.} 55U35,55U40,18G30,  }
\renewcommand{\thefootnote}{\arabic{footnote}}



\tableofcontents

\section{Introduction and preliminaries}

\subsection{Introduction}

Quillen model category constitute, since their introduction by D.~Quillen in \cite{quillen1967homotopical}, one of the main frameworks for categorical homotopy theory. Let us recall their definition (in a simplified form which can be found in \cite{riehl2009concise}):

\begin{definition} A \emph{Quillen model category} is a complete and cocomplete category with three classes of morphisms $\Wcal$ (equivalences), $\fib$ (fibrations), and $\cof$ (cofibrations), such that:

\begin{enumerate}[label=(\roman*)]
\item $\Wcal$ satisfies 2-out-of-3 and contains the isomorphisms.

\item $(\Wcal \cap \cof,\fib)$ is a weak factorization system.

\item $(\cof,\fib \cap \Wcal)$ is a weak factorization system.
\end{enumerate}

\end{definition}

It has been realized more recently that for many examples, some parts of this structure, which are actually not useful in practice, are difficult, or even impossible to obtain. This has motivated the introduction of several weakenings of this definition:

The notion of \emph{left semi-model category}\footnote{First introduced in \cite{spitzweck2001operads} under the name $J$-semi model structure.} weakens axiom (ii) by:
\begin{itemize}[noitemsep,topsep=2pt]
\item Only requiring that arrows with a cofibrant domain (instead of all arrows) can be factored as an acyclic cofibration followed by a fibration. 
\item Only requiring that acyclic cofibrations with cofibrant domain (instead of all acyclic cofibrations) have the left lifting property against fibrations.
\item The stability under retract of fibrations and acyclic cofibrations, no longer automatic, is also required.
\end{itemize}
The dual notion of \emph{right semi-model category}\footnote{Introduced in \cite{barwick2010left}.} is obtained by instead weakening axiom (iii), restricting the existence of factorizations to arrows with fibrant target, and the lifting property to acyclic fibrations with fibrant target. We will not give many examples of such structures, for which we refer to \cite{spitzweck2001operads}, \cite{barwick2010left} or \cite{batanin2020Bousfield}. The introduction of \cite{batanin2020Bousfield} contains an up-to-date bibliography on the topic.

\bigskip

In this paper we introduce a new notion called ``weak model category'' where we instead restrict the two weak factorization systems ``on both sides'', i.e. we only require that arrows with both a cofibrant domain and a fibrant target can be factored, and only ask for the lifting property between (acyclic) cofibrations with cofibrant domain and (acyclic) fibration with fibrant targets. The precise definition will be given in Section~\ref{Subsection_DEFofWMS} (Definition~\ref{Def::weak_model_categories}). This is a generalization of the notion of Quillen model category which encompasses both left and right semi-model categories. The notion is self-dual: the opposite of a weak model category is also a weak model category, and it is still sufficient to study and compare different homotopy theories: we will define the homotopy category of a weak model category similarly to that of a Quillen model category, a notion of Quillen adjunction and Quillen equivalence between weak model categories, and so on.  Our guiding principle will be that only the notion of cofibration with cofibrant domain and fibration with fibrant target should be considered meaningful. As a consequence, if $X$ is an object which is neither fibrant nor cofibrant, it is not possible to construct fibrant or cofibrant replacements for $X$, and hence such objects should not be considered when talking about homotopy theoretic properties. For this reason the class $\Wcal$ of equivalences will be a class of arrows in the full subcategory $\Ccal^{\bothe} = \Ccal^{\cofe} \cup \Ccal^{\fibe}$ of objects of $\Ccal$ that are either fibrant or cofibrant.

Initially, the main reason for developing this theory was the appearence of some examples of such structures in my work: The structure defined on the category of pre-cylinder categories in \cite{henry2016algebraic} is a weak model structure, and the main result of \cite{henry2018regular} (published simultaneously with the first version of the present paper) involves constructing and comparing several weak model structures, and relies on results of the present paper.

But while developing the theory of weak model structures it appeared that there are many examples of categories which, even if they do admit a full Quillen model structures, it is considerably easier to construct only a weak model structure and, most of time, this is enough for all the practical applications. This has allowed to tackle another important problem: giving constructive proofs that some classical examples of model structure exist. In the present paper we will show that the projective model structure on chain complexes (Subsection~\ref{subsec:ChainCplex}), the Kan-Quillen model structure on simplicial sets (Subsection~\ref{subsec:KanQuillenMS}), the Joyal-Lurie model structure on marked simplicial sets (Subsection~\ref{subsec:LurieMS}), and the Verity model structure on stratified simplicial set (Subsection~\ref{subsec:complicialsets}) can all be proved to exist constructively as weak model structures, hence opening the door to a constructive theory of higher categories. This has, in the subsequent work \cite{Gambino2019simplicial}, allowed, up to some coherence issue that still needs to be taken care of, to give a constructive version of Voevodsky's simplicial model of homotopy type theory. It was also shown (in \cite{henry2019Kan} and \cite{gambino2019constructive}) that the Kan-Quillen model structure on simplicial sets can be constructively proved to be a proper Quillen model category, but this requires considerably more work, and uses properties very specific to the Kan-Quillen model structure.\footnote{In fact, the author believes a similar result for the Joyal model structures is out of reach at the moment.}

\bigskip

The present paper is mostly focused on the aspect of the theory of weak model categories that can be developed in constructive mathematics. In particular, many classical topics of the theory of Quillen model categories, for example the notion of combinatorial model structure, or the theory of Bousfield localizations, will be ignored in the present paper as they require stronger logical assumptions. A subsequent paper \cite{henry2019CombWMS} studies these non-constructive aspects, and the theory of combinatorial and accessible weak model categories. The precise relation between weak model structures and left and right-semi model structure will also be studied in great details in \cite{henry2019CombWMS}. A reader not interested in constructive aspects can find a more concise introduction to weak model categories from \cite{henry2019CombWMS}.

\bigskip

Another important objective of the present paper is to give several easy criteria for constructing a weak model structure on a category, especially in the case where we start from the cofibrations and the fibrations, but we do not have a good description of the weak equivalences, as this is generally a hard task for Quillen model categories. This is a key point for the constructive examples of weak model structure because, before the present works, it was not known how to define a notion of ``weak equivalences'', for example between simplicial sets, that would allow to construct a model structure. 

\bigskip

\textbf{Acknowledgment:} I would like to thank Nicola Gambino and John Bourke for their many comments and suggestion while I was preparing the second version of this paper. I would also want to thank Harry Gindy and Viktoriya Ozornova who independently pointed out a mistake in the first version of the paper regarding my attempt to give a simpler proof of the corner-product condition for complicial sets.

This work was supported by the Grant agency of the Czech republic under the grant P201/12/G028.

\subsection{Overview of the paper}

The paper is relatively long, but it does not need to be read from the first page to the last in order. The core of the paper consists of Subsections~\ref{Subsection_DEFofWMS} and \ref{subsec:homotopycategory} which contains the basic theory of weak model structures:  their definition and the construction of their homotopy categories. As such they are the only sections that are necessary to read in order to follow the rest of paper.

The rest of Section~\ref{section_general} contains other aspects of the general theory of weak model structures: Subsection~\ref{sub_section_equivalentDefinitions} gives additional criteria to identify weak model categories, Subsection~\ref{subsec:Quillenfunctors} introduces Quillen adjunctions and Quillen equivalences between weak model categories.

Section~\ref{sec_monoidalcase} gives a couple of theorems (\ref{Th_tensorWMS} and \ref{Th::WMS_from_cylinder_path}) allowing to easily construct weak model categories in the presence of a monoidal structure, an enrichment or a well behaved (left adjoint) cylinder functor. These theorems will be our main tools to construct examples of weak model structures. They can be thought of as a version of Cisinski-Olschok theory (as in \cite{olschok2011left}) for weak model categories.

Section~\ref{Section_Examples} deals with two very simple examples of weak model categories (setoids and chain complexes) which might be enlightening for readers unfamiliar with model categories in general.

Section~\ref{sec:simplicialEx} deals with well-known simplicially based examples, but treats them in a completely constructive way (which is mostly new). One of the main differences with classical mathematics is that not all monomorphisms are cofibrations, and not all simplicial sets are cofibrant if one does not assume the law of excluded middle. We will start with the usual Kan-Quillen model structure on simplicial sets, then we treat a variant of the Joyal model structure constructed by Lurie on the category of marked simplicial sets, which we will refer to as the Joyal-Lurie model structure, and more generally we treat the case of the Verity model structure for weak complicial sets (constructed by D.~Verity in \cite{verity2008weak}), which is supposed to give a model for $(\infty,n)$-categories and even $(\infty,\infty)$-categories.

Finally, in Section~\ref{Sec_ex_semi-simplicial sets} we develop ``semi-simplicial'' versions of all these model categories which as far as I know are new even classically. These examples cannot be Quillen model categories and are only right semi-model categories.

Appendix~\ref{section_prelim_setoids} briefly introduces the notion of ``setoids'' and ``setoid-categories'', which most readers will be happy to just replace by ``sets'' and ``categories''.  Appendix~\ref{section_pisetoids} uses these setoids to give tools to obtain a constructive version of the usual characterization of equivalences between fibrant objects. These tools are used in Proposition~\ref{prop:Charac_of_simp_equiv} to show that the equivalences of the Kan-Quillen model structure can be characterized as the maps inducing isomorphisms of $\pi_n$-setoids (which, assuming choice, is equivalent to bijections of $\pi_n$-sets). These setoids are useful in two situations:

\begin{itemize}[noitemsep,topsep=2pt]

\item One wants to work in an extremely weak logical framework, where quotients of sets by equivalence relations cannot always be constructed.

\item One wants to work without the axiom of choice and read Appendix~\ref{section_pisetoids} about the $\pi$-setoids characterization of equivalences.

\end{itemize}

Appendix~\ref{Subsection_Joyal_tierney_calculus} reviews Joyal-Tierney calculus, which plays a key role in Section~\ref{sec_monoidalcase} and is useful for the treatment of examples in Sections~\ref{Section_Examples} and \ref{sec:simplicialEx}.  Finally, Appendix~\ref{section_the_small_object_arguments} discusses the small object argument in constructive mathematics.

\subsection{Logical framework}
\label{sec:Logical_framework}

Everything we do here can be formalized in P.~Aczel's constructive set theory (CZF) \cite{Aczel2010CST}. It can also be formalized in the internal logic of an elementary topos with a natural number object. One minor exception is the most general form of the small object argument (as presented in Appendix~\ref{section_the_small_object_arguments}) applied to a large category, which relies on constructing objects by induction on a natural number, but all concrete applications of the small object argument we will use can be formalized both in an elementary topos with a natural number object or in (CZF). However, both these options are far stronger than what we need, and we will not impose any specific framework.

Indeed, while it was not our goal to look for the absolute minimal logical framework in which to do homotopy theory, it appeared that the natural framework for developing this theory is in fact far lighter than we would have thought. In the end, most of the general theory of weak model categories (i.e. Section~\ref{section_general}) is developed in the internal logic\footnote{It is not clear if the word ``logic'' is still suitable for such a low level.} of a mere \emph{category with finite limits}. Note that this is only for the general theory of weak model structures. Most examples will require a slightly stronger logical framework, mostly in order to implement the small object argument (this is discussed in Appendix~\ref{section_the_small_object_arguments}). Also technically speaking the definition of $\Ccal^{\bothe}$ in \ref{notation:cof_fib} involves taking a disjoint union, so we actually need the internal logic of an \emph{extensive\footnote{A category with finite limits and disjoint and universal finite coproducts.} category}, but this is only for convenience and could be avoided. 

There is a reason for this: the only way to make the theory work without axiom of choice is to require that everything that should exists (like diagonal fillers for lifting problems, factorization of maps, the limits and colimits that we need, and so on) is chosen. In particular, the correct way to interpret any quantification like ``$\forall x, \exists y$'' is as the existence of an application which given an $x$ produces a $y$. This has the effect of removing all need for any kind of quantification or logic from the theory. Hence by asserting that we work in the internal logic of a category with finite limits we avoid any possible doubt of how a statement like this should be interpreted.

This being said, we will sometimes, to keep the exposition readable, (especially for readers not interested in constructive aspects) still use quantifiers and say things like ``for all $x$ there exists a $y$ such that''. A statement like this should always be interpreted as a function. We will leave to the reader interested in the constructive aspects to make the appropriate translation. No confusion is possible here as our framework does not allow for any other interpretation of such sentences.

Another requirement that we could have for our logical framework is the existence of quotient sets. For example, morphisms in the homotopy category are defined as equivalence classes of maps for the homotopy relation. As far as we know there are two ways to deal with this:

\begin{itemize}[noitemsep,topsep=2pt]

\item Require the existence of quotients in our logical framework. This would mean working internally in an exact category.

\item Avoid the use of quotient by using ``setoids'' instead. This essentially amount to working internally in the exact completion of our category with finite limits.

\end{itemize}

For most of the paper the two options are equally valid, but for Appendix~\ref{section_pisetoids} the use of setoids is crucial in order to avoid the use of the axiom of choice, and it makes the exposition smoother if the homotopy category has been previously introduced in terms of setoids instead of quotient sets. For this reason we will use the setoid approach everywhere.

As mentioned before, Sections~\ref{Section_Examples} and \ref{sec:simplicialEx}, being focused on examples, will require a stronger logical framework in order to implement the small object argument. The precise nature of the required framework is a complicated matter that is discussed more in Appendix~\ref{section_the_small_object_arguments}.

\section{Weak model structures}
\label{section_general}

\subsection{Definition of weak model categories and homotopies}
\label{Subsection_DEFofWMS}

Weak model categories will be categories endowed with two classes of maps, ``cofibrations'' and ``fibrations'', satisfying some axioms. These axioms are considerably weaker than those of a Quillen model category, but are still enough to define a homotopy category and to introduce classicals notions like (weak) equivalence, homotopy limits and colimits, Quillen adjunctions, Quillen equivalences, etc.

\begin{notation}\label{notation:cof_fib}
A cofibration will always be denoted by a ``hooked'' arrow: $A \hookrightarrow B$, and a fibration by a double headed arrow: $X \twoheadrightarrow Y$.

In a category $\Ccal$ which has an initial object $0$ and a notion of cofibration, we say that an object $X$ is \emph{cofibrant} if the unique map $ 0 \rightarrow X$ is a cofibration. The full subcategory of cofibrant objects is denoted $\Ccal^{\cofe}$.

Similarly, if $\Ccal$ has a terminal object and a notion of fibration, we say that an object $X$ is \emph{fibrant} if the unique map $X \rightarrow 1$ is a fibration. The full subcategory of fibrant objects is denoted $\Ccal^{\fibe}$.

If $\Ccal$ has all these structures, an object will be called \emph{bifibrant} if it is both fibrant and cofibrant. The full subcategory of bifibrant objects is denoted $\Ccal^{\bfe}$.

The full subcategory of $\Ccal$ of objects that are either fibrant or cofibrant will be denoted by $\Ccal^{\bothe}$. More precisely, in the constructive setting, $\Ccal^{\bothe}$ is defined as the category whose set of objects is $\Ccal^{\fibe} \coprod \Ccal^{\cofe}$ and whose morphisms are the morphisms between their images in $\Ccal$.

\end{notation}

\begin{definition}\label{def:class_of_cof}
A \emph{class of cofibrations} on a category $\Ccal$ is a set of maps called \emph{cofibrations} which satisfies the following properties:

\begin{itemize}

\item $\Ccal$ has an initial object $0$ and it is cofibrant.

\item Any isomorphism with a cofibrant domain is a cofibration.

\item The composite of two cofibrations is a cofibration.

\item Given a diagram:

\[
\begin{tikzcd}
A \arrow[hook]{d}{i} \ar[dr,phantom,"\ulcorner"very near end] \arrow{r}{f} & C \ar[d,dotted] \\
B \ar[r,dotted] & \displaystyle C \coprod_A B \\
\end{tikzcd}
\]

\noindent with $A$ and $C$ two cofibrant objects and $i$ a cofibration, then the pushout $C \coprod_A B$ exists and the map $C \rightarrow C \coprod_A B$ is a cofibration.

\end{itemize}

Dually, a \emph{class of fibrations} on a category $\Ccal$ is a set of maps, called \emph{fibrations} in $\Ccal$ which form a class of cofibrations in $\Ccal^{op}$.

\end{definition}

\begin{remark}
A weak model category will be a category $\Ccal$ endowed with both a class of fibrations and a class of cofibrations satisfying some additional compatibility axioms, see Definition~\ref{Def::weak_model_categories}. 

Here again, in a weak logical framework, everything should be interpreted following the ideas of Appendix~\ref{section_setoids}: The fibrations and cofibrations are not necessarily subsets of morphisms, but sets $\fib(\Ccal)$ and $\cof(\Ccal)$ endowed with a map to the set of all arrows of $\Ccal$, and all the axioms of the definition are interpreted as operations. 

In particular we assume that we have chosen pushouts along cofibrations, but this choice can depend on the ``cofibration structure'' of the map: If $i,j \in \cof(\Ccal)$ have the same underlying arrow in $\Ccal$ pushouts along them can be different. 
\end{remark}

\begin{remark}

Given a class of cofibrations on a category $\Ccal$, the class of ``cofibrations between cofibrant objects'' is again a class of cofibrations. Moreover, the definition of weak model category, and all the relevant notion related to it will only involve the cofibrations between cofibrant objects and fibrations between fibrant objects. Hence we could freely add the assumptions that:

\begin{itemize}[noitemsep,topsep=2pt]

\item The domain of every cofibration is cofibrant,

\item The target of every fibration is fibrant,

\end{itemize}

\noindent without changing the content of any of the results we will give here. Even if we do not explicitely make these assumptions to not restrict ourselves, we emphasize that:

\begin{center} \emph{We only ever consider cofibrations with cofibrant domain and fibrations with fibrant target.} \end{center}

\end{remark}

\begin{remark} The reader may be surprised by the fact that we do not include closure under retract in the definition of a class of cofibrations. The reason for this is simply that this property is not used anywhere in the paper.
\end{remark}

\begin{notation}
\label{Def_lifting_property_and_weak_factorization_system}As usual if $f$ and $g$ are two morphisms in a category $\Ccal$ we say that $f$ has the left lifting property against $g$ (or that $g$ has the right lifting property against $f$) and we write $f \pitchfork g$ if for each solid square:

\[
\begin{tikzcd}[ampersand replacement=\&]
A \arrow{d}{f} \arrow{r} \& X \arrow{d}{g} \\
B  \arrow[dotted]{ur}{\exists} \arrow{r} \& Y \\
\end{tikzcd}
\]

\noindent there is a (chosen) dotted diagonal filling.

\end{notation}

\begin{definition}\label{Def_trivcofib}
Let $\Ccal$ be a category endowed with a class of fibrations and a class of cofibrations. An arrow is said to be:

\begin{itemize}

\item An \emph{acyclic fibration} if it is a fibration and it has the right lifting property against all cofibrations between cofibrant objects.

\item An \emph{acyclic cofibration} if it is a cofibration and it has the left lifting property against all fibrations between fibrant objects.

\end{itemize}

In diagrams, acyclic cofibrations are represented by $\overset{\sim}{\hookrightarrow}$ and acyclic fibrations by $\overset{\sim}{\twoheadrightarrow}$.

\end{definition}

Technically speaking, our logical framework does not allow us to form the ``set of acyclic fibrations'', but we can still say that a map ``is an acyclic fibration'' to mean that there is a function producing the desired lifts.

Of course, acyclic fibrations and cofibrations will end up being ``equivalences'' as soon as we will have defined the notion (Proposition~\ref{Prop_trivCofFibAreinvertible}). In fact we will prove in \ref{Prop_triv=equiv} that in a weak model category, a (co)fibration is acyclic if and only if it an equivalence. It should also be noted (see for example Lemma 7.14 of \cite{joyal2006quasi}) that in a Quillen model category, a cofibration is acyclic if and only if it has the left lifting property with respect to all fibrations between fibrant objects. Hence the terminology introduced here is compatible with the theory of Quillen model categories.

\begin{lemma}
\label{lem:acyclicCof_basics}Acyclic cofibrations are stable under composition and pushout (amongst cofibrant objects) . A cofibration $i$ which is a retract of an acyclic cofibration $j$ is again an acyclic cofibration. If $i$ and $j$ are composable cofibrations and if $i \circ j$ and $i$ are acyclic, then $j$ is acyclic.  All the dual conditions holds for acyclic fibrations.
\end{lemma}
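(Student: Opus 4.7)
The plan is to verify each closure property by combining the corresponding closure axiom for the class of cofibrations (from Definition~\ref{def:class_of_cof}) with an elementary manipulation of lifting diagrams against an arbitrary fibration $p: X \twoheadrightarrow Y$ between fibrant objects. Throughout, note that if $A \hookrightarrow B$ and $B \hookrightarrow C$ are cofibrations with $A$ cofibrant, then $B$ and $C$ are cofibrant too, so we remain in the good regime where acyclic cofibrations are defined.

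First I would handle composition: given acyclic $j: A \hookrightarrow B$ and $i: B \hookrightarrow C$, the composite $i \circ j$ is a cofibration by Definition~\ref{def:class_of_cof}, and any lifting problem $(u: A \to X,\ w: C \to Y)$ against $p$ is solved in two steps by first lifting $u$ through $j$ using acyclicity of $j$, then lifting the resulting map through $i$ using acyclicity of $i$. For pushout along $f: A \to C$ with $A, C$ cofibrant, the pushout $B \coprod_A C$ exists and the map $C \to B \coprod_A C$ is a cofibration by Definition~\ref{def:class_of_cof}; any lifting problem against $p$ restricts along the two pushout legs to a map $A \to X$ and a map $B \to Y$, whose common lift $B \to X$ (supplied by acyclicity of $j$) combines with the given $C \to X$ via the universal property of the pushout. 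The retract stability is the standard fact that lifting properties on the left are preserved under retracts: given a retract diagram exhibiting $i$ as a retract of $j$, pre- and post-compose the lift of $j$ against $p$ with the retract data.

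The cancellation property is the only step requiring a small trick. Suppose $j: A \hookrightarrow B$ and $i: B \hookrightarrow C$ are cofibrations with $i$ and $i \circ j$ acyclic, and consider a lifting problem $(u: A \to X,\ v: B \to Y)$ for $j$ against $p: X \twoheadrightarrow Y$. The key observation is that since $Y$ is fibrant and $i$ is an acyclic cofibration, one can first extend $v$ along $i$ to a map $\tilde v: C \to Y$ by lifting $v$ against $Y \twoheadrightarrow 1$. The pair $(u,\tilde v)$ then forms a lifting problem for $i \circ j$ against $p$, which is solved by a map $\ell: C \to X$; the composite $\ell \circ i: B \to X$ is the desired diagonal, as $\ell \circ i \circ j = u$ and $p \circ \ell \circ i = \tilde v \circ i = v$.

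Finally, the dual statements about acyclic fibrations follow immediately by applying everything above inside $\Ccal^{op}$: the class of fibrations of $\Ccal$ is by definition a class of cofibrations on $\Ccal^{op}$, and the acyclic fibrations of $\Ccal$ are exactly the acyclic cofibrations of $\Ccal^{op}$. No step is really difficult; the only point requiring care is the cancellation, where the right move is to extend $v$ along $i$ before attempting to solve the lifting problem directly.
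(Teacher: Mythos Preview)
Your proof is correct and follows essentially the same approach as the paper. In particular, your treatment of the cancellation property---extending $v$ along $i$ using fibrancy of $Y$, then solving the lifting problem for $i\circ j$---is exactly the argument the paper gives; the paper merely packages the first three closure properties into the phrase ``the class of maps $f$ such that $f\pitchfork g$ is stable under pushout, composition and retract'' without spelling them out.
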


\begin{proof}
This is just the classical fact that the class of maps $f$ such that $f \pitchfork g$ is stable under pushout, composition and retract. The ``$2$-out-of-$3$'' claim follows from the fact being an acyclic cofibration is tested against fibrations between \emph{fibrant} objects:

\[\begin{tikzcd}[ampersand replacement=\&]
U \arrow[hook]{d}{j} \arrow{r} \& X \arrow[two heads]{d}\\
V \arrow[hook]{d}{i} \arrow{r} \& Y \\
W \arrow[dotted]{ur} \arrow[dotted]{uur} \& \\
\end{tikzcd}\]

The lower dotted arrow is constructed using that $Y$ is fibrant and $i$ is acyclic, and the upper one using that $i \circ j$ is acyclic and $X \rightarrow Y$ is a fibration between fibrant objects. The composite $V \rightarrow X$ gives the diagonal filling we are after.\end{proof}

In a category with classes of fibrations and cofibrations as above, if $X$ is a cofibrant object, a fibrant replacement of $X$ or bifibrant replacement of $X$ is a fibrant object $X^{\fibe}$ endowed with an acyclic cofibration $X \overset{\sim}{\hookrightarrow} X^{\fibe}$. Dually a cofibrant replacement (or bifibrant replacement) of a fibrant object $X$ is a cofibrant object $X^{\cofe}$ endowed with an acyclic fibration $X^{\cofe} \overset{\sim}{\twoheadrightarrow} X$. 

\begin{definition}
\begin{itemize}\item[]

\item A \emph{relative strong cylinder object} for a cofibration $A \hookrightarrow B$ is a factorization of the relative co-diagonal map $B \coprod_A B \rightarrow B$ into:

\[ B \coprod_A B \hookrightarrow I_A B \rightarrow B \]

\noindent where the first map is a cofibration and its pre-composite with the first co-product inclusion $B \hookrightarrow B \coprod_A B \hookrightarrow I_A B$ is an acyclic cofibration.

\item A \emph{relative strong path object} for a fibration $Y \twoheadrightarrow X$ is a factorization of the relative diagonal map into:

\[ Y \rightarrow P_X Y \twoheadrightarrow Y \times_X Y \]

\noindent where the second map is a fibration and its composite $P_X Y \twoheadrightarrow Y \times_X Y \twoheadrightarrow Y$ is an acyclic fibration.

\end{itemize}

\end{definition}

A (strong) cylinder object $IX$ for a cofibrant object $X$ is a relative cylinder for the cofibration $\emptyset \hookrightarrow X$. A (strong) path object $PY$ for a fibrant object $Y$ is a relative path object for the fibration $Y \twoheadrightarrow 1$.


The apparent asymmetry of the definition (only one of the two ``legs'' is asked to be acyclic) is artificial: in a weak model categories we will define a notion of equivalences (\ref{def:WeakEquiv}) satisfying the $2$-out-of-$3$ condition, and we will show in \cref{Prop_triv=equiv}.\ref{Prop_triv=equiv:isWMS} that (co)fibrations between (co)fibrant objects are acyclic if and only if they are equivalences, so the second leg will automatically be acyclic as well.

We can now give the main definition:

\begin{definition}\label{Def::weak_model_categories}
A \emph{weak model category}, is a category $\Ccal$ endowed with both a class of cofibrations and a class of fibrations which satisfies the following:

\begin{itemize}

\item \emph{Factorization axiom:} Any map from a cofibrant object to a fibrant object can be factored both as a cofibration followed by an acyclic fibration and as an acyclic cofibration followed by a fibration.

\item \emph{Cylinder axiom:} Every cofibration from a cofibrant object to a fibrant object admits a relative strong cylinder object.

\item \emph{Path object axiom:} Every fibration from a cofibrant object to a fibrant object admits a relative strong path object.

\end{itemize}

\end{definition}

Weak model categories have the following elementary stability properties:

\begin{prop}
Let $\Ccal$ be a weak model category then:

\begin{itemize}

\item $\Ccal^{op}$ is a weak model category whose (acyclic) fibrations and (acyclic) cofibrations are respectively the (acyclic) cofibrations and (acyclic) fibrations of $\Ccal$.

\item For any cofibrant object $A$ of $\Ccal$, the coslice category $A/\Ccal$ of arrows $A \rightarrow X$ is a weak model category, whose cofibrations, acyclic cofibrations, fibrations and acyclic fibrations are the maps whose image by the forgetful functor to $\Ccal$ has the same property.

\item Dually, for any fibrant object $X$ of $\Ccal$ the slice category $\Ccal/X$ of arrows $B \rightarrow X$ is a weak model category, whose cofibrations, acyclic cofibrations, fibrations and acyclic fibrations are the maps whose image by the forgetful functor to $\Ccal$ has the same property.
\end{itemize}
\end{prop}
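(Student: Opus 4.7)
My plan is as follows. The three parts share a common structure. For the opposite category (first bullet), the claim is essentially formal: a class of fibrations on $\Ccal$ was defined as a class of cofibrations on $\Ccal^{op}$ (and conversely), so the two structures swap automatically. The factorization axiom is manifestly self-dual, while the cylinder and path object axioms exchange roles under opposition, so this part requires no real work.

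For the coslice $A/\Ccal$ (the slice case being formally dual), I would reduce each axiom to the corresponding one in $\Ccal$ via the forgetful functor $U : A/\Ccal \to \Ccal$. First I would identify the distinguished objects: the initial object is $\mathrm{id}_A : A \to A$, which is cofibrant because $\mathrm{id}_A$ is an isomorphism with cofibrant domain; the terminal is $A \to 1$, whose unique endomorphism maps to $\mathrm{id}_1$ in $\Ccal$, an isomorphism with fibrant domain and hence a fibration, so this terminal is fibrant in $A/\Ccal$. An object $(f : A \to X)$ of $A/\Ccal$ is cofibrant exactly when $f$ is a cofibration in $\Ccal$, and fibrant exactly when $X$ is fibrant in $\Ccal$. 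Since $U$ preserves and reflects connected limits and colimits, in particular pushouts and pullbacks, the axioms of a class of cofibrations and of a class of fibrations transfer directly from $\Ccal$. The factorization, cylinder, and path object axioms transfer similarly: a factorization $X \to Z \to Y$ in $\Ccal$ of the underlying arrow of a morphism in $A/\Ccal$ between bifibrant objects lifts to one in $A/\Ccal$ by equipping $Z$ with the composite $A \to X \to Z$, and cylinder and path objects are built in the same way, since the relevant pushouts and pullbacks are preserved by $U$.

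The only delicate step is to verify that the acyclic classes in $A/\Ccal$ correspond along $U$ to the acyclic classes in $\Ccal$. For acyclic cofibrations this is direct: given a fibration $p : Z \twoheadrightarrow W$ between fibrants of $\Ccal$ and a lifting problem against a cofibration $(A \to X) \hookrightarrow (A \to Y)$ in $A/\Ccal$, I can equip $Z$ and $W$ with the induced $A$-maps $A \to X \to Z$ and $A \to Y \to W$, making them fibrant objects of $A/\Ccal$; any lift in $A/\Ccal$ forgets to a lift in $\Ccal$, and conversely any lift in $\Ccal$ automatically respects the $A$-maps because the two triangles of the square do. For acyclic fibrations the analogous reduction is more subtle because a cofibration $i : X \hookrightarrow Y$ between cofibrants of $\Ccal$ need not come with $A$-maps; here I would form $X \coprod A \hookrightarrow Y \coprod A$, which by the pushout axiom applied to $0 \hookrightarrow A$ (along $0 \to X$ and then $X \to Y$) is a cofibration between cofibrant objects of $A/\Ccal$, transfer the lifting problem there using the existing $A$-structure on $Z$ and $W$, and restrict the resulting lift to the $Y$-summand. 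This $\coprod A$ trick is the main obstacle in the whole argument; everything else is routine bookkeeping.
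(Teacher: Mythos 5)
The paper states this proposition without proof, so there is no official argument to compare against; it is treated as an elementary stability property. Your proof is correct and captures the genuine content. The first bullet is indeed formal: the definitions of class of cofibrations and class of fibrations were set up to be exactly dual, the factorization axiom is self-dual, and the cylinder and path axioms exchange. For the coslice, your observations that the forgetful functor $U : A/\Ccal \to \Ccal$ creates limits and connected colimits, that $(f : A \to X)$ is cofibrant in $A/\Ccal$ iff $f$ is a cofibration in $\Ccal$ (so its underlying object is cofibrant since $A$ is), and that it is fibrant iff $X$ is fibrant, are the right bookkeeping.

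You correctly identify the only non-obvious point. For acyclic cofibrations, both inclusions are easy: a lifting problem in $A/\Ccal$ forgets to one in $\Ccal$ and the lift is automatically an $A$-map via the commutativity of the upper triangle; conversely, any lifting problem in $\Ccal$ against a fibration $p : Z \twoheadrightarrow W$ between fibrants lifts to $A/\Ccal$ because $Z$ and $W$ inherit $A$-structures from the cofibration's domain and codomain and the square still commutes. For acyclic fibrations, one direction is again automatic, but the other genuinely requires attaching an $A$-structure to a cofibration $i : X \hookrightarrow Y$ between cofibrants that has none. Your $\coprod A$ construction does this correctly: the pushout axiom (applied to the cofibration $0 \hookrightarrow X$ pushed along $0 \to A$) shows $A \to X \coprod A$ is a cofibration, $X \coprod A \hookrightarrow Y \coprod A$ is a cofibration as a pushout of $i$ along $X \to X \coprod A$, the $A$-structure maps $a_Z$ and $a_W$ on $Z$ and $W$ supply an $A/\Ccal$-valid lifting problem, and restriction of the resulting lift to the $Y$-summand gives the required filler. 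One minor phrasing quibble: you wrote the pushout axiom was applied to ``$0 \hookrightarrow A$ along $0 \to X$,'' which in the paper's formulation produces the cofibration $X \to X \coprod A$ rather than $A \to X \coprod A$; you actually need the symmetric instance (cofibration $0 \hookrightarrow X$ pushed along $0 \to A$) to conclude cofibrancy of $X \coprod A$ in $A/\Ccal$. Both are true, so nothing breaks, but it is worth stating which one is doing the work. Finally, once the acyclic classes are identified, the factorization, cylinder, and path axioms do transfer exactly as you describe, with the intermediate/cylinder/path objects receiving their $A$-structure canonically from the domain side.
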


In a weak model category, cofibrations between cofibrant objects and fibrations between fibrant objects still admit a kind of ``relative cylinder object'' and ``relative path object'' which we call ``weak cylinder objects'' and ``weak path objects'':

\begin{definition}
\begin{itemize}

\item[]

\item A \emph{relative weak cylinder object} for a cofibration $A \hookrightarrow B$ is a diagram of the form: 

\[
\begin{tikzcd}[ampersand replacement=\&]
B \coprod_A B \arrow[hook]{d} \arrow{r} \& B \arrow[hook]{d}{\sim}  \\
I_AB \arrow{r} \&  D_A B \\
\end{tikzcd}
\]

\noindent where furthermore the first map $\iota_0\colon B \overset{\sim}{\hookrightarrow} I_A B$ is an acyclic cofibration.

\item A \emph{relative weak path object} for a fibration $Y \twoheadrightarrow X$ is a diagram of the form:

\[
\begin{tikzcd}[ampersand replacement=\&]
T_X Y \arrow{r} \arrow[two heads]{d}{\sim} \& P_X Y \arrow[two heads]{d} \\
Y \arrow{r}{\Delta} \& Y \times_X Y
\end{tikzcd}
\]

\noindent where furthermore the first projection $\pi_0\colon  P_X Y \overset{\sim}{\twoheadrightarrow} Y$ is an acyclic fibration.

\end{itemize}

\end{definition}

The idea is simple: in general we do not have a map $I_A B \rightarrow B$, which would be used for example to define self homotopies of a map $B \rightarrow X$. Instead we have a copsan $I_A B \rightarrow D_A B \overset{\sim}{\hookleftarrow} B$ which defines such a map at least at the level of the homotopy category. The object $D_AB$ (as well as $T_X Y$) is called the \emph{reflexivity witness}.

\begin{remark}\label{rk:Exist_strong_Cyl=exists_weak_cyl}
\begin{itemize}

\item[]

\item Any relative strong cylinder object can be seen as a relative weak cylinder object by taking $D_A B = B$.

\item If a cofibration $A \hookrightarrow B$ has a relative weak cylinder object and $B$ is furthermore fibrant, then, using the lifting property of $B$, we can construct a retraction:

\[\begin{tikzcd}[ampersand replacement=\&]
B \arrow[hook]{d}{\sim} \arrow{r}{Id_B} \& B \\
D_A B \ar[dotted,ur,"r"{description}]  
\end{tikzcd}\]

\noindent and the composite:

\[ B \coprod_A B \hookrightarrow I_A B \rightarrow D_A B \overset{r}{\rightarrow} B\]

\noindent forms a relative \emph{strong} cylinder object of $A \hookrightarrow B$.

\item If $A \hookrightarrow B $ is a cofibration, and $B \overset{\sim}{\hookrightarrow} \widetilde{B}$ is a fibrant replacement of $B$ then a relative strong cylinder object for the cofibration $A \hookrightarrow \widetilde{B}$ gives us a relative weak cylinder object for $A \hookrightarrow B$ as follows:

\[
\begin{tikzcd}[ampersand replacement=\&]
B \coprod_A B \arrow[hook]{d} \arrow{r} \& B \arrow[hook]{dd}{\sim}  \\
\widetilde{B} \coprod_A \widetilde{B} \arrow[hook]{d}\& \\
I_A \widetilde{B} \arrow{r} \&  \widetilde{B} \\
\end{tikzcd}
\]

\item Hence, in the presence of the factorization axiom, the cylinder axiom is equivalent to the requirement that every cofibration between cofibrant objects has a relative weak cylinder object.

\item All the remarks above can be dualized to path objects and fibrations.

\end{itemize}

\end{remark}

\begin{definition}
Let $f,g \colon  X \rightrightarrows Y$ be two maps from a cofibrant object to a fibrant object in a category with fibrations and cofibrations.

\begin{itemize}

\item We say that $f$ and $g$ are homotopic relative to a (weak or strong) cylinder object $IX$ for $X$ if the map $(f,g)\colon  X \coprod X \rightarrow Y$ factors through $X \coprod X \hookrightarrow IX$.

\item We say that $f$ and $g$ are homotopic relative to a (weak or strong) path object $PY$ for $Y$ if the map $(f,g)\colon X \rightarrow Y \times Y$ factors through $PY \twoheadrightarrow Y \times Y $.

\end{itemize}

\end{definition}

Note that if $i\colon A \hookrightarrow B$ is a cofibration (with $A$ and $B$ cofibrant) and $f,g$ are two maps $f,g\colon B \rightrightarrows Y$ (with $Y$ fibrant) such that $f \circ i = g \circ i$ we can also talk about ``homotopy relative to $A$'', that will be for example parametrized by a relative cylinder object for $A \hookrightarrow B$. This relative version will be very useful. We do not discuss this further simply because it is the homotopy relation in the coslice category $A/\Ccal$, so it can be seen as a special case of the non-relative version.

\begin{lemma}
\label{Lem_reflexivity}Let $f \colon X \rightarrow Y$ be a map from a cofibrant object $X$ to a fibrant object $Y$, then there is a homotopy $r_f$ from $f$ to $f$ relative to any cylinder object for $X$ or path object for $Y$.
\end{lemma}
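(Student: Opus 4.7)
The plan is to produce, in each of the four cases (strong or weak cylinder, strong or weak path object), an explicit map witnessing $r_f$. The strong cases are immediate from the defining data; the weak cases each require exactly one application of the lifting axiom from Definition~\ref{Def_trivcofib}.

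For a strong cylinder $X \coprod X \hookrightarrow IX \to X$, set $r_f := f \circ (IX \to X)$. Since the composite $X \coprod X \hookrightarrow IX \to X$ is the codiagonal $\nabla$, precomposition with $r_f$ gives $f \circ \nabla = (f,f)$, which is the required factorization. For a weak cylinder, presented by the commutative square with corners $X \coprod X$, $X$, $IX$, $DX$ and acyclic cofibration $\iota \colon X \overset{\sim}{\hookrightarrow} DX$, I first extend $f$ along $\iota$: since $Y$ is fibrant, $Y \to 1$ is a fibration between fibrant objects, so the definition of acyclic cofibration produces $\tilde{f} \colon DX \to Y$ with $\tilde{f} \circ \iota = f$. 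I then set $r_f := \tilde{f} \circ (IX \to DX)$. Using commutativity of the square and the identity $\iota \circ \nabla = (X \coprod X \hookrightarrow IX \to DX)$, I obtain $r_f \circ (X \coprod X \hookrightarrow IX) = \tilde{f} \circ \iota \circ \nabla = f \circ \nabla = (f,f)$.

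The path object case is formally dual. For a strong path object $Y \to PY \twoheadrightarrow Y \times Y$ factoring the diagonal, take $r_f := (Y \to PY) \circ f$, so postcomposition with $PY \twoheadrightarrow Y \times Y$ yields $\Delta \circ f = (f,f)$. For a weak path object with acyclic fibration $\pi \colon TY \overset{\sim}{\twoheadrightarrow} Y$, I first lift $f$ through $\pi$: since $X$ is cofibrant, $\emptyset \to X$ is a cofibration between cofibrant objects, so the definition of acyclic fibration produces $\tilde{f} \colon X \to TY$ with $\pi \circ \tilde{f} = f$. I then set $r_f := (TY \to PY) \circ \tilde{f}$; commutativity of the defining square gives the required factorization of $(f,f)$ through $PY \twoheadrightarrow Y \times Y$.

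I expect no significant obstacle: the proof is a direct unwinding of the definitions of (weak) cylinder and path object, combined in the weak cases with a single invocation of the lifting property between acyclic (co)fibrations and (co)fibrations with fibrant target (respectively cofibrant domain). The only point requiring attention is keeping track of which leg of the weak cylinder or path object is the acyclic one, so that the lifting is carried out against the correct side.
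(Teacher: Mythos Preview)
Your proof is correct and follows the same approach as the paper. The paper treats only the weak cylinder case (since strong cylinders are a special case with $DX=X$, as noted in Remark~\ref{rk:Exist_strong_Cyl=exists_weak_cyl}) and simply says ``dually'' for path objects; your explicit separation into strong and weak cases is a bit more verbose but amounts to exactly the same argument.
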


\begin{proof}
 For a weak cylinder object $(IX,DX)$ for $X$ we obtain the reflexivity homotopy $r_f$ as follows: 

\[\begin{tikzcd}[ampersand replacement=\&]
X \coprod X \arrow[hook]{d} \arrow{r}{\nabla} \&  X \arrow{r}{f} \arrow[hook]{d}{\sim} \& Y  \\
IX \arrow{r} \arrow[bend right = 70]{rru}[swap]{r_f} \&  DX \arrow[dotted]{ur} \& \\
\end{tikzcd}\]

\noindent and dually for the case of a path object for $Y$.\end{proof}

\begin{prop}\label{cylinderEqPath}
Consider two maps $f,g \colon X \rightrightarrows Y$ with $X$ cofibrant and $Y$ fibrant, such that $X$ admits at least one cylinder object and $Y$ admits at least one path object. Then the homotopy relations defined by any cylinder object for $X$ or path object for $Y$ are equivalent.
\end{prop}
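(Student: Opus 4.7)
The plan is to prove the key direction that a left homotopy via some cylinder $IX$ produces a right homotopy via any path object $PY$; the converse then follows by passing to $\Ccal^{op}$, and combining the two shows that all four possible notions (left or right homotopy via some or any cylinder or path object) coincide.

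Suppose $H\colon IX \to Y$ is a left homotopy with $H\iota_0 = f$ and $H\iota_1 = g$, and let $(PY, TY)$ be any weak path object for $Y$. By \cref{Lem_reflexivity} there is a reflexivity left homotopy $r^l_f\colon IX \to Y$ with $r^l_f \iota_0 = r^l_f \iota_1 = f$, and dually a reflexivity right homotopy $r^r_f\colon X \to PY$ with $\pi_0 r^r_f = \pi_1 r^r_f = f$. I would then consider the commutative square
\[
\begin{tikzcd}
X \arrow{r}{r^r_f} \arrow[hook]{d}{\iota_0} & PY \arrow[two heads]{d}{(\pi_0, \pi_1)} \\
IX \arrow{r}{(r^l_f, H)} \arrow[dotted]{ur}{L} & Y \times Y
\end{tikzcd}
\]
in which both paths restrict on $X$ (via $\iota_0$) to the map $(f, f)$. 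The left vertical is an acyclic cofibration between cofibrant objects ($IX$ is cofibrant since $X \coprod X \hookrightarrow IX$ is a cofibration with cofibrant domain), while the right vertical is a fibration between fibrant objects ($PY$ is fibrant because $PY \twoheadrightarrow Y \times Y$ is a fibration with fibrant target). A diagonal filler $L\colon IX \to PY$ therefore exists, and $K := L\iota_1\colon X \to PY$ satisfies $(\pi_0, \pi_1) K = (r^l_f \iota_1, H \iota_1) = (f, g)$, giving the desired right homotopy from $f$ to $g$ via $PY$.

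The converse direction is obtained by duality: the opposite of a weak model category is again a weak model category, and weak cylinder objects in $\Ccal$ correspond to weak path objects in $\Ccal^{op}$, so the argument above, read in $\Ccal^{op}$, shows that a right homotopy via some $PY$ produces a left homotopy via any cylinder $IX$. Combining: $f \sim_l g$ via some $IX_0$ implies $f \sim_r g$ via every $PY$, which in turn implies $f \sim_l g$ via every $IX$; the symmetric chain starting from a right homotopy yields the remaining implications, so all four notions of homotopy coincide.

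The main subtlety is that in the weak setting there is no fold map $IX \to X$, which in the Quillen case would allow defining the compatible pair $(f \circ s, H)\colon IX \to Y \times Y$ directly. Its role is played here by the reflexivity left homotopy $r^l_f$ from \cref{Lem_reflexivity}, which is a map $IX \to Y$ constant at $f$ on both endpoints; this is exactly what is needed so that the lifting square commutes and the lifting property of the acyclic cofibration $\iota_0$ against the fibration $(\pi_0, \pi_1)$ can be invoked.
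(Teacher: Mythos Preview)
Your proof is correct and essentially identical to the paper's own argument: the same lifting square with the reflexivity homotopies $r^r_f$ on top and $(r^l_f,H)$ on the bottom, the same diagonal filler, and the same evaluation at the second endpoint, followed by the dual argument for the converse direction. Your closing remark about the absence of a fold map $IX\to X$ and the role of $r^l_f$ in replacing it is a helpful gloss, but the core argument matches the paper exactly.
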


We will hence just say that $f$ and $g$ are homotopic without specifying if it is with respect to a cylinder object or to a path object nor with respect to which path object or cylinder object, at least as long as we do not need to specify the homotopy itself. Of course this proposition really means that we have an explicit construction which associate to any homotopy relative to some path objects a homotopy relative to any other path object or cylinder object.

\begin{proof}
Let $f,g\colon X \rightrightarrows Y$ be two arrows as in the proposition, with a homotopy $h\colon  IX \rightarrow Y$ between $f$ and $g$ relative to a weak cylinder object $IX$. Let $PY$ be any weak path object for $Y$. There is a commutative square:

\[
\begin{tikzcd}[ampersand replacement=\&]
X \arrow[hookrightarrow]{d}{\sim} \arrow{r}{r'_f} \&  PY \arrow[twoheadrightarrow]{d}  \\
IX \arrow{r}{(r_f,h)} \& Y \times Y \\
\end{tikzcd}
\]

\noindent where the left vertical map is the ``first inclusion'', and $r_f$ and $r'_f$ denotes the homotopy from $f$ to $f$ produced by Lemma~\ref{Lem_reflexivity}.

We obtain a diagonal filling $w \colon  IX \rightarrow PY$, and pre-composing it with the second ``inclusion'' $i_2\colon  X \rightarrow IX$ gives a map $X \rightarrow PY$ whose projections to $Y$ are $f$ and $g$, i.e. a homotopy $h'$ between $f$ and $g$ relative to $PY$:

\[\begin{tikzcd}[ampersand replacement=\&]
\& X \arrow[hookrightarrow]{d}{i_1} \arrow{r}{r'_f} \&  PY \arrow[twoheadrightarrow]{d}  \\
X \arrow[hook]{r}{i_2} \arrow[bend left=60]{rru}{h'} \arrow[bend right=30]{rr}[swap]{(f,g)} \& IX \arrow{ur}{w} \arrow{r}[swap]{(r_f,h)} \& Y \times Y \\
\end{tikzcd}
\]

Dually, a homotopy indexed by any path object will induce a homotopy between any other cylinder object, which concludes the proof. \end{proof}

\begin{theorem}\label{Th_homotopyEqRel}
Let $\Ccal$ be a category with fibrations and cofibrations, let $X$ be a cofibrant object admitting at least one cylinder object and $Y$ a fibrant object admitting at least one path object. Then the homotopy relation for maps from $X$ to $Y$ is an equivalence relation.
\end{theorem}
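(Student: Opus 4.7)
The plan is to prove reflexivity, symmetry, and transitivity in turn. Reflexivity is immediate from Lemma~\ref{Lem_reflexivity}, which produces for every $f\colon X\to Y$ both a cylinder-parametrised self-homotopy $r_f\colon IX\to Y$ and a path-parametrised one $r'_f\colon X\to PY$. The key tool for both symmetry and transitivity is a single lifting pattern: the acyclic cofibration $\iota_0\colon X\overset{\sim}{\hookrightarrow} IX$ (between cofibrant objects) has the left lifting property against the fibration $(\pi_0,\pi_1)\colon PY\twoheadrightarrow Y\times Y$, because $PY$ is fibrant (the acyclic fibration $\pi_0\colon PY\twoheadrightarrow Y$ lands in the fibrant object $Y$) and $Y\times Y$ is fibrant as a product of fibrants. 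I will use Proposition~\ref{cylinderEqPath} freely to move between cylinder- and path-homotopies.

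For symmetry, starting from a cylinder-homotopy $H\colon IX\to Y$ with $H\iota_0=f$ and $H\iota_1=g$, I would lift the square
\[
\begin{tikzcd}
X \arrow[hook]{d}{\sim} \arrow{r}{r'_f} & PY \arrow[two heads]{d}{(\pi_0,\pi_1)} \\
IX \arrow{r}{(H,r_f)} & Y\times Y
\end{tikzcd}
\]
which commutes because at $\iota_0$ both composites evaluate to $(f,f)$. The diagonal filler $L\colon IX\to PY$ satisfies $\pi_0 L=H$ and $\pi_1 L=r_f$, so $L\iota_1\colon X\to PY$ has projections $(g,f)$, i.e.\ it is a path-homotopy from $g$ to $f$; by Proposition~\ref{cylinderEqPath} it converts to a cylinder-homotopy witnessing $g\sim f$.

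For transitivity, given $f\sim g$ and $g\sim h$, I would use Proposition~\ref{cylinderEqPath} to present them respectively as a path-homotopy $h_1\colon X\to PY$ with $(\pi_0,\pi_1)h_1=(f,g)$ and a cylinder-homotopy $H_2\colon IX\to Y$ with $H_2\iota_0=g$, $H_2\iota_1=h$. Then the analogous square
\[
\begin{tikzcd}
X \arrow[hook]{d}{\sim} \arrow{r}{h_1} & PY \arrow[two heads]{d}{(\pi_0,\pi_1)} \\
IX \arrow{r}{(r_f,H_2)} & Y\times Y
\end{tikzcd}
\]
commutes at $\iota_0$, both composites giving $(f,g)$, and its filler $M$ yields the path-homotopy $M\iota_1\colon X\to PY$ with projections $(f,h)$, proving $f\sim h$.

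The principal things to verify are the fibrancy of $PY$ (needed in order to apply the lifting property of acyclic cofibrations against fibrations between fibrant objects) and the commutativity of each of the two squares along $\iota_0$. Beyond that, the argument is formal and uses only the acyclic leg $\iota_0$, the acyclic fibration $\pi_0$, and the fibration $(\pi_0,\pi_1)$ of the cylinder/path object in question, so it applies uniformly whether the chosen cylinder and path objects are weak or strong.
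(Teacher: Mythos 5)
Your argument is correct, and for transitivity it takes a genuinely different route from the paper. For symmetry the two proofs coincide in essence: you lift against $(\pi_0,\pi_1)$ with the components of the bottom map swapped, whereas the paper reaches the identical lifting problem by applying the proof of Proposition~\ref{cylinderEqPath} to the ``exchanged'' path object $P'Y = \tau\circ PY$; either way one evaluates the filler at $\iota_1$ to read off $(g,f)$. For transitivity, however, the paper does not solve a lifting problem at all: it glues two cylinders to form $IX\coprod_X IX$, checks that this is again a weak cylinder object for $X$ (using stability of acyclic cofibrations under pushout and composition and the auxiliary reflexivity-witness $DX\coprod_X DX$), and then concatenates the two given cylinder homotopies along the shared endpoint $g$. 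Your route instead converts one of the two homotopies to a path-homotopy $h_1$ via Proposition~\ref{cylinderEqPath} and runs the same lifting square you used for symmetry, with $h_1$ on top and $(r_f,H_2)$ on the bottom; evaluating the filler at $\iota_1$ produces the path-homotopy from $f$ to $h$. Both work from the same ingredients (the acyclic leg $\iota_0$, the fibration $(\pi_0,\pi_1)$ between the fibrant objects $PY$ and $Y\times Y$, and the reflexivity homotopies of Lemma~\ref{Lem_reflexivity}), and both apply equally to weak or strong cylinder/path objects. What the paper's concatenation buys is an explicit new cylinder object $IX\coprod_X IX$, a construction that is useful elsewhere; what your approach buys is economy: symmetry and transitivity drop out of a single lifting pattern, with no need to verify the cylinder axioms for a glued object.
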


We mean by that we have a setoid structure on the set of maps from $X$ to $Y$ and the set of homotopies between them, this holds for whatever choice of cylinder and or path objects we are using (and using several choices of path and cylinder object simultaneously is also an option).

\begin{proof}
 Reflexivity has been proved as Lemma~\ref{Lem_reflexivity}. Let $\alpha,\beta,\gamma$ be three arrows $X \rightarrow Y$ with homotopies $h$ from $\alpha$ to $\beta$ and $h'$ from $\beta$ to $\gamma$.

There is a ``homotopy'' between $\alpha$ and $\gamma$ relative to the object:

\[ X \coprod X \hookrightarrow IX \coprod_X IX  \]

\noindent this cofibration fits in a diagram:

\[
\begin{tikzcd}[ampersand replacement=\&]
X \coprod X \arrow[hook]{d} \arrow{r}{\nabla} \& X \arrow[hook]{d}{\sim}  \\
IX \coprod_X IX \arrow{r} \&  DX \coprod_X D X \\
\end{tikzcd}
\]

\noindent and the stability of acyclic cofibrations under pushout and compositions gives all the conditions that we need for this to be a weak cylinder object for $X$ hence proves that $\alpha$ is homotopic to $\gamma$.

Symmetry needs a little more work because of our asymetrical definition of cylinder objects: Let $h\colon IX \rightarrow Y$ be a homotopy between $f,g\colon X \rightrightarrows Y$. Let $PY$ be any path object for $Y$, and let $P'Y$ be $PY$ composed with the exchange map $\tau\colon Y \times Y \rightarrow Y \times Y$. As the proof of Proposition~\ref{cylinderEqPath} did not use the assumption that the projections of the path object are acyclic fibrations it also applies to $P'Y$ and hence the homotopy given by $IX$ produces a $P'Y$-homotopy between $f$ and $g$, but this is exactly a $PY$-homotopy between $g$ and $f$ and this proves the symmetry of the homotopy relation.\end{proof}

\subsection{Equivalences and the homotopy category}
\label{subsec:homotopycategory}

\begin{assumption}\label{Assumption:PreWMS} The results in this section apply to slightly more general structure than weak model categories: We do not need the relative version of path objects and cylinder objects. Instead we consider $\Ccal$ a category with fibrations and cofibrations, which satisfies the factorization axiom of Definition~\ref{Def::weak_model_categories}, and in which every bifibrant object has both a cylinder object and a path object, or (equivalently, by Remark~\ref{rk:Exist_strong_Cyl=exists_weak_cyl}) in which every cofibrant object has a weak cylinder object and every fibrant object has a weak path object.
\end{assumption}

\begin{definition}
We denote by $Ho(\Ccal^{bf})$ the (setoid\footnote{See Appendix~\ref{section_setoids} for the notion of setoid-category. Though one can be ignore this for most of the paper and consider $Ho(\Ccal^{bf})$ as an ordinary category.}) category whose objects are the bifibrant objects of $\Ccal$ and whose arrows are morphisms in $\Ccal$ up to the homotopy relation.

\end{definition}

We proved in Theorem~\ref{Th_homotopyEqRel} that the homotopy relation is an equivalence relation, and, as Proposition~\ref{cylinderEqPath} shows that it can be defined equivalently using a cylinder object or a path object, it is clearly preserved both by pre-composition and post-composition, hence the ``quotient'' of $\Ccal$ by this equivalence relation is indeed a setoid-category.

\begin{prop}\label{Prop_trivCofFibAreinvertible}
Acyclic cofibrations and acyclic fibrations between bifibrant objects are invertible arrow in $Ho(\Ccal^{bf})$.
\end{prop}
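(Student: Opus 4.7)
The plan is to establish the claim for acyclic cofibrations and then invoke duality for the acyclic fibration case, since the opposite of a weak model category is again a weak model category (the proposition following \cref{Def::weak_model_categories}) with the two classes swapped, and this stability preserves \cref{Assumption:PreWMS}. So let $i \colon A \overset{\sim}{\hookrightarrow} B$ be an acyclic cofibration with $A$ and $B$ bifibrant; I will build $r \colon B \to A$ which is a two-sided homotopy inverse to $i$, by first producing a strict retraction and then constructing a path-object homotopy.

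For the strict retraction, since $A$ is fibrant, the terminal map $A \twoheadrightarrow 1$ is a fibration between fibrant objects. Hence the lifting problem
\[\begin{tikzcd}
A \arrow[d, hook, "i"'] \arrow[r, "\mathrm{id}_A"] & A \arrow[d, two heads] \\
B \arrow[r] \arrow[ur, dashed, "r"] & 1
\end{tikzcd}\]
admits a solution $r \colon B \to A$ satisfying $r \circ i = \mathrm{id}_A$ by the acyclic cofibration property of $i$.

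To then produce a homotopy from $i \circ r$ to $\mathrm{id}_B$, I would use a strong path object $B \xrightarrow{r_B} PB \twoheadrightarrow B \times B$ for $B$, whose existence is guaranteed by \cref{Assumption:PreWMS} since $B$ is bifibrant. The object $B \times B$ is fibrant because fibrations are stable under pullback and composition, and $PB$ is fibrant because the first projection $PB \overset{\sim}{\twoheadrightarrow} B$ is (an acyclic) fibration and $B$ is fibrant; so $PB \twoheadrightarrow B \times B$ is a fibration between fibrant objects. The square
\[\begin{tikzcd}
A \arrow[d, hook, "i"'] \arrow[r, "r_B \circ i"] & PB \arrow[d, two heads] \\
B \arrow[r, "(i \circ r{,}\, \mathrm{id}_B)"'] \arrow[ur, dashed, "H"] & B \times B
\end{tikzcd}\]
commutes, as both composites $A \to B \times B$ are equal to $(i,i)$ by the identity $r \circ i = \mathrm{id}_A$. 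The filler $H$ exists by the acyclic cofibration property of $i$, and is a $PB$-homotopy from $i \circ r$ to $\mathrm{id}_B$; combined with the previous step this shows $[r]$ and $[i]$ are mutually inverse in $Ho(\Ccal^{bf})$.

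The only issue to keep track of—more bookkeeping than a true obstacle—is that the right-hand sides of both lifting problems are fibrations between fibrant objects, so that the acyclic cofibration property of $i$ actually applies. This is immediate for $A \twoheadrightarrow 1$, and for $PB \twoheadrightarrow B \times B$ it reduces to the stability of fibrations under composition and pullback starting from the fibration $B \twoheadrightarrow 1$. The acyclic fibration case follows by running the same argument in $\Ccal^{op}$, where the path object is replaced by a cylinder object.
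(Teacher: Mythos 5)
Your proof is correct and follows essentially the same route as the paper: reduce to acyclic cofibrations by duality, produce the strict retraction $r$ by lifting against $A \twoheadrightarrow 1$, and then lift against $PB \twoheadrightarrow B \times B$ to obtain a homotopy $i\circ r \sim \mathrm{id}_B$. The paper merely packages the second lift as the statement that $i$ is an epimorphism in $Ho(\Ccal^{bf})$ and then specializes to $u=\mathrm{id}_B$, $v=i\circ r$, but the resulting lifting square coincides with yours (your $r_B\circ i$ on top is the reflexivity homotopy of Lemma~\ref{Lem_reflexivity} in its path-object form).
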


\begin{proof}
It is enough to show it for acyclic cofibrations. Let $j\colon X \overset{\sim}{\hookrightarrow} Y$ be an acyclic cofibrations between two bifibrant objects. 

A diagonal filling in the following square:

\[
\begin{tikzcd}[ampersand replacement=\&]
X \arrow[hookrightarrow]{d}{\sim} \arrow{r} \&  X \arrow{d}  \\
Y \arrow{r} \arrow[dotted]{ur}{r} \& 1 \\
\end{tikzcd}
\]

\noindent gives us a retraction of $j$. And $j$ is an epimorphism in $Ho(\Ccal^{bf})$: if two maps $u,v\colon Y \rightrightarrows Z$ are such that $u \circ j$ and $v\circ j$ are homotopic then a diagonal filling in the square:

\[
\begin{tikzcd}[ampersand replacement=\&]
X \arrow[hookrightarrow]{d}{\sim} \arrow{r}{h} \&  PZ \arrow[two heads]{d}  \\
Y \arrow{r}{(u,v)} \arrow[dotted]{ur} \& Z \times Z \\
\end{tikzcd}
\]

\noindent gives a homotopy between $u$ and $v$. Applying this to $Z=Y$, $u=Id_Y$ and $v =j \circ r$ gives that $j \circ r$ is homotopic to $Id_Y$ and concludes the proof.\end{proof}

\begin{prop}\label{Prop_HoCcf}
The quotient functor $\Ccal^{bf} \rightarrow Ho(\Ccal^{bf})$ identifies $Ho(\Ccal^{bf})$ with the localization of $\Ccal^{bf}$ at all acyclic cofibrations (dually at all acyclic fibrations).
\end{prop}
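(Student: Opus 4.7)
The plan is to verify the universal property of localization directly. By \cref{Prop_trivCofFibAreinvertible} the quotient functor $\pi\colon \Ccal^{bf} \to Ho(\Ccal^{bf})$ already sends acyclic cofibrations to isomorphisms, so what remains is to show that any functor $F\colon \Ccal^{bf} \to \Dcal$ sending acyclic cofibrations to isomorphisms factors uniquely through $\pi$. Uniqueness will be immediate: $\pi$ is the identity on objects, and every morphism of $Ho(\Ccal^{bf})$ is by construction the homotopy class of some morphism of $\Ccal^{bf}$, so the factor $\overline{F}$ is forced on objects and on representative morphisms.

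The substantive step is existence, for which I must show that such an $F$ identifies homotopic maps (in the strict sense $F(f) = F(g)$). Given $f, g\colon X \to Y$ between bifibrant objects together with a homotopy between them, I will invoke \cref{rk:Exist_strong_Cyl=exists_weak_cyl} to replace the parametrizing (a priori weak) cylinder by a strong cylinder object $X \coprod X \hookrightarrow IX \to X$, writing $i_0, i_1\colon X \to IX$ for the two coproduct inclusions and $p\colon IX \to X$ for the collapse map. By construction $i_0$ is an acyclic cofibration and $p \circ i_0 = p \circ i_1 = Id_X$. Since $F(i_0)$ is invertible by hypothesis, the first equation forces $F(p) = F(i_0)^{-1}$, and the second then forces $F(i_1) = F(i_0)$. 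For any homotopy $h\colon IX \to Y$ with $h \circ i_0 = f$ and $h \circ i_1 = g$, this gives
\[
F(f) = F(h) \circ F(i_0) = F(h) \circ F(i_1) = F(g).
\]

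With homotopic maps identified, the assignment $[f] \mapsto F(f)$ descends to a setoid-functor $\overline{F}\colon Ho(\Ccal^{bf}) \to \Dcal$ with $\overline{F} \circ \pi = F$, completing the universal property. The dual statement for acyclic fibrations is obtained by applying what was just proved to $\Ccal^{op}$, which again satisfies \cref{Assumption:PreWMS} by the self-duality of that assumption (path objects becoming cylinder objects and vice-versa).

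The only piece of genuine content is the identity $F(i_0) = F(i_1)$ coming from $F(i_0)$ being invertible and $p$ being a common retraction; everything else is bookkeeping, including the descent to the setoid-quotient, which has already been prepared by \cref{Th_homotopyEqRel}. I do not expect a serious obstacle, beyond taking care to pass to a strong cylinder version of the homotopy, which is available precisely because the source $X$ is bifibrant.
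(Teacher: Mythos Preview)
Your proof is correct and follows essentially the same approach as the paper: both arguments use that $F(i_0)$ is invertible together with the common retraction $p$ to conclude $F(i_0)=F(i_1)$, then deduce that $F$ identifies homotopic maps and hence factors through $Ho(\Ccal^{bf})$. Your explicit appeal to \cref{rk:Exist_strong_Cyl=exists_weak_cyl} to ensure a strong cylinder is a minor elaboration the paper leaves implicit, but otherwise the two proofs coincide.
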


What we mean here is that $Ho(\Ccal^{bf})$ has the universal property of a localization, in the sense that for any functor $F\colon  \Ccal^{bf} \rightarrow D$ which send acyclic cofibrations (or acyclic fibrations) to isomorphisms factors uniquely as $\Ccal^{bf} \rightarrow Ho(\Ccal^{bf}) \rightarrow D$.

Moreover $D$ can be taken to be a setoid-category in this statement.

In particular if the logical framework is strong enough to construct the formal (Gabriel-Zisman) localization of $\Ccal^{bf}$ (for example if $\Ccal^{bf}$ is small and if we have list object and quotient by equivalence relation) then this formal localization will be equivalent to $Ho(\Ccal^{bf})$.

\begin{proof}
First, we observed in Proposition~\ref{Prop_trivCofFibAreinvertible} that acyclic cofibrations (and acyclic fibrations) are invertible in $Ho(\Ccal^{bf})$. Let $F \colon \Ccal^{bf} \rightarrow \Dcal$ be a functor which invert all acyclic cofibrations, in particular, it inverts the map $i_1\colon X \hookrightarrow IX$ and hence also the map $u\colon IX \rightarrow X$ as it is a retraction of the previous one. As $i_2\colon X \hookrightarrow IX$ is another section of $u$ we have $F(i_1)=F(i_2)$ in $\Dcal$.

Any two homotopic maps in $\Ccal$ are written as $h\circ i_1$ and $h \circ i_2$ and hence have equals image in $\Dcal$. This shows that $F$ factors uniquely into $ Ho(\Ccal^{bf})$ and hence proves that the quotient functor $\Ccal^{bf} \rightarrow Ho(\Ccal^{bf})$ is the localization of $\Ccal^{bf}$ at acyclic cofibrations. By duality, $Ho(\Ccal^{bf})$ is also the localization of $\Ccal^{bf}$ at acyclic fibrations. \end{proof}

It is well known that in a Quillen model category the homotopy category of bifibrant objects is in fact equivalent to the localization of the whole category at equivalences. To obtain a similar result for weak model categories, we will gradually push this equivalence between this homotopy category of bifibrant objects and localization of various larger full subcategory of $\Ccal$ using the following lemma:

\begin{lemma}\label{Lem_localizationExt}
Let $\Ccal$ be a category, $\Dcal \subset \Ccal$ a full subcategory, $\Wcal$ a class of maps in $\Ccal$ and $\Wcal'$ a class of maps in $\Dcal$.

We assume that:
\begin{enumerate}

\item \label{Lem_localizationExt:Cond_Locexists} The localization $\Dcal[\Wcal'^{-1}]$ exists.

\item \label{Lem_localizationExt:Cond_ReplacementExists} For each object $c \in \Ccal$ there is an arrow $c \rightarrow d$ with $w \in \Wcal$ and $d \in \Dcal$.

\item \label{Lem_localizationExt:Cond_LiftExists} For each solid diagram: 

\[
\begin{tikzcd}[ampersand replacement=\&]
c \arrow{d} \arrow{r}{w} \& d \arrow[dotted]{dl} \\
d' 
\end{tikzcd}
\]

\noindent with $c\in \Ccal, w \in \Wcal$ and  $d,d' \in \Dcal$ there is a dotted arrow that makes the triangle commutes.

\item \label{Lem_localizationExt:Cond_LiftUnique} Each pair of arrows fitting in place of the dotted arrow in the diagram above have the same image in $\Dcal[\Wcal'^{-1}]$.

\item \label{Lem_localizationExt:Cond_Wcompose} $\Wcal$ is stable under composition.

\end{enumerate}

Then the localization $ \Ccal[(\Wcal \cup \Wcal')^{-1}]$ exists and is equivalent to $\Dcal[\Wcal'^{-1}]$ by the functor induced by the inclusion $\Dcal \subset \Ccal$.

\end{lemma}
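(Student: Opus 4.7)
The plan is to directly construct a functor $F \colon \Ccal \to \Dcal[\Wcal'^{-1}]$ that restricts on $\Dcal$ to the canonical localization functor, sends every map in $\Wcal \cup \Wcal'$ to an isomorphism, and has the universal property characterising $\Ccal[(\Wcal \cup \Wcal')^{-1}]$. Using condition (\ref{Lem_localizationExt:Cond_ReplacementExists}) I first pick, for every $c \in \Ccal$, a replacement $w_c \colon c \to F(c)$ in $\Wcal$ with $F(c) \in \Dcal$, adopting the convention that $F(c) = c$ and $w_c = \mathrm{id}_c$ whenever $c$ already lies in $\Dcal$. For a morphism $f \colon c \to c'$ in $\Ccal$, applying condition (\ref{Lem_localizationExt:Cond_LiftExists}) to the pair $w_c \colon c \to F(c)$ and $w_{c'} \circ f \colon c \to F(c')$ produces a filler $\tilde f \colon F(c) \to F(c')$, and I set $F(f)$ to be its image in $\Dcal[\Wcal'^{-1}]$. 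Condition (\ref{Lem_localizationExt:Cond_LiftUnique}) makes this independent of the choice of filler. Functoriality then follows from the same uniqueness: $\mathrm{id}_{F(c)}$ is a valid filler for the triangle defining $F(\mathrm{id}_c)$, and $\tilde g \circ \tilde f$ is a valid filler for the one defining $F(g \circ f)$, so $F(\mathrm{id}_c) = \mathrm{id}_{F(c)}$ and $F(g \circ f) = F(g) \circ F(f)$ in the localization.

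I would then check that $F$ inverts $\Wcal \cup \Wcal'$. The case of $\Wcal'$ is automatic, since $F$ restricted to $\Dcal$ is the canonical quotient. For $w \colon c \to c'$ in $\Wcal$, the composite $w_{c'} \circ w$ lies in $\Wcal$ by condition (\ref{Lem_localizationExt:Cond_Wcompose}), so condition (\ref{Lem_localizationExt:Cond_LiftExists}) yields an arrow $g \colon F(c') \to F(c)$ with $g \circ w_{c'} \circ w = w_c$. Then both $g \circ \tilde w$ and $\mathrm{id}_{F(c)}$ are legitimate fillers for the triangle with hypotenuse $w_c$ and bottom edge $w_c$, and symmetrically $\tilde w \circ g$ and $\mathrm{id}_{F(c')}$ are both fillers for the triangle with hypotenuse $w_{c'} \circ w$ and bottom edge $w_{c'} \circ w$. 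Two applications of (\ref{Lem_localizationExt:Cond_LiftUnique}) show that $F(w)$ and the image of $g$ are mutually inverse in $\Dcal[\Wcal'^{-1}]$.

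For the universal property, given any functor $G \colon \Ccal \to \Ecal$ inverting $\Wcal \cup \Wcal'$, the restriction $G|_{\Dcal}$ inverts $\Wcal'$ and hence factors uniquely through some $\bar G \colon \Dcal[\Wcal'^{-1}] \to \Ecal$ by assumption (\ref{Lem_localizationExt:Cond_Locexists}). Each $G(w_c)$ is then an isomorphism $G(c) \to \bar G F(c)$ in $\Ecal$, and naturality of this family in $f \colon c \to c'$ reduces to the identity $\tilde f \circ w_c = w_{c'} \circ f$ already holding in $\Ccal$, so $G \cong \bar G \circ F$. Uniqueness of such a factorization reduces via the inclusion $\Dcal \subset \Ccal$ to uniqueness of $\bar G$, which is exactly the universal property of $\Dcal[\Wcal'^{-1}]$.

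The only point requiring care is keeping the construction honest in the setoid-theoretic framework of the paper: each $\tilde f$ is merely a chosen representative and the identities defining functoriality, the inverse $g$, and naturality all need to be read as equalities in $\Dcal[\Wcal'^{-1}]$ rather than strict equalities in $\Dcal$. Condition (\ref{Lem_localizationExt:Cond_LiftUnique}) is precisely engineered to absorb this indeterminacy, so I anticipate no genuine obstacle beyond careful bookkeeping of where equalities take place.
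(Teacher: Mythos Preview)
Your proof is correct and follows essentially the same strategy as the paper's: construct $F\colon\Ccal\to\Dcal[\Wcal'^{-1}]$ via chosen replacements and lifts, use condition~(\ref{Lem_localizationExt:Cond_LiftUnique}) for well-definedness and functoriality, then verify the universal property by restricting a given $G$ to $\Dcal$ and using the $G(w_c)$ as a natural isomorphism. One small point: your convention $w_c=\mathrm{id}_c$ for $c\in\Dcal$ tacitly assumes identities lie in $\Wcal$, which is not among the hypotheses; the paper avoids this by only claiming that $F|_\Dcal$ is \emph{naturally isomorphic} to the localization functor, which suffices for everything you need.
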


As previously mentioned, all the ``there is'' in the assumption are interpreted as ``we have operations giving us these objects''. The correct interpretation of assumption $4.$ in setoid language is that given two arrows that makes the triangle commutes there is a (chosen) relation between them in the localization.

\begin{proof}
We assume that $\Dcal[\Wcal'^{-1}]$ exists, we will construct a functor $F\colon \Ccal \rightarrow \Dcal[\Wcal'^{-1}]$:

Any object $c \in \Ccal$ is sent to the chosen object $F(c)=d$ such that there is an arrow $w\colon c \rightarrow d$ with $w \in \Wcal$ and $d \in \Dcal$.
If $f\colon  c \rightarrow c'$ is an arrow and $d$ and $d'$ are the image of $c$ and $c'$, we construct the image of $f$ by taking a lift:

\[\begin{tikzcd}[ampersand replacement=\&]
c \arrow{d}{f} \arrow{r}{w \in \Wcal} \& d \arrow[dotted]{d}{F(f)} \\
c' \arrow{r}{w'} \& d'
\end{tikzcd}\]

Such an arrow exists because of the third assumption and is unique because of the fourth assumption, hence the functions exists. To be more precise, in the setoids language, ``unique'' means that any two such arrows can be connected by a relation, and the functions ``exists'' means that it can be made into a morphism of setoids, i.e. that it acts on relations as well.

Functoriality (in the setoid-categories sense) is immediate because of this uniqueness result. It is easy to show that any arrow in $\Wcal$ or $\Wcal'$ is sent to an isomorphism by this functor. Also the restriction of this functor to $\Dcal \subset \Ccal$ is naturally isomorphic to the universal functor $\Dcal \rightarrow \Dcal[\Wcal'^{-1}]$.

We can now show that any functor $G \colon \Ccal \rightarrow \Kcal$ which inverts all maps in $\Wcal$ and $\Wcal'$ factors through $F$ up to equivalence:

First $G$ restricted to $\Dcal$ induces a functor $G_{\Dcal} \colon  \Dcal[\Wcal'^{-1}] \rightarrow \Kcal$, for any object $c \in \Ccal$, there is an arrow $w\colon c \rightarrow F(c)$ with $w \in \Wcal$, applying $G$ on both sides gives $G(w) \colon  G(c) \rightarrow G_{\Dcal}(F(c))$. By assumption $G(w)$ is an isomorphism, and it is immediate to check that it is functorial in $w$. Hence this produces an isomorphism of functors $G \sim G_{\Dcal} \circ F$ hence proving that $G$ factors into $F$ up to isomorphisms which show that $\Dcal[\Wcal^{-1}]$ has the universal property of the localization $\Ccal[\Wcal^{-1},\Wcal'^{-1}]$. (The uniqueness of the factorization up to unique equivalence is clear).\end{proof}

\begin{theorem}\label{Th_LocCatEq}
Let $\Ccal$ be a weak model category. The following categories (see \ref{notation:cof_fib}) all exists and are equivalent:

\begin{enumerate}

\item $Ho(\Ccal^{bf})$

\item The localization of $\Ccal^{bf}$ at acyclic fibrations.

\item The localization of $\Ccal^{bf}$ at acyclic cofibrations.

\item The localization of $\Ccal^{\cofe}$ at acyclic cofibrations.

\item The localization of $\Ccal^{\fibe}$ at acyclic fibrations.

\item The localization of $\Ccal^{\bothe}$ at all acyclic cofibrations with cofibrant domain and all acyclic fibrations with fibrant target.

\end{enumerate}

The equivalence being induced by the natural quotient functor from $\Ccal^{bf}$ to $Ho(\Ccal^{bf})$ and the square of inclusion:

\[
\begin{tikzcd}[ampersand replacement=\&]
\Ccal^{bf} \arrow{d} \arrow{r} \& \Ccal^{\cofe} \arrow{d} \\
\Ccal^{\fibe} \arrow{r} \& \Ccal^{\bothe} \\
\end{tikzcd}
\]

\end{theorem}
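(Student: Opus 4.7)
My plan is to chain applications of Proposition~\ref{Prop_HoCcf} with several iterations of Lemma~\ref{Lem_localizationExt} along the diagram of inclusions displayed in the statement. Since Proposition~\ref{Prop_HoCcf} already provides $(1) \simeq (2) \simeq (3)$, the remaining work consists of applying the lemma (and its evident dual) to bootstrap the equivalence outward to $\Ccal^{\cofe}$, $\Ccal^{\fibe}$ and $\Ccal^{\bothe}$.

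For $(4) \simeq (3)$ I would apply Lemma~\ref{Lem_localizationExt} with $\Dcal = \Ccal^{bf}$ sitting inside $\Ccal = \Ccal^{\cofe}$, taking $\Wcal$ to be all acyclic cofibrations in $\Ccal^{\cofe}$ and $\Wcal'$ their restriction to $\Ccal^{bf}$, so that $\Wcal \cup \Wcal'$ matches the class inverted in $(4)$. The replacement condition is supplied by the factorization axiom applied to $c \to 1$, giving an acyclic cofibration $c \hookrightarrow c^{\fibe}$ into a bifibrant object; the lifting condition holds because any map into $d' \in \Ccal^{bf}$ can be filled along an acyclic cofibration $w\colon c \hookrightarrow d$ by lifting against the fibration $d' \twoheadrightarrow 1$; stability under composition is Lemma~\ref{lem:acyclicCof_basics}. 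The most delicate point is uniqueness of the lift: given two fillers $f, g\colon d \rightrightarrows d'$ with $fw = gw$, I would produce a homotopy between them by choosing a path object $Pd'$ and lifting $w$ against the fibration $Pd' \twoheadrightarrow d' \times d'$, with top edge the reflexivity homotopy $r'_{fw}\colon c \to Pd'$ (Lemma~\ref{Lem_reflexivity}) and bottom edge $(f,g)$; the diagonal filler gives a homotopy $f \sim g$, so $f = g$ in $Ho(\Ccal^{bf})$. The equivalence $(5) \simeq (2)$ then follows dually, using cofibrant replacements and cylinder objects in place of fibrant replacements and path objects.

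For $(6)$ I would apply the lemma to the inclusion $\Ccal^{\fibe} \subset \Ccal^{\bothe}$, taking $\Wcal$ to be the acyclic cofibrations with cofibrant domain and using identities on fibrant objects together with fibrant replacements $c \hookrightarrow c^{\fibe}$ on cofibrant ones as replacement data. The lifting and uniqueness checks are formally identical to those above, yielding an equivalence between $(5)$ and the localization of $\Ccal^{\bothe}$ at $\Wcal \cup \Wcal'$. The main obstacle, and where I expect the proof to require the most care, is that this class is a priori strictly smaller than the class inverted in $(6)$: it excludes the acyclic fibrations $c \twoheadrightarrow y$ with $c$ cofibrant but not fibrant (and the mirror situation on the cofibration side, which is handled by the symmetric application with $\Dcal = \Ccal^{\cofe}$). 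To close this gap I would factor such a map as $c \hookrightarrow c^{\fibe} \to y$, the second arrow obtained by lifting $c \hookrightarrow c^{\fibe}$ against $y \twoheadrightarrow 1$, and then argue that $c^{\fibe} \to y$ is already inverted in the smaller localization---its image in $Ho(\Ccal^{bf})$ is a map between bifibrant objects which, after factoring into an acyclic cofibration followed by a fibration and applying Lemma~\ref{lem:acyclicCof_basics}, becomes an equivalence and is therefore invertible by Proposition~\ref{Prop_trivCofFibAreinvertible}. Combining the two symmetric applications then identifies $(6)$ with $(4)$, $(5)$ and $(1)$, closing the square of equivalences.
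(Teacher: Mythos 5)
Your treatment of $(1)$--$(5)$ follows the paper's argument closely: Proposition~\ref{Prop_HoCcf} for the first three, then two applications of Lemma~\ref{Lem_localizationExt} (one dual) to pass to $\Ccal^{\cofe}$ and $\Ccal^{\fibe}$. The uniqueness check via the reflexivity homotopy $r'_{fw}$ and a path object $Pd'$ is exactly the intended mechanism. (One minor technicality: for the replacement data in the application to $\Ccal^{\fibe} \subset \Ccal^{\bothe}$ you must actually put the identity maps into $\Wcal$, not merely ``use'' them, since Lemma~\ref{Lem_localizationExt} asks the replacement arrow to lie in $\Wcal$.)

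The real issue is with $(6)$. You flag as ``the main obstacle'' the acyclic fibrations $c \twoheadrightarrow y$ with $c$ cofibrant but not fibrant and $y$ fibrant, claiming these lie in the class of $(6)$ but escape $\Wcal \cup \Wcal'$. But no such map exists. An acyclic fibration is in particular a fibration, and fibrations are closed under composition (Definition~\ref{def:class_of_cof}, dualized), so if $y$ is fibrant then the composite $c \twoheadrightarrow y \twoheadrightarrow 1$ is a fibration, i.e.\ $c$ is automatically fibrant. Hence ``acyclic fibration with fibrant target'' is synonymous with ``acyclic fibration between fibrant objects,'' which is exactly $\Wcal'$. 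The same observation applies on the cofibration side: a cofibration with cofibrant domain has cofibrant codomain, so no ``mirror'' maps are missed either. Consequently a single application of Lemma~\ref{Lem_localizationExt} to $\Ccal^{\fibe} \subset \Ccal^{\bothe}$, with $\Wcal$ the acyclic cofibrations with cofibrant domain together with identity maps and $\Wcal'$ the acyclic fibrations between fibrant objects, already identifies $(6)$ with $(5)$; the second symmetric application with $\Dcal = \Ccal^{\cofe}$ and the entire gap-closing maneuver are unnecessary. As an aside, the gap-closing argument you sketch does not quite go through as written: after factoring $c^{\fibe} \to y$ as an acyclic cofibration followed by a fibration, there is no tool available at this stage of the paper (equivalences, 2-out-of-3, and Proposition~\ref{Prop_triv=equiv} are only established after this theorem) to conclude that the fibration leg is acyclic, and the retract lemma runs in the wrong direction. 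Fortunately none of this is needed.
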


\begin{proof}
The equivalence of first three categories have already been proved. We then prove that $\Ccal^{bf} \rightarrow \Ccal^{\cofe}$ induces an equivalence after localizing at acyclic cofibrations using Lemma~\ref{Lem_localizationExt} with $\Wcal$ and $ \Wcal'$ both being the acyclic cofibrations. Condition~\ref{Lem_localizationExt:Cond_Locexists} follows from Proposition~\ref{Prop_HoCcf}.
Condition~\ref{Lem_localizationExt:Cond_ReplacementExists} is just the existence of factorization as an acyclic cofibration followed by a fibration of $X \rightarrow 1$.
Condition~\ref{Lem_localizationExt:Cond_LiftExists} is the lifting property of acyclic cofibrations with respect to the fibration $d \twoheadrightarrow 1$.
Condition~\ref{Lem_localizationExt:Cond_LiftUnique}: Given $c \overset{\sim}{\hookrightarrow} d \rightrightarrows d'$ with $d'$ fibrant. We construct a homotopy between the two maps as a diagonal filling in:

\[
\begin{tikzcd}[ampersand replacement=\&]
c \arrow[hook]{d}{\sim} \arrow{r} \& Pd' \arrow[twoheadrightarrow]{d} \\
d \arrow{r} \& d' \times d'
\end{tikzcd}
\]

Finally acyclic cofibrations are stable under composition (Condition~\ref{Lem_localizationExt:Cond_Wcompose}).

This proves that the localization of $\Ccal^{\cofe}$ at acyclic cofibrations is equivalent to the localization of $\Ccal^{bf}$ at acyclic cofibrations, i.e. is equivalent to $Ho(\Ccal^{bf})$.

\bigskip

Dually, the localization of $\Ccal^{\fibe}$ at acyclic fibrations is equivalent to $Ho(\Ccal^{bf})$.

\bigskip

We now move to the localization of $\Ccal^{\bothe}$ at all maps that are either acyclic cofibration with cofibrant domain or acyclic fibration with fibrant target. We apply Lemma~\ref{Lem_localizationExt} to the inclusion $\Ccal^{\fibe} \subset \Ccal^{\bothe}$ with $\Wcal$ being all acyclic cofibration with cofibrant domains as well as identity maps, and $\Wcal'$ being the class of acyclic fibration with fibrant domain. All the conditions are checked exactly in the same way as in the previous case, except Condition~\ref{Lem_localizationExt:Cond_LiftUnique}: we construct instead a homotopy in the sense of a weak path object $Pd'$ but this is enough to show that the two maps are equal in the localization at acyclic fibrations.\end{proof}

\begin{definition}\label{def:WeakEquiv}
An arrow in $\Ccal^{\bothe}$ is said to be an equivalence if it is invertible in the homotopy category, i.e. in the equivalent localization of Theorem~\ref{Th_LocCatEq}.
\end{definition}

Equivalences automatically satisfies the $2$-out-of-$3$ condition, and even the stronger $2$-out-of-$6$ condition: If $f,g$ and $h$ are composable and both $f \circ g$ and $g\circ h$ are equivalences then $f,g,h$ and $f \circ g \circ h$ are equivalences. They are also stable under retracts. Acyclic cofibrations with cofibrant domain as well as acyclic fibrations with fibrant target are equivalences. But as mentioned in the introduction, we do not have in general a good notion of equivalences for objects which are neither fibrant nor cofibrant.

\begin{lemma}\label{Lem_weaklifting1}
Let $\Ccal$ be as in \ref{Assumption:PreWMS}, given a diagram of the form:

\[
\begin{tikzcd}[ampersand replacement=\&]
A \arrow[hook]{d}{i} \arrow{r}{w} \& X \arrow{d}{\in \Wcal}[swap]{f}  \\
B \arrow{r} \& Y  \\
\end{tikzcd}
\]

\noindent with $A$ and $B$ cofibrant, $i$ a cofibration, $X$ and $Y$ fibrant and $f$ an equivalence, then there is a diagonal filler which makes the upper triangle commutes.

\end{lemma}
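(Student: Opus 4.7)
The plan is to use the equivalence of $f$ to produce a candidate map $g\colon B\to X$ with $g\circ i$ merely homotopic to $w$, and then correct $g$ to an exact extension $\ell$ via the lifting property of the acyclic projection of a path object.

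First I would reduce to the case where $X$ is bifibrant. Factoring $\emptyset\to X$ by the factorization axiom (valid since $\emptyset$ is cofibrant and $X$ fibrant) yields a bifibrant replacement $\emptyset\hookrightarrow X^c\overset{\sim}{\twoheadrightarrow} X$. Using cofibrancy of $A$, I lift $w$ through the acyclic fibration $X^c\twoheadrightarrow X$ to $w^c\colon A\to X^c$. The composite $X^c\overset{\sim}{\twoheadrightarrow} X\xrightarrow{f} Y$ is again an equivalence (by $2$-out-of-$3$), and any lift into $X^c$ for the modified problem descends along $X^c\to X$ to a lift for the original. So I may assume $X$ is bifibrant.

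Next I would construct a candidate $g\colon B\to X$ together with a homotopy $H\colon A\to PX$ from $g\circ i$ to $w$, where $PX$ is a strong path object of $X$ (which exists since $X$ is bifibrant) with projections $\sigma_0\colon PX\overset{\sim}{\twoheadrightarrow} X$ acyclic and $\sigma_1\colon PX\twoheadrightarrow X$ a fibration, chosen so that $\sigma_0\circ H=g\circ i$ and $\sigma_1\circ H=w$. Heuristically $g$ represents the class $[f]^{-1}\circ[b]$, which exists because $[f]$ is invertible in $Ho(\Ccal^{bf})$ (Theorem~\ref{Th_LocCatEq}); the homotopy $H$ realizes the identity $[g\circ i]=[f]^{-1}[f\circ w]=[w]$. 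The square
\[
\begin{tikzcd}[ampersand replacement=\&]
A \arrow[hook]{d}{i} \arrow{r}{H} \& PX \arrow[two heads]{d}{\sigma_0} \\
B \arrow{r}{g} \& X
\end{tikzcd}
\]
then commutes by construction and admits a diagonal filler $\tilde H\colon B\to PX$ because $\sigma_0$ is an acyclic fibration and $i$ is a cofibration between cofibrant objects. I would conclude by setting $\ell:=\sigma_1\circ\tilde H\colon B\to X$; this satisfies $\ell\circ i=\sigma_1\circ\tilde H\circ i=\sigma_1\circ H=w$, which is exactly the commutativity of the upper triangle.

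The hard part is producing $g$ and $H$ in an explicit, constructive fashion, since the heuristic argument via $[f]^{-1}$ lives in $Ho(\Ccal^{bf})$ whose morphisms are only directly represented by maps between bifibrant objects, whereas here $B$ is merely cofibrant and $Y$ merely fibrant. Concretely I would fibrantly replace $B$ by an acyclic cofibration $B\overset{\sim}{\hookrightarrow} B^f$ into a bifibrant object, cofibrantly replace $Y$ by $Y^c\overset{\sim}{\twoheadrightarrow} Y$, lift $f$ and the extended datum $b$ to the bifibrant setting where the homotopy-inverse construction implicit in the proof of Theorem~\ref{Th_LocCatEq} is available, and then restrict the resulting $g$ and $H$ to $B$ and $A$ respectively. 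Care must be taken not to invoke the later identification of equivalences with acyclic (co)fibrations (Proposition~\ref{Prop_triv=equiv}), toward which the present lemma is a stepping stone.
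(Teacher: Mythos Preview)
Your approach is essentially the paper's: obtain a homotopy-diagonal $g$ with $g\circ i\sim w$ from the invertibility of $[f]$ in the homotopy category, then rectify via a lift against the acyclic projection of a path object $PX$ to get a strict upper-triangle filler. The paper carries this out identically once all four objects are bifibrant.

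The only noteworthy difference is in the reduction to the bifibrant case. You make $X$ bifibrant first, and then separately replace $B\overset{\sim}{\hookrightarrow}B^f$ and $Y^c\overset{\sim}{\twoheadrightarrow}Y$, lifting $f$ and $b$ independently; this forces you to argue that the lifted square $A\to X\to Y^c$ versus $A\to B^f\to Y^c$ commutes at least up to homotopy (which it does, since both sides agree after the equivalence $Y^c\to Y$), and then to chase through the equivalence of Theorem~\ref{Th_LocCatEq} to extract $g$ and $H$. The paper instead builds a single ``inner square'' with all four corners bifibrant, using pushouts $B'=B\coprod_A A_1$ and $Y'=B\coprod_A X_1$ followed by further factorizations, so that the inner square commutes \emph{strictly} by construction and the homotopy-category argument applies directly without any bookkeeping about homotopy-commutativity. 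Your route works but is slightly noisier; the paper's pushout construction is cleaner, especially in the constructive setting where one wants explicit operations rather than existence claims.
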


\begin{proof}

We first show the lemma when all the objects involved are bifibrant. In this situation, as $f$ is an isomorphism in $Ho(\Ccal^{bf})$ there must exist a diagonal filler in the category $Ho(\Ccal^{bf})$, in particular there is a map $v\colon  B \rightarrow X$ and a homotopy $h\colon  A \rightarrow PX$ from $v \circ i$ to $w$, we can then form the solid diagram below, which admit a dotted diagonal filler:

\[
\begin{tikzcd}[ampersand replacement=\&]
A \arrow[bend left=30]{rr}{w} \arrow[hook]{d}{i} \arrow{r}{h} \& PX \arrow[two heads]{d}{\sim}[swap]{\pi_1} \arrow[two heads]{r}{\pi_2}[swap]{\sim} \& X  \\
B \arrow[dotted]{ur}{t} \arrow{r}{v} \& X  \\
\end{tikzcd}
\]

The composite $\pi_2 \circ t$ gives us a map from $B$ to $X$ such that $\pi_2 \circ t \circ i = w$ hence this concludes the proof.

We then shows that given a square as in the proposition there is an ``inner square'' as below: 

\[
\begin{tikzcd}[ampersand replacement=\&]
A \arrow[hook]{ddd}{i} \arrow{rrr}{w} \arrow[hook]{dr}{\sim} \& \& \& X \arrow{ddd}{\in \Wcal}[swap]{f}  \\
\& A_1 \arrow[hook]{d}{i_1} \arrow{r}{w_1} \& X_1 \arrow{d}{f_1} \arrow[two heads]{ur}{\sim} \& \\
\& B_1  \arrow{r} \& Y_1 \arrow[two heads]{dr}{\sim}  \& \\
B \arrow[hook]{ur}{\sim} \arrow{rrr} \& \& \& Y  \\
\end{tikzcd}
\]

\noindent with all the objects of the inner square being bifibrant. The map $f_1$ is still an equivalence because the acyclic fibration with fibrant target are equivalences (by Theorem~\ref{Th_LocCatEq}) and equivalences satisfies the $2$-out-of-$3$ property. A filler as above in the inner square produce a filler in the outer square.

 Indeed, we first factor the map $A \rightarrow X$ as an acyclic cofibration followed by a fibration $A \overset{\sim}{\hookrightarrow} A_1 \twoheadrightarrow X$ to get a bifibrant object $A_1$, and the map $A_1 \rightarrow X$ as a cofibration followed by an acyclic fibration $A_1 \hookrightarrow X_1 \overset{\sim}{\twoheadrightarrow} X$ to get a bifibrant object $X_1$. We get a diagram:

\[
\begin{tikzcd}[ampersand replacement=\&]
A \arrow[hook]{dd}{i} \arrow{rrr}{w} \arrow[hook]{dr}{\sim} \& \& \& X \arrow{dd}{\in \Wcal}[swap]{f}  \\
\& A_1  \ar[r,hook,"w_1"] \& X_1 \arrow[two heads]{ur}{\sim} \& \\
B \arrow{rrr} \& \& \& Y \, .  \\
\end{tikzcd}
\]

We form the pushout $B' = B \coprod_A A_1$ and $Y'=B \coprod_A X_1$ to get a diagram:

\[
\begin{tikzcd}[ampersand replacement=\&]
A \arrow[hook]{ddd}{i} \arrow{rrr}{w} \arrow[hook]{dr}{\sim} \& \& \& X \arrow{ddd}{\in \Wcal}[swap]{f}  \\
\ar[dr,phantom,"\ulcorner"{very near end}] \& A_1 \arrow[hook]{d} \arrow{r}{w_1} \ar[dr,phantom,"\ulcorner"{very near end}] \& X_1 \ar[d,hook]\arrow[two heads]{ur}{\sim} \& \\
\& B'  \arrow{r} \&  Y' \ar[dr] \& \\
B \arrow[hook]{ur}{\sim} \arrow{rrr} \& \& \& Y \, . \\
\end{tikzcd}
\]

Finally we factor $Y' \rightarrow Y$ as a cofibration followed by an acyclic fibration $Y' \hookrightarrow Y_1 \overset{\sim}{\twoheadrightarrow} Y$ to get a bifibrant object $Y_1$, then we factor the map $B' \rightarrow Y_1$ as an acyclic cofibration followed by a fibration $B' \overset{\sim}{\hookrightarrow} B_1 \twoheadrightarrow Y_1$ to get a bifibrant object $B_1$ and we obtain a diagram with an inner square as claimed above.\end{proof}

Before going further we briefly recall the well-known:

\begin{retract_lemma}\label{retract_lemma}
In any category, if there is a factorization $f=pi$ and $f$ has the right lifting property against $i$, then $f$ is a retract of $p$. Dually if $f$ has the left lifting property against $p$ then $f$ is a retract of $i$.
\end{retract_lemma}
\begin{proof} We prove the first claim. The lift in the square left below:

\[ \begin{tikzcd}
 X \ar[d,"i"] \ar[r,equal,] & X \ar[d,"f"] & X \ar[d,"f"] \ar[r,"i"] & Y \ar[d,"p"]  \ar[r,"w"] & X \ar[d,"f"]  \\
Y \ar[r,"p"] \ar[ur,dotted,"w"] & Z & Z \ar[r,equal]  &   Z \ar[r,equal] & Z \\
\end{tikzcd}\]

 produces the map to complete the retract diagram right above.\end{proof}

The next proposition, and more specifically the fact that any weak model structure satisfies Condition~\ref{Prop_triv=equiv:acycCof=equiv} and \ref{Prop_triv=equiv:acycFib=equiv} is of the highest importance for the theory:

\begin{prop}\label{Prop_triv=equiv} Let $\Ccal$ be as in \ref{Assumption:PreWMS}, i.e. it satisfies the factorization axiom of Definition~\ref{Def::weak_model_categories} and every bifibrant object has both a path object and a cylinder object. Then the following conditions are equivalent:
\begin{enumerate}[label=(\roman*)]

\item\label{Prop_triv=equiv:isWMS} $\Ccal$ is a weak model category.

\item\label{Prop_triv=equiv:relativeCyl} Every cofibration between bifibrant objects has a relative cylinder object.

\item\label{Prop_triv=equiv:relativePath} Every fibration between bifibrant objects has a relative path object.

\item\label{Prop_triv=equiv:acycCof=equiv} A cofibration between cofibrant objects is an acyclic cofibration if and only if it is an equivalence.

\item\label{Prop_triv=equiv:acycFib=equiv} A fibration between fibrant objects is an acyclic fibration if and only if it is an equivalence.

\end{enumerate} 
\end{prop}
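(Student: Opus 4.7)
The plan is to establish the equivalence of the five conditions via the cycle $(i) \Rightarrow (ii) \Rightarrow (iv) \Rightarrow (i)$ together with its dual $(i) \Rightarrow (iii) \Rightarrow (v) \Rightarrow (i)$. The implications $(i) \Rightarrow (ii)$ and $(i) \Rightarrow (iii)$ are trivial, as $(ii)$ and $(iii)$ are simply the restrictions of the cylinder and path object axioms of a weak model category to cofibrations and fibrations whose source and target are both bifibrant.

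For $(iv) \Rightarrow (i)$, I construct the required relative strong cylinder objects by factoring codiagonals. Given a cofibration $i\colon A \hookrightarrow B$ with $A$ cofibrant and $B$ fibrant (hence bifibrant), I factor the relative codiagonal $B \coprod_A B \to B$ using the factorization axiom as $B \coprod_A B \hookrightarrow I_A B \overset{\sim}{\twoheadrightarrow} B$; this is valid since $B \coprod_A B$ is cofibrant (pushout of cofibrants along a cofibration) and $B$ is fibrant. The first leg $B \hookrightarrow I_A B$ is a cofibration between cofibrant objects whose composition with the acyclic fibration is $\operatorname{id}_B$, so by 2-out-of-3 it is an equivalence, and by $(iv)$ it is acyclic, giving the required relative strong cylinder. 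The path object axiom is established by the dual construction using $(v)$, which emerges by transitivity of the equivalence of conditions once the symmetric cycle closes.

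The heart of the proof is $(ii) \Rightarrow (iv)$. The direction acyclic $\Rightarrow$ equivalence is Theorem~\ref{Th_LocCatEq}. For the converse, given an equivalence cofibration $i\colon A \hookrightarrow B$ between cofibrants, I first reduce to a cofibration between bifibrants: choose a fibrant replacement $A \overset{\sim}{\hookrightarrow} \tilde A$, form the pushout $\tilde A \coprod_A B$, and choose a fibrant replacement $\tilde A \coprod_A B \overset{\sim}{\hookrightarrow} \tilde B$. The composite $\tilde{\imath} \colon \tilde A \hookrightarrow \tilde A \coprod_A B \overset{\sim}{\hookrightarrow} \tilde B$ is a cofibration between bifibrant objects that is an equivalence (by 2-out-of-3 applied to the composite $A \overset{\sim}{\to} \tilde A \overset{\tilde{\imath}}{\to} \tilde B = A \overset{i}{\to} B \overset{\sim}{\to} \tilde B$), and by Lemma~\ref{lem:acyclicCof_basics} acyclicity of $\tilde{\imath}$ forces that of $i$. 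In the bifibrant case I factor $\tilde{\imath} = q \circ j$ with $j\colon \tilde A \overset{\sim}{\hookrightarrow} Y'$ an acyclic cofibration and $q\colon Y' \twoheadrightarrow \tilde B$ a fibration between bifibrants (hence an equivalence by 2-out-of-3). By the Retract Lemma, it suffices to display $\tilde{\imath}$ as a retract of $j$, i.e.\ to build a diagonal filler in the square with top $j$ and bottom $\operatorname{id}_{\tilde B}$. Lemma~\ref{Lem_weaklifting1}, applied with the equivalence $q$ on the right, gives a first map $s\colon \tilde B \to Y'$ with $s \tilde{\imath} = j$.

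The main obstacle is then the adjustment of $s$ into an $s'$ that also satisfies $q s' = \operatorname{id}_{\tilde B}$. The two maps $qs, \operatorname{id}_{\tilde B}\colon \tilde B \rightrightarrows \tilde B$ agree on $\tilde A$ (both precomposed with $\tilde{\imath}$ yield $\tilde{\imath}$) and become equal in $Ho(\Ccal^{bf})$ since $q$ is an equivalence. I expect to promote this to an $\tilde A$-relative homotopy $K\colon I_{\tilde A} \tilde B \to \tilde B$ with $K \iota_0 = qs$ and $K \iota_1 = \operatorname{id}_{\tilde B}$ by working in the coslice $\tilde A/\Ccal^{bf}$, which inherits the hypotheses of Assumption~\ref{Assumption:PreWMS} and in which the relative cylinder from $(ii)$ is an ordinary cylinder object for $\tilde B$; producing this rel-homotopy is the delicate point. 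Once $K$ is available, I lift it through $q$ along the acyclic leg $\iota_0\colon \tilde B \overset{\sim}{\hookrightarrow} I_{\tilde A} \tilde B$ (valid since $q$ is a fibration between fibrant objects) to obtain $L\colon I_{\tilde A} \tilde B \to Y'$ with $L \iota_0 = s$ and $q L = K$. Setting $s' := L \iota_1$ then gives $q s' = K \iota_1 = \operatorname{id}_{\tilde B}$ directly, while $s' \tilde{\imath} = L \iota_1 \tilde{\imath} = L \iota_0 \tilde{\imath} = s \tilde{\imath} = j$ follows from the identification $\iota_0 \tilde{\imath} = \iota_1 \tilde{\imath}$ in the pushout $\tilde B \coprod_{\tilde A} \tilde B$ defining the relative cylinder. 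The dual argument establishes $(iii) \Rightarrow (v)$, closing both cycles.
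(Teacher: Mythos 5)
Your high-level plan — derive $(ii)$ and $(iii)$ from $(i)$ trivially, make $(ii)\Rightarrow(iv)$ and $(iii)\Rightarrow(v)$ the core work, then close by showing $(iv)$ and $(v)$ recover $(i)$ — is the right shape, and your argument for the cylinder half of $(iv)\Rightarrow(i)$ is exactly the paper's. But there are two genuine gaps.

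\textbf{The two cycles cannot close as stated.} You present the proof as two independent loops $(i)\Rightarrow(ii)\Rightarrow(iv)\Rightarrow(i)$ and $(i)\Rightarrow(iii)\Rightarrow(v)\Rightarrow(i)$, but then admit that your $(iv)\Rightarrow(i)$ step only produces cylinder objects and relies on $(v)$ for the path object axiom. The condition $(v)$ is only reached in the second loop, whose final step $(v)\Rightarrow(i)$ dually requires $(iv)$ for cylinders; so neither loop terminates without the other. You need a bridge $(iv)\Leftrightarrow(v)$ that does not pass through $(i)$. Such a bridge exists and is short: given a fibration $f$ between bifibrant objects (one may reduce to this case) that is an equivalence, factor $f$ as a cofibration followed by an acyclic fibration; by $2$-out-of-$3$ the cofibration is an equivalence, hence acyclic by $(iv)$, hence has the left lifting property against $f$, and the retract lemma~\ref{retract_lemma} exhibits $f$ as a retract of the acyclic fibration. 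This is precisely how the paper threads the needle: it proves $(iv)\Rightarrow(v)$ directly, and only then deduces $(i)$.

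\textbf{The ``delicate point'' in $(ii)\Rightarrow(iv)$ is a real gap, not a detail to be filled in.} You factor $\tilde\imath=q\circ j$, produce $s$ with $s\tilde\imath=j$ via Lemma~\ref{Lem_weaklifting1}, and then want a homotopy $K\colon I_{\tilde A}\tilde B\to\tilde B$ \emph{relative to $\tilde A$} between $qs$ and $\mathrm{id}_{\tilde B}$. You correctly observe that $qs=\mathrm{id}$ in $Ho(\Ccal^{bf})$ (since $qs\tilde\imath=\tilde\imath$ and $\tilde\imath$ is invertible there), but a plain homotopy in $Ho(\Ccal^{bf})$ gives no control over the restriction to $\tilde A$; the two endpoints $\iota_0\tilde\imath$ and $\iota_1\tilde\imath$ of a non-relative cylinder do not coincide, so the lifted correction $s'=L\iota_1$ need not satisfy $s'\tilde\imath=j$. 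To upgrade to a rel-$\tilde A$ homotopy you would have to know that $qs$ and $\mathrm{id}$ are equal in $Ho(\tilde A/\Ccal^{bf})$ — equivalently, that $\tilde\imath$ (or $q$) is invertible \emph{in the coslice homotopy category} — but the equivalence of ``equivalence in $\Ccal$'' and ``equivalence in $\tilde A/\Ccal$'' is exactly the corollary proved right after this proposition, assuming the full weak model structure. So your upgrade step is circular. The paper avoids this entirely by not factoring $\tilde\imath$: it first applies Lemma~\ref{Lem_weaklifting1} to the square with $\tilde\imath$ on both sides to produce a retraction $r\colon\tilde B\to\tilde A$; then $r$ is an equivalence by $2$-out-of-$3$, and a second application of Lemma~\ref{Lem_weaklifting1} to the square with left leg $\tilde B\coprod_{\tilde A}\tilde B\hookrightarrow I_{\tilde A}\tilde B$, top $(\tilde\imath r,\mathrm{id}_{\tilde B})$, and right leg $r$ yields directly a rel-$\tilde A$ homotopy $h$ between $\tilde\imath r$ and $\mathrm{id}_{\tilde B}$ (the square commutes because both composites to $\tilde A$ are $(r,r)$). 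The retract diagram then exhibits $\tilde\imath$ as a retract of the acyclic cofibration $\iota_0\colon\tilde B\hookrightarrow I_{\tilde A}\tilde B$, with no need to factor $\tilde\imath$ or to correct a lift. I recommend adopting this route.
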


\begin{proof}
It is clear that \ref{Prop_triv=equiv:isWMS} $\Rightarrow $ \ref{Prop_triv=equiv:relativeCyl} and \ref{Prop_triv=equiv:relativePath}.

We prove \ref{Prop_triv=equiv:relativeCyl} $\Rightarrow$ \ref{Prop_triv=equiv:acycCof=equiv}: Acyclic cofibrations between cofibrant objects are equivalence almost by definition of equivalences. Conversely let $i\colon A \hookrightarrow B$ be a cofibration between cofibrant objects which is an equivalence, we will prove it is acyclic. Using the same replacement as in the end of the proof of Lemma~\ref{Lem_weaklifting1} it is enough to show it when $A$ and $B$ are bifibrant. Using Lemma~\ref{Lem_weaklifting1} in the square:

\[
\begin{tikzcd}[ampersand replacement=\&]
A \arrow[hook]{d}{i}[swap]{\in \Wcal} \arrow{r}{Id_A} \& A \arrow[hook]{d}{i}[swap]{\in \Wcal} \\
B \arrow{r}{Id_B} \& B \\
\end{tikzcd}
\]

\noindent gives us a retraction $r\colon B \rightarrow A$ of $i$. As a retract of an equivalence, $r$ is also an equivalence, hence we can use Lemma~\ref{Lem_weaklifting1} in the square:

\[
\begin{tikzcd}[ampersand replacement=\&]
B \coprod_A B \arrow[hook]{d} \arrow{r}{(i \circ r, Id_B)} \& B \arrow{d}{r}[swap]{\in \Wcal} \\
I_A B \arrow{r} \& A \\
\end{tikzcd}
\]

\noindent gives a homotopy $h\colon I_A B \rightarrow B$ between $i \circ r$ and $Id_B$ relative to $A$, we form the commutative diagram:

\[
\begin{tikzcd}[ampersand replacement=\&]
A \ar[rr,equal,bend left = 40] \arrow[hook]{d}{i} \arrow[hook]{r}{i} \& B \arrow[hook]{d}{\iota_0} \arrow{r}{r} \& A \arrow[hook]{d}{i} \\
B \ar[rr,equal,bend right = 40] \arrow[hook]{r}{\iota_1} \& I_A B \arrow{r}{h} \& B \\
\end{tikzcd}
\]

\noindent which shows that $A \hookrightarrow B$ is a retract of $B \hookrightarrow I_A B$, hence it has the same lifting property as the acyclic cofibration  $B \hookrightarrow I_A B$, so as it is a cofibration, it is acyclic.

By duality, we also have \ref{Prop_triv=equiv:relativePath} $\Rightarrow$ \ref{Prop_triv=equiv:acycFib=equiv}.

Then we show that \ref{Prop_triv=equiv:acycCof=equiv} $\Rightarrow$ \ref{Prop_triv=equiv:acycFib=equiv}. Let $f\colon X \rightarrow Y $ be a fibration between fibrant objects which is an equivalence. As above, we can freely assume that $X$ is cofibrant. We then factor $f$ as a cofibration followed by an acyclic fibration. By $2$-out-of-$3$ for equivalences, the cofibrations part is an equivalence and hence is acyclic by assumption, and hence has the left lifting property against $f$. The retract Lemma~\ref{retract_lemma} then implies that $f$ is a retract of the acyclic fibration part of the factorization and this concludes the proof. By duality we can in fact deduces that \ref{Prop_triv=equiv:acycCof=equiv} $\Leftrightarrow$ \ref{Prop_triv=equiv:acycFib=equiv}.

Finally assuming $\Ccal$ satisfies \ref{Prop_triv=equiv:acycCof=equiv}, and given a cofibration $A \hookrightarrow B$ from a cofibrant object to a fibrant object, we consider a cofibration/acyclic fibration factorization of the relative co-diagonal map:

\[ B \coprod_A B \hookrightarrow I_A B \overset{\sim}{\twoheadrightarrow} B \]

The composite $B \hookrightarrow I_A B \overset{\sim}{\twoheadrightarrow} B$ is the identity, hence is an equivalence. By $2$-out-of-$3$ for equivalences and the fact that acyclic fibrations are equivalences we conclude that $B \hookrightarrow I_A B$ is an equivalence and hence is acyclic. Dually, we use \ref{Prop_triv=equiv:acycFib=equiv} to construct relative path objects for fibrations from a cofibrant object to a fibrant object. Hence showing that the two equivalent conditions~\ref{Prop_triv=equiv:acycFib=equiv} and \ref{Prop_triv=equiv:acycCof=equiv} imply \ref{Prop_triv=equiv:isWMS}.\end{proof}

\begin{cor} Let $\Ccal$ be a weak model category.
\begin{enumerate}[label=(\roman*)]
\item Let $X$ be a cofibrant object of $\Ccal$, then a map $f$ between fibrant or cofibrant objects in $X/\Ccal$ is an equivalence if and only if it is an equivalence in $\Ccal$.

\item Let $X$ be a fibrant object of $\Ccal$, then a map between fibrant or cofibrant objects in $\Ccal/X$ is an equivalence if and only if it is an equivalence in $\Ccal$.
\end{enumerate}

\end{cor}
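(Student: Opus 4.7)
The plan is to prove (i); statement (ii) then follows by the self-duality of weak model categories. First I would unpack how the weak model structure on $X/\Ccal$ compares with that on $\Ccal$: an object $A$ is cofibrant in $X/\Ccal$ iff the structure map $X\hookrightarrow A$ is a cofibration of $\Ccal$, and fibrant in $X/\Ccal$ iff $A$ is fibrant in $\Ccal$, so that the forgetful functor sends $(X/\Ccal)^{\bothe}$ into $\Ccal^{\bothe}$. Moreover, any lift in $\Ccal$ of a lifting problem coming from $X/\Ccal$ is automatically a morphism under $X$, so the notions of acyclic cofibration and acyclic fibration coincide between the two weak model structures. From this the easy direction is immediate: the forgetful functor preserves acyclic cofibrations and acyclic fibrations, and since the homotopy category is by definition the localization at those (\cref{Th_LocCatEq}), any equivalence of $X/\Ccal$ is an equivalence of $\Ccal$.

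For the converse, suppose $f\colon A \to B$ in $(X/\Ccal)^{\bothe}$ has underlying map an equivalence in $\Ccal$. My plan is to reduce to the case where $A$ and $B$ are both bifibrant in $X/\Ccal$, by building bifibrant replacements inside $X/\Ccal$: if $A$ is cofibrant in $X/\Ccal$, factor $A \to 1$ as $A \overset{\sim}{\hookrightarrow} \hat A \twoheadrightarrow 1$; if $A$ is fibrant in $X/\Ccal$, apply the factorization axiom in $\Ccal$ to $X \to A$ to obtain $X \hookrightarrow \hat A \overset{\sim}{\twoheadrightarrow} A$, which makes $\hat A$ bifibrant in $X/\Ccal$. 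In either case the comparison map is acyclic in both $X/\Ccal$ and $\Ccal$. After replacing both $A$ and $B$ and using the lifting properties inside $X/\Ccal$, one constructs a map $\hat f\colon \hat A \to \hat B$ fitting into a commutative diagram with $f$ and the two comparison maps, and by 2-out-of-3 for equivalences (valid in each of the two weak model categories), $\hat f$ is an equivalence in $X/\Ccal$ (resp.\ in $\Ccal$) iff $f$ is.

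At this point $\hat A, \hat B$ are bifibrant in both $X/\Ccal$ and $\Ccal$, so I would factor $\hat f$ in $X/\Ccal$ as $\hat A \overset{\sim}{\hookrightarrow} C \twoheadrightarrow \hat B$; since (acyclic) cofibrations and fibrations agree between the two structures, this is simultaneously such a factorization in $\Ccal$, and $C$ is bifibrant in both. By \cref{Prop_triv=equiv}\ref{Prop_triv=equiv:acycFib=equiv} applied in each structure, $\hat f$ is an equivalence in that structure iff the fibration $C \twoheadrightarrow \hat B$ is acyclic there, and because acyclic fibrations coincide between $X/\Ccal$ and $\Ccal$, these two conditions match and the proof is complete. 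The main obstacle is really the replacement step: one has to verify that in each of the four combinations of fibrant/cofibrant for $A$ and $B$ in $X/\Ccal$ the desired lifts exist inside $X/\Ccal$ and yield an honestly commutative diagram to which 2-out-of-3 applies.
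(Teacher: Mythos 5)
Your proof is correct and takes essentially the same route as the paper: reduce to a nice case via (co)fibrant replacements, factor the map, invoke \cref{Prop_triv=equiv}, and conclude using that acyclic (co)fibrations in $\Ccal$ and $X/\Ccal$ coincide. The one simplification the paper uses is worth noting: rather than replacing \emph{both} endpoints to bifibrant and then solving lifting problems to build $\hat f$, it merely pre-composes $f$ with an acyclic fibration of cofibrant domain and/or post-composes with an acyclic cofibration of fibrant codomain, which already lands in the case (cofibrant source, fibrant target) without constructing any lift.
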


\begin{proof}
Let $f\colon  Z \rightarrow Y$ be a map from a cofibrant object to a fibrant object in $X/\Ccal$. Let $f\colon Z \hookrightarrow Z' \overset{\sim}{\twoheadrightarrow} Y$ be a factorization of $f$ as a cofibration followed by an acyclic fibration. $f$ is an equivalence in $\Ccal$ (resp. in $X/\Ccal$) if and only if the cofibration part is in fact an acylic cofibration in $\Ccal$ (resp. in $X/\Ccal$), but acyclic cofibrations in $\Ccal$ and in  $X/\Ccal$ are the same things and this proves the result in the case where the source of $f$ is cofibrant and the target of $f$ is fibrant. If the domain of $f$ is instead fibrant we pre-compose it with an acyclic fibrations of cofibrant domain and if $f$ has cofibrant target we post-composite with an acyclic cofibration with fibrant domain to go back to the previous case.\end{proof}

\subsection{Equivalent definitions}
\label{sub_section_equivalentDefinitions}

\begin{assumption} In this subsection, $\Ccal$ is a category with cofibrations and fibrations as in Definition~\ref{def:class_of_cof}, which satisfies the factorization axiom of Definition~\ref{Def::weak_model_categories}, i.e. every arrow from a cofibrant object to a fibrant object can be factored both as a cofibration followed by an acyclic fibration and as an acyclic cofibration followed by a fibration. We will give equivalent fomulations for the other axioms of weak model categories.
\end{assumption}

\begin{prop}\label{Prop_alternateCond1}
 The following conditions are equivalent:
\begin{enumerate}[label=(\roman*)]

\item\label{Prop_alternateCond1:(1)factor} For any bifibrant object $A$ and any factorization $Id_A\colon A \hookrightarrow B \overset{\sim}{\twoheadrightarrow} A$ of the identity of $A$ as a cofibration followed by an acyclic fibration, the cofibration is acyclic.

\item\label{Prop_alternateCond1:rest_factor} There is a class $\Fcal$ of acyclic fibrations in $\Ccal$ such that any arrow from a cofibrant object to a fibrant object can be factored as a cofibration followed by an arrow in $\Fcal$, and condition \ref{Prop_alternateCond1:(1)factor} holds when the acyclic fibration part of the factorization is in $\Fcal$.

\item\label{Prop_alternateCond1:(2)gen_cyl} Any cofibration $A \hookrightarrow B$ with $A$ cofibrant and $B$ fibrant admits a relative cylinder object.

\item\label{Prop_alternateCond1:(3)restr_cyl} Any cofibration $A \hookrightarrow B$ between bifibrant objects admits a relative cylinder object.

\end{enumerate}

\end{prop}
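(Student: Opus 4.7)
My plan is to prove these equivalences via the cycle (i) $\Rightarrow$ (ii) $\Rightarrow$ (iii) $\Rightarrow$ (iv) $\Rightarrow$ (i). Two implications are immediate: for (i) $\Rightarrow$ (ii) I take $\Fcal$ to be the class of all acyclic fibrations, and (iii) $\Rightarrow$ (iv) holds because any bifibrant object is in particular cofibrant and fibrant.

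For (ii) $\Rightarrow$ (iii), let $j\colon A \hookrightarrow B$ be a cofibration with $A$ cofibrant and $B$ fibrant, so that $B$ is bifibrant. The codiagonal $B \coprod_A B \to B$ goes from a cofibrant object to a fibrant object, so I apply the factorization supplied by (ii) to obtain $B \coprod_A B \hookrightarrow I_A B \overset{f}{\twoheadrightarrow} B$ with $f \in \Fcal$. The composite $B \overset{\iota_0}{\hookrightarrow} I_A B \overset{f}{\twoheadrightarrow} B$ is $Id_B$, so the restricted version of (i) built into (ii) forces the cofibration $B \hookrightarrow I_A B$ to be acyclic, producing the required relative strong cylinder object for $j$.

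The real work is in (iv) $\Rightarrow$ (i). Suppose $A$ is bifibrant and $Id_A = p \circ j$ with $j\colon A \hookrightarrow B$ a cofibration and $p\colon B \overset{\sim}{\twoheadrightarrow} A$ an acyclic fibration; I must show $j$ is acyclic. First note that $B$ is bifibrant: it is cofibrant as the target of a cofibration from a cofibrant object, and fibrant because $B \overset{p}{\twoheadrightarrow} A \twoheadrightarrow 1$ is a composite of fibrations. Apply (iv) to $j$ to get a relative weak cylinder $B \coprod_A B \hookrightarrow I_A B \to D_A B$ with $\iota_0\colon B \overset{\sim}{\hookrightarrow} I_A B$ and $B \overset{\sim}{\hookrightarrow} D_A B$ acyclic cofibrations. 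The key construction is a map $\psi_2\colon I_A B \to B$ satisfying $\psi_2 \iota_0 = jp$ and $\psi_2 \iota_1 = Id_B$; once it is available, the diagram
\[
\begin{tikzcd}
A \ar[r, "j"] \ar[d, "j"'] & B \ar[r, "p"] \ar[d, "\iota_0"] & A \ar[d, "j"] \\
B \ar[r, "\iota_1"'] & I_A B \ar[r, "\psi_2"'] & B
\end{tikzcd}
\]
exhibits $j$ as a retract of $\iota_0$: the top and bottom rows compose to $Id_A$ and $Id_B$ respectively, the left square commutes by the codiagonal identity $\iota_0 j = \iota_1 j$, and the right commutes by $\psi_2 \iota_0 = jp$. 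Lemma~\ref{lem:acyclicCof_basics} then gives that $j$ is an acyclic cofibration.

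The main obstacle is producing $\psi_2$. The pair $(jp, Id_B)\colon B \coprod_A B \to B$ is well-defined, since $pj = Id_A$ means both components agree with $j$ after precomposition with $j$; I want to extend it along $B \coprod_A B \hookrightarrow I_A B$, which I will set up as a lifting problem against the acyclic fibration $p$. Using that $B$ is fibrant, the acyclic cofibration $B \hookrightarrow D_A B$ admits a retraction $r\colon D_A B \to B$, and setting $q := p \circ r \circ (I_A B \to D_A B)\colon I_A B \to A$ gives $q\iota_0 = q\iota_1 = p$. The resulting square with top $(jp, Id_B)$, bottom $q$, left $B \coprod_A B \hookrightarrow I_A B$ and right $p$ then commutes (both composites restrict to $(p,p)$ on $B \coprod_A B$), and since $B \coprod_A B$ is cofibrant and $p$ is an acyclic fibration, a lift $\psi_2$ with the required properties exists.
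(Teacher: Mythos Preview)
Your proof is correct and follows essentially the same route as the paper: the same cycle of implications, the same factorization of the codiagonal for (ii) $\Rightarrow$ (iii), and for (iv) $\Rightarrow$ (i) the same lifting square against the acyclic fibration $p$ producing a homotopy that exhibits $j$ as a retract of $\iota_0$. The only cosmetic difference is that the paper works directly with a \emph{strong} relative cylinder (available since $B$ is fibrant, by Remark~\ref{rk:Exist_strong_Cyl=exists_weak_cyl}), whereas you start from a weak cylinder and rebuild the map $I_A B \to B$ via the retraction $r\colon D_A B \to B$; the resulting map $q$ and the lift $\psi_2$ coincide with the paper's $h$.
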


And this can be dualized for the existence of path objects.

\begin{proof}
The implication $\ref{Prop_alternateCond1:(1)factor} \Rightarrow \ref{Prop_alternateCond1:rest_factor}$ and $\ref{Prop_alternateCond1:(2)gen_cyl} \Rightarrow \ref{Prop_alternateCond1:(3)restr_cyl}$ are immediate (one takes $\Fcal$ to be the class of all acyclic fibrations).

$\ref{Prop_alternateCond1:rest_factor} \Rightarrow \ref{Prop_alternateCond1:(2)gen_cyl}$: Consider a factorization $B \coprod_A B \hookrightarrow I_A B \overset{\in \Fcal}{\twoheadrightarrow} B$. The composite  $B \hookrightarrow I_A B \overset{\in \Fcal}{\twoheadrightarrow} B$ is a factorization of the identity of $B$, which is bifibrant, as in condition $\ref{Prop_alternateCond1:rest_factor}$, hence $B \hookrightarrow I_A B $ is an acyclic cofibration. This proves $\ref{Prop_alternateCond1:(2)gen_cyl}$

$\ref{Prop_alternateCond1:(3)restr_cyl} \Rightarrow \ref{Prop_alternateCond1:(1)factor}$: Given $A \hookrightarrow B \overset{\sim}{\twoheadrightarrow} A$ a factorization of the identity of a bifibrant object $A$, we denote by $r \colon  B \overset{\sim}{\twoheadrightarrow} A \hookrightarrow B$ the idempotent induced on $B$. We obtain a (dotted) diagonal filling $h$ in the square on the left below:

\[
\begin{tikzcd}
B \coprod_A B \arrow{rr}{(r,Id_B)} \arrow[hook]{d} & & B \arrow[two heads]{d}{\sim}  & & A \ar[rr,bend left = 40,equal] \ar[d,hook] \ar[r,hook] & B \ar[d,hook,"i_1"] \ar[r,->>] & A \ar[d,hook] \\
I_A B \arrow{r} \arrow[dotted]{rru}{h} & B \arrow[two heads]{r}{\sim} & A  & & B \ar[rr,bend right=40,equal] \ar[r,hook,"i_2"] & I_A B \ar[r,"h"] & B 
\end{tikzcd}\]

\noindent which then fits in the retract diagram on the right above, showing that $A \hookrightarrow B$ is a retract of $B \hookrightarrow I_A B$, and hence is an acyclic cofibration by \cref{lem:acyclicCof_basics}.\end{proof}

\begin{prop}\label{Prop_alternateCond2}

Assume furthermore that $\Ccal$ satisfies the cylinder axiom of Definition~\ref{Def::weak_model_categories}; i.e. the equivalent conditions of proposition~\ref{Prop_alternateCond1}. The following conditions are equivalent:

\begin{enumerate}[label=(\roman*)]

\item\label{Prop_alternateCond2:(1)PO} $\Ccal$ is a weak model category, i.e. any fibration from a cofibrant object to a fibrant object admits a relative strong path object.

\item\label{Prop_alternateCond2:(2)Rcancellation} If $A \overset{i}{\hookrightarrow} B \overset{j}{\hookrightarrow} C$ are two cofibrations between bifibrant objects, such that $i$ and $j \circ i$ are acyclic then $j$ is acyclic.

\item\label{Prop_alternateCond2:restricted_Rcancellation} There is a class $\Jcal$ of acyclic cofibrations such that any arrow from a cofibrant object to a fibrant object can be factored as an arrow in $\Jcal$ followed by a fibration, and condition \ref{Prop_alternateCond2:(2)Rcancellation} holds when we further assume that $i \in \Jcal$.

\end{enumerate}

\end{prop}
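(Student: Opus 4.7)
The plan is to prove the cycle $(i) \Rightarrow (ii) \Rightarrow (iii) \Rightarrow (i)$, with only the last implication requiring genuine work.

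For $(i) \Rightarrow (ii)$, I will invoke \cref{Prop_triv=equiv}: in a weak model category a cofibration between bifibrant objects is acyclic if and only if it is an equivalence, so the claim reduces to the $2$-out-of-$3$ property for equivalences recorded right after \cref{def:WeakEquiv}. The implication $(ii) \Rightarrow (iii)$ is immediate by taking $\Jcal$ to be the class of all acyclic cofibrations, with the required factorization supplied by the factorization axiom assumed throughout the subsection.

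The main content is $(iii) \Rightarrow (i)$. Given a fibration $f\colon Y \twoheadrightarrow X$ with $Y$ cofibrant and $X$ fibrant, the composite $Y \twoheadrightarrow X \twoheadrightarrow 1$ is a fibration, so $Y$ is automatically bifibrant, and similarly $Y \times_X Y$ is fibrant. Thus the relative diagonal $\Delta\colon Y \to Y\times_X Y$ goes from a cofibrant to a fibrant object, and condition $(iii)$ factors it as $Y \overset{j}{\hookrightarrow} PY \overset{q}{\twoheadrightarrow} Y\times_X Y$ with $j \in \Jcal$; this makes $PY$ bifibrant. My candidate relative strong path object is this factorization, and everything reduces to showing that the projection $\pi_1 \circ q\colon PY \twoheadrightarrow Y$ is an acyclic fibration.

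For that, I will further factor $\pi_1 \circ q$ as a cofibration followed by an acyclic fibration, $PY \overset{i}{\hookrightarrow} Q \overset{r}{\twoheadrightarrow} Y$, with $Q$ again bifibrant. Since $(\pi_1 \circ q)\circ j = \mathrm{Id}_Y$, the composite $r \circ (i \circ j)$ is a factorization of the identity of the bifibrant object $Y$ as a cofibration followed by an acyclic fibration, so the cylinder axiom in the guise of \cref{Prop_alternateCond1}\ref{Prop_alternateCond1:(1)factor} forces $i \circ j$ to be an acyclic cofibration. The restricted cancellation in $(iii)$, applied with $j \in \Jcal$ and $i \circ j$ acyclic, then yields that $i$ is acyclic. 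Finally, since $i$ is an acyclic cofibration and $\pi_1 \circ q$ is a fibration between fibrant objects, lifting $i$ against $\pi_1 \circ q$ in the obvious square with $\mathrm{Id}_{PY}$ on top and $r$ on the bottom produces a section of $i$ over $r$; the retract lemma~\ref{retract_lemma} then exhibits $\pi_1 \circ q$ as a retract of $r$, and stability of acyclic fibrations under retracts (\cref{lem:acyclicCof_basics}) concludes. The main obstacle is the bookkeeping of bifibrancy hypotheses needed to invoke both \cref{Prop_alternateCond1} and condition $(iii)$ within the same diagram, which is what pins down the iterated factorization above.
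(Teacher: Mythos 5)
Your proof is correct, and the underlying mechanism for $(iii)\Rightarrow(i)$ — factor the identity of a bifibrant object through $\Jcal$, invoke \cref{Prop_alternateCond1}\ref{Prop_alternateCond1:(1)factor} to get one acyclic cofibration, apply the restricted cancellation to get another, then conclude by the retract lemma — is exactly the one in the paper. The only difference is packaging: the paper verifies the dual of condition \ref{Prop_alternateCond1:rest_factor} of \cref{Prop_alternateCond1} (showing that a factorization $A \overset{\in\Jcal}{\hookrightarrow} B \twoheadrightarrow A$ of an identity has acyclic second leg) and then cites the dual of that proposition to produce path objects, whereas you inline the dual of the implication \ref{Prop_alternateCond1:rest_factor}$\Rightarrow$\ref{Prop_alternateCond1:(2)gen_cyl} by factoring the diagonal $Y \to Y\times_X Y$ directly, so there is nothing essentially new either way.
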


\begin{proof}
We proved in \ref{Prop_triv=equiv} that in a weak model category acyclic cofibrations between cofibrant objects are exactly the cofibrations that are invertible in the homotopy category, hence $\ref{Prop_alternateCond2:(2)Rcancellation}$ holds in any weak model category. The implication $\ref{Prop_alternateCond2:(2)Rcancellation} \Rightarrow \ref{Prop_alternateCond2:restricted_Rcancellation}$ is immediate. We now assume $\ref{Prop_alternateCond2:restricted_Rcancellation}$, and consider a factorization $A \overset{\in \Jcal}{\hookrightarrow} B \overset{p}{\twoheadrightarrow} A$ of the identity of a bifibrant object $A$, and (following the dual \cref{Prop_alternateCond1}) we will prove that $p$ is acyclic. We further factorize $p$ as  $B \hookrightarrow C \overset{\sim}{\twoheadrightarrow} A$, which gives us a factorization of the identity of $A$ as:

\[ A \overset{\sim}{\underset{\in \Jcal}{\hookrightarrow}} B \hookrightarrow C \overset{\sim}{\twoheadrightarrow} A \]

As $\Ccal$ has strong cylinder objects, it also satisfies condition $\ref{Prop_alternateCond1:(1)factor}$ of Proposition~\ref{Prop_alternateCond1}, hence the composite cofibration $A \hookrightarrow C$ is an acyclic cofibration and hence, because of $\ref{Prop_alternateCond2:restricted_Rcancellation}$, the cofibration $B \hookrightarrow C$ is acyclic. The retract lemma (\ref{retract_lemma}) then shows that $p$ is a retract of the acyclic fibration $C \overset{\sim}{\twoheadrightarrow} A$ and hence is also acyclic.\end{proof}

\begin{remark}
Proposition~\ref{Prop_alternateCond2} gives a characterization of weak model categories, which, except for the factorization axiom, only involves the cofibrations and the acyclic cofibrations. As an application of this, if we start from a given weak model structure and one modifies its class of fibrations in a way that do not change the class of acyclic cofibrations and so that the factorization axiom is preserved, we still have a weak model structure.

For example, one can take the closure of the class of fibrations under retract, or take them to be ``all the arrows having the right lifting property against cofibrations between cofibrant objects'', as soon as these are well defined in the logical framework, and still forms a class of fibrations. The same remark applies dually to modification of the class of cofibrations.
\end{remark}

Let us also recall:

\begin{prop}\label{prop:Non-relative_on_one_side_isEnough}
Assume that $\Ccal$ satisfies the cylinder axiom and that every bifibrant object of $\Ccal$ admits a strong path object. Then $\Ccal$ is a weak model category.\end{prop}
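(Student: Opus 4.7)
I would verify the path object axiom, the only axiom of a weak model category not already included in the hypotheses (factorization is part of the standing assumption of the subsection, and the cylinder axiom is given). By the dual of Proposition~\ref{Prop_alternateCond1}, the path object axiom is equivalent to condition (i') in the dual: for every bifibrant object $A$ and every factorization $A \overset{c}{\hookrightarrow} B \overset{p}{\twoheadrightarrow} A$ of $Id_A$ as an acyclic cofibration followed by a fibration, the fibration $p$ is itself acyclic.

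To establish (i'), I would further factor $p$ via the factorization axiom as $p = q \circ l$ with $l \colon B \hookrightarrow C$ a cofibration and $q \colon C \overset{\sim}{\twoheadrightarrow} A$ an acyclic fibration. Then $Id_A = q \circ (l \circ c)$ is a factorization of the identity as a cofibration followed by an acyclic fibration, so by Proposition~\ref{Prop_alternateCond1}(i) (which uses only the cylinder axiom) $l \circ c$ is an acyclic cofibration. If $l$ itself is acyclic, then $l$ has the left-lifting property against $p$, and the retract lemma applied to the factorization $p = q \circ l$ exhibits $p$ as a retract of the acyclic fibration $q$, hence $p$ is acyclic. Thus the proof reduces to the cancellation statement: for cofibrations $A \overset{c}{\hookrightarrow} B \overset{l}{\hookrightarrow} C$ between bifibrant objects with $c$ and $l \circ c$ acyclic, $l$ is acyclic. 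This is condition (ii) of Proposition~\ref{Prop_alternateCond2}, and establishing it is the main obstacle.

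The key step is to establish this cancellation using only the non-relative strong path object hypothesis. Given a lifting problem for $l$ against a fibration $f \colon X \twoheadrightarrow Y$ between fibrant objects, I would first reduce, via cofibrant replacements of $X$ and $Y$ supplied by the factorization axiom, to the case where both $X$ and $Y$ are bifibrant; the transfer is valid because the replacements are acyclic fibrations. In this reduced setting the acyclicity of $l \circ c$ produces a preliminary lift $\tilde{k} \colon C \to X$, and the two maps $u, \tilde{k} \circ l \colon B \to X$ then agree on $A$ via $c$ and have equal composite with $f$. Using the acyclic cofibration $c$ and the fibration $PX \twoheadrightarrow X \times X$ (with $PX$ the strong path object of the now-bifibrant $X$), one builds a homotopy between $u$ and $\tilde{k}\circ l$ relative to $A$. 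Finally, combining this homotopy with the relative strong cylinder object for $l$ (guaranteed by the cylinder axiom) via a further lifting argument yields the adjustment of $\tilde{k}$ into a genuine lift, which is then transferred back through the bifibrant replacements. The most delicate point, and the place where the interaction between the two structures is subtle, is the final adjustment combining the homotopy in $PX$ with the cylinder $I_B C$; here one has to set up the lifting problem so that only fibrations between bifibrant objects and cofibrations with cofibrant domain are involved, to avoid invoking structure not supplied by the hypotheses.
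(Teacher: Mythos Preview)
Your reduction to the right-cancellation statement (Proposition~\ref{Prop_alternateCond2}\ref{Prop_alternateCond2:(2)Rcancellation}) via the dual of Proposition~\ref{Prop_alternateCond1} is correct, and this is indeed the heart of the matter. The gap is in your direct lifting argument for that cancellation. After producing the preliminary lift $\tilde k \colon C \to X$ and the homotopy $h \colon B \to PX$ between $u$ and $\tilde k \circ l$, you propose to adjust $\tilde k$ using $I_B C$. But any adjustment obtained by lifting $h$ along the acyclic projection $\pi_0 \colon PX \to X$ (the only projection you know to be acyclic) gives a new $k \colon C \to X$ with $k \circ l = u$, and you lose control of $f \circ k$: there is no reason it should equal $v$. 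Keeping $f \circ k = v$ would require the homotopy to live in $\Ccal/Y$, i.e.\ would require a relative path object $P_Y X$, which is precisely what is \emph{not} assumed. The relative cylinder $I_B C$ does not help either, because the obstruction is on the $Y$-side, not on the $B$-side; and the second leg $i_1 \colon C \hookrightarrow I_B C$ is not yet known to be acyclic (that also needs the conclusion you are after).

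The paper's proof sidesteps this entirely. Your hypotheses give exactly \cref{Assumption:PreWMS}: bifibrant objects have both cylinders (from the cylinder axiom via \cref{rk:Exist_strong_Cyl=exists_weak_cyl}) and path objects (by hypothesis). Hence all of Section~\ref{subsec:homotopycategory} is available, and one simply invokes \cref{Prop_triv=equiv}: the cylinder axiom gives its condition~\ref{Prop_triv=equiv:relativeCyl}, and the equivalence with~\ref{Prop_triv=equiv:isWMS} finishes the proof in one line. If you insist on proving the right cancellation directly, you will end up re-deriving \cref{Lem_weaklifting1} and the implication \ref{Prop_triv=equiv:relativeCyl}$\Rightarrow$\ref{Prop_triv=equiv:acycCof=equiv} of \cref{Prop_triv=equiv} (which is exactly where the non-relative $PX$ and the relative $I_A B$ cooperate correctly: one first shows $l$ is an equivalence by $2$-out-of-$3$, then exhibits it as a retract of $B \hookrightarrow I_B C$ via two applications of \cref{Lem_weaklifting1}). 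At that point your argument has become the paper's.
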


\begin{proof}Such a category has both cylinder and path objects for all bifibrant objects, hence we can apply Proposition~\ref{Prop_triv=equiv} and conclude from the fact that it satisfies the cylinder axiom that it is a weak model category. \end{proof}

Finally, we observe that constructiong ``path objects'' \emph{without units} is enough to get actual path objects:

\begin{lemma}[``Self-composed span trick'']\label{lem:Span_trick}
Let $X$ be a fibrant object in $\Ccal$. Assume that there is a fibrant object $X'$, and a fibration:

 \[ C \twoheadrightarrow X \times X' \]
\noindent whose components are acyclic:
\[
\begin{tikzcd}
& C \ar[dl,->>,"\sim"{swap}] \ar[dr,->>,"\sim"]  \\
X & & X' 
\end{tikzcd}
\]

Then $X$ admits a weak path object.
\end{lemma}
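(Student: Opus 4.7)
The plan is to use a ``self-composed span'' construction, as the name of the lemma suggests. Let $p\colon C \twoheadrightarrow X$ and $q\colon C \twoheadrightarrow X'$ denote the two acyclic fibrations. I will define $PX := C \times_{X'} C$, the pullback formed using $q$ on both sides (which exists since $q$ is a fibration to the fibrant object $X'$), and set $TX := C$. The map $TX \twoheadrightarrow X$ will be $p$, and $TX \to PX$ will be the diagonal $\Delta_C \colon C \to C \times_{X'} C$. The map $PX \to X \times X$ is $(p \circ \pi_1, p \circ \pi_2)$, where $\pi_1, \pi_2 \colon PX \to C$ are the two projections of the pullback.

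Most of the required properties would then be immediate. Each $\pi_i$ is a pullback of the acyclic fibration $q$ along itself, hence an acyclic fibration. Therefore the first projection $p \circ \pi_1 \colon PX \to X$ is a composite of acyclic fibrations, hence acyclic. Commutativity of the square with the diagonal $\Delta_X \colon X \to X \times X$ holds because $\pi_i \circ \Delta_C = Id_C$, so both composites $TX \to PX \to X \times X$ equal $(p,p) = \Delta_X \circ p$.

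The main obstacle is to show that $PX \to X \times X$ is a fibration; this is where the full hypothesis that the combined map $(p,q)\colon C \twoheadrightarrow X \times X'$ is a fibration (and not merely that its two components are) enters. I would factor the map as $PX \to X \times C \xrightarrow{Id_X \times p} X \times X$, where the first map sends $(c_1, c_2) \mapsto (p(c_1), c_2)$. The second map is a pullback of $p$ along a projection, hence a fibration. For the first map, I would verify that it arises as the pullback of the fibration $(p,q)\colon C \twoheadrightarrow X \times X'$ along $Id_X \times q \colon X \times C \to X \times X'$: indeed, the fiber of $PX \to X \times C$ over a generic $(x, c_2)$ consists of those $c_1 \in C$ with $p(c_1) = x$ and $q(c_1) = q(c_2)$, which is exactly the prescribed fiber. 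Since fibrations are stable under pullback (with fibrant codomain), the first map is a fibration, and hence so is the composite $PX \to X \times X$, completing the verification that $(TX, PX)$ is a weak path object for $X$.
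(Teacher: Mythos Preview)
Your proposal is correct and follows exactly the same construction as the paper: $P = C \times_{X'} C$ with reflexivity witness $C$ and diagonal map $\Delta_C$. You are in fact more careful than the paper, which leaves the verification that $P \to X \times X$ is a fibration implicit; your factorization through $X \times C$ and identification of each factor as a pullback of a fibration is a clean way to supply this detail and makes explicit where the hypothesis that the combined map $C \twoheadrightarrow X \times X'$ (not just its components) is a fibration is actually used.
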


This lemma applies as soon as we have a class of fibrations and acyclic fibrations stable under pullbacks and compositions.

\begin{remark} Dually, there is a version for weak cylinder objects constructed out of cospans of acyclic cofibrations, and applying the results in (co)slices of $\Ccal$ automatically gives a version of the statement for relative weak path objects and relative weak cylinder objects. For example, if $A \hookrightarrow B$ is a cofibration that fits into a diagram:

\[
\begin{tikzcd}
  A \ar[r,hook] \ar[d,hook] & B \ar[d,hook,"\sim"] \\
B' \ar[r,hook,"\sim"] & I 
\end{tikzcd}
\]

\noindent such that the map $B \coprod_A B' \rightarrow I$ is a cofibration, then the cofibration $A \hookrightarrow B$ admits a relative weak cylinder object.
\end{remark}

\begin{proof} $P=C \times_{X'} C$ is a weak path object, with $C$ as reflexivity witness object:

\[\begin{tikzcd}
C \arrow[rd,"\Delta"] \arrow[rrd, "Id_C", bend left] \arrow[rdd, "Id_C"', bend right] &                                                                                     &                                                                                     &   \\
                                                                             & P \ar[dr,phantom,"\lrcorner"{very near start}]\arrow[d, "\sim" description, two heads] \arrow[r, "\sim" description, two heads] & C \arrow[d, "\sim" description, two heads] \arrow[r, "\sim" description, two heads] & X \\
                                                                             & C \arrow[r, "\sim" description, two heads] \arrow[d, "\sim" description, two heads] & X'                                                                                  &   \\
                                                                             & X                                                                                   &                                                                                     &  
\end{tikzcd} \qquad
\begin{tikzcd}
  C \ar[d,two heads,"\sim"{description}] \ar[r,"\Delta"] & P \ar[d,two heads] \\
X \ar[r] & X \times X 
\end{tikzcd}\]\end{proof}

\subsection{Weak Quillen functors and Quillen equivalences}
\label{subsec:Quillenfunctors}

In this subsection we introduce ``Quillen pairs'' and ``Quillen equivalences'', which are the natural notion of morphisms and equivalences between weak model categories. For classical Quillen model categories they are defined as pairs of adjoint functors  $F\colon \Ccal \rightleftarrows \Dcal\colon G$ satisfying some conditions, but in the weak context it is natural to only ask for the left adjoint functor $F$ to be defined on cofibrant objects and for the right adjoint functor $G$ to be defined on fibrant objects. More precisely: 

\begin{definition}
A \emph{weak Quillen pair} $F\colon \Ccal \rightleftarrows \Dcal\colon G$ between two weak model categories $\Ccal$ and $\Dcal$ is a pair of functors $F\colon \Ccal^{\cofe} \rightarrow \Dcal^{\cofe}$ and $G\colon \Dcal^{\fibe} \rightarrow \Ccal^{\fibe}$ such that:

\begin{enumerate}

\item $F$ and $G$ are ``adjoint'' in the sense that there is an isomorphism 

\[ Hom_{\Dcal}(F(X),Y) \simeq Hom_{\Ccal}(X,G(Y)) \]

 natural in $X \in \Ccal^{\cofe}$ and $Y \in \Dcal^{\fibe}$.

\item $F$ send cofibrations to cofibrations.
\item $G$ send fibrations to fibrations.
\end{enumerate}

\end{definition}

$F$ is called a left (weak) Quillen functor and $G$ a right (weak) Quillen functor. In the rest of the paper we will omit the ``weak'' and just talk about Quillen pair and Quillen functors.

\begin{example}
\label{pullbackpushoutQuillenpair}Let $\Ccal$ be a weak model category and let $X$ and $Y$ be cofibrant objects and $f\colon  X \rightarrow Y$ a map. There is a Quillen pair:

 \[ P_f\colon X/\Ccal \rightleftarrows  Y/\Ccal\colon  U_f \]

Where $P_f$ is the functor which takes a cofibrant $X \hookrightarrow Z$ to its pushout $Y \hookrightarrow Z \coprod_X Y$ and $U_f$ takes a fibrant $Z$ with a map $Y \rightarrow Z$ to the composite $X \rightarrow Y \rightarrow Z$.

This example is the main reason why we do not ask Quillen pairs to be globally defined adjoint functors: As we do not assume that $\Ccal$ has all finite colimits, but only pushout along cofibrations, the functor $P_f$ is only defined for cofibrant objects of $X/\Ccal$.

There is a dual version: if $X$ and $Y$ are fibrant objects of $\Ccal$ and  $f\colon X \rightarrow Y$ is any map there is a Quillen pair:

\[ U_f \colon  \Ccal/X \rightleftarrows \Ccal/Y \colon  P_f \]

Where $U_f$ send any cofibrant object $Z \rightarrow X$ to the composite $Z \rightarrow X \rightarrow Y$ and $P_f$ is the pullback functor taking a fibration $Z \twoheadrightarrow Y$ to its pullback $Z \times_Y X \twoheadrightarrow X$.

\end{example}

 \begin{prop}\label{Prop_QuillenFunctorHomotopy}
Let $F \colon  \Ccal \rightleftarrows \Dcal \colon  G$ be a Quillen pair. Then the two functors:

\[F\colon  \Ccal^{\cofe} \rightarrow \Dcal^{\cofe} \qquad G\colon \Dcal^{\fibe} \rightarrow \Ccal^{\fibe} \]

Both send equivalences to equivalences and induces functors:

 \[ Ho(F)\colon  Ho(\Ccal^{\cofe}) \rightarrow Ho(\Dcal^{\cofe}) \qquad Ho(G)\colon  Ho(\Dcal^{\fibe}) \rightarrow Ho(\Dcal^{\fibe}). \]

Moreover, up to the equivalences of categories of Theorem~\ref{Th_LocCatEq} $Ho(F)$ is left adjoint to $Ho(G)$ on the homotopy categories.

\end{prop}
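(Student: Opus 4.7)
The plan is to prove the three claims in order: (1) $F$ sends acyclic cofibrations between cofibrant objects to acyclic cofibrations and dually $G$ sends acyclic fibrations between fibrant objects to acyclic fibrations, (2) both functors descend to the stated functors on homotopy categories and send equivalences to equivalences, and (3) after transport along the equivalences of Theorem~\ref{Th_LocCatEq}, the descended functors are adjoint.

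For (1), let $i \colon A \hookrightarrow B$ be an acyclic cofibration between cofibrant objects in $\Ccal$ and $p$ a fibration between fibrant objects in $\Dcal$. The strict adjunction turns a lifting problem of $F(i)$ against $p$ into one of $i$ against $G(p)$; since $G$ preserves fibrations and fibrant objects, $G(p)$ is a fibration between fibrant objects, so the lift exists by acyclicity of $i$. As $F$ also preserves cofibrations, $F(i)$ is acyclic. The dual argument handles $G$. For (2), Theorem~\ref{Th_LocCatEq} identifies $Ho(\Ccal^{\cofe})$ with the localization of $\Ccal^{\cofe}$ at its acyclic cofibrations, and by Definition~\ref{def:WeakEquiv} the equivalences in $\Ccal^{\cofe}$ are precisely the arrows inverted there. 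Thus the composite $\Ccal^{\cofe} \xrightarrow{F} \Dcal^{\cofe} \to Ho(\Dcal^{\cofe})$ sends acyclic cofibrations to isomorphisms (as their $F$-images are themselves acyclic cofibrations, hence equivalences) and therefore factors uniquely through $Ho(\Ccal^{\cofe})$ as $Ho(F)$, which automatically sends every equivalence to an equivalence. Dually for $Ho(G)$.

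For (3), it suffices to exhibit, naturally in bifibrant $X \in \Ccal^{bf}$ and $Y \in \Dcal^{bf}$, a bijection of homotopy classes $[F(X), Y] \cong [X, G(Y)]$; transporting along the equivalences of Theorem~\ref{Th_LocCatEq} then yields the adjunction $Ho(F) \dashv Ho(G)$. The strict adjunction supplies a set-level bijection $Hom_{\Dcal}(F(X),Y) \cong Hom_{\Ccal}(X,G(Y))$, and I would verify that it respects homotopy by fixing a weak cylinder object $IX$ for $X$ in $\Ccal$ and showing that $F(IX)$ serves as a weak cylinder object for $F(X)$ in $\Dcal$. The needed inputs are that $F$ preserves cofibrations, acyclic cofibrations (from step~(1)), and the coproduct $X \coprod X$ together with the pushout appearing in the reflexivity witness $DX$; the last point follows from the restricted adjunction, since $F$ of such a colimit and the corresponding colimit of $F$-images represent the same functor on $\Dcal^{\fibe}$. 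Given that $F(IX)$ is a weak cylinder for $F(X)$, a homotopy $F(IX) \to Y$ corresponds under the adjunction to a map $IX \to G(Y)$, which is a homotopy of the adjuncts (since $G(Y)$ is fibrant and $IX$ is a cylinder object for $X$); combined with Proposition~\ref{cylinderEqPath} this produces the desired bijection on full homotopy classes, whose naturality in $X$ and $Y$ is straightforward. The main obstacle will be the rigorous verification that $F(IX)$ really is a weak cylinder for $F(X)$, i.e.\ that $F$ preserves the relevant colimits of cofibrant objects strictly, not merely up to equivalence; in the constructive setting the Yoneda-style uniqueness argument requires some care, but it can be circumvented by working only up to homotopy, which is all that the bijection of homotopy classes ultimately requires.
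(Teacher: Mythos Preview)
Your proposal is correct and follows the same route as the paper: show via the partial adjunction that $F$ preserves acyclic cofibrations, descend through the localization description of $Ho(\Ccal^{\cofe})$, and verify that the strict bijection $Hom(F(X),Y)\cong Hom(X,G(Y))$ respects homotopy because $F$ carries a cylinder for $X$ to one for $F(X)$ (the paper simply says ``$F$ preserves cylinder objects and $G$ preserves path objects''). Your caution about whether $F$ strictly preserves $X\coprod X$ is warranted---the paper does not address it---and your proposed workaround of arguing only up to homotopy (using that $F(\iota_0)$ and $F(\iota_1)$ are acyclic cofibrations with a common retraction, hence equal in $Ho(\Dcal^{\cofe})$) is the right way to close that gap.
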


\begin{proof}
The adjunction property between $F$ and $G$ and the fact that $G$ sends fibrations to fibrations implies that $F$ sends acyclic cofibrations to acyclic cofibrations. As $Ho(\Ccal^{\cofe})$ and $Ho(\Dcal^{\cofe})$ are localization at acyclic cofibrations this shows that $F$ induces a functor $Ho(F)\colon  Ho(\Ccal^{\cofe}) \rightarrow Ho(\Dcal^{\cofe})$. Dually $G$ induces a functor $Ho(\Dcal^{\fibe}) \rightarrow Ho(\Ccal^{\fibe})$. This shows in particular that $F$ and $G$ send equivalences to equivalences.

Now given $X \in \Ccal^{\cofe}$ and $Y \in \Dcal^{\fibe}$, the adjunction isomorphism $Hom(X,G(Y)) \simeq Hom(F(X),Y)$ is compatible to the homotopy relation (because $F$ preserves cylinder objects and $G$ preserves path objects) hence it descend into an isomorphism:

\[ Hom_{Ho(\Ccal)}(X,G(Y)) \simeq Hom_{Ho(\Ccal)}(F(X),Y) \]

We easily check that this isomorphism is natural on the homotopy category (for example by restricting to $X$ and $Y$ bifibrant), and this concludes the proof.\end{proof}

\begin{cor}
In a weak model category:
\begin{itemize}
\item Pushouts along a cofibration between cofibrant objects send equivalences between cofibrant objects to equivalences.

\item Pullbacks along fibrations between fibrant objects send equivalences between fibrant objects to equivalences.

\end{itemize}

\end{cor}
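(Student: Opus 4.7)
The plan is to combine the Quillen pairs constructed in Example \ref{pullbackpushoutQuillenpair} with Proposition \ref{Prop_QuillenFunctorHomotopy}, using the preceding corollary to translate between equivalences in a slice and equivalences in $\Ccal$.

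For the first statement, fix a cofibration $i \colon A \hookrightarrow B$ between cofibrant objects. Example \ref{pullbackpushoutQuillenpair} exhibits the pair $P_i \colon A/\Ccal \rightleftarrows B/\Ccal \colon U_i$ as a Quillen pair, where $P_i$ sends a cofibrant object $(A \hookrightarrow X)$ of $A/\Ccal$ to the cofibrant object $(B \hookrightarrow B \coprod_A X)$ of $B/\Ccal$ (the fact that the latter is a cofibration comes from the pushout stability axiom for classes of cofibrations). Proposition \ref{Prop_QuillenFunctorHomotopy} then tells us that the left Quillen functor $P_i$ preserves equivalences between cofibrant objects of $A/\Ccal$. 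Combining this with the preceding corollary---which identifies equivalences in $A/\Ccal$ and in $B/\Ccal$ between fibrant or cofibrant objects with equivalences in $\Ccal$---we obtain the desired statement.

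For the second statement we argue dually: given a fibration $f \colon X \twoheadrightarrow Y$ between fibrant objects, Example \ref{pullbackpushoutQuillenpair} provides the Quillen pair $U_f \colon \Ccal/X \rightleftarrows \Ccal/Y \colon P_f$ in which the right Quillen functor $P_f$ is the pullback along $f$, defined on fibrations $Z \twoheadrightarrow Y$ with value the fibration $Z \times_Y X \twoheadrightarrow X$. Applying Proposition \ref{Prop_QuillenFunctorHomotopy} to $P_f$ together with the preceding corollary yields the claim.

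There is no substantive obstacle: both parts reduce immediately to the Quillen pair formalism already developed. The only subtle point is the correct interpretation of ``equivalences between cofibrant (resp.\ fibrant) objects''---this must be read in the appropriate slice, meaning the structure map from $A$ (resp.\ to $Y$) is itself a cofibration (resp.\ fibration), since that is precisely the range of objects on which the relevant Quillen pair is defined.
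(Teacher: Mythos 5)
Your proof is correct and follows the same strategy as the paper's proof, which simply states that this is Proposition~\ref{Prop_QuillenFunctorHomotopy} applied to Example~\ref{pullbackpushoutQuillenpair}. You have filled in a step that the paper's one-line proof leaves implicit: Proposition~\ref{Prop_QuillenFunctorHomotopy} only gives that $P_i$ sends equivalences of $A/\Ccal$ to equivalences of $B/\Ccal$, so one really does need the slice corollary (the one following Proposition~\ref{Prop_triv=equiv}) to recognize these as equivalences in $\Ccal$, and you correctly invoke it. Your concluding remark about reading ``cofibrant objects'' as objects of the slice whose structure map is a cofibration is also a genuine clarification of what the corollary means, since that is the domain of definition of $P_i$. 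No gaps.
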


\begin{proof} This is Proposition~\ref{Prop_QuillenFunctorHomotopy} applied to \cref{pullbackpushoutQuillenpair}.\end{proof}

\begin{prop}\label{prop:QuillenEquiv}
For a Quillen pair $F\colon \Ccal \rightleftarrows \Dcal \colon  G$ between two weak model categories the following conditions are equivalent:

\begin{enumerate}[label=(\roman*)]

\item $Ho(F) \colon  Ho(\Ccal^{\cofe}) \rightarrow Ho(\Dcal^{\cofe})$ is an equivalence of categories.

\item $Ho(G) \colon  Ho(\Dcal^{\fibe}) \rightarrow Ho(\Ccal^{\fibe})$ is an equivalence of categories.

\item For any $X \in \Ccal^{\cofe}$ and $Y \in \Dcal^{\fibe}$ a map $f\colon X \rightarrow G(Y)$ is an equivalence if and only if its adjoint transpose $F(X) \rightarrow Y$ is an equivalence.

\item For any $X \in \Ccal^{\cofe}$ the map $X \rightarrow G(F(X)^{\fibe})$ where $F(X) \overset{\sim}{\hookrightarrow} F(X)^{\fibe}$ is a fibrant replacement of $F(X)$ is an equivalence, and the dual condition holds for any $Y \in \Dcal^{\fibe}$.

\item For any $X \in \Ccal^{\cofe}$ the map $X \rightarrow G(F(X)^{\fibe})$ as in (iv) is an equivalence, and $G$ detects equivalences between (bi)fibrant  objects, i.e. if $f$ is a morphism in $\Dcal^{\fibe}$ (or even just $\Dcal^{\bfe}$) such that $G(f)$ is an equivalence then $f$ is also an equivalence.

\end{enumerate}

\end{prop}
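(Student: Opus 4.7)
The plan is to organize the proof around Proposition~\ref{Prop_QuillenFunctorHomotopy}, which provides the derived adjunction $Ho(F) \dashv Ho(G)$ between the equivalent versions of the homotopy category furnished by Theorem~\ref{Th_LocCatEq}. The equivalence (i) $\Leftrightarrow$ (ii) is then a formal fact about adjoint functors: a left adjoint is an equivalence of categories if and only if its right adjoint is, in which case they form a quasi-inverse pair. For (i) $\Rightarrow$ (iii), assuming $Ho(F)$ and $Ho(G)$ are mutually quasi-inverse, the counit $\epsilon_Y$ is an equivalence for every bifibrant $Y$, and a map $f \colon X \to G(Y)$ and its adjoint transpose $g \colon F(X) \to Y$ are related by $g = \epsilon_Y \circ Ho(F)(f)$ in the homotopy category, so $g$ is an equivalence if and only if $Ho(F)(f)$ is, if and only if $f$ is (since equivalences of categories reflect isomorphisms).

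For (iii) $\Rightarrow$ (iv), I will apply (iii) to the acyclic cofibration $F(X) \overset{\sim}{\hookrightarrow} F(X)^{\fibe}$, which is an equivalence by Theorem~\ref{Th_LocCatEq}; its adjoint transpose is precisely the map $X \to G(F(X)^{\fibe})$ appearing in (iv), hence an equivalence. The dual half is symmetric. For (iv) $\Rightarrow$ (v), only the detection clause requires work. Given $f \colon Y \to Y'$ in $\Dcal^{\bfe}$ with $G(f)$ an equivalence, I choose cofibrant replacements $p \colon G(Y)^{\cofe} \overset{\sim}{\twoheadrightarrow} G(Y)$ and $p' \colon G(Y')^{\cofe} \overset{\sim}{\twoheadrightarrow} G(Y')$, and lift $G(f) \circ p$ along $p'$ (using the lifting property of acyclic fibrations with fibrant target against $0 \hookrightarrow G(Y)^{\cofe}$) to obtain $\tilde f \colon G(Y)^{\cofe} \to G(Y')^{\cofe}$ with $p' \circ \tilde f = G(f) \circ p$. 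Since $p$, $p'$ and $G(f)$ are equivalences, so is $\tilde f$ by 2-out-of-3, and Proposition~\ref{Prop_QuillenFunctorHomotopy} then shows that $F(\tilde f)$ is an equivalence. The adjoint transposes of $p, p'$ are the maps $F(G(Y)^{\cofe}) \to Y$ and $F(G(Y')^{\cofe}) \to Y'$, which are equivalences by the dual half of (iv). By uniqueness of adjoint transposes, the square with horizontals $F(\tilde f), f$ and these verticals commutes on the nose, and a final 2-out-of-3 forces $f$ to be an equivalence.

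Finally, for (v) $\Rightarrow$ (i), the unit half of (iv) contained in (v) identifies the derived unit $\eta_X$ with an isomorphism in the homotopy category for every bifibrant $X$, making $Ho(F)$ fully faithful. For essential surjectivity, the triangle identity $Ho(G)(\epsilon_Y) \circ \eta_{Ho(G)(Y)} = Id$ together with $\eta_{Ho(G)(Y)}$ being an isomorphism forces $Ho(G)(\epsilon_Y)$ to be an isomorphism, and the conservativity clause in (v) then upgrades this to $\epsilon_Y$ itself being an isomorphism, concluding the proof. The main obstacle throughout will be the bookkeeping required to match the maps in (iii)--(v) --- phrased at the level of $\Ccal^{\cofe}$ and $\Dcal^{\fibe}$ --- with the formal unit and counit of the derived adjunction on $Ho(\Ccal^{\bfe}) \simeq Ho(\Dcal^{\bfe})$; since $F$ is only defined on cofibrant objects and $G$ only on fibrant ones, these identifications require systematically inserting (co)fibrant replacements and checking that the result is well-defined up to the equivalences of Theorem~\ref{Th_LocCatEq}.
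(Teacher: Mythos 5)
Your proof is correct and follows the same overall strategy as the paper: set up the derived adjunction via Proposition~\ref{Prop_QuillenFunctorHomotopy}, note that (i) $\Leftrightarrow$ (ii) is formal, identify the maps in (iii) and (iv) as homotopy-level adjoint transposes and (co)units, and close the cycle using the classical fact that an adjunction with invertible unit and conservative right adjoint is an equivalence. The one step where you genuinely diverge is (iv) $\Rightarrow$ (v): you establish the detection clause directly, by lifting $G(f)$ through cofibrant replacements to obtain $\tilde f$, observing that the square with verticals the transposes of $p,p'$ commutes by uniqueness of adjoint transposes, and finishing with $2$-out-of-$3$. The paper instead proves (iv) $\Rightarrow$ (ii) (the homotopy adjunction has invertible unit and counit) and then reads off the detection clause of (v) as a consequence of $Ho(G)$ being an equivalence. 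Both routes are valid; yours stays inside the model-category lifting calculus and is a bit more self-contained, while the paper's is shorter because it reuses the already-established equivalence of homotopy categories. One small caveat in your (i) $\Rightarrow$ (iii) step: writing $g = \epsilon_Y \circ Ho(F)(f)$ tacitly requires first replacing $G(Y)$ by a cofibrant object so that $Ho(F)$ can be applied; this is exactly the bookkeeping you acknowledge in your final paragraph, and it is supplied by the derived adjunction isomorphism of Proposition~\ref{Prop_QuillenFunctorHomotopy}, so the step is sound once that is spelled out.
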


A Quillen pair satisfying these conditions is called a (weak) Quillen equivalence. Also conditions $(iv)$ and $(v)$ do not depend on the choice of the fibrant replacement of $F(X)$ or on the cofibrant replacement of $G(Y)$ because of Proposition~\ref{Prop_QuillenFunctorHomotopy}.

\begin{proof}
The equivalence of $(i)$ and $(ii)$ is immediate form the adjunction property satisfied by $Ho(F)$ and $Ho(G)$. They imply $(iii)$ because if $f \colon  X \rightarrow G(Y)$ is an equivalence if and only if it is invertible in $Ho(\Ccal^{\bothe})$ and its adjoint transpose $f^*\colon  F(X) \rightarrow Y$ has been shown in the proof of Proposition~\ref{Prop_QuillenFunctorHomotopy} to represent the adjoint transpose of $f$ by the adjunction between $Ho(F)$ and $Ho(G)$, hence if those are equivalences of categories, $f$ will be an equivalence if and only if $f^*$ is an equivalence. Condition $(iii)$ immediately implies $(iv)$, and $(iv)$ implies $(i)$ and $(ii)$ as the maps described represent in the homotopy category the unit and co-unit of the adjunction between $Ho(F)$ and $Ho(G)$ hence asking them to be an equivalence makes $Ho(F)$ and $Ho(G)$ inverse of each other. So conditions $(i)$ to $(iv)$ are all equivalent.

Condition $(v)$ implies that the unit of the adjunction $Ho(F)$ and $Ho(G)$ is an isomorphism and that $Ho(G)$ is conservative, which by a classical category theoretic argument implies that $Ho(F)$ and $Ho(G)$ are equivalences, i.e. $(i)$ and $(ii)$. Conversely, condition $(iv)$ contains the first half of condition $(v)$ and the second half of condition $(v)$ follows from $(ii)$.\end{proof}

The last condition of \cref{prop:QuillenEquiv} can be further simplified:

\begin{prop}\label{prop:Gconservatif} For a right Quillen functor $G:\Dcal^\fibe \to \Ccal^\fibe$, the following conditions are equivalent:

\begin{enumerate}[label=(\roman*), noitemsep,topsep=2pt]

\item\label{prop:Gconservatif:conservatif} $G$ detects equivalences between (bi)fibrant objects, i.e. $Ho(G)$ is conservative.

\item\label{prop:Gconservatif:detect_acyclic} For any fibration between (bi)fibrant objects $p$, if $G(p)$ is acyclic then $p$ is acyclic.

\end{enumerate}

\end{prop}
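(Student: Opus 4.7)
The easy direction is $(i) \Rightarrow (ii)$: if $p$ is a fibration between fibrant objects with $G(p)$ acyclic, then by \cref{Prop_triv=equiv} $G(p)$ is an equivalence, so hypothesis (i) yields that $p$ is an equivalence; since $p$ is itself a fibration between fibrant objects, \cref{Prop_triv=equiv} applied once more shows that $p$ is acyclic.

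For $(ii) \Rightarrow (i)$, let $f \colon X \to Y$ be a map in $\Dcal^{\bfe}$ with $G(f)$ an equivalence; the aim is to show $f$ is an equivalence. The strategy is a \emph{double factorization} that converts the question into one about an acyclic fibration, which is precisely the shape that hypothesis (ii) can detect. First I factor $f = p \circ i$ with $i \colon X \hookrightarrow Z$ a cofibration and $p \colon Z \overset{\sim}{\twoheadrightarrow} Y$ an acyclic fibration; the intermediate object $Z$ is automatically bifibrant (cofibrant as target of a cofibration with cofibrant domain, fibrant as source of a fibration with fibrant target). Since $p$ is an equivalence, so is $G(p)$ by \cref{Prop_QuillenFunctorHomotopy}, and 2-out-of-3 forces $G(i)$ to be an equivalence.

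Hypothesis (ii) does not apply directly to $i$, because $i$ is a cofibration and not a fibration; so I further factor $i = q \circ j$ as an acyclic cofibration $j \colon X \overset{\sim}{\hookrightarrow} W$ followed by a fibration $q \colon W \twoheadrightarrow Z$, with $W$ again bifibrant. Now $j$ is an equivalence, hence $G(j)$ is an equivalence, and 2-out-of-3 shows that $G(q)$ is an equivalence as well. Since $G(q)$ is a fibration between fibrant objects of $\Ccal$ and an equivalence, \cref{Prop_triv=equiv} shows that $G(q)$ is acyclic. Hypothesis (ii) now applies to the fibration $q$ between bifibrant objects and gives that $q$ itself is acyclic. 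Therefore $i = q \circ j$ is a composite of equivalences, and $f = p \circ i$ is also an equivalence, finishing the proof. The only real obstacle is the mismatch between the cofibration/fibration sides, which is exactly what the second factorization resolves.
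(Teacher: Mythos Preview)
Your proof is correct, but the second direction is needlessly circuitous. The paper factors $f$ \emph{once}, as an acyclic cofibration $j \colon X \overset{\sim}{\hookrightarrow} Z$ followed by a fibration $p \colon Z \twoheadrightarrow Y$; then $G(j)$ is an equivalence by \cref{Prop_QuillenFunctorHomotopy}, so $G(p)$ is an equivalence by $2$-out-of-$3$, hence acyclic by \cref{Prop_triv=equiv}, and hypothesis (ii) gives that $p$ is acyclic, whence $f = p \circ j$ is an equivalence. Your first factorization (cofibration followed by acyclic fibration) produces a cofibration $i$ to which (ii) cannot be applied, forcing you to refactor $i$ as an acyclic cofibration followed by a fibration --- but that second factorization is exactly the one the paper applies directly to $f$. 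The detour through $i$ adds nothing: once you have the (acyclic cofibration, fibration) factorization of $i$, composing its fibration part with your original acyclic fibration $p$ recovers the single factorization the paper uses. So both arguments are correct and rest on the same key idea; yours just wraps it in an extra layer.
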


\begin{proof}The implication $\ref{prop:Gconservatif:conservatif} \Rightarrow \ref{prop:Gconservatif:detect_acyclic}$ follows immediately from \cref{Prop_triv=equiv}.$\ref{Prop_triv=equiv:acycFib=equiv}$. We assume $\ref{prop:Gconservatif:detect_acyclic}$. Let $f: X \to Y$ an arrow between bifibrant objects such that $G(f)$ is an equivalence. Consider $X \overset{\sim}{\hookrightarrow} Z \overset{p}{\twoheadrightarrow} Y$ an (acyclic cofibration,fibration) factorization of $f$,  $G$ sends the acyclic cofibrations to an equivalence because of \cref{Prop_QuillenFunctorHomotopy}, hence $G(p)$ is an equivalence by $2$-out-of-$3$, hence it is acyclic, and hence $p$ is acyclic by $\ref{prop:Gconservatif:detect_acyclic}$, which proves that $f$ is an equivalence.\end{proof}

\makeatletter 
\renewcommand{\thetheorem}{\thesection.\arabic{theorem}} 
\@addtoreset{theorem}{section}
\makeatother

\section{Cisinski-Olschok type theorems}

\label{sec_monoidalcase}

The goal of this section is to provide simpler criterions for constructing a weak model structure out of two weak factorization systems in the special case where either:

\begin{enumerate}[label=(\roman*), noitemsep,topsep=2pt]

\item the underlying category has a well behaved and left adjoint weak cylinder functor (Theorem~\ref{Th::WMS_from_cylinder_path}),

\item the underlying category is monoidal closed and usual compatibility conditions between the monoidal structure and the factorization system are satisfied (Theorem~\ref{Th_tensorWMS}),

\item the underlying categories is enriched in a category that already has a weak model structure and this enrichment is compatible with the factorization system (Theorem~\ref{Th_tensorWMS} as well).

\end{enumerate}

The special case (i) is similar in form to M.~Olschok's generalization of D-C.~Cisinski's theory from \cite{cisinski2002theories}, see more precisely theorem $3.16$ from \cite{olschok2011left}.  In \cite{henry2019CombWMS} we will show how we can recover and generalize Olschok's theorem from Theorem~\ref{Th::WMS_from_cylinder_path}. The case (iii) can be seen as a generalization of the main result of \cite{lee2015building}, which essentially corresponds to a weak form of our Theorem~\ref{Th_tensorWMS} in the special case of a simplicially enriched category.

This section heavily relies on properties of the corner-product, recalled in Definition~\ref{pushout-productDef}, and what is often called the Joyal-Tierney calculus, introduced in the appendix of \cite{joyal2006quasi}, which we review in Appendix~\ref{Subsection_Joyal_tierney_calculus}.

\begin{assumption}In all this section, as well as in all the example treated in the rest of the paper, we will consider a category $\Ccal$ with a set of generating cofibrations $I$ and a set of ``generating anodyne maps''or ``Pseudo-generating\footnote{This terminology comes from section 9.9 of \cite{simpson2011homotopy}. In a model category, it refers to the fact that this set of acyclic cofibration is only sufficient to characterize fibrations between fibrant objects, but not necessarily all fibrations.} acyclic cofibrations'' $J$. Cofibrations will be the $I$-cofibrations and fibrations will be the $J$-fibrations in the sense of \cref{def_IfibCof}. It will always be the case that maps in $J$ are $I$-cofibrations.
\end{assumption}

\begin{theorem}\label{Th_tensorWMS}
Let $\Acal$ and $\Ccal$ be two complete and cocomplete categories such that:

\begin{enumerate}[label=(\roman*)]

\item There is a functor $\odot\colon  \Acal \times \Ccal \rightarrow \Ccal$ divisible on both sides as in \ref{DivisibleOnbothSide}.

\item\label{Th_tensorWMS:condWFS}$\Ccal$ is endowed with two classes of maps $I$ and $J$  such that ($I$-cof,$I$-fib) and ($J$-cof,$J$-fib) (as in Definition~\ref{def_IfibCof}) forms weak factorization systems\footnote{See the discussion of the small object argument in Appendix~\ref{section_the_small_object_arguments} for methods to show this type of conditions.}.

\item $\Acal$ is endowed with two classes of maps $I_{\Acal}$ and $J_{\Acal}$.

\item $J \subset I \textsc{-cof}$ and $J_{\Acal} \subset I_{\Acal}\textsc{-cof}$.

\item We have:

\[ I_{\Acal} \corner{\odot} I \subset I\textsc{-cof} \]

\noindent where $\corner{\odot}$ denotes the corner-product, or pushout-product, as defined in \ref{pushout-productDef}.

\item Any map in $I_{\Acal} \corner{\odot} J$ or in $J_{\Acal} \corner{\odot} I$ has the left lifting property with respect to all $J$-fibrations between $J$-fibrant objects.

\item There is an $I_{\Acal}$-cofibrant object $\Ib$ in $\Acal$ such that $\Ib \odot \_$ is isomorphic to the identity endofunctor of $\Ccal$.

\item\label{Th_tensorWMS:Cond_cylinder} There is in $\Acal$ a diagram of the form:

\[
\begin{tikzcd}[ampersand replacement=\&]
\Ib \coprod \Ib \arrow{d}{\triangledown} \arrow[hook]{r}{i} \& C \arrow{d} \\
\Ib \arrow[hook]{r}{\sim} \& D
\end{tikzcd}
\] 

\noindent such that $i$ is an $I_{\Acal}$-cofibration, and both the map $\Ib \hookrightarrow D$ and the first map $\Ib \rightarrow C$ are acyclic cofibrations, in the sense that they are $I_{\Acal}$-cofibrations with the left lifting property with respect to all $J_{\Acal}$-fibrations between $J_{\Acal}$-fibrant objects.
\end{enumerate}

Then there is a weak model structure on $\Ccal$ such that the fibrations between fibrant objects are the $J$-fibrations and the cofibrations between cofibrant objects are the $I$-cofibrations.
\end{theorem}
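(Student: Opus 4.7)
The plan is to verify the three axioms of Definition~\ref{Def::weak_model_categories} using the tensor $\odot$, the interval $(C,D)$ supplied by condition (viii), and the Joyal--Tierney corner calculus of Appendix~\ref{Subsection_Joyal_tierney_calculus}.

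The factorization axiom is immediate from the two weak factorization systems. For a map from a cofibrant to a fibrant object, the $(I\textsc{-cof}, I\textsc{-fib})$-factorization has its $I$-fibration part possessing the RLP against all $I$-cofibrations, in particular against cofibrations between cofibrant objects, making it an acyclic fibration. Dually, the $(J\textsc{-cof}, J\textsc{-fib})$-factorization produces a first factor which is an $I$-cofibration (by $J \subset I\textsc{-cof}$) with the LLP against $J$-fibrations between fibrants (by definition of $J$-cof), hence an acyclic cofibration, followed by a fibration.

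For the path-object side, let $[-,-]_{\Ccal}$ denote the right adjoint of $\odot$ in its second variable. For a bifibrant $X$ I take $PX := [C,X]_{\Ccal}$. The map $[i,X]: PX \to X \times X$ is a $J$-fibration between fibrant objects: by the corner-hom form of the Joyal--Tierney calculus, checking this against $u \in J$ reduces to the lifting $u \corner{\odot} i \pitchfork (X \to 1)$, which holds by condition (vi) and fibrancy of $X$. A section $X \to PX$ is adjoint to the composite $C \odot X \to D \odot X \to X$, where $D \odot X \to X$ is a retraction of the acyclic cofibration $X \hookrightarrow D \odot X$ (the acyclicity of this map being a symmetric corner-hom consequence of conditions (vi) and (viii)), and the retraction itself exists by fibrancy of $X$. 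The first projection $PX \to X$ is $[\iota_0, X]$, an acyclic fibration because $\iota_0$ is acyclic in $\Acal$, so that the corresponding corner product against any cofibration between cofibrant objects is an acyclic cofibration. This produces a strong absolute path object for every bifibrant $X$.

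The cylinder axiom is the main obstacle. By Proposition~\ref{Prop_alternateCond1} it is equivalent to show that for any bifibrant $A$ and any factorization $A \hookrightarrow^i B \twoheadrightarrow^p A$ of $\mathrm{id}_A$ into a cofibration followed by an acyclic fibration, $i$ is acyclic (note $B$ is then automatically bifibrant). Given a $J$-fibration $q: X \to Y$ between fibrants together with a square $u: A \to X$, $v: B \to Y$, I proceed in two steps. First, using the RLP of the acyclic fibration $p$ against the cofibration $B \coprod B \hookrightarrow IB := C \odot B$ (which lies between cofibrants by condition (v)) I lift $(i \circ p, \mathrm{id}_B): B \coprod B \to B$ to a homotopy $H: IB \to B$ from $ip$ to $\mathrm{id}_B$ satisfying $p \circ H = p \circ \sigma$, where $\sigma: IB \to B$ is the retraction obtained from fibrancy of $B$ as above. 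Second, using the LLP of the acyclic cofibration $B \hookrightarrow IB$ (first endpoint, acyclic by the same corner argument used for $X \to D \odot X$) against $q$, I lift $v \circ H$ to $\widetilde{H}: IB \to X$ starting from $u \circ p$, and set $\widetilde{v} := \widetilde{H} \circ \iota_1$. Then $q \widetilde{v} = v$ automatically, and the delicate point is arranging $H$ \emph{relative to $A$} so that $\widetilde{v} \circ i = u$; this is achieved by choosing $\sigma$ and $H$ compatibly with the acyclic cofibration $(\Ib \to C) \corner{\odot} (A \hookrightarrow B)$ supplied by condition (vi), which encodes exactly the relative cylinder structure of $i$. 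Combining this cylinder axiom with the strong absolute path objects above, Proposition~\ref{prop:Non-relative_on_one_side_isEnough} concludes. The hard part is the last step: verifying that $H$ can be made relative to $A$ through the correct choice of corner-product lift rather than as an ad hoc fibrewise homotopy.
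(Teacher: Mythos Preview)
Your factorization axiom and path-object arguments are essentially the paper's, modulo one caveat below. The real problem is your cylinder argument.

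You attempt condition~\ref{Prop_alternateCond1:(1)factor} of Proposition~\ref{Prop_alternateCond1} by a direct lifting argument, but the two lifts you take are against the wrong maps, so the ``relative to $A$'' conclusion you need does not follow. In your Step~1 you lift $(ip,\mathrm{id}_B)$ against the cofibration $B\coprod B \hookrightarrow IB$; this gives you no control whatsoever over $H|_{IA}$. (Also, the bottom edge $p\sigma$ of your square is not obviously well-defined: you need $p\sigma\iota_1 = p$, which requires $\sigma$ to retract \emph{both} endpoints, and that in turn forces you to go through $D$ as in the paper rather than just use fibrancy of $B$.) In your Step~2 you lift against the acyclic cofibration $B \overset{\iota_0}{\hookrightarrow} IB$; again this gives you no control over $\widetilde{H}|_{IA}$, so there is no reason for $\widetilde v\, i = \widetilde{H}\,\iota_1\, i$ to equal $u$. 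To make the argument work you must instead lift against
\[
(B\coprod B)\coprod_{A\coprod A} IA \hookrightarrow IB
\quad\text{and}\quad
B\coprod_A IA \hookrightarrow IB,
\]
which are precisely $(\Ib\coprod\Ib \hookrightarrow C)\corner{\odot} i$ and $(\Ib \overset{\iota_0}{\hookrightarrow} C)\corner{\odot} i$. You gesture at the second of these in your last sentence, but once both corrections are made you have effectively built the relative cylinder for $i$, and it is cleaner to do so directly: the paper pushes the first of these maps forward along a suitable map to $B\coprod_A B$ (using a retraction $D\odot A \to A$ coming from fibrancy of $A$) to obtain $I_A B$, and then verifies the strong cylinder properties. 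This is condition~\ref{Prop_alternateCond1:(3)restr_cyl} rather than \ref{Prop_alternateCond1:(1)factor}, and avoids your detour entirely.

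A smaller point: you repeatedly invoke statements like ``$[\iota_0,X]$ is an acyclic fibration because $\iota_0$ is acyclic in $\Acal$'' and ``$X\hookrightarrow D\odot X$ is an acyclic cofibration''. These are correct but not immediate from condition~(vi), because ``acyclic cofibration'' here means the left lifting property against $J$-fibrations \emph{between $J$-fibrant objects}, not $J$\textsc{-cof}; so Lemma~\ref{Lem_mainJTcalculus} does not apply directly. The paper isolates this as a sequence of Claims~1--5: the key intermediate step is that for $p$ a fibration between fibrants and $c$ a cofibration between cofibrants in $\Ccal$, the corner-hom $\cornerl{p/c}$ is a $J_{\Acal}$-fibration between $J_{\Acal}$-fibrant objects (so that an acyclic cofibration in $\Acal$ does lift against it). You should make this explicit rather than absorb it into ``corner-hom form of the Joyal--Tierney calculus''.
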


\begin{proof}
 $I$-cofibrations and $I_A$-cofibrations will be called cofibrations, $J$-fibrations and $J_A$-fibrations will be called fibrations, and $J$-cofibrations and $J_A$-cofibrations will be called anodyne maps. As in \cref{Def_trivcofib}, $I_A$ or $I$-cofibrations with the left lifting property against all $J_A$ or $J$-fibrations between $J_A$ or $J$-fibrant objects will be called acyclic cofibrations, and similarly for acyclic fibrations.

Any $I$-fibration is automatically a fibration, because $J \subset I \textsc{-cof}$, and in fact an $I$-fibration is an acyclic fibration as it has the right lifting property against all maps in $I$, hence against all cofibrations as well. Similarly, $J$-cofibrations (i.e. anodyne morphisms) are acyclic cofibrations.

The existence of weak factorization systems $(I\text{-cof}, I\text{-fib})$ and $(J\text{-cof}, I\text{-fib})$ implies that $\Ccal$ satisfies the factorization axioms. We use the symbol $a \pitchfork p$ to denote that $a$ has the left lifting property against $p$ as in Appendix~\ref{Subsection_Joyal_tierney_calculus}.

\bigskip

\textbf{Claim~1:}If $a\colon A \hookrightarrow B$ and $i\colon X \hookrightarrow Y $ are cofibrations between cofibrant objects in $\Acal$ and $\Ccal$ respectively, then $a \corner{\odot} i$ is also a cofibration between cofibrant objects.  Indeed, the assumption $I_{\Acal} \corner{\odot} I \subset I\textsc{-cof}$, together with Lemma~\ref{Lem_mainJTcalculus}, shows that such a map is always a cofibration and hence Lemma~\ref{Lem_JTcalculusCofDom} shows that its domain is cofibrant.

\bigskip

\textbf{Claim~2:} If $i\colon A \hookrightarrow B$ is a cofibration between cofibrant objects in $\Acal$ and $f\colon X \rightarrow Y$ is a fibration between fibrant objects in $\Ccal$ then $\cornerl{ i \backslash f}$  (as defined in \ref{pushout-productDef}) is a fibration between fibrant objects.

Indeed, the assumptions of the theorem show that $I_{\Acal} \corner{\odot} J \pitchfork f $ for $f$ any fibration between fibrant objects. Hence for any such $f$ and $i$ any $I_{\Acal}$-cofibration we have $J \pitchfork \cornerl{i \backslash a}$, i.e. $ \cornerl{ i \backslash a }$  is a $J$-fibration. We apply Lemma~\ref{Lem_JTcalculusCofDom} to the bi-functor $ \_ \backslash \_  \colon  \Acal \times \Ccal^{op} \rightarrow \Ccal^{op}$ to conclude that $\cornerl{ i \backslash a }$ is always a fibration between fibrant objects.

\bigskip

\textbf{Claim~3:} If $i\colon A \hookrightarrow B$ is a cofibration between cofibrant objects in $\Ccal$ and $f\colon X \rightarrow Y$ is a fibration between fibrant objects in $\Ccal$ then $\cornerl{ f/i }$ is a fibration between fibrant objects in $\Acal$.

Indeed, the assumptions of the theorem show that $J_{\Acal} \corner{\odot} I \pitchfork f$ for $f$ any fibration between fibrant objects. This shows that for $i$ and $f$ as in the claim $J_{\Acal} \pitchfork \cornerl{ f/i }$, i.e. that $\cornerl{f/i}$ if a $J_{\Acal}$-fibration. Then applying Lemma~\ref{Lem_JTcalculusCofDom} to the bi-functor $ \_ / \_  \colon   \Ccal^{op} \times \Ccal \rightarrow \Acal^{op}$ to get that $\cornerl{ i \backslash a }$ is always a $J$-fibration between $J$-fibrant objects.

\bigskip

From these observations we deduce:

\bigskip

\textbf{Claim~4:} If $a\colon A\hookrightarrow B$ and $i\colon X \hookrightarrow Y$ are two cofibrations between cofibrant objects in $\Acal$ and $\Ccal$ respectively and one of them is an acyclic cofibration then $a \corner{\odot} i$ is an acyclic cofibration.

\bigskip

It is a cofibration by Claim~1, so we need to prove that $a \corner{\odot} i \pitchfork p$ for $p$ a fibration between fibrant objects. This is equivalent to $a \pitchfork \cornerl{ p /i }$ and to $i \pitchfork \cornerl{ a \backslash p }$.
If we assume for example that $a$ is an acyclic cofibration, then Claim~$3$ shows that $\cornerl{ p /i }$ is a fibration between fibrant objects and hence $a \pitchfork \cornerl{ p /i }$ do hold. If instead $i$ is an acyclic cofibration, then Claim~$2$ shows that  $i \pitchfork \cornerl{ a \backslash p }$.

\bigskip

\textbf{Claim~5:} If $j$ is an acyclic cofibration between cofibrant objects in $\Acal$ and $p$ is a fibration between fibrant objects in $\Ccal$ then $\cornerl{ j \backslash p }$ is an acyclic fibration in $\Ccal$.

\bigskip

Indeed, as $j$ is in particular a cofibration, this map is a fibration between fibrant objects by Claim~$2$. We need to prove that it has the right lifting property with respect to all cofibrations between cofibrant objects. Let $i$ be such a cofibration in $\Ccal$, we have $j \corner{\odot} i \pitchfork p$. Claim~$4$ shows that $i \pitchfork \cornerl{ j \backslash p }$ which concludes the proof.
All the other similar expected claim will of course hold as well and are obtained with the same methods, but those we proved above are the only ones needed in the rest of the proof.

\bigskip

We can now construct relative weak cylinder for cofibrations and relative weak path objects for fibrations between bifibrant objects, which is sufficient to conclude because of Proposition~\ref{Prop_alternateCond1}.

Let $i\colon A \hookrightarrow B$ be a cofibration between bifibrant objects of $\Ccal$ then the map $w \coloneqq (\Ib \coprod \Ib \hookrightarrow \Ical ) \corner{\odot} (A \hookrightarrow B)$ is a cofibration:

\[w\colon  (B \coprod B) \coprod_{A \coprod A} ( \Ical \odot A ) \hookrightarrow \Ical \odot B \]

Moreover, the map $A \hookrightarrow D \odot A$ is an acyclic cofibration by Claim~$4$ because it is $(\Ib \hookrightarrow D) \corner{\odot} (0 \hookrightarrow A)$ and $(\Ib \hookrightarrow D)$ is an acyclic cofibration. As $A$ is fibrant, it admits a retraction $r\colon  D \odot A \rightarrow A$.

In particular, we define a map from the domain of $w$ to $B \coprod_A B$ by sending $B \coprod B$ to $B \coprod_A B$, and $\Ical \odot A$ is sent to $A$ by the map $r$ above (pre-composed with $I \odot A \rightarrow D \odot A$ and then to $B \coprod_A B$ by the natural map $A \rightarrow B \coprod_A B$. We call the pushout of $w$ along this map $I_A B$, which comes with a cofibration:

\[ B \coprod_A B \hookrightarrow I_A B \]

\noindent and we will show that it is a relative cylinder object with the expected properties.

The first map $B \hookrightarrow I_A B$ can be checked to be the pushout of the map $B \coprod_A \Ical \odot A \rightarrow \Ical \odot B$ (induced by the first map $B \hookrightarrow \Ical \odot B$ and the natural map $\Ical \odot A \rightarrow \Ical \odot B$) along the map $B \coprod_A \Ical \odot B$ (induced by the identity on $B$ and the obvious map $\Ical \odot A  \rightarrow \rightarrow B$). But the map $B \coprod_A \Ical \odot A \rightarrow \Ical \odot B$ mentioned above is exactly $(\Ib \hookrightarrow \Ical) \corner{\odot} (A \hookrightarrow B)$ hence it is an acyclic cofibration by Claim~$4$, and this show that the first map $B \hookrightarrow I_A B$ is indeed an acyclic cofibration.

Finally the map $(\Ib \hookrightarrow D ) \corner{\odot} (A \hookrightarrow B)$ is also an acyclic cofibration because of Claim~$4$. This map is:

\[ (D \odot A) \coprod_{A} B \hookrightarrow D \odot B. \]

If we consider the map $(D \odot A) \coprod_{A} B \rightarrow B$ induced by the identity of $B$ and the map $D \odot A \overset{r}{\rightarrow} A \rightarrow B$, then as $B$ is fibrant we can extend it to a map $r' \colon  D \odot B \rightarrow B$, which induces a map, also denoted $r'\colon  \Ical \odot B \rightarrow B$, by construction, this map is $r$ when restricted to $\Ical \odot A \hookrightarrow \Ical \odot B$ and is the co-diagonal map when restricted to $B \coprod B \hookrightarrow \Ical \odot B$, those properties exactly shows that $r'$ defines by the universal property of the pushout defining $I_A B$ a map $I_A B \rightarrow B$ which factor the codiagonal:

\[ B \coprod_A B \hookrightarrow I_A B \rightarrow B. \]

At this point we could, almost by the exact dual argument, construct a relative path object. But by Proposition~\ref{prop:Non-relative_on_one_side_isEnough} it is enough to show that every bifibrant object have a strong path object, or (by Remark~\ref{rk:Exist_strong_Cyl=exists_weak_cyl}) that every fibrant object has a weak path object. This is directly produced by applying $\left( \_ \backslash X \right)$ to the diagram in (\ref{Th_tensorWMS:Cond_cylinder}):

\[ 
\begin{tikzcd}
 D \backslash X \ar[d] \ar[r,two heads,"q","\sim"{swap}] & X \ar[d,"\Delta"] \\
 \Ical \backslash X \ar[r,two heads,"p"] & X \times X 
\end{tikzcd}
\]

Where $p$ is a fibration by Claim~$2$, and $q$ as well as the composite $\Ical \backslash X \twoheadrightarrow X$ are acyclic fibrations by Claim~$5$, applied to the fibration $X \twoheadrightarrow 1$ and in each case the corresponding (acyclic) cofibration in $\Acal$.\end{proof}

\begin{remark}\label{rk:SpanTrick_for_tensorWMS} Using the ``self-composed span trick'' lemma of \ref{lem:Span_trick}, condition (\ref{Th_tensorWMS:Cond_cylinder}) of Theorem~\ref{Th_tensorWMS} can be replaced by the sometimes simpler condition:

\begin{itemize}
\item[\ref{Th_tensorWMS:Cond_cylinder}'] There is in $\Acal$ an $I_{\Acal}$-cofibration of the form 

\[ \Ib \coprod X \overset{i}{\hookrightarrow} C \]

\noindent such that $X$ is $I_A$-cofibrant and both the map $\Ib \hookrightarrow C$ and $X \hookrightarrow C$ are acyclic cofibrations.

\end{itemize}

Indeed, applying the dual of Lemma~\ref{lem:Span_trick} to this span will produce exactly the weak cylinder object that we need.

\end{remark}

\begin{construction}\label{constr:A_generated_by_a cylinder} We conclude with a special case of interest of our theorem. Take $\Acal$ to be the category of presheaves over the following category $\Dcal$:

\[ P \overset{(e_0,e_1)}\rightrightarrows C \]

We define:

\[ J_A = \{ e_0,e_1 \colon  P \rightrightarrows C  \}  \]
\[ I_A =  \{ \emptyset \rightarrow P,  P \coprod P \overset{e_0,e_1}{\rightarrow} C \} \]

Where we have identified the objects $P$ and $C$ with the corresponding representable functors. Following the third point of \cref{Ex::divisiblebifunctor}, a divisible bi-functor $\widehat{\Dcal} \odot \Ccal \rightarrow \Ccal$ is given by two left adjoint functors $P, C$ from $\Ccal$ to $\Ccal$ with natural transformations $e_0,e_1 \colon  P \rightrightarrows C$. We assume that $P$ is the identity endofunctor. In this special case, Theorem~\ref{Th_tensorWMS} (with the modification of Remark~\ref{rk:SpanTrick_for_tensorWMS}) reduces to:

\end{construction}

\begin{theorem}[Variant of Cisinski-Olschok's theorem]\label{Th::WMS_from_cylinder_path}
Let $\Ccal$ be category with two classes of maps $I$ and $J$ such that:

\begin{enumerate}

\item $I$ and $J$ generates weak factorization systems and $J \subset I\textsc{-cof}$ as in Theorem~\ref{Th_tensorWMS}.(\ref{Th_tensorWMS:condWFS}).

\item $\Ccal$ is endowed with a left adjoint endofunctor $X \mapsto C X$. As well as natural transformations:

\[ Id \overset{e_0,e_1}{\rightrightarrows} C \]

\item For any $i\colon A \rightarrow B \in I$ the map:

\[(B \coprod B) \coprod_{A \coprod A} C A  \rightarrow C B\]

\noindent is an $I$-cofibration.

\item For any $i\colon A \rightarrow B$ in $I$ the two maps:

\[B \coprod_A C A \rightrightarrows C B \]

\noindent have the left lifting property against all $J$-fibrations between $J$-fibrant objects.

\item For any $j \colon A \rightarrow B \in J$ the map

\[(B \coprod B) \coprod_{A \coprod A} C A  \rightarrow C B\]

\noindent has the lift lifting property against all $J$-fibrations between $J$-fibrant objects.

\end{enumerate}
\end{theorem}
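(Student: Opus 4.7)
The plan is to deduce this result as a direct specialization of \cref{Th_tensorWMS}, following the reduction sketched in \cref{constr:A_generated_by_a cylinder}. I take $\Acal = \widehat{\Dcal}$ for $\Dcal$ the two-object category $P \rightrightarrows C$ with arrows $e_0,e_1$, and I set $I_{\Acal} = \{\emptyset \to P,\ P \coprod P \to C\}$ and $J_{\Acal} = \{e_0,e_1\colon P \rightrightarrows C\}$. The divisible bi-functor $\odot\colon \Acal \times \Ccal \to \Ccal$ is the one determined (as in the third item of \cref{Ex::divisiblebifunctor}) by the pair of left adjoints $(\mathrm{Id},C)$ together with the natural transformations $e_0,e_1\colon \mathrm{Id} \rightrightarrows C$, so that with $\Ib = P$ one indeed has $\Ib \odot X = X$.

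Next I verify each hypothesis of \cref{Th_tensorWMS}. The factorization systems exist by assumption (1); the inclusion $J_{\Acal} \subset I_{\Acal}\text{-cof}$ holds because each $e_i\colon P \to C$ factors as $P \hookrightarrow P \coprod P \to C$, the first map being a pushout of $\emptyset \to P$ and the second being generating. The condition $I_{\Acal} \corner{\odot} I \subset I\text{-cof}$ splits into two pieces: for $\emptyset \to P$, the corner-product with $i\in I$ is just $i$ itself, and for $P \coprod P \to C$, it is exactly the map $(B \coprod B) \coprod_{A \coprod A} CA \to CB$ of hypothesis (3). The lifting conditions of \cref{Th_tensorWMS}(vi) decompose similarly: the $J_{\Acal} \corner{\odot} I$ case gives exactly the two maps $B \coprod_A CA \rightrightarrows CB$ of hypothesis (4), while $I_{\Acal} \corner{\odot} J$ splits into (a) the corner-product of $\emptyset \to P$ with $j\in J$, which is $j$ itself, already an $I$-cofibration with the required lifting property, and (b) the corner-product of $P \coprod P \to C$ with $j\in J$, which is exactly the map in hypothesis (5).

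The only slightly delicate point is condition (viii) of \cref{Th_tensorWMS}, the cylinder diagram in $\Acal$. For this I invoke \cref{rk:SpanTrick_for_tensorWMS}, which allows me to replace (viii) by the existence of an $I_{\Acal}$-cofibration $\Ib \coprod X \hookrightarrow C'$ with $X$ an $I_{\Acal}$-cofibrant object and both legs $\Ib \to C'$ and $X \to C' $ acyclic cofibrations. Taking $X = P$ and $C' = C$, the codiagonal $P \coprod P \to C$ is a generator in $I_{\Acal}$, and both its components $e_0,e_1$ lie in $J_{\Acal}$, hence in $I_{\Acal}\text{-cof}$ with the left lifting property against all $J_{\Acal}$-fibrations; this is precisely the notion of acyclic $I_{\Acal}$-cofibration needed.

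All hypotheses of \cref{Th_tensorWMS} are thus satisfied, and the conclusion produces a weak model structure on $\Ccal$ whose fibrations between fibrant objects are the $J$-fibrations and whose cofibrations between cofibrant objects are the $I$-cofibrations. The main obstacle is really just the bookkeeping of corner-product decompositions and checking that conditions (3), (4), (5) line up precisely with the two components of $I_{\Acal}$ (respectively $J_{\Acal}$) when paired with arrows of $I$ and $J$; no further homotopical content needs to be verified beyond what \cref{Th_tensorWMS} already provides.
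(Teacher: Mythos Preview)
Your proof is correct and follows exactly the approach the paper intends: the statement is presented in the paper as the specialization of \cref{Th_tensorWMS} obtained via \cref{constr:A_generated_by_a cylinder}, with condition (viii) handled through \cref{rk:SpanTrick_for_tensorWMS}, and you have carried out precisely that reduction with the right identification of each corner-product.
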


\begin{remark}
Notes that there are other options for the choice of small category $\Dcal$ and hence of $\Acal = \widehat{\Dcal}$ as in Construction~\ref{constr:A_generated_by_a cylinder} that gives variation of Theorem~\ref{Th::WMS_from_cylinder_path}. Take $\Dcal$ to have three objects $P,Q,C$ with maps $P \rightarrow C$ and $Q \rightarrow C$ with only $P$ acting as the identity, this corresponds to the most general form of the (dual of the) ``span trick Lemma''~\ref{lem:Span_trick}, where we only ask to have a cospan $X \leftarrow C \rightarrow P$ in order to construct a cylinder for $X$. Alternatively, we can also\footnote{The reader can consult $3.2.2$ in the \emph{first} arXiv version of the present paper for a full statement of this form of the theorem.} use a $\Dcal$ that has the shape of the diagram in Theorem~\ref{Th_tensorWMS}.(\ref{Th_tensorWMS:Cond_cylinder}), this gives a version where we have left adjoint functor $C$ and $D$, providing functorial weak cylinder, in this case we only need to ask the first leg inclusion $P \rightarrow C$ to be acyclic instead of both.
\end{remark}

\makeatletter 
\renewcommand{\thetheorem}{\thesubsection.\arabic{theorem}} 
\makeatother

\section{Simple examples}
\label{Section_Examples}

In this section we will mostly show how the framework above applies to some very simple examples. In terms of logical background, we now need slightly stronger assumptions in order to use the small object argument. As this is a subtle matter we refer to Appendix~\ref{section_the_small_object_arguments} for a precise discussion of what this means, though we do not have the final answer to that question yet. In any case, everything below would be valid either in the internal logic of an elementary topos with a natural number object or in (CZF).

\subsection{The model structure for setoids}
\label{ex_setoids}

Here we construct a model structure corresponding to the notion of setoids as presented in Appendix~\ref{section_setoids}.

We consider $\G$ the category of oriented graphs. By a graph $X$ we mean a set of vertices $V(X)$ and a set of arrow $R(X)$, with two maps $s,t:R(x) \rightrightarrows V(X)$. We endow $\G$ with a monoidal structure defined as follows:

If $X$ and $Y$ are two graphs, we define:

\[ V(X \otimes Y) \coloneqq  V(X) \times V(Y) \]
\[ R(X \otimes Y) \coloneqq  \left[ V(X) \times R(Y) \right] \coprod \left[ R(X) \times V(Y) \right] \]

Where the source and target map $s$ and $t$ are defined by:

\[ \begin{array}{c c c}
s(x,g) \coloneqq (x,s(g) & t(x,g) \coloneqq  (x , t(g)) & \text{If $x \in V(X)$ and $g \in R(Y)$} \\
s(f,y) \coloneqq  (s(f),y) & t(f,y)=(t(f),y) & \text{If $f \in R(X)$ and $y \in V(Y)$}
\end{array} \]

So for example the graph $(x \overset{f}{\rightarrow} y) \otimes (a \overset{g}{\rightarrow} b)$ is simply:

\[
\begin{tikzcd}[ampersand replacement=\&]
(x,a) \arrow{d}[swap]{(f,a)} \arrow{r}{(x,g)} \&  (x,b) \arrow{d}{(f,b)} \\
(y,a) \arrow{r}[swap]{(y,g)} \& (y ,b)\\
\end{tikzcd}
\]

This makes the category of graphs a symetric monoidal closed category. Morphisms $X \otimes Y \rightarrow Z$ corresponds to the definition in Appendix~\ref{section_prelim_setoids} of two variables functions between setoids.

We will use our Theorem~\ref{Th_tensorWMS} to endow the category of graphs with a ``monoidal'' weak model structure. By that we mean that we will apply the theorem with $\Acal=\Ccal=\G$, the bi-functor being the tensor product and with $I=I_A$ and $J=J_A$.

The set of generating cofibrations is $I= \{i_V ,i_R \}$, with:

\[ i_V \colon  \emptyset \hookrightarrow \bullet \qquad  i_R \colon  (\bullet \quad \bullet ) \hookrightarrow ( \bullet \rightarrow \bullet) \]

The small object argument applies in its ``good'' version of \ref{EasySOA}. 

\begin{lemma}
The $I$-cofibrations are the complemented inclusions, i.e. the monomorphisms $f \colon X \rightarrow Y$ such that for all $y \in Y$ either $y \in X$ or $y \notin X$.
\end{lemma}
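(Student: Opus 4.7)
The plan is to prove both inclusions between the class of $I$-cofibrations and the class of complemented inclusions in $\G$. Since $\G$ is a presheaf category on the category $\bullet \rightrightarrows \bullet$, all colimits are computed pointwise on vertices and on edges, so a map of graphs is a complemented inclusion in the sense stated iff it is such on vertices and on edges separately. This ``level-wise'' perspective will make both directions essentially reduce to the corresponding well-known statements in the category of sets.

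For the easy direction, that every $I$-cofibration is a complemented inclusion, I would first observe that both generators $i_V$ and $i_R$ are themselves complemented inclusions. Then I would verify that the class of complemented inclusions is closed under the operations producing $I$-cofibrations from $I$: pushouts, (possibly transfinite) coproducts, transfinite composition, and retracts. Each of these reduces level-wise to the corresponding closure property for complemented inclusions of sets. The retract closure is the one that typically deserves a line: if $f$ is a retract of a complemented inclusion $g$, then for any target element $b$ one decides membership in the image of $f$ by sending $b$ into the codomain of $g$, testing whether it lies in the image of $g$, and chasing the retract diagram to either recover a preimage under $f$ or derive a contradiction from injectivity.

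For the converse direction, that every complemented inclusion $f \colon X \hookrightarrow Y$ is an $I$-cofibration, I would factor $f$ as
\[ X \hookrightarrow X' \hookrightarrow Y, \]
where $X'$ is the subgraph of $Y$ having the same edges as $X$ but with vertex set $V(Y)$, so that the first map adjoins the vertices of the complement $V_0 = V(Y) \setminus V(X)$ and the second map adjoins the edges of $R_0 = R(Y) \setminus R(X)$. Both complements exist as honest sets precisely because $f$ is complemented. The first map is obtained as a (possibly transfinite) composition of pushouts of copies of $i_V$, one for each element of $V_0$, which is an $I$-cofibration. The second map is similarly a composition of pushouts of $i_R$, one for each edge $e \in R_0$; these are well-defined pushouts since at the moment we adjoin $e$ both of its endpoints already lie in the intermediate graph. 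Hence $f$ is an $I$-cofibration.

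The main subtlety, rather than being a real obstacle, is constructive: the factorization above must be organised as a transfinite composition indexed by the sets $V_0$ and $R_0$ and carried out in the logical framework fixed at the start of the section (which, as recalled, is strong enough to run the small object argument in the ``good'' version of \ref{EasySOA}). Once the composition is performed in this framework, no further choice is needed, and both directions assemble into the desired characterisation.
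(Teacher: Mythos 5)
Your approach is the same as the paper's: the easy direction is that the generators satisfy the property and the property is closed under the operations producing the saturation, and the converse is the factorization through the object $X'$ that first adjoins the missing vertices and then the missing edges. The paper's proof is exactly this two-step decomposition.

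There is, however, a constructive gap in the way you organize the converse. You describe the map $X \hookrightarrow X'$ as ``a (possibly transfinite) composition of pushouts of copies of $i_V$, one for each element of $V_0$'', and similarly for the edges, and in your final paragraph you reiterate that the factorization is ``organised as a transfinite composition indexed by the sets $V_0$ and $R_0$.'' A transfinite composition indexed by a set requires a well-ordering on that set, and a well-ordering on $V_0$ and $R_0$ is precisely what constructive mathematics does not supply. This is not a cosmetic point: the paper's Appendix C on the small object argument singles it out, explaining that classically one may replace a pushout of a coproduct by a transfinite composition of single pushouts after choosing a well-ordering of the indexing set, but that this is not possible constructively, which is why the paper consistently says ``transfinite composition of pushouts of \emph{coproducts} of maps in $I$''. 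The fix is short: replace each of your two transfinite compositions by a \emph{single} pushout of a coproduct. Concretely, $X \hookrightarrow X'$ is the pushout of $\coprod_{V_0} i_V$ along the unique map $\coprod_{V_0} \emptyset = \emptyset \to X$, and $X' \hookrightarrow Y$ is the pushout of $\coprod_{R_0} i_R$ along the map $\coprod_{R_0} (\bullet\ \bullet) \to X'$ classifying the two endpoints of each missing edge (which do lie in $X'$, as you note). With this replacement no choice of ordering is needed and the argument goes through in the paper's framework; it then coincides with the paper's own proof.

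A minor terminological slip in the easy direction: the class of $I$-cofibrations produced by the good case of the small object argument is closed under pushout, coproduct (of arbitrary small index, no transfinite process is involved in forming a coproduct), $\omega$-composition, and retract; your phrase ``(possibly transfinite) coproducts'' should just read ``coproducts''. The retract argument you sketch, reducing level-wise to complemented inclusions of sets and chasing the retract square, is correct.
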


\begin{proof}
The generating cofibrations satisfies this condition and it is stable under retract, coproduct, pushout and composition so this proves one inclusion. Conversely note that each such levelwise complemented inclusion can be constructed by first using pushout along $i_V$ to add all the missing vertices and then pushout along $i_R$ to add all missing arrows.\end{proof}

In particular every graph is $I$-cofibrant. The $I$-fibrations are the map $f \colon X \rightarrow Y$ such that for every cell $y \in Y$ there is an $x \in X$ such that $f(x)=y$ and for every arrow $v\colon f(x) \rightarrow f(y)$ in $Y$ there is an arrow $w\colon x \rightarrow y$ such that $f(w)=v$. Also, the corner-product conditions for cofibrations are easily checked: $i_V \corner{\otimes} i_V = i_V$, $(i_V \corner{\otimes} i_R) =(i_R \corner{\otimes} i_V) = i_R$ and $i_R \corner{\otimes} i_R $ is an isomorphism.

The generating anodyne maps will be given by $J=\{j_1, j_t, j_{inv}\}$:

\[ j_1 \coloneqq  ( x ) \hookrightarrow  (x \rightarrow y) \]
\[ j_t \coloneqq  \left(\parbox{3cm}{\begin{tikzcd}[ampersand replacement=\&]
\bullet \arrow{r} \& \bullet \arrow{r} \& \bullet \\
\end{tikzcd}\vspace{-0.65cm}} \right) \hookrightarrow
\left(\parbox{3cm}{\begin{tikzcd}[ampersand replacement=\&]
\bullet \arrow[bend left = 40]{rr} \arrow{r} \& \bullet \arrow{r} \& \bullet \\
\end{tikzcd}\vspace{-0.3cm}} \right) \]

\[ j_{i} \coloneqq  \left(\parbox{1.8cm}{\begin{tikzcd}[ampersand replacement=\&]
\bullet \arrow{r} \& \bullet  \\
\end{tikzcd}\vspace{-0.65cm}} \right) \hookrightarrow
\left(\parbox{1.8cm}{\vspace{-0.2cm}\begin{tikzcd}[ampersand replacement=\&]
\bullet \arrow{r} \& \bullet \arrow[bend right=40]{l}  \\
\end{tikzcd}\vspace{-0.65cm}} \right) \]

Here again the small object argument applies without any problems and gives us a weak factorization system in $J$-cofibrations/$J$-fibrations. 
The corner-product conditions against $i_V$ are all trivial as $i_V \corner{\otimes} f = f$ for all $f$, we only need to check the corner-product of the form $i_R \corner{\otimes} j_{?}$ (and the corner-product is symmetric as the tensor product is). We have:

\[ i_R \corner{\otimes} j_1 =  \left(\parbox{1.8cm}{\begin{tikzcd}[ampersand replacement=\&] \bullet \arrow{d} \arrow{r} \& \bullet \\
\bullet \arrow{r} \& \bullet
\end{tikzcd}} \right) \hookrightarrow \left(\parbox{1.8cm}{\begin{tikzcd}[ampersand replacement=\&] \bullet \arrow{d} \arrow{r} \& \bullet \arrow{d}  \\
\bullet \arrow{r} \& \bullet
\end{tikzcd}} \right)  \]

\begin{lemma}Any $J$-fibration $f\colon X \rightarrow Y$ between $J$-fibrant objects has the right lifting property against $i_R \corner{\otimes} j_1$.\end{lemma}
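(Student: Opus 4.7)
The first step is to unfold what the map $i_R \corner{\otimes} j_1$ looks like concretely. Its codomain is the full square graph on four vertices $(a,x),(a,y),(b,x),(b,y)$ with the four evident arrows, and its domain is the subgraph obtained by removing the right-hand arrow $(a,y) \to (b,y)$. So a lifting problem against a $J$-fibration $f \colon X \to Y$ amounts to the following data: vertices $u,v,u',v'$ in $X$ together with arrows $\alpha \colon u \to v$, $\gamma \colon u \to u'$, $\beta \colon u' \to v'$, and in $Y$ an arrow $\delta \colon f(v) \to f(v')$ (automatically compatible with the images of the other data, since there is nothing else to compare). We must produce an arrow $\epsilon \colon v \to v'$ in $X$ with $f(\epsilon) = \delta$.

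The plan is to manufacture a two-arrow path from $v$ to $v'$ in $X$ using the fibrancy of $X$, and then apply the $j_t$-lifting property of $f$ to lift $\delta$ along it. Concretely: first, apply the $j_i$-lifting property of $X \to 1$ to $\alpha$ to produce a reverse arrow $\bar\alpha \colon v \to u$ in $X$. Second, apply the $j_t$-lifting property of $X \to 1$ to the two-arrow path $v \xrightarrow{\bar\alpha} u \xrightarrow{\gamma} u'$ to obtain a composite $\nu \colon v \to u'$ in $X$. Third, apply the $j_t$-lifting property of $f$ to the two-arrow path $v \xrightarrow{\nu} u' \xrightarrow{\beta} v'$ in $X$, together with the arrow $\delta \colon f(v) \to f(v')$ in $Y$ as the image of the new diagonal, to obtain an arrow $\epsilon \colon v \to v'$ in $X$ whose $f$-image is $\delta$.

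The point to verify at the third step, which one might initially worry about, is that the lifting square for $j_t$ against $f$ really does commute with no additional constraint on $\delta$. But a morphism of graphs into $Y$ is determined by choosing images of vertices and arrows subject only to source/target conditions; there is no composition in graphs and hence no composite relation for $\delta$ to satisfy. Since $\delta$ has source $f(v)$ and target $f(v')$ by hypothesis, the square commutes tautologically, and the $J$-fibration property of $f$ produces the lift. Notice that the fibrancy of $Y$ is not in fact used anywhere in this argument; only the fibrancy of $X$ (for steps 1 and 2) and the $J$-fibration property of $f$ (for step 3) are invoked.
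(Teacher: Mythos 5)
Your proof is correct and follows essentially the same strategy as the paper's: use the $j_i$-lifting property of the fibrant object $X$ to reverse the top arrow, use $j_t$-lifting of $X$ to compose along a two-arrow path from $v$ to $v'$, then apply the $j_t$-lifting property of the fibration $f$ (with $\delta$ as the candidate composite in $Y$) to produce the missing arrow. The only difference is the intermediate vertex chosen for the two-arrow path — you route through the bottom-left corner, the paper routes through the top-left — which is an immaterial variation; your side remark that fibrancy of $Y$ is not needed is also accurate.
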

\begin{proof}
A lifting square of $f\colon X \rightarrow Y$ against $i_R \corner{\otimes} j_1$ corresponds to a solid diagram of the form: 

\[ \begin{tikzcd}[ampersand replacement=\&] \bullet \arrow{d} \arrow{r} \& \bullet \arrow[dotted]{d} \\
\bullet \arrow{r} \& \bullet
\end{tikzcd} \]

\noindent in $X$, together with a dotted filling in $Y$. Using the lifting property of $X$ against $j_t$ and $j_{i}$ we can extend this diagram into:

\[ \begin{tikzcd}[ampersand replacement=\&] \bullet \arrow{d} \arrow{r} \arrow{dr} \& \bullet \arrow[bend right=40]{l} \arrow[dotted]{d} \\
\bullet \arrow{r} \& \bullet
\end{tikzcd} \]

Using the two new arrows, the dotted filling then became a solution to a lifting problem against $j_t$, and hence it can be lifted from $Y$ to $X$ using the lifting property of $f$ against $j_t$.\end{proof}

The two other corner-product map $i_R \corner{\otimes} j_t$ and $i_R \corner{\otimes} j_{i}$ are both identity map, respectively of:

\[\parbox{3cm}{\begin{tikzcd}[ampersand replacement=\&]
\bullet \arrow[bend left = 30]{rr} \arrow{r} \arrow{d} \& \bullet \arrow{d} \arrow{r} \& \bullet \arrow{d} \\
\bullet \arrow[bend right = 30]{rr} \arrow{r} \& \bullet \arrow{r} \& \bullet \\
\end{tikzcd}} \quad \text{ and } \quad \parbox{1.8cm}{\vspace{-0.22cm}\begin{tikzcd}[ampersand replacement=\&]
\bullet \arrow[bend right=40]{r} \arrow{d} \& \bullet \arrow[bend right=40]{l} \arrow{d} \\
\bullet \arrow[bend right=40]{r} \& \bullet \arrow[bend right=40]{l} 
\end{tikzcd}}\]

In order to finish the proof that the conditions of Theorem~\ref{Th_tensorWMS} are satisfied we only need to construct a weak cylinder object for the graph $\bullet$. It is given by:

\[
\begin{tikzcd}
 \left(  \bullet \coprod \bullet \right) \ar[d,hook] \ar[r] & \bullet \ar[d,hook,"j_r"]\\
\left( \bullet \rightarrow \bullet \right) \ar[r] & \left( \tikz[baseline=(A.base)]{\node (A) {$\bullet$}; \draw[->] (A) to[loop right] (A); } \right) \\
\end{tikzcd}
\]

That is the cylinder object $\Ical$ and the reflexivity witness $\Dcal$ are respectively given by:

\[ \Ical \coloneqq  \left( \bullet \rightarrow \bullet \right) \qquad \Dcal = \left( \tikz[baseline=(A.base)]{\node (A) {$\bullet$}; \draw[->] (A) to[loop right] (A); } \right) \]

\noindent with the obvious map $\bullet \coprod \bullet \hookrightarrow \Ical$ (all the other maps being the unique possible map). The first leg $\bullet \hookrightarrow \Ical$ is the map $j_1$, so it is an anodyne map. And in order to conclude we need to show that $j_r$ is an acyclic cofibration, i.e.:

\begin{lemma}
The map

\[ j_r\colon  (\bullet) \hookrightarrow  \left( \begin{tikzcd} \bullet \arrow[loop right]\end{tikzcd} \right) \]
 has the left lifting property against all $J$-fibrations $f\colon X \rightarrow Y$ between $J$-fibrant objects.
\end{lemma}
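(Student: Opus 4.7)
The plan is to reduce a lifting square $j_r \to f$ to three successive lifts against the generators of $J$, using only that $f$ is a $J$-fibration. Unpacking the square, the data is a vertex $x \in X$ together with a loop $v \colon f(x) \to f(x)$ in $Y$, and one must produce a loop $w \colon x \to x$ in $X$ with $f(w) = v$.

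First I would use the lifting property of $f$ against $j_1$, applied to the vertex $x$ in $X$ and the arrow $v$ in $Y$: this yields an arrow $w_1 \colon x \to y$ in $X$ with $f(w_1) = v$, and in particular $f(y) = f(x)$. Next, applying the lifting property of $f$ against $j_i$ to the arrow $w_1$ in $X$ together with the arrow $v \colon f(y) \to f(x)$ in $Y$ (which makes sense since $f(y) = f(x)$), I obtain an arrow $u \colon y \to x$ in $X$ with $f(u) = v$. Finally, $(w_1, u)$ is a composable chain $x \to y \to x$ in $X$, so applying the lifting property of $f$ against $j_t$ to this chain and to the arrow $v \colon f(x) \to f(x)$ in $Y$ produces an arrow $w \colon x \to x$ in $X$ with $f(w) = v$, which is the required loop.

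There is no real obstacle; the only point to watch is that the endpoints of the intermediate lifts match up so that the final composite is genuinely a loop based at $x$. This is why the middle step uses $j_i$ rather than a second application of $j_1$: iterating $j_1$ would give arrows whose target lies over $f(x)$ but is not forced to be $x$, whereas $j_i$ allows us to prescribe the target $x$ while keeping the image equal to $v$. It is worth noting that this argument does not actually use the fibrancy of $X$ or $Y$; it uses only the three generating lifting properties of $f$ itself.
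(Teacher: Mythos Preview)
Your argument is correct and follows the same outline as the paper: reduce the lifting problem for $j_r$ to successive lifts against $j_1$, $j_i$ and $j_t$. The only difference is where the first two lifts are taken. The paper uses the fibrancy of $X$ to produce arbitrary arrows $a\colon x\to y$ and $b\colon y\to x$ in $X$ (lifting $j_1$ and $j_i$ against $X\to 1$), and then a single lift of $j_t$ against $f$ to obtain the loop over $v$. You instead lift all three generators directly against $f$, arranging at each step that the image of the new arrow in $Y$ is $v$. Both routes work; yours has the small advantage, which you point out, that it never invokes the hypothesis that $X$ (or $Y$) is $J$-fibrant, only that $f$ is a $J$-fibration.
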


\begin{proof}
 A lifting problem for $f:X \to Y$ against $j_r$ is a vertex $v$ in $X$ together with an arrow $r\colon f(v) \rightarrow f(v)$ in $Y$. Using that $X$ is $J$-fibrant we can find in $X$ a vertex $y$ and arrows $a\colon x \rightarrow y$ and $b\colon y \rightarrow x$, we now have a lifting problem against $j_t$, which has a solution as $f$ is a fibration.\end{proof}

It is also worth noting that:

\begin{prop}\label{Prop_fibrantGraph=setoid}
A graph $X$ is $J$-fibrant if and only it is a \emph{Setoid} in the sense of Definition~\ref{def:setoids}. 
\end{prop}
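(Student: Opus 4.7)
The plan is to unpack what $J$-fibrancy of $X \to 1$ means for each generator and compare axiom-by-axiom with the setoid structure recalled in Appendix~\ref{section_prelim_setoids}. A lifting problem of $X \to 1$ against $j_1$ is exactly the data of a vertex $x \in V(X)$, and a solution is an arrow out of $x$ in $X$; against $j_t$ it is a pair of composable arrows $x \to y \to z$, and a solution is an arrow $x \to z$; against $j_{i}$ it is a single arrow $f \colon x \to y$, and a solution is an arrow $y \to x$. Thus $J$-fibrancy amounts to the choice of three operations: every vertex has a chosen outgoing edge, every composable pair has a chosen composite, and every edge has a chosen reverse.

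For the ``if'' direction, suppose $X$ is a setoid, i.e.\ is equipped with reflexivity, symmetry and transitivity operations. Transitivity supplies a lift against $j_t$, symmetry supplies a lift against $j_{i}$, and for any vertex $x$ the reflexivity arrow $\refl{x} \colon x \to x$ itself is an outgoing edge, providing a lift against $j_1$. Hence $X$ is $J$-fibrant. (In the constructive/setoid-theoretic reading, these chosen operations are precisely the data one needs to exhibit $X$ as $J$-fibrant in the functional sense of Section~\ref{sec:Logical_framework}.)

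For the ``only if'' direction, symmetry and transitivity are immediate from the lifting operations against $j_{i}$ and $j_t$ respectively. The only genuine point is reflexivity, which is not directly encoded by $j_1$ alone: given $x \in V(X)$, lifting against $j_1$ produces an arrow $a \colon x \to y$; lifting against $j_{i}$ applied to $a$ produces $b \colon y \to x$; and lifting against $j_t$ applied to the composable pair $(a,b)$ produces the desired self-loop $x \to x$, which we take as $\refl{x}$. This is the only non-formal step of the argument and the main ``obstacle'' is simply to notice that reflexivity has to be built from the three generators in this combined way rather than being an axiom on its own.

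Since both directions come from explicit operations at the level of sets of vertices and arrows, no further coherence has to be checked, and the construction respects the setoid-categorical reading that quantifiers are interpreted as chosen functions. This establishes the equivalence between $J$-fibrant graphs and setoids.
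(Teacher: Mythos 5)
Your proof is correct and follows essentially the same route as the paper: the paper observes that a setoid structure is precisely a choice of lifts against $j_r$, $j_t$, $j_i$, that lifting against $j_r$ gives lifting against $j_1$ (the reflexivity loop is an outgoing edge), and it delegates the converse direction — building the loop from $j_1$, $j_i$, $j_t$ — to the lemma proved immediately beforehand, which contains exactly your three-step construction. You have simply inlined that lemma's argument; no gap.
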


\begin{proof}
A structure of setoids on a graph $X$ is exactly the same as chosen lifting against $j_t$, $j_i$ and $j_r$ for the map $X \rightarrow 1$. We have seen that a fibrant object as the lifting property against $j_r$, and conversely for a map of the form $X \rightarrow 1$ the lifting property against $j_r$ clearly implies the lifting property against $j_1$ so this concludes the proof.\end{proof}

\begin{theorem}
There is a weak model structure on the category $\G$ of graphs such that:

\begin{itemize}

\item Every object is cofibrant, cofibrations are the complemented monomorphisms of graphs, I.e. monomorphisms $f \colon X \rightarrow Y$ such that for each vertex or edge $y \in Y$, either $y \in X$ or $y \notin X$.

\item Fibrant objects are the setoids.

\item Fibrations and acyclic fibrations between fibrant objects are the $I$-fibrations and $J$-fibrations.

\item Two maps between fibrant objects $f,g \colon  X \rightrightarrows Y$ are homotopic if and only if they are equivalent in the sense of Definition~\ref{def:setoids_properties}.\ref{def:setoids_properties:relation_morphisms}.

\item The equivalences between fibrant objects corresponds to the notion of isomorphisms of setoids as in Definition~\ref{def:setoids_properties}.\ref{def:setoids_properties:isomorphisms}.

\end{itemize}

\end{theorem}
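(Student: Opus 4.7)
The plan is to assemble the lemmas and computations that immediately precede the theorem into an application of \cref{Th_tensorWMS}, and then translate each of the five bullet points into a concrete verification.

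First, I would invoke \cref{Th_tensorWMS} with $\Acal = \Ccal = \G$, the bifunctor being the monoidal product $\otimes$, and $I=I_{\Acal}$, $J=J_{\Acal}$ as specified. All eight hypotheses of the theorem have essentially been verified in the preceding discussion: the small object argument produces the two weak factorization systems; the computations of $i_V \corner{\otimes} i_V$, $i_V \corner{\otimes} i_R$, $i_R \corner{\otimes} i_R$ give the cofibration corner-product condition; the three displayed corner-products $i_R \corner{\otimes} j_1$, $i_R \corner{\otimes} j_t$, $i_R \corner{\otimes} j_i$ together with the two lemmas just proved handle the acyclic corner-product conditions; the unit is $\Ib = \bullet$; and the square with $\Ical = (\bullet \to \bullet)$ and $\Dcal = (\bullet \circlearrowleft)$, combined with the fact that $\bullet \hookrightarrow \Ical$ is $j_1$ and the lemma showing $j_r$ is acyclic, supplies condition~\ref{Th_tensorWMS:Cond_cylinder}. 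This immediately yields a weak model structure whose cofibrations between cofibrant objects are the $I$-cofibrations and whose fibrations between fibrant objects are the $J$-fibrations.

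Next, the characterizations. The cofibration bullet follows from the lemma identifying $I$-cofibrations with complemented monomorphisms, together with the observation that $\emptyset \hookrightarrow X$ is always such a map, so every graph is cofibrant. The fibrant objects bullet is \cref{Prop_fibrantGraph=setoid}. For the fibrations bullet I would note that by \cref{Prop_triv=equiv}.\ref{Prop_triv=equiv:acycFib=equiv}, acyclic fibrations between fibrant objects coincide with $J$-fibrations that are also $I$-fibrations (equivalently, $I$-fibrations whose target is fibrant), and unwinding this in graph-theoretic terms gives the stated description.

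For the homotopy bullet, I would use the strong cylinder object $\bullet \hookrightarrow \Ical \to \bullet$ obtained from the weak cylinder square by the retraction argument of \cref{rk:Exist_strong_Cyl=exists_weak_cyl} (applicable because $\bullet$ is fibrant), and tensor with $X$ to get a cylinder for any cofibrant $X$; a homotopy $IX \to Y$ is then precisely the datum of an arrow $h(x)\colon f(x) \to g(x)$ in $Y$ for each vertex $x \in X$ together with a compatibility with edges, which is exactly the notion of equivalence of morphisms from \cref{def:setoids_properties}.\ref{def:setoids_properties:relation_morphisms}. The last bullet then reduces, via the standard fact that an equivalence between fibrant--cofibrant objects is a homotopy equivalence (a map with homotopy inverse), to checking that a homotopy equivalence in this sense is the same as a bijection on $\pi_0$-setoids together with fully faithfulness on hom-setoids; both translations amount to routine unwinding once the previous bullet is in place.

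The only step that is not essentially a bookkeeping exercise is the verification of condition~\ref{Th_tensorWMS:Cond_cylinder}, i.e.\ showing that $j_r$ has the left lifting property against all $J$-fibrations between $J$-fibrant objects, but this has already been carried out in the lemma just above the theorem; so the main remaining ``obstacle'' is really just organizing the match between the abstract hypotheses of \cref{Th_tensorWMS} and the concrete corner-product computations, which is mechanical.
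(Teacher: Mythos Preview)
Your overall strategy is the same as the paper's: apply \cref{Th_tensorWMS} using the verifications already done in the section, then read off the five bullets. The paper's own proof is a three-line summary of exactly this.

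There is one genuine slip. You claim that $\bullet$ is fibrant and therefore obtain a \emph{strong} cylinder $\bullet \coprod \bullet \hookrightarrow \Ical \to \bullet$ via the retraction trick of \cref{rk:Exist_strong_Cyl=exists_weak_cyl}. But $\bullet$ is not $J$-fibrant: it has no edge, so it fails the lifting property against $j_1$ (equivalently, by \cref{Prop_fibrantGraph=setoid}, it is not a setoid because it lacks a reflexivity witness). In fact there is no graph morphism $\Ical \to \bullet$ at all, since the edge of $\Ical$ has nowhere to go. So the strong cylinder you describe does not exist.

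The fix is painless and is what the paper does implicitly: use the weak path object $\Ical \backslash Y$ for fibrant $Y$ produced at the end of the proof of \cref{Th_tensorWMS}; by adjunction a homotopy $X \to \Ical \backslash Y$ is exactly a map $\Ical \otimes X \to Y$, and unwinding the tensor product formula shows this is precisely a function $x \mapsto h(x)\colon f(x) \to g(x)$ on vertices with nothing further required on edges (the edge-component of the map is already forced by $f$ and $g$). So your parenthetical ``together with a compatibility with edges'' is vacuous here, and the identification with \cref{def:setoids_properties}.\ref{def:setoids_properties:relation_morphisms} is immediate. Your final bullet then goes through as you say, via the lemma in Appendix~\ref{section_prelim_setoids} identifying setoid isomorphisms with maps admitting a two-sided inverse up to relation; the ``$\pi_0$ and hom-setoids'' phrasing is unnecessary.
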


Note that (as every object is cofibrant) this weak model structure can be seen, at least non-constructively, to be a right semi-model structure. But it is not a full Quillen model structure: indeed the map $(\bullet \quad \bullet) \rightarrow \bullet $ is an $I$-fibration as there is no arrow to lift in its target, but is not an equivalence.

\begin{proof}
The first three points follow immediately from Theorem~\ref{Th_tensorWMS}, all the assumptions have been checked in the discussion above. The fourth point is exactly the description of a homotopy as a map $\Ical \otimes X \rightarrow Y$. The last point is also immediate: a map between bifibrant objects is an equivalence if and only if it is invertible in the homotopy category, and once homotopy are translated into equivalence of maps between setoids then this is exactly the condition of the theorem.\end{proof}

\begin{remark} This example also shows that for the model structure constructed out of Theorem~\ref{Th_tensorWMS} or \ref{Th::WMS_from_cylinder_path}, the acyclic cofibrations are not always the $J$-cofibrations, for examples the maps:

\[j_2\colon  y \hookrightarrow (x \rightarrow y) \qquad  j_r\colon  (\bullet) \hookrightarrow  \left( \begin{tikzcd}[ampersand replacement=\&] \bullet \arrow[loop right]\end{tikzcd} \right) \]

\noindent cannot be written as retract of composite of pushouts of coproduct of maps in $J$, but are acyclic cofibrations.

\end{remark}

\subsection{The projective model structure for chain complexes}
\label{subsec:ChainCplex}

We consider chain complexes of arbitrary degree, with a homological (i.e. degree decreasing) differential, so sequences of $R$-modules: 

\[ \dots C_{-1} \overset{\partial}{\leftarrow} C_0 \overset{\partial}{\leftarrow} C_1 \overset{\partial}{\leftarrow} \dots \overset{\partial}{\leftarrow} C_n \overset{\partial}{\leftarrow} \dots \]

\noindent subject to the condition $\partial \circ \partial = 0$, with morphisms being the morphisms of diagram. It is endowed with its usual closed monoidal structure. 



\begin{construction}
The generating cofibrations are the maps:

\[ i_k \colon \left( \parbox{9cm}{\begin{tikzcd}[ampersand replacement=\&]
 \dots \& \arrow{l} 0\arrow{d} \& \arrow{l} 0 \arrow{d} \& \arrow{l} R \arrow{d}{1} \&  \arrow{l} 0 \arrow{d} \& \arrow{l} 0 \arrow{d} \& \arrow{l} \dots \\
 \dots \& \arrow{l} 0 \& \arrow{l} 0 \& \arrow{l} R \& \arrow{l}{1} R \& \arrow{l} 0 \& \arrow{l} \dots \\ 
\end{tikzcd} \vspace{-0.4cm}}\right)  \]

Where the two nontrivial components are in degree $k-1$ and $k$. Taking a pushout by $i_k$ mean adding an element to $c_k$ with a specified differential. In particular the unit object can be obtained as a pushout of $i_0$ (so is cofibrant) and, more generally, the objects that can be obtained from the zero object as finite iterated pushouts of maps in $I = \{(i_k), k \in \mathbb{Z}\}$ are exactly the complexes which are free in each degree with a finite number of generators in total. General cofibrant objects are a little more complex to describe, but they are in particular retract of free modules (projective) in each degree. The corner-product condition for cofibrations is very easy to check: a computation shows that $i_k \corner{\otimes} i_{k'}$ is a pushout of $i_{k+k'}$.

The generating anodyne maps are given by:

\[ j_k \colon \left( \parbox{9cm}{\begin{tikzcd}[ampersand replacement=\&]
 \dots \& \arrow{l} 0\arrow{d} \& \arrow{l} 0 \arrow{d} \& \arrow{l} 0 \arrow{d} \&  \arrow{l} 0 \arrow{d} \& \arrow{l} 0 \arrow{d} \& \arrow{l} \dots \\
 \dots \& \arrow{l} 0 \& \arrow{l} 0 \& \arrow{l} R \& \arrow{l}{1} R \& \arrow{l} 0 \& \arrow{l} \dots \\ 
\end{tikzcd} \vspace{-0.4cm}}\right)  \]

\noindent with the two nontrivial components being in degrees $k$ and $k+1$, hence a pushout of $j_k$ adds both an element in degree $k+1$ and its differential in degree $k$. And $j_k \corner{\otimes} i_{k'}$ is just $i_{k'}$ tensored by the target of $j_k$ and is a pushout of $j_{k+k'}$.

\end{construction}

The cylinder object for the unit is given by:

\[\dots 0 \leftarrow R \oplus R \leftarrow R \leftarrow 0 \leftarrow \dots \]

\noindent with $R \oplus R$ in degree $0$, and $\partial \colon  R \rightarrow R \oplus R$ is $r \mapsto (-r,+r)$. The two maps from the unit are just the two co-product inclusions in dimension $0$. Moreover, the two maps $R \rightarrow R \oplus R$ corresponding to the first component and the diagonal map also identifies $R \oplus R$ as the coproduct of $R$ and $R$ in another way, and this show that our interval can be decomposed as the coproduct of the unit and the target of $j_0$, hence showing that the map from the unit to the interval is anodyne.

\begin{theorem}
There is a weak model structure on the category of chain complexes such that:

\begin{enumerate}[label=(\roman*)]

\item All objects are fibrants. A map is a fibration if on each component it admits a (possibly non-linear) section.

\item Cofibrant objects are objectwise projective\footnote{i.e. retract of a free module. We could also restrict to free modules as we are not assuming that cofibrations have to be stable under retract.} $R$-modules (but not all objectwise projective are necessarily cofibrant).

\item Two maps $f,g\colon  X \rightrightarrows Y$ with $X$ cofibrant are homotopic if there they are homotopic in the sense of homological algebra, i.e. if we have a collection of linear maps $h \colon X_n \rightarrow Y_{n+1}$ such that $\partial h + h \partial  = f -g$.

\item A map $f\colon X \rightarrow Y$ between two chain complexes is an equivalence if and only if for each $n$ the map $f\colon H_n(X) \rightarrow H_n(Y)$ is an isomorphism of setoids, where $H_n(X)$ denotes the group quotient $\{x \in X_n | \partial x =0 \} / \{ \partial x | x \in X_{n+1} \}$ constructed as a setoid.

\end{enumerate}

\end{theorem}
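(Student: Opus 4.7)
The plan is to apply Theorem~\ref{Th_tensorWMS} with $\Acal = \Ccal$ the category of chain complexes, the bifunctor being $\otimes_R$, and $I_\Acal = I$, $J_\Acal = J$ as defined in the construction above. The corner-product conditions are precisely the identities $i_k \corner{\otimes} i_{k'}$ = pushout of $i_{k+k'}$ and $j_k \corner{\otimes} i_{k'}$ = pushout of $j_{k+k'}$ already recorded, and the inclusion $J \subset I\textsc{-cof}$ follows from the factorisation $0 \hookrightarrow R[k] \xrightarrow{i_{k+1}} T$ of each $j_k$, where the first map is the pushout of $i_k$ along the zero morphism $R[k-1] \to 0$. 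The unit $\Ib = R[0]$ is $I$-cofibrant (a pushout of $i_0$), and the cylinder diagram of condition~(\ref{Th_tensorWMS:Cond_cylinder}) is realised by $C = \Ical$ and $D = \Ib$: the map $\Ib \coprod \Ib \hookrightarrow \Ical$ is a pushout of $i_1$, the first leg $\Ib \hookrightarrow \Ical$ is a pushout of $j_0$ (hence acyclic), and $\Ib \hookrightarrow D$ is an identity. With the small object argument applied in one of the frameworks of Appendix~\ref{section_the_small_object_arguments} to produce the two weak factorisation systems, Theorem~\ref{Th_tensorWMS} delivers the weak model structure.

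Claims (i)--(iii) then follow by unpacking. A lifting problem against $X \to 0$ of $j_k$ is solved by $0 \in X_{k+1}$, so every object is fibrant; and a $J$-fibration $f \colon X \to Y$ between fibrant objects lifts against $j_k$ iff for each $b \in Y_{k+1}$ one has a chosen $a \in X_{k+1}$ with $f(a) = b$, giving the not-necessarily-linear sections in each degree. Cofibrant objects are retracts of $I$-cell complexes, each pushout of $i_k$ adding a free direct summand in degree $k$, so cofibrant complexes are degreewise projective. A cylinder-homotopy is a map $\Ical \otimes X \to Y$; using $\Ical_0 = R \oplus R$, $\Ical_1 = R$ with differential $r \mapsto (-r, r)$, such a map decomposes into chain maps $f, g \colon X \to Y$ together with a degree-$1$ family $h_n \colon X_n \to Y_{n+1}$, and chasing the differential on $\Ical \otimes X$ yields exactly $\partial h + h \partial = g - f$.

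For~(iv), the forward direction reduces by Proposition~\ref{Prop_triv=equiv} to showing that acyclic (co)fibrations between bifibrant objects induce isomorphisms on homology setoids: for an acyclic $I$-fibration $X \twoheadrightarrow Y$, the right lifting property against each $i_k$ with zero-cycle source gives surjectivity on cycles of $Y$, while lifting against a relation $\partial b = f(a)$ inside $Y$ yields well-definedness of an inverse on the homology setoid; the acyclic cofibration case is then handled by~(iii) via chain-homotopy invariance of homology. For the converse, given $f \colon X \to Y$ inducing setoid-isomorphisms on every $H_n$, I would take cofibrant replacements $\widetilde X \twoheadrightarrow X$ and $\widetilde Y \twoheadrightarrow Y$ via the factorisation axiom, lift to $\widetilde f \colon \widetilde X \to \widetilde Y$, and reduce to the statement that a quasi-isomorphism between degreewise projective complexes is a chain-homotopy equivalence; by~(iii) this makes $\widetilde f$ and hence $f$ invertible in the homotopy category. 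The main obstacle is carrying out this last reduction constructively: rather than the usual argument picking representatives of homology classes in quotient modules, one must build the chain-homotopy inverse and its bounding chains cell-by-cell along the cellular filtration of $\widetilde Y$, using the supplied setoid-isomorphism to produce the chosen lifts at each attaching map of an $i_k$-cell.
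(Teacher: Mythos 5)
Your proof of parts (i)--(iii) is essentially the paper's proof verbatim: apply Theorem~\ref{Th_tensorWMS} to the tensor product with $\Acal=\Ccal$, verify the corner-product identities $i_k\corner{\otimes}i_{k'}=\text{pushout of }i_{k+k'}$ and $j_k\corner{\otimes}i_{k'}=\text{pushout of }i_{k'}\otimes(\text{target of }j_k)=\text{pushout of }j_{k+k'}$, use the decomposition of $\Ical$ as $\Ib\coprod(\text{target of }j_0)$ to see the first leg is anodyne, and then unpack. All of this is fine.

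For (iv) you take a genuinely different route from the paper. The paper's proof at this point simply says that (iv) can be deduced from Appendix~\ref{section_pisetoids} ``with some work,'' using Theorem~\ref{Th:Charac_WE_Pi_Setoids}: since $I$ is a pseudo-generating set of cofibrations, $f$ between fibrant objects is an equivalence iff $\pi_{i_k}(X,x)\to\pi_{i_k}(Y,f(x))$ is a surjection of setoids for all $k$ and all cycles $x\in Z_{k-1}(X)$, and one then translates these $\pi$-setoid conditions into conditions on the homology setoids $H_n$, in the spirit of what is done for simplicial sets in Proposition~\ref{prop:Charac_of_simp_equiv}. You instead propose cofibrant replacement followed by a direct construction of a chain-homotopy inverse. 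Both are incomplete sketches, so this is a legitimate alternative in principle, but you should be aware that your approach is precisely the classical argument that the $\pi$-setoid machinery is there to replace: the classical proof picks representatives of homology classes in quotient modules (needing choice), and the whole point of Appendix~\ref{section_pisetoids} is to package the required lifting data so that this step disappears.

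There is, however, a concrete error in the intermediate target you reduce to. You write that it suffices to show ``a quasi-isomorphism between degreewise projective complexes is a chain-homotopy equivalence.'' For unbounded complexes this is \emph{false}, even classically: for instance over $R=\mathbb Z/4$ the two-sided complex $\cdots\to R\xrightarrow{2}R\xrightarrow{2}R\to\cdots$ is degreewise free and acyclic but not contractible ($\partial h+h\partial=\mathrm{id}$ would force $4h=\mathrm{id}$). Degreewise projectivity is only a \emph{necessary} condition for cofibrancy here, as the theorem statement itself warns (``not all objectwise projective are necessarily cofibrant''). The statement you actually need is that a quasi-isomorphism between \emph{cofibrant} complexes (retracts of $I$-cell complexes, i.e.\ semi-projective/DG-projective complexes) is a chain-homotopy equivalence, and the proof of that must genuinely use the cellular filtration rather than degreewise projectivity alone. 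Your closing sentence about building the inverse ``cell-by-cell along the cellular filtration of $\widetilde Y$'' does use the right data, so the plan is morally sound; the stated reduction just needs to be corrected from ``degreewise projective'' to ``cofibrant.''
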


Of course, classically, this is in fact a Quillen model structure.

\begin{proof}
We just apply Theorem~\ref{Th_tensorWMS} with the choices explained above. Fibrations are characterized by the right lifting property against the $j_k$, as the $j_k$ all have a retraction (the $0$ map) any object is fibrant, and a map from the target of $j_k$ to $X$ is just the choice of an element in $X_{k+1}$, which implies the description of fibrations given. Condition (iii) is just a spelled out description of what is a map $\Ical \otimes X \rightarrow Y$. Condition (iv) can be deduced from Appendix~\ref{section_pisetoids} with some work, we will treat in details the corresponding statement for simplicial sets as Proposition~\ref{prop:Charac_of_simp_equiv} which is similar but harder.\end{proof}

\section{Simplicial examples}
\label{sec:simplicialEx}
\subsection{Generalities on simplicial sets and their cofibrations}
\label{subsec:GenSimp}

Let $\Delta$ be the category whose objects are the finite non-empty ordinals:

\[ [n]  = \{0,\dots,n\} \]

\noindent for $n \geqslant 0$ and whose morphisms are the order preserving maps. We denote $\widehat{\Delta}$ the category of presheaves of sets over $\Delta$, called simplicial sets. $\Delta [n]$ denotes the representable presheaf corresponding to $[n]$. For a simplicial set $X$, $X([n])$ is sometimes abbreviated to $X_n$ and for $f\colon [n] \rightarrow [m]$ we denote by $f^*$ the corresponding map $X([m]) \rightarrow X([n])$.

A cell in $X([n])$ is said to be degenerate if it is of the form $s^*y$ for $s\colon [n] \rightarrow [m]$ a surjection (also called a degeneracy). Using the factorization of maps in $\Delta$ as surjection followed by an injection any cell of the form $v^* y$ with $v$ a non-injective map is degenerate. We say that a cell is \emph{non-degenerate} if it is not degenerate, but one should be careful: being degenerate is not always a decidable property.

\begin{lemma}\label{Delta_abs_pushout}
In $\Delta$ a pushout of two degeneracies: 

\[\begin{tikzcd}[ampersand replacement=\&]
\left[ n \right] \ar[dr,phantom,"\ulcorner"{very near end}] \arrow[two heads]{r} \arrow[two heads]{d} \& \left[ i \right] \arrow[dotted,two heads]{d} \\
\left[ j \right] \arrow[dotted,two heads]{r} \& \left[ k \right] \\
\end{tikzcd}\]

 always exists, its two structural maps are again degeneracies and it is an absolute pushout (i.e. preserved by any functor).
\end{lemma}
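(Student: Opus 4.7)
Here is my plan to prove the lemma. I would construct the pushout as an explicit quotient, verify the universal property by hand, and then deduce absoluteness via the Yoneda embedding.

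\emph{Construction.} Let $\sim$ be the smallest equivalence relation on $[n]$ that identifies $x$ with $y$ whenever $p(x)=p(y)$ or $q(x)=q(y)$. Since $p$ and $q$ are monotone surjections their fibres are non-empty intervals of $[n]$, so the equivalence classes of $\sim$, being unions of chains of overlapping intervals, are themselves intervals. Hence $[n]/{\sim}$ inherits a total order, identifies canonically with a unique $[k]$, and the monotone quotient $\pi \colon [n] \twoheadrightarrow [k]$ factors uniquely as $\pi = p' \circ p = q' \circ q$ through monotone surjections $p' \colon [i] \twoheadrightarrow [k]$ and $q' \colon [j] \twoheadrightarrow [k]$.

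\emph{Universal property.} Given monotone maps $u \colon [i] \to [m]$ and $v \colon [j] \to [m]$ with $up = vq$, the common composite $[n] \to [m]$ is constant on $p$-fibres (by definition of $u$) and on $q$-fibres (by definition of $v$, using $up = vq$), hence on every $\sim$-class. It therefore factors uniquely through $\pi$ as a monotone $w \colon [k] \to [m]$ with $wp' = u$ and $wq' = v$.

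\emph{Absoluteness.} By the standard criterion that a colimit is absolute iff it is preserved by the Yoneda embedding, it suffices to check that the image square in $\widehat{\Delta}$ is a pushout of simplicial sets, i.e.\ a levelwise pushout of sets. For each $[m]$ we need $Hom([m],[k]) = Hom([m],[i]) \sqcup_{Hom([m],[n])} Hom([m],[j])$. Surjectivity follows because the minimum-of-fibre section $s \colon [k] \to [i]$ of $p'$ is monotone (the fibres of $p'$ being consecutive intervals of $[i]$), so any $f \colon [m] \to [k]$ lifts as $sf$. Injectivity amounts to connecting two lifts $g, g' \colon [m] \to [i]$ of the same $f$ by a zigzag alternately through $Hom([m],[n])$ and $Hom([m],[j])$; such a zigzag is built one coordinate at a time, using the min-sections of $p$ and $q$ to traverse the $\sim$-class at each value of the source.

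The main obstacle will be the absoluteness step. The construction and the universal property are essentially formal, but spelling out the zigzag connecting two arbitrary lifts requires some care, as $\sim$ is obtained by iterating generators and the zigzag must mirror this iteration fibre by fibre. A possible alternative, if this becomes cumbersome, is to exhibit absoluteness directly by producing compatible sections of $p$, $q$, $p'$, $q'$ that give a pushout analogue of a split coequalizer preserved by any functor.
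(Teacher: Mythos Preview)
The paper does not prove this lemma itself; it cites Joyal's notes for a direct computation and Bergner--Rezk for the elegant-Reedy approach. Your plan is essentially the direct combinatorial route and is sound: the construction of $[k]=[n]/{\sim}$ and the universal property are routine, Par\'e's criterion (a colimit is absolute iff preserved by the covariant Yoneda embedding) legitimately reduces absoluteness to a levelwise check, and your surjectivity argument via a monotone section of $p'$ is correct.

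The one soft spot is the zigzag step, where ``one coordinate at a time'' is misleading: coordinatewise choices need not assemble into \emph{monotone} maps $[m]\to[n]$. The fix is global. The monotone endomap $\phi=s_q\,q\,s_p\,p$ of $[n]$ built from the min-sections is pointwise weakly decreasing and stabilises after at most $n$ iterations at the minimum of each $\sim$-class; post-composing any lift $[m]\to[n]$ with the iterates of $\phi$ and alternately projecting by $p$ and $q$ produces a bounded-length zigzag in $\operatorname{Hom}([m],[i])\cup\operatorname{Hom}([m],[j])$ ending at the canonical ``minimum'' lift, so any two lifts of the same $f\colon[m]\to[k]$ meet there. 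This is exactly the split-pushout data you mention as an alternative at the end, and presenting it that way---sections satisfying equations preserved by any functor---is how the standard references argue.
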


\begin{proof}
The standard proof of this fact is constructive. A direct proof with a rather explicit computation specifically for the category $\Delta$ can be found in the first pages of \cite{joyal2008notes}. It can also be deduced from the more general theory of elegant Reedy categories introduced in \cite{bergner2013reedy}: the property in the lemma is one of the equivalent definition of elegant Reedy categories, and there are other equivalent definition considerably easier to check for the category $\Delta$.\end{proof}

The following is a constructive version of the classical Eilenberg-Zilber lemma:

\begin{lemma}\label{Lem_EilenbergZibler}
\begin{enumerate} [label=(\roman*)]

\item[]

\item If a cell $x \in X_n$ is degenerate in two ways, i.e. if $x= d_1^* y =d_2^* v$ with $d_1$ and $d_2$ two degeneracies, then there exists a cell $t$ such that $y=d_3^* t$ and $v=d_4^*t$ with $d_3$ and $d_4$ two degeneracies and $d_3 d_1 = d_4 d_2$ in $\Delta$.

\item If a cell $x$ has an expression of the form $x = d^* y$ for $d$ a degeneracy and $y$ non-degenerate, then this expression is unique.

\item Given a cell $x \in X_n$ if for every expression $x=d^* y$ with $d$ a degeneracy it is decidable whether $y$ is degenerate or not, then $x$ admit a (unique) expression of the form $d^* y$ with $d$ a degeneracy and $y$ non-degenerate.

\end{enumerate}

\end{lemma}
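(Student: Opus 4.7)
The plan is to obtain (i) directly from the absolute pushout lemma, derive (ii) from (i) together with the definition of non-degeneracy, and prove (iii) by induction on dimension.

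For (i), I would form the pushout in $\Delta$ of $d_1 \colon [n] \twoheadrightarrow [i]$ and $d_2 \colon [n] \twoheadrightarrow [j]$, which exists by \cref{Delta_abs_pushout} and comes with degeneracies $d_3 \colon [i] \twoheadrightarrow [k]$ and $d_4 \colon [j] \twoheadrightarrow [k]$ satisfying $d_3 d_1 = d_4 d_2$. Since this pushout is absolute, the presheaf $X$ (viewed as a contravariant functor on $\Delta$) carries it to a pullback in $\mathrm{Set}$. The pair $(y,v) \in X_i \times X_j$ satisfies $d_1^* y = x = d_2^* v$, so the universal property of this pullback produces the desired $t \in X_k$ with $d_3^* t = y$ and $d_4^* t = v$.

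For (ii), given two expressions $x = d_1^* y_1 = d_2^* y_2$ with both $y_1$ and $y_2$ non-degenerate, apply (i) to obtain a common refinement $t \in X_k$ together with degeneracies $d_3, d_4$ such that $y_1 = d_3^* t$, $y_2 = d_4^* t$, and $d_3 d_1 = d_4 d_2$. Non-degeneracy of $y_1$ forces $d_3$ to be the identity, since otherwise $d_3$ would be a non-identity surjection exhibiting $y_1 = d_3^* t$ as degenerate; likewise $d_4$ is the identity. Hence $[i] = [k] = [j]$, $t = y_1 = y_2$, and the compatibility relation collapses to $d_1 = d_2$.

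For (iii), I would proceed by induction on $n$. The base case $n = 0$ is immediate because $[0]$ admits no non-identity surjection, so every cell of $X_0$ is vacuously non-degenerate and $x = \mathrm{id}^* x$ works. For the inductive step, apply the decidability hypothesis to the tautological expression $x = \mathrm{id}^* x$: this tells us whether $x$ itself is degenerate. If not, we are done. Otherwise $x = d^* y$ for some non-identity degeneracy $d$ and some $y$ of strictly smaller dimension. To apply the induction hypothesis to $y$, I must verify that $y$ inherits the decidability hypothesis: but any expression $y = d'^* y'$ yields $x = (d' d)^* y'$, and the decidability assumption for this expression of $x$ is precisely the required decidability for the expression $y = d'^* y'$. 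The inductive conclusion applied to $y$ then yields $y = e^* z$ with $z$ non-degenerate, whence $x = (e d)^* z$ has the desired form. Uniqueness is immediate from (ii).

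The main subtlety lies in (iii), specifically in the constructive reading of ``$x$ is degenerate'': the decidability hypothesis must be interpreted as producing an actual witness $(d, y)$ whenever $x$ is asserted to be degenerate, and one must be careful that this witness-extraction is compatible with the inheritance step when passing from $x$ to $y$. Once that is granted, everything reduces to routine bookkeeping, with the absolute pushout lemma providing the essential computational input for (i).
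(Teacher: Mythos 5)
Your proposal is correct and follows essentially the same route as the paper: (i) via the absolute pushout of \cref{Delta_abs_pushout} turned into a pullback of sets by the presheaf, (ii) by applying (i) and observing non-degeneracy forces $d_3$, $d_4$ to be identities, and (iii) by induction on dimension, using the tautological expression $x=\mathrm{id}^*x$ to decide degeneracy and checking the hypothesis is inherited by $y$ through composition of surjections. Your elaboration on the inheritance step and the constructive reading of the decidability hypothesis only makes explicit what the paper leaves implicit.
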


\begin{proof}
(i) Is a translation of the fact that, by Lemma~\ref{Delta_abs_pushout} the pushout of $d_1$ and $d_2$ in $\Delta$ exists and is preserved by $X\colon \Delta \rightarrow Set^{op}$.

For (ii), if $x$ has two such expressions  $x= d_1^* y =d_2^* v$ then the first point implies that $y$ and $v$ have to be degeneracies of a same cell $t$, but as they are non-degenerate those degeneracies have to be identities, hence $y$ and $v$ are both equal to $t$ and $d_1=d_2$. Finally (iii) follows by induction on $n$: The result is trivially true for $x \in X_0$, and for $x \in X_n$ either\footnote{Here we use the assumption for the degeneracy map $d=\text{Id}$.} $x$ is non-degenerate, in which case the result is trivially true, or $x= d^* y$ for $d$ a degeneracy, but then $y$ also satisfies the hypothesis of our claim and has strictly smaller dimension, so that $y=d'^* z$ for $z$ non-degenerate and $d'$ a degeneracy and $x= (d' d)^* z$.\end{proof}


\begin{construction}\label{constr:partiaDelta_Lambda}

We consider the following subobjects of $\Delta[n]$:

\[ (\partial \Delta[n])_k = \{ f\colon  [k] \rightarrow [n] \text{ non-surjective} \} \]

\[ (\Lambda^i[n])_k = \left\lbrace f\colon  \Delta^k \rightarrow \Delta^n \middle| \parbox{5cm}{$f$ is not surjective,  nor a surjection onto $\{0,\dots,n \} \backslash \{i\}$ } \right\rbrace \]

We denote by $\partial[n] \colon  \partial \Delta[n] \hookrightarrow \Delta[n]$ and $\lambda^k[n]\colon \Lambda^k[n] \hookrightarrow \Delta[n]$ the natural inclusion. Let also $\partial^i[n]$, or simply $\partial^i \colon  \Delta[n-1] \rightarrow \Delta[n]$ be the $i$-th face map, i.e. the map that at the level of finite ordinal is injective and skip $i$.

Alternatively, $\partial \Delta[n]$ is the union (in $\widehat{\Delta}$) of the image of all the $\partial^i[n]$ and $\Lambda^i[n]$ is the union of the image of all the $\partial[n]^j$ for $j \neq i$. Geometrically, $\partial \Delta[n]$ corresponds to the boundary of $\Delta[n]$ and $\Lambda^i[n]$ to this same boundary minus the interior of the face opposed to the $i$-th vertex.

\end{construction}

The model structures we will consider on the category of simplicial sets have for generating cofibrations:

\[ I = \{ \partial[n] \colon  \partial \Delta[n] \hookrightarrow \Delta[n] \} \]

The small object argument produces (constructively) a weak factorization system on the category of simplicial sets into  ``$I$-cofibrations'' and ``$I$-fibrations''. This is the ``good'' version of the small object argument described in \ref{EasySOA}. In classical mathematics, it follows from the Eilenberg-Zilber lemma that $I$-cofibrations are exactly the monomorphisms and hence that every object is $I$-cofibrant. Using the constructive version of the Eilenberg-Zilber lemma, we get instead:

\begin{prop}\label{prop:simplicial_Cofibration} 
The $I$-cofibrations between simplicial sets, are the map $f\colon X \rightarrow Y$ such that:

\begin{itemize}

\item $f$ is a levelwise complemented monomorphisms, i.e. for all $n$, $f\colon X_n \rightarrow Y_n$ identifies $X_n$ with a complemented (i.e. decidable) subset of $Y_n$.

\item For all cell $y \in Y_n$ which is not in the image of $X_n$, the proposition ``$y$ is a degenerate cell'' is decidable. 
\end{itemize}

In particular:

\begin{itemize}

\item $I$-cofibrant objects are the simplicial sets where it is decidable if a cell is degenerate or not.

\item $I$-cofibrations between $I$-cofibrant objects are just the levelwise complemented monomorphisms.

\end{itemize}

\end{prop}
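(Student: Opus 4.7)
The plan is to show the two inclusions of classes separately, with the ``only if'' direction following from a standard closure-under-cell-attachment argument, and the harder ``if'' direction proved by filtering $Y$ by dimension and recognizing each stage as a pushout of coproducts of generating cofibrations. Throughout, the constructive Eilenberg-Zilber lemma (\cref{Lem_EilenbergZibler}) and the decidability hypothesis will be what allow the usual arguments to go through.

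For the direction ``$I$-cofibration $\Rightarrow$ the two conditions'', I will check that the conditions hold for the generating $\partial[n]$ and are closed under coproducts, pushouts, transfinite composition, and retracts. The generating map $\partial[n]$ is obviously a levelwise complemented monomorphism, and a cell of $\Delta[n]_k \setminus \partial\Delta[n]_k$ is a surjection $[k] \twoheadrightarrow [n]$, which is non-degenerate iff $k=n$ (decidable). Closure under coproducts and transfinite composition is routine. For closure under pushout along an arbitrary map $A \to C$: if $i \colon A \hookrightarrow B$ satisfies the conditions, the resulting $C \hookrightarrow C \cup_A B$ is still a complemented monomorphism (pushout of complemented monos), and a new cell $\bar y$ coming from $y \in B \setminus A$ is degenerate in the pushout iff $y$ is degenerate in $B$ (the forward implication uses that a non-trivial degeneration of $\bar y$ cannot factor through $C$ without $y$ landing in $A$, and the converse is immediate); the hypothesis on $i$ then gives decidability. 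Retracts are handled similarly, using that $y \in Y$ is degenerate iff its image under the retract inclusion is degenerate in the ambient object.

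For the converse direction, suppose $f \colon X \hookrightarrow Y$ satisfies the two conditions. By hypothesis, for each $n$ the set
\[ N_n = \{y \in Y_n : y \notin X_n \text{ and } y \text{ is non-degenerate}\} \]
is a decidable subset of $Y_n$. Define $Y^{(-1)} = X$ and let $Y^{(n)} \subseteq Y$ be the subpresheaf consisting of cells of the form $s^* z$ where either $z \in X$ or $z \in N_m$ for some $m \leq n$ (uniqueness of this representation, where it exists, follows from \cref{Lem_EilenbergZibler}.(ii)). Using that $y \notin X$ implies that any non-degenerate representative $z$ of $y$ also lies outside $X$, one shows by induction on dimension, using the decidability hypothesis together with \cref{Lem_EilenbergZibler}.(iii), that every $y \in Y$ admits a non-degenerate representative, so $Y = \mathrm{colim}_n Y^{(n)}$. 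The key remaining step is then that for each $n$ we have a pushout square
\[
\begin{tikzcd}
\coprod_{y \in N_n} \partial \Delta[n] \ar[r] \ar[d,hook] & Y^{(n-1)} \ar[d,hook] \\
\coprod_{y \in N_n} \Delta[n] \ar[r] & Y^{(n)}
\end{tikzcd}
\]
where the top horizontal map is given by the boundary of each $y$ (its faces have smaller dimension, hence lie in $Y^{(n-1)}$ by the above decomposition), and where commutation of the square with $Y^{(n)}$ on the bottom right follows again from \cref{Lem_EilenbergZibler}.

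Finally, the ``in particular'' statements are immediate: applying the characterization to the map $\emptyset \hookrightarrow Y$ makes the first condition vacuous and leaves exactly the decidability of degeneracy for all cells; and if both $X$ and $Y$ are cofibrant, then degeneracy is decidable for every cell of $Y$, so the second condition is automatic. The main obstacle I expect is the careful bookkeeping in the constructive decomposition: one needs to verify that $Y^{(n)}$ is a well-defined subpresheaf (closed under faces) without appealing to global existence of non-degenerate representatives, and that the pushout square above matches on cells of every dimension $k$, which requires distinguishing the cells of $\Delta[n]_k$ that come from $\partial\Delta[n]_k$ (non-surjective $[k] \to [n]$) from those that don't (surjections, i.e.\ degeneracies of the top cell).
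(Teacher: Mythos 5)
Your proof is correct and follows essentially the same route as the paper's: the forward direction checks the two conditions on the generating maps $\partial[n]$ and their closure under coproducts, pushouts, transfinite compositions and retracts, and the converse builds the same skeletal filtration (your $Y^{(n)}$ is the paper's $A^{n+1}$ up to reindexing) and recognizes each stage as a pushout of a coproduct of $\partial\Delta[n]\hookrightarrow\Delta[n]$ via the constructive Eilenberg--Zilber lemma. The only cosmetic slip is the claim that \emph{every} $y\in Y$ admits a non-degenerate representative --- this is only guaranteed for $y\notin X$ (for $y\in X$ degeneracy need not be decidable), but since $Y^{(n)}$ is defined to contain all of $X$ outright, the filtration still exhausts $Y$ and the argument is unaffected.
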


This recovers in particular that classically every object is cofibrant and cofibrations are just the monomorphisms. The fact that not every object is cofibrant constructively is fairly new, but it was somehow expected from some negative results of T.~Coquand, M.~Bezem and E.~Parmann \cite{bezem2015kripke}, \cite{bezem2015non} about the homotopy theory of Kan complexes in constructive mathematics, and the key point of all their obstructions is exactly the undecidability of degeneratness in general.

\begin{proof}
As we are working in a presheaf category, co-limits are computed levelwise and so the ``good case'' of the small object argument presented in \ref{EasySOA} applies. In particular, cofibrations are retract of $\omega$-composition of pushouts of coproducts of generating cofibrations.

The generating cofibrations $\partial \Delta [n] \hookrightarrow \Delta[n]$ satisfy all the conditions of the proposition and these conditions are transferred to coproducts, pushouts, transfinite compositions and retracts, so that any $I$-cofibration satifies them as well.

Conversely,  assume that $f\colon  A \hookrightarrow B$ is a map satisfying the conditions in the proposition, then essentially the usual proof that every monomorphisms of simplicial set is a cofibration can be carried over constructively, thanks to those additional decidability assumptions:

First if $x$ is a cell in $B$ not in $A$, then if $x= v^* y$ for $v$ a degeneracy, then the cell $y$ cannot be in $A$ either (otherwise $x$ would be). In particular it is decidable if $y$ is degenerate or not, hence our Eilenberg-Zilber Lemma~\ref{Lem_EilenbergZibler} shows that $x=d^* y$ for a unique degeneracy map $d$ and non-degenerate cell $y$.

Let $A^n$ be the subset of cells of $B$ which are either in $A$ or degeneracies of a cell of dimension strictly less than $n$. So $A^0=A$ and $B = \bigcup_n A^n$. Each $A^n$ is a sub-simplicial set and they are all levelwise complemented. We claim that for each $n$, $A^n$ is obtained from $A^{n-1}$ as a pushout of a coproduct of copies of the map $\partial \Delta[n] \hookrightarrow \Delta[n]$. For each cell $d \in B_n$ which is neither degenerate nor in $A$, the composed map $\partial \Delta[n] \rightarrow \Delta[n] \overset{d}{\rightarrow} B$ factor in $A^{n-1}$, as its only non-degenerate cells are of dimension strictly smaller than $n$. Let $A^{n-1} \rightarrow C$ be the pushout of the coproduct of one copy of $\partial \Delta[n] \hookrightarrow \Delta[n]$ for each such cell $d$. We have a natural map from $C$ to $B$, it is rather immediate from Lemma~\ref{Lem_EilenbergZibler} and our various decidability assumption that this map is a monomorphism, and that it identifies $C$ with $A^{n}$.\end{proof}

\begin{prop}\label{prop:pushout_product_Simpcofibrations}
If $i$ and $i'$ are $I$-cofibrations then $i \corner{\times} i'$ also is.
\end{prop}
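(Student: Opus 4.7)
The plan is to reduce to the case of generating cofibrations via the Joyal--Tierney calculus (referenced later as Lemma~\ref{Lem_mainJTcalculus}), then to verify the corner-product of two generating cofibrations directly against the characterization of $I$-cofibrations given in Proposition~\ref{prop:simplicial_Cofibration}.

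For the reduction, I would observe that for fixed $i' \in I\text{-cof}$, the class of maps $i$ such that $i \corner{\times} i'$ is an $I$-cofibration is stable under pushout, transfinite composition, retract, and contains isomorphisms (because $I\text{-cof}$ itself is, and $\corner{\times}$ commutes with colimits in each variable, using that $\widehat{\Delta}$ is cartesian closed so $(-) \times Z$ is a left adjoint). Hence if it contains $I$ it contains $I\text{-cof}$; the symmetric argument in the second variable then gives the full claim. Consequently it suffices to show $\partial[n] \corner{\times} \partial[m]$ is an $I$-cofibration for all $n, m \geq 0$.

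For this latter map, I would explicitly describe the corner product: it is the inclusion
\[ (\partial \Delta[n] \times \Delta[m]) \cup (\Delta[n] \times \partial \Delta[m]) \hookrightarrow \Delta[n] \times \Delta[m], \]
whose $k$-cells on the right are pairs $(f,g)$ with $f \colon [k] \to [n]$, $g \colon [k] \to [m]$, and which omits exactly those pairs where both $f$ and $g$ are surjective. This gives a levelwise complemented monomorphism, since for a pair $(f,g)$ in dimension $k$ the predicates ``$f$ is surjective'' and ``$g$ is surjective'' are each decidable ($[k], [n], [m]$ being finite), so their conjunction is decidable too. It then remains to check that for each pair $(f,g)$ not in the boundary (i.e.\ with $f, g$ both surjective), it is decidable whether $(f,g)$ is degenerate. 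But $(f,g)$ is degenerate in $\Delta[n] \times \Delta[m]$ iff there exists $0 \leq i < k$ with $f(i)=f(i+1)$ and $g(i)=g(i+1)$, which is again a finite decidable condition.

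The main obstacle is largely bookkeeping: one should confirm that the pushout $(\partial \Delta[n] \times \Delta[m]) \cup_{\partial \Delta[n] \times \partial \Delta[m]} (\Delta[n] \times \partial \Delta[m])$ really is computed as the union subobject of $\Delta[n] \times \Delta[m]$ described above, and that the two decidability conditions of Proposition~\ref{prop:simplicial_Cofibration} are verified honestly constructively; neither step is hard, but both rely on the finiteness of the ordinals involved and on the constructive Eilenberg--Zilber Lemma~\ref{Lem_EilenbergZibler} for normal form of degenerate cells. With these pieces in place, the characterization in Proposition~\ref{prop:simplicial_Cofibration} identifies $\partial[n] \corner{\times} \partial[m]$ as an $I$-cofibration and the proposition follows.
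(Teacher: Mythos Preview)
Your proposal is correct and follows essentially the same approach as the paper: reduce to generating cofibrations via the Joyal--Tierney calculus (Lemma~\ref{Lem_mainJTcalculus}), then verify that $\partial[n] \corner{\times} \partial[m]$ satisfies the characterization of Proposition~\ref{prop:simplicial_Cofibration}. The paper's proof simply declares the second step ``immediate'', whereas you spell out the decidability checks explicitly; your invocation of the Eilenberg--Zilber lemma is not strictly needed here (the degeneracy criterion for a pair $(f,g)$ reduces directly to the finite check you describe), but the argument is sound.
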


\begin{proof}
It is enough to check the result for two generating cofibrations $\partial[n]$ and $\partial[m]$ and in this case it is immediate that $\partial[n] \corner{\times} \partial[m]$ satisfies the conditions of Proposition~\ref{prop:simplicial_Cofibration}.\end{proof}

\subsection{The weak Kan-Quillen model structure}
\label{subsec:KanQuillenMS}

The goal of this subsection is to prove the following:

\begin{theorem}\label{th:KanQuillen_WMS} There is a weak model structure on the category of simplicial sets such that:

\begin{itemize}

\item The fibrant objects and fibrations between fibrant objects are characterized by the right lifting property against simplicial horn inclusion:

\[ \lambda^k[n]\colon  \Lambda^k[n] \hookrightarrow \Delta[n] \]

\item The cofibrant objects and cofibrations between them are these of Proposition~\ref{prop:simplicial_Cofibration}.

\item Acyclic fibrations between fibrant objects are characterized by the lifting property against the boundary inclusion $\partial [n]\colon \partial \Delta[n] \hookrightarrow \Delta[n]$.

\end{itemize}
\end{theorem}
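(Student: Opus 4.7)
The plan is to apply Theorem~\ref{Th_tensorWMS} in the self-enriched case: take $\Acal = \Ccal = \widehat{\Delta}$ with the cartesian monoidal structure (so $\odot = \times$ is divisible on both sides since $\widehat{\Delta}$ is cartesian closed), $I_\Acal = I = \{\partial[n]\}$, $J_\Acal = J = \{\lambda^k[n]\}$, and $\Ib = \Delta[0]$. The weak factorization systems generated by $I$ and $J$ exist by the ``good'' small object argument (\ref{EasySOA}), since $\widehat{\Delta}$ is a presheaf category with finite-dimensional generators; the $I$-cofibrations were described in Proposition~\ref{prop:simplicial_Cofibration}. The inclusion $J \subset I\textsc{-cof}$ holds because each horn inclusion $\lambda^k[n]$ satisfies the conditions of that proposition. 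The cylinder diagram required by condition~(\ref{Th_tensorWMS:Cond_cylinder}) is simply
\[
\begin{tikzcd}
\Delta[0] \coprod \Delta[0] \ar[d,"\triangledown"] \ar[r,hook] & \Delta[1] \ar[d] \\
\Delta[0] \ar[r,equal] & \Delta[0]
\end{tikzcd}
\]
where the first leg $\Delta[0] \hookrightarrow \Delta[1]$ is literally the horn inclusion $\Lambda^0[1] \hookrightarrow \Delta[1]$, hence an acyclic cofibration, and $\Delta[0] \to \Delta[0]$ is trivially an acyclic cofibration.

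The corner-product $I \corner{\times} I \subset I\textsc{-cof}$ is Proposition~\ref{prop:pushout_product_Simpcofibrations}. What remains, and what is the genuine technical content of the proof, is to establish the two corner-product lifting conditions
\[ I \corner{\times} J \pitchfork \text{(fibrations between fibrant objects)}, \qquad J \corner{\times} I \pitchfork \text{(fibrations between fibrant objects)}. \]
By bilinearity of the corner-product and Joyal--Tierney calculus (Appendix~\ref{Subsection_Joyal_tierney_calculus}), it suffices to show that each of the maps $\partial[n] \corner{\times} \lambda^k[m]$ and $\lambda^k[m] \corner{\times} \partial[n]$ is a $J$-cofibration, i.e.\ can be written (constructively) as a retract of a transfinite composite of pushouts of coproducts of horn inclusions. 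I would do this by the standard explicit filtration of $\Delta[n]\times\Delta[m]$ by its non-degenerate simplices (indexed by shuffles, i.e.\ monotone paths in the $n\times m$-grid), ordered so that each successive attachment fills a specific inner or outer horn. One verifies once and for all that:
\begin{itemize}
\item $(\{\varepsilon\} \hookrightarrow \Delta[1]) \corner{\times} \partial[n]$ is a composite of pushouts of horn inclusions (the ``Kan extension of the mapping cylinder''), giving the anodyneness needed for Theorem~\ref{Th::WMS_from_cylinder_path}.(4);
\item $(\partial \Delta[1] \hookrightarrow \Delta[1]) \corner{\times} \lambda^k[n]$ is a composite of pushouts of horn inclusions, giving (5);
\item more generally, the same sort of shuffle filtration handles $\partial[n] \corner{\times} \lambda^k[m]$ and $\lambda^k[m] \corner{\times} \partial[n]$.
\end{itemize}

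The main obstacle is precisely this combinatorial/constructive verification: classically the argument uses only the Eilenberg--Zilber lemma plus the observation that every non-degenerate cell of a product of simplices is a shuffle, but constructively we must write down the filtrations explicitly (without relying on the law of excluded middle to decide whether a given face lies in the sub-complex already built). This is the constructive reworking of the Gabriel--Zisman anodyne extension theorem. Once this combinatorial input is in place, all hypotheses of Theorem~\ref{Th_tensorWMS} are verified, and the theorem yields the desired weak model structure. The description of fibrations, acyclic fibrations and cofibrations between (co)fibrant objects is then immediate from the construction: $J$-fibrant objects and $J$-fibrations are by definition those with the right lifting property against the horn inclusions, while $I$-fibrations with fibrant target are acyclic fibrations by the argument at the start of the proof of Theorem~\ref{Th_tensorWMS}, and the description of cofibrations was already given in Proposition~\ref{prop:simplicial_Cofibration}.
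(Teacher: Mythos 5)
Your framework is exactly the one the paper uses: apply Theorem~\ref{Th_tensorWMS} with $\Acal=\Ccal=\widehat{\Delta}$, the cartesian monoidal structure, $I=\{\partial[n]\}$, $J=\{\lambda^k[n]\}$, unit $\Ib = \Delta[0]$, and the cylinder $\Delta[0]\coprod\Delta[0]\hookrightarrow\Delta[1]\to\Delta[0]$. All of that matches. (Minor inconsistency: you announce you will apply Theorem~\ref{Th_tensorWMS} but then cite conditions (4), (5) of Theorem~\ref{Th::WMS_from_cylinder_path}; for this example the two routes are essentially interchangeable, but you should pick one.)

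Where you genuinely diverge from the paper, and where the gap sits, is the corner-product condition $J\,\corner{\times}\,I\subset J\textsc{-cof}$. You propose to establish that each $\partial[n]\corner{\times}\lambda^k[m]$ and $\lambda^k[m]\corner{\times}\partial[n]$ is a transfinite composite of pushouts of horn inclusions, via a shuffle filtration of $\Delta[n]\times\Delta[m]$, and you acknowledge that carrying this out constructively is ``the main obstacle.'' That direct route is not how this is handled in the literature, and for good reason: even classically, the Gabriel--Zisman anodyne extension theorem does \emph{not} exhibit $\lambda^k[m]\corner{\times}\partial[n]$ as a literal composite of pushouts of horns; it uses a retract somewhere. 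The clean way to do this is \emph{Joyal's retract lemma} (Lemma~\ref{lemma:Joyal_retract_KanQuillen} in the paper, Theorem~3.2.3 of Joyal's notes, or Prop.~2.1.2.6 of Lurie): the sets $\{\lambda^k[n]\}$ and $\{i_\epsilon\corner{\times}\partial[n]\}$ generate the same weak factorization system. The only combinatorics needed are (a) $i_\epsilon\corner{\times}\partial[n]$ is a pushout of horns — exactly your first bullet — and (b) $\lambda^k[n]$ is a retract of $i_\epsilon\corner{\times}\lambda^k[n]$ (a small explicit diagram). With the alternative generating set in hand, the corner-product stability is purely formal Joyal--Tierney calculus: $(i_\epsilon\corner{\times}\partial[n])\corner{\times}i = i_\epsilon\corner{\times}(\partial[n]\corner{\times}i)$, and $\partial[n]\corner{\times}i$ is a cofibration by Proposition~\ref{prop:pushout_product_Simpcofibrations}, so the whole thing is in the class generated by the $i_\epsilon\corner{\times}\partial[n']$.

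So the concrete gap: your plan omits the retract (b), and without it you are left with a substantially harder direct filtration claim (your third bullet) that you neither prove nor have a standard reference for, and whose truth as stated — literal composite of pushouts without retracts, constructively — is at best non-obvious. Replacing your third bullet by Joyal's retract lemma closes the gap, and also makes the verification of constructivity a matter of checking two short explicit constructions rather than reworking the Gabriel--Zisman filtration from scratch.
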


The class of equivalences between fibrant objects will be described in \ref{prop:Charac_of_simp_equiv}. The theorem will be proved by applying Theorem~\ref{Th_tensorWMS} to the cartesian monoidal structure. The proof will be completed in \ref{proof:KanQuillen_WMS}. As usual, the important point is to check the corner-product condition which we will deduce from:

\begin{lemma}[Joyal]\label{lemma:Joyal_retract_KanQuillen}
The following set of morphisms generates the same weak factorization systems:

\begin{enumerate}

\item The set of horn inclusion $\lambda^k[n] : \Lambda^k[n] \hookrightarrow \Delta[n]$,

\item The set of morphisms $ i_{\epsilon} \corner{\times} \partial[n]$ with $\partial [n] \colon  \partial \Delta[n] \hookrightarrow \Delta[n]$ the boundary inclusion and $i_0,i_1 \colon  \Delta[0] \rightrightarrows \Delta[1]$ are the two endpoint inclusion.

\end{enumerate}

\end{lemma}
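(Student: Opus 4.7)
The plan is to prove the lemma by showing both sets have the same cellular closure, which suffices for generating the same weak factorization system. I proceed by establishing both inclusions of saturations.

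First, for $\{i_\epsilon \corner{\times} \partial[n]\} \subseteq \mathrm{cell}(\{\lambda^k[m]\})$: I would use the classical triangulation of the prism $\Delta[1] \times \Delta[n]$ into its $n+1$ non-degenerate $(n+1)$-simplices $\sigma_j = [(0,0), \ldots, (0,j), (1,j), \ldots, (1,n)]$. Starting from the source of $i_0 \corner{\times} \partial[n]$, namely $(\{0\} \times \Delta[n]) \cup (\Delta[1] \times \partial\Delta[n])$, I attach the $\sigma_j$ in decreasing order of $j$; at each step exactly one face of the new simplex is absent, so the attachment is a pushout of a horn inclusion of dimension $n+1$ ($\Lambda^n[n+1]$ at the first step, then $\Lambda^j[n+1]$ for decreasing $j$, ending with $\Lambda^0[n+1]$). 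The symmetric argument with reverse attachment order handles $i_1 \corner{\times} \partial[n]$.

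Second, for $\{\lambda^k[n]\} \subseteq \mathrm{cell}(\{i_\epsilon \corner{\times} \partial[m]\})$, I use a retract argument based on a deformation retraction. For each pair $k, n$, construct a monotone simplicial map $h\colon \Delta[n] \times \Delta[1] \to \Delta[n]$ satisfying: (a) $h$ restricts to the identity on one face $\Delta[n] \times \{\epsilon\}$, (b) $h$ maps the opposite face $\Delta[n] \times \{1-\epsilon\}$ into $\Lambda^k[n]$, and (c) $h(\Lambda^k[n] \times \Delta[1]) \subseteq \Lambda^k[n]$. For the outer horn $k=0$, take $\epsilon = 1$ with $h(v,0) = 0$ and $h(v,1) = v$; for $k = n$, take $\epsilon = 0$ with $h(v,0) = v$ and $h(v,1) = n$; for an inner horn $0 < k < n$, take $\epsilon = 1$ with $h(v,1) = v$ and $h(v,0) = v$ for $v \neq k+1$, $h(k+1, 0) = k$. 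Given a lifting problem $(\phi, \psi)$ for $p$ against $\lambda^k[n]$, the formula $(v, t) \mapsto \phi(h(v,t))$ on $(\Delta[n] \times \{1-\epsilon\}) \cup (\Lambda^k[n] \times \Delta[1])$, together with $\psi \circ h$, is a lifting problem for $p$ against the pushout-product $\lambda^k[n] \corner{\times} i_{1-\epsilon}$. The combinatorial step that closes the argument is the decomposition $\Lambda^k[n] \hookrightarrow \partial\Delta[n] \hookrightarrow \Delta[n]$, where the first inclusion is a pushout of $\partial[n-1]$ (attaching the missing $k$-th face along $\partial\Delta[n-1] \hookrightarrow \Lambda^k[n]$) and the second is $\partial[n]$; since $\corner{\times}$ preserves pushouts and compositions in each argument, $\lambda^k[n] \corner{\times} i_{1-\epsilon}$ lies in $\mathrm{cell}(\{i_{1-\epsilon} \corner{\times} \partial[m]\})$, a lift exists by hypothesis, and its restriction to $\Delta[n] \times \{\epsilon\}$ gives the desired lift.

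The main obstacle is the verification of property (c), which is delicate because the naive "collapse to a single vertex" construction typically fails — collapsing toward a vertex $j' \neq k$ can introduce $j'$ into the image of cells that were characterized by missing exactly $j'$, destroying the horn property. The crucial insight is that $h$ must collapse toward the vertex $k$ itself (the one indexing the horn), so that the only vertex $h$ can introduce into an image is $k$; then cells of $\Lambda^k[n]$, characterized as those missing some vertex $j \neq k$, retain $j$ as a missing vertex under $h$, since $j \neq k$ and $h$ does not touch it. This choice of collapse direction is what makes the construction go through uniformly for all outer and inner horns.
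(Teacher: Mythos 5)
Your argument is correct and follows, in substance, the same route the paper outlines by citing Joyal's notes and Lurie's book: the explicit prism triangulation places set $(2)$ in the cellular closure of set $(1)$, and for the converse you construct the standard simplicial deformation retraction $h\colon\Delta[n]\times\Delta[1]\to\Delta[n]$ realizing $\lambda^k[n]$ as a retract of $\lambda^k[n]\corner{\times}i_\delta$, then factor $\lambda^k[n]$ through $\partial\Delta[n]$ to check that this pushout-product lies in the cellular closure of $(2)$, which is exactly the role played in the paper's sketch by the abstract Joyal--Tierney corner-product lemma. One small wording fix: your opening plan claims both sets have the same cellular closure, but the retract step really shows $\lambda^k[n]$ lies in the saturated class generated by $(2)$ (retracts of transfinite composites of pushouts of coproducts), which is precisely what \emph{generating the same weak factorization system} requires, so the conclusion is unaffected.
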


We call \emph{anodyne map} the left class of this weak factorization system.

\begin{proof}
This corresponds to theorem $3.2.3$ in \cite{joyal2008notes}, see also proposition $2.1.2.6$ in \cite{lurie2009higher} (which is slightly different, but the lemma can be deduced by combining this statement and its dual). The proof given in both these references are completely constructive:

One first observes that the maps $i_{\epsilon} \corner{\times} \partial [n]$ can be explicitly constructed as a pushout of horn inclusion, hence the set (ii) is included in the left class generated by (i). Then, as an application of Joyal-Tierney calculus, we observe that the left class generated by (ii) contains all the morphisms $i_{\epsilon} \corner{\times} v$ for any cofibration $v$. Another explicit construction shows that the morphism $\lambda^k[n]\colon  \Lambda^k[n] \hookrightarrow \Delta[n]$ is a retract of $ i_{\epsilon} \corner{\times} \lambda^k[n]$ (for $\epsilon =1$ if $k>0$ and $\epsilon =0$ if $k<n$). As $(\Lambda^k[n] \hookrightarrow \Delta[n])$ is a cofibration it does show that the set (i) is included in the left class generated by (ii).\end{proof}

\begin{cor}\label{cor:Simplicial_corner_prod}

In the category of simplicial sets, if $i$ is a cofibration and $j$ is an anodyne morphism, i.e. in the left class of the weak factorization system of Lemma~\ref{lemma:Joyal_retract_KanQuillen}, then $j \corner{\times} i $ is also an anodyne morphism.
\end{cor}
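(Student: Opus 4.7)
The plan is to exploit the flexibility offered by \cref{lemma:Joyal_retract_KanQuillen}: instead of working directly with horn inclusions, I would use the alternative generating set $\{i_\epsilon \corner{\times} \partial[n]\}$ for anodyne maps, together with the standard fact (Joyal-Tierney calculus, \cref{Subsection_Joyal_tierney_calculus}) that the corner-product $\_ \corner{\times} \_$ respects saturation in each variable separately.

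First I would prove an intermediate claim: for any cofibration $k$ and either endpoint inclusion $i_\epsilon \colon \Delta[0] \hookrightarrow \Delta[1]$, the corner-product $i_\epsilon \corner{\times} k$ is anodyne. By Joyal-Tierney calculus, the class of cofibrations $k$ for which this holds is closed under pushouts, transfinite compositions, coproducts, and retracts, so it suffices to check it on the generating cofibrations $k = \partial[n]$. But $i_\epsilon \corner{\times} \partial[n]$ is literally one of the generators for anodyne maps given by \cref{lemma:Joyal_retract_KanQuillen}, so this is immediate.

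Now for the main statement: given an anodyne map $j$ and a cofibration $i$, I want $j \corner{\times} i$ to be anodyne. Fixing $i$, the class of morphisms $j$ such that $j \corner{\times} i$ is anodyne is saturated (again by Joyal-Tierney), so it suffices to check it when $j$ ranges over the alternative generators $i_\epsilon \corner{\times} \partial[n]$ from \cref{lemma:Joyal_retract_KanQuillen}. By associativity of the corner-product,
\[ (i_\epsilon \corner{\times} \partial[n]) \corner{\times} i \;\cong\; i_\epsilon \corner{\times} (\partial[n] \corner{\times} i). \]
By \cref{prop:pushout_product_Simpcofibrations}, $\partial[n] \corner{\times} i$ is again a cofibration, and hence by the intermediate claim the right-hand side is anodyne, which completes the proof.

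I do not anticipate a real obstacle here: the argument is essentially formal once one commits to using the alternative generators provided by \cref{lemma:Joyal_retract_KanQuillen}. The only point worth double-checking is that the Joyal-Tierney closure properties used are valid constructively, but this is exactly what \cref{Subsection_Joyal_tierney_calculus} is set up to handle, and associativity of the corner-product is a purely formal consequence of the associativity of the cartesian product together with universal properties of pushouts.
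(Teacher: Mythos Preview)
Your proposal is correct and follows essentially the same approach as the paper's proof: both reduce to the alternative generators $i_\epsilon \corner{\times} \partial[n]$ from \cref{lemma:Joyal_retract_KanQuillen}, use associativity of the corner-product, invoke \cref{prop:pushout_product_Simpcofibrations} to see that $\partial[n] \corner{\times} i$ is a cofibration, and conclude via Joyal--Tierney calculus that $i_\epsilon \corner{\times} (\partial[n] \corner{\times} i)$ is anodyne. Your version simply spells out the intermediate claim (that $i_\epsilon \corner{\times} k$ is anodyne for any cofibration $k$) more explicitly, whereas the paper absorbs this into a single sentence.
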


\begin{proof}
This follows directly from Lemma~\ref{lemma:Joyal_retract_KanQuillen} and the results of Appendix~\ref{Subsection_Joyal_tierney_calculus}. It is enough to check that if $i$ is a cofibration and $j =  i_{\epsilon} \corner{\times} \partial[n]$ is one of the generators, then $j \corner{\times} i = i_{\epsilon} \corner{\times} \partial[n] \corner{\times} i = i_{\epsilon} \corner{\times} ( \partial[n] \corner{\times} i )$. But by Proposition~\ref{prop:pushout_product_Simpcofibrations}, the map $\partial[n] \corner{\times} i$ is a simplicial cofibration, hence the map $i_{\epsilon} \corner{\times} ( \partial[n] \corner{\times} i )$ is in the class generated by the $i_{\epsilon} \corner{\times} \partial[n]$, i.e. is anodyne, which proves the result.\end{proof}

\begin{proof1}\label{proof:KanQuillen_WMS} We apply Theorem~\ref{Th_tensorWMS} to the cartesian monoidal structure on $\widehat{\Delta}$. The corner-product axiom for cofibration has been proved in \ref{prop:pushout_product_Simpcofibrations} and for anodyne morphism in Corollary~\ref{cor:Simplicial_corner_prod}. The good version of the small object argument applies to both these classes. The unit for the cartesian tensor product is $\Delta[0]$ and is cofibrant. Finally a cylinder for $\Delta[0]$ is given by:

\[ \Delta[0] \coprod \Delta[0] \overset{\partial[1]}{\hookrightarrow} \Delta[1] \rightarrow \Delta[0] \]

The two maps $\Delta[0] \rightrightarrows \Delta[1]$ are part of our generating acyclic cofibrations, so this concludes the proof. The description of acylic fibrations is immediate from the description of cofibrations and the fact that the generating cofibrations have cofibrant domains.\qed
\end{proof1}

The end of subsection~\ref{subsec:KanQuillenMS} is devoted to the proof of Proposition~\ref{prop:Charac_of_simp_equiv} below that recovers a constructive version of the usual characterization of equivalences in terms of homotopy groups.

\begin{construction}

Given a fibrant simplicial sets $X$, and $x \in X([0])$, we define, following Appendix~\ref{section_pisetoids}:

\[ \pi_n(X,x) \coloneqq  \pi_{\partial[n]} (X,x) \]

\noindent where $x$ denotes the constant morphisms $\partial \Delta[n] \rightarrow \Delta[0] \overset{x}{\rightarrow} X$.  For $n=0$, we define $\pi_0(X) = \pi_{\Delta[0]/\emptyset}(X, !)$ where $!$ denotes the unique morphism $\emptyset \rightarrow X$. ``$\pi$'' is defined in Appendix~\ref{section_pisetoids}, we remind the reader that $\pi_i(X,x)$ is defined as a setoid (see Appendix~\ref{section_setoids}) whose quotient set is the usual homotopy group. Assuming the axiom of choice these setoids can be identified with the usual homotopy groups, but constructively they need to be considered as a different objects and contain more informations than the usual homotopy group.

If follows from Remark~\ref{rk:basic_functoriality_pi_setoid} that if $f\colon X \rightarrow Y$ is an equivalence then the induced morphism:

\[ \pi_i(f) \colon  \pi_i(X,x) \rightarrow \pi_i(Y,f(x)) \]

\noindent is an equivalence of setoids.

\end{construction}

\begin{prop}\label{prop:Charac_of_simp_equiv} A morphism $f\colon X \rightarrow Y$ between fibrant simplicial sets is an equivalence if and only if for all $i \geqslant 0$, and, in the case $i>0$, for all $x \in X$, the morphism:

\[ \pi_i(X,x) \rightarrow \pi_i(X,f(x)) \]

\noindent is an equivalence of setoids.  

\end{prop}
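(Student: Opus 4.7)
The ``only if'' direction is immediate from the construction preceding the statement: any equivalence between fibrant objects induces equivalences on all $\pi_i$-setoids by functoriality.

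For the converse, factor $f$ as $X \overset{\sim}{\hookrightarrow} X' \overset{p}{\twoheadrightarrow} Y$ using the factorization axiom of Definition~\ref{Def::weak_model_categories}. The acyclic cofibration is an equivalence (by Theorem~\ref{Th_LocCatEq}), hence induces equivalences on $\pi_i$-setoids, and by $2$-out-of-$3$ for setoid equivalences the fibration $p$ does as well. By Proposition~\ref{Prop_triv=equiv}.\ref{Prop_triv=equiv:acycFib=equiv}, $p$ is an equivalence if and only if it is an acyclic fibration, and by Theorem~\ref{th:KanQuillen_WMS} this is tested by the right lifting property against the boundary inclusions $\partial[n]\colon \partial\Delta[n] \hookrightarrow \Delta[n]$. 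Thus the proposition reduces to the following claim: a fibration $p\colon X' \twoheadrightarrow Y$ between fibrant objects that induces equivalences on all $\pi_i$-setoids has the right lifting property against every $\partial[n]$.

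To verify this, consider a square
\[
\begin{tikzcd}
\partial\Delta[n] \arrow[r,"u"] \arrow[d,hook,"\partial{[n]}"{swap}] & X' \arrow[d,two heads,"p"] \\
\Delta[n] \arrow[r,"v"] & Y.
\end{tikzcd}
\]
Choosing a basepoint $x_0$ among the vertices of $u$, the map $u$ represents a class in a $\pi$-setoid of $X'$ associated to the cofibration $\partial[n]$ (in the sense of Appendix~\ref{section_pisetoids}, using its relative version), while $v$ witnesses that $p_*[u]$ is the distinguished trivial class in the corresponding setoid of $Y$. The hypothesis that the induced map of $\pi$-setoids is an equivalence provides a filler for $u$ in $X'$ up to relative homotopy, together with a homotopy in $Y$ identifying its image with $v$. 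The second step is to promote this into an actual filler of the square: using the fact that $p$ is a fibration between fibrant objects, the identifying homotopy in $Y$ lifts through $p$, and applying this lift corrects the approximate filler into an honest diagonal of the square.

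The main obstacle is the translation between ``equivalence of $\pi_i$-setoids'' and ``lifting against boundary inclusions''. Classically this is obtained via the long exact sequence of a fibration, but constructively and without choice one must argue directly with setoid representatives, using the structural results of Appendix~\ref{section_pisetoids}. The key input is a lemma of the form: for a fibration $p$ between fibrant objects, $p$ is an acyclic fibration if and only if $\pi_{\partial[n]}(p)$ is an equivalence of setoids for every $n \geqslant 0$ and every choice of basepoint; combined with the comparison between $\pi_{\partial[n]}$ and the usual $\pi_i$ (which amounts to re-expressing the filling problem for a sphere with a chosen vertex), the proposition follows by applying this lemma at each $i$ and each basepoint.
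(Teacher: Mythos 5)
Your reduction to the lifting problem for a fibration $p$ between fibrant objects is sound and parallels what the paper does via Theorem~\ref{Th:Charac_WE_Pi_Setoids} (which internally performs exactly this (acyclic cofibration, fibration) factorization and invokes Lemma~\ref{Lem_Pi_sets}.\ref{Lem_Pi_sets:RLP=Epi_on_Pi_i}). However, there is a genuine gap at the decisive step. When you ``choose a basepoint $x_0$ among the vertices of $u$'' and assert that ``the hypothesis that the induced map of $\pi$-setoids is an equivalence provides a filler for $u$'', you are silently identifying two different things. The hypothesis gives you equivalences $\pi_i(X,x)\to\pi_i(Y,f(x))$ where $\pi_i(X,x) = \pi_{\partial[i]}(X,x)$ is taken at a \emph{constant} boundary map $x\colon\partial\Delta[i]\to\Delta[0]\to X$. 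But the map $u\colon\partial\Delta[n]\to X'$ in an arbitrary lifting square is not constant, so the relevant setoid is $\pi_{\partial[n]}(X',u)$ for a general $u$, and Lemma~\ref{Lem_Pi_sets}.\ref{Lem_Pi_sets:RLP=Epi_on_Pi_i} requires surjectivity of $\pi_{\partial[n]}(p,u)$ for \emph{every} such $u$, not just the constant ones. Your closing sentence waves this away as ``re-expressing the filling problem for a sphere with a chosen vertex'', but that comparison is not a formal re-indexing: it is the entire technical content of the paper's proof.

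Concretely, the paper supplies this missing bridge in two nontrivial steps. Step (i) shows that \emph{if} $\pi_{\partial[n]}(X,\lambda)$ already has an element $v$ (i.e.\ $\lambda$ already extends over $\Delta[n]$), then $v$ provides a relative homotopy of $\lambda$ to a constant map, and a chain of equivalences from Lemma~\ref{Lem_Pi_sets}.\ref{Lem_Pi_sets:Inv_POdomain} and \ref{Lem_Pi_sets:Inv_homotopy_of_Basepoint} identifies $\pi_{\partial[n]}(X,\lambda)$ with a constant-basepoint $\pi_n(X,x)$, to which the hypothesis applies. Step (ii) is the harder part: it produces that initial element $v$ by interpreting $\lambda$ itself as an element of a constant-basepoint $\pi_{n-1}(X,\lambda_0)$ (via the objects $S_{n-1}$ and $B_n$), using injectivity of $\pi_{n-1}(f,\lambda_0)$ to show $\lambda$ is nullhomotopic in $X$ because it is in $Y$, and translating that nullhomotopy back into an extension of $\lambda$ over $\Delta[n]$ (via the equivalence $B_{n+1}\simeq B'_{n+1}$ in $Ho(\partial\Delta[n]/\widehat{\Delta})$). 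Nothing in your proposal addresses either step, and without them the claim that ``the hypothesis\ldots provides a filler for $u$'' is unjustified.
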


\begin{proof}

By Theorem~\ref{Th:Charac_WE_Pi_Setoids} we need to show that under the assumption on $\pi$-setoids in the proposition, the map $\pi_{\partial[n]}(X,\lambda) \rightarrow \pi_{\partial[n]}(Y, f \circ\lambda)$ is a surjection of setoids (as in \ref{def:setoids_properties}.(iv)) for all $n$ and $\lambda \colon  \partial \Delta[n] \rightarrow X$. It follows immediately from our assumption in the case $n=0$ or when $\lambda$ is a constant map. 

The proof will be in two part:
\begin{enumerate}[label=(\roman*),noitemsep,topsep=2pt]
\item We show that given some element $v \in \pi_{\partial[n]}(X,\lambda)$ we can construct a structure of surjection on the map $\pi_{\partial[n]}(X,\lambda) \rightarrow \pi_{\partial[n]}(Y, f \circ\lambda)$.
\item We show that given an element in $\pi_{\partial[n]}(Y, f \circ\lambda)$ we can construct an element in $\pi_{\partial[n]}(X,\lambda)$.
\end{enumerate}
The combination of these two constructions provides the structure of surjection: any element $w$ in $\pi_{\partial[n]}(Y, f \circ\lambda)$ gives an element $v \in \pi_{\partial[n]}(X,\lambda)$, that in turn can be used to construct a surjection structure on  $\pi_{\partial[n]}(X,\lambda) \rightarrow \pi_{\partial[n]}(Y, f \circ\lambda)$, which can be used to produce a preimage (and a witness) for $w$.

We start with (i). $\lambda \colon  \partial \Delta[n] \rightarrow X$ and let $v\colon \Delta[n] \rightarrow X$ be an element of $\pi_{\partial[n]}(X,\lambda)$. The general idea is that $v$ shows that $\lambda$ is homotopic to a constant morphism hence the $\pi$-sets concerned are equivalent to these appearing from a constant map and for these the problem is already solved.

More precisely, by \cref{Lem_Pi_sets}.\ref{Lem_Pi_sets:Inv_POdomain} there is an equivalence of setoids \[ \pi_{\partial[n]}(X,\lambda) \simeq \pi_{\partial[n]'}(X,v) \] where $\partial[n]'$ is the morphism $\Delta[n] \hookrightarrow \Delta[n] \coprod_{\partial \Delta[n]} \Delta[n]$. Now as $\Delta[n]$ is equivalent to $\Delta[0]$ (it is possible to construct an explicit homotopy equivalence) the morphism $v$ is homotopic to a constant morphism $v'\colon  \Delta[n] \rightarrow X$, by \cref{Lem_Pi_sets}.\ref{Lem_Pi_sets:Inv_homotopy_of_Basepoint} this produces an equivalence of setoids $ \pi_{\partial[n]'}(X,v) \simeq \pi_{\partial[n]'}(X,v')$, using again \cref{Lem_Pi_sets}.\ref{Lem_Pi_sets:Inv_POdomain} this $\pi$-setoid is also equivalent to $\pi_{\partial[n]''}(X,v')$ where $\partial[n]''$ is obtained from $\partial[n]'$ by collapsing $\Delta[n]$ to $\Delta[0]$:

\[ \partial[n]'' \colon  \Delta[0] \rightarrow \left( \Delta[n] \coprod_{\partial \Delta[n]} \Delta[n] \right)  \coprod_{\Delta[n]} \Delta[0] \simeq \Delta[n] \coprod_{\partial \Delta[n]} \Delta[0] \]

Applying \cref{Lem_Pi_sets}.\ref{Lem_Pi_sets:Inv_POdomain} one more time this shows that our $\pi$-set is equivalent to $\pi_{\partial[n]}(X,v')$ where $v'$ is the constant map with the same value as the $v'$ mentioned earlier. All these equivalences are functorial in $X$, so this is shows that it is equivalent to put a surjection structure on $\pi_{\partial[n]}(f,\lambda)$ or on $\pi_{\partial[n]}(f,v')$, but the second case follows from the observation above that the problem is already solved for constant morphisms $\partial \Delta[n] \rightarrow X$.

We now prove (ii). We need to show that given any element in $\pi_{\partial[n]}(Y, f \circ \lambda)$ we can construct an element of $\pi_{\partial[n]}(X,\lambda)$. Here the informal idea is that $\lambda\colon  \partial \Delta[n] \rightarrow X$ can itself be, by a construction we will explain below, thought of as an element of $\pi_{n-1}(X,\lambda_0)$ (where $\lambda_0$ the composite of $\lambda$ with the vertices $0 \colon \Delta[0] \rightarrow \partial \Delta[n]$ ). Having an element in $\pi_{\partial[n]}(Y, f \circ \lambda)$ allows to show that the corresponding element of $\pi_{n-1}(X,\lambda_0)$ has a trivial image in $\pi_{n-1}(Y,f \circ \lambda_0)$, but as the map  $\pi_{n-1}(X,\lambda_0) \rightarrow  \pi_{n-1}(Y,f \circ \lambda_0)$ is a bijection of setoids there should also be a trivialization in $X$ which, but the same construction, corresponds to an element of $\pi_{\partial[n]}(X,\lambda)$.

To make this formal, we need to clarify how a function $\lambda\colon \partial \Delta[n] \rightarrow X$ corresponds to an element of $\pi_{n-1}(X,\lambda_0)$, and how a trivialization of such an element corresponds to an extension to $\Delta[n] \rightarrow X$. We start with some constructions: Consider the two pushout diagrams defining the objects $S_n$ and $B_n$ (the choice of the value of $i$ is irrelevant here):

\[
\begin{tikzcd}
 \Lambda^i[n] \ar[r,hook] \ar[dr,phantom,"\ulcorner",very near end] \ar[d] & \partial \Delta[n] \ar[r,hook] \ar[dr,phantom,"\ulcorner",very near end] \ar[d,"\theta"] & \Delta[n] \ar[d,"\theta'"] \\
\Delta[0] \ar[r,hook,"p"] & S_{n-1} \ar[r,hook,"i_n"] & B_{n} 
\end{tikzcd}
\]

All the vertical maps, as well as both the horizontal composites are equivalences ( $\Delta[0] \rightarrow \Lambda^i[n]$ is an acylic cofibration by $2$-out-of-$3$).

Note that $S_{n-1}$ is also isomorphic to $\Delta[n-1] \coprod_{\partial \Delta[n-1]} \Delta[0]$  through the inclusion of $\Delta[n-1]$ in $\partial \Delta[n]$ as the $i$-th face.

As the morphism $\theta\colon \partial \Delta[n] \rightarrow S_{n-1}$ is an equivalence and $X$ is fibrant, there exists a morphism $\mu\colon  S_{n-1} \rightarrow X$ such that the composite $\mu \theta$ is homotopy equivalent to $\lambda$, and as above, combining point \ref{Lem_Pi_sets:Inv_POdomain} and \ref{Lem_Pi_sets:Inv_homotopy_of_Basepoint} of Lemma~\ref{Lem_Pi_sets} there is an equivalence of $\pi$-setoids (natural in $X$) $\pi_{\partial[n]}(X,\lambda) \simeq \pi_{i_n}(X,\mu )$. This gives us in particular an element in $\pi_{i_n}(Y,f \mu )$. The morphism $\mu\colon  S_{n-1} \rightarrow X$, can be seen, by the observation that $S_{n-1}$ is isomorphic to $\Delta[n-1]\coprod_{\partial \Delta[n-1]} \Delta[0]$, as an element of $\pi_{\partial[n-1]}(X,\mu p )$.

Finally, we will prove that there is an element in $\pi_{i_n}(X,\mu )$ if and only if $\mu$ is trivial as an element of  $\pi_{\partial[n-1]}(X,\mu p )$ (trivial means equivalent to the constant map), this allows to conclude the proof by the informal argument explained above.

An arrow $ \Delta[n] \rightarrow X$ (constant on the boundary) is homotopy equivalent to a point relative to $\partial \Delta[n]$ if it can be extended into a morphism $\Delta[n] \times \Delta[1] $ which is constant on $\partial \Delta[n] \times \Delta[1] \cup \Delta[n] \times \{1\} $. I.e. a morphism $\mu\colon  S_n \rightarrow X$ is homotopically constant (relative to its base point) if it can be extended along:

\[ S_n \hookrightarrow \left( \Delta[n] \times \Delta[1] \coprod_{\partial \Delta[n] \times \Delta[1] \cup \Delta[n] \times \{1\}} \Delta[0] \right) = B'_{n+1} \]

The cofibration $\partial \Delta[n] \times \Delta[1] \cup \Delta[n] \times \{1\} \hookrightarrow \Delta[n] \times \Delta[1]$ used in the diagram above is the corner-product of $\partial[n]$ by one of the endpoint inclusion $\Delta[0] \rightarrow \Delta[1]$ hence is an acyclic cofibration, hence the inclusion $\Delta[0] \hookrightarrow B'_n$ is an acyclic cofibration. It follows that $B_{n+1}$ and $B'_{n+1}$ are equivalent in the homotopy category $Ho(\partial \Delta[n]/\widehat{\Delta})$ (their maps to $\Delta[0]$ are equivalences). This shows, by Lemma~\ref{Lem_Pi_sets}.\ref{Lem_Pi_sets:Inv_equiv_target}, that $\pi_{B_{n+1}/S_n}(X,\mu)$ and $\pi_{B'_{n+1}/S_{n}}(X,\mu)$ are equivalent. The first one being inhabited exactly means that $\mu$ is trivial as an element of $\pi_{\partial[n-1]}(X,\mu p ))$ by definition, and so this concludes the proof. \end{proof}

\subsection{The weak Joyal-Lurie model structure on marked simplicial sets}
\label{subsec:LurieMS}

In this subsection we will construct a weak version of a variant of the Joyal model structure for quasicategories due to J.~Lurie, which we will refer to as the Joyal-Lurie model structure. It is a model structure on the category of marked simplicial sets, that also models quasicategories.

\begin{definition}
A marked simplicial set $(X,\Ecal)$ is a simplicial set $X$ together with a set of ``marked'' $1$-cells: $\Ecal \subset X([1])$ containing all degenerate cells.
A morphism of marked simplicial set is a morphism of simplicial sets that send marked cells to marked cells.
The category of marked simplicial sets will be denoted by $\widehat{\Delta}^m$.

\end{definition}

We will sometime make the abuse of language to say that a simplicial set has no marked cell to means that only its degenerate cells are marked.

\bigskip

The idea of this model structure is as follows. In the Kan-Quillen model structure constructed in the previous section, the fibrant objects can be thought of as ``$\infty$-groupoids'', where the $0$-cells are objects, the $1$-cells are morphisms and the higher cells encodes arrows of higher dimension with more complicated boundary, for example a $2$-cell corresponds to a $2$-arrow of the form:

\[
\begin{tikzcd}
  & |[alias=T]| \bullet \ar[dr]  & \\
\bullet \ar[rr,""{name=S}] \ar[ur] &  & \bullet \arrow[Rightarrow, from=T, to=S,shorten <= 5pt] \\
\end{tikzcd}
\]

The Joyal-Lurie model structure (like the Joyal model structure) models a notion of $(\infty,1)$-category, i.e. where not all $1$-cell are invertible, and the marked arrows corresponds to the invertible ones.

It is very similar to Joyal model structure, which is a model structure on plain simplicial sets, where invertibility of $1$-cells is instead defined explicitly by the existence of an inverse. The Joyal-Lurie model structure is slightly better behaved (for example, it is a simplicial model structure), more expressive (minor modification allows to model cartesian fibration of quasi-category over a base), and actually simpler to construct. The main reason for this is that the generators of the Joyal-Lurie model structure, as well as its interval object are simpler to describe, and it leads simpler combinatorics when checking the corner-product conditions. Though a relatively direct proof of the corner-product conditions for the additional generator of the Joyal model structure follows from Lemma A.4 of \cite{dugger2011mapping}, and can be used to provide a constructive proof of the existence of a weak Joyal model stucture on plain simplicial sets along the same line of what we will do in the present section, with a bit more work.

\begin{construction} We introduce the following marked simplicial sets:
 \begin{itemize}[topsep=2pt]
  \item $\Delta^0[n]$ (resp. $\Delta^n[n]$) denotes respectively $\Delta[n]$ where only the $1$-cell corresponding to $\{0,1\}$ (resp. $\{n-1,n\}$) is marked (and the degenerate cells).

  \item The object $\Delta^0[1] = \Delta^1[1]$, i.e. $\Delta[1]$ with its unique non-degenerate cell being marked, will often be denoted $I$.
   
  \item $\Delta^i[n]$ for $0<i<n$, or $\Delta[n]$ denotes just $\Delta[n]$ with only the degenerate cells marked.

  \item $\Lambda^i[n]$ is defined as in \ref{constr:partiaDelta_Lambda} but endowed with the marking induced by $\Delta^i[n]$. I.e. no non-degenerate marked cell when $0<i<n$ and only one when $i=0$ or $i=n$.

  \item If $X$ is any simplicial set, $X^{\sharp}$ denotes the marked simplicial set where all $1$-cells of $X$ are marked and $X^{\flat}$ denotes the marked simplicial set where only degenerate cells are marked.

\end{itemize}

\end{construction}

\begin{construction}\label{Const:Marked_generator}

\begin{itemize}

\item[]

\item The set $I$ of generating cofibrations of the Joyal-Lurie model structure are the:

\[ \partial[n] \colon  \partial \Delta[n] \hookrightarrow \Delta[n] \]

With no markings, and the arrow

\[ \iota\colon  \Delta[1] \rightarrow I \]

\noindent which is the identity of the underlying simplicial sets.

\item The set $J$ of (pseudo) generating anodyne map of the Joyal-Lurie model structure are the:

\[ \lambda^k[n]\colon  \Lambda^k[n] \hookrightarrow \Delta^k[n] \]

\noindent for all $0 \leqslant k \leqslant n$, and the morphism:

\[ S\colon  \Delta[3]^{2/6} \rightarrow \Delta[3]^{\sharp} \]

\noindent where $\Delta[3]^{2/6}$ denotes $\Delta[3]$ where the cells corresponding to $\{0,2\}$ and $\{1,3\}$ (as well as the degenerate cells) are marked and $\Delta[3]^{\sharp}$ is the one where all $1$-cells are marked.

\end{itemize}

This last arrow $S$ essentially corresponds to the $2$-out-of-$6$ property: a morphism $\Delta[3] \rightarrow X$ is interpreted as a series of three composable arrows $f,g,h$, with their composites. Saying that it extend to $\Delta[3]^{2/6}$ means that $g \circ f$ and $h \circ g$ are marked, and saying that it extent to $\Delta[3]^{\sharp}$ means that $f,g,h$ and their composite are all marked. Hence the lifting property of an object against $S$ enforces that marked cells satisfies the $2$-out-of-$6$ property.

\end{construction}

\begin{remark}\label{remark:2_out_of_3_marked_generator}
The usual fact that the $2$-out-of-$6$ property implies the $2$-out-of-$3$ property shows in this case that the three arrows that encode the $2$-out-of-$3$ property similarly to how $S$ encode the $2$-out-of-$6$ property, whose domain are respectively:

\[ \begin{tikzcd}
  & |[alias=T]| \bullet \ar[dr,"\sim"{description}]  & \\
\bullet \ar[rr,""{name=S},"\sim"{description}] \ar[ur] &  & \bullet \arrow[Rightarrow, from=T, to=S,shorten <= 5pt] \\
\end{tikzcd} \qquad
\begin{tikzcd}[ampersand replacement=\&]
   \& |[alias=T]| \bullet \ar[dr,"\sim"{description}]  \& \\
 \bullet \ar[rr,""{name=S}] \ar[ur,"\sim"{description}] \&  \& \bullet \arrow[Rightarrow, from=T, to=S,shorten <= 5pt] \\
 \end{tikzcd} \qquad
\begin{tikzcd}
  & |[alias=T]| \bullet \ar[dr]  & \\
\bullet \ar[rr,""{name=S},"\sim"{description}] \ar[ur,"\sim"{description}] &  & \bullet \arrow[Rightarrow, from=T, to=S,shorten <= 5pt] \\
\end{tikzcd}
 \]

\noindent and with all the same target::

\[
\begin{tikzcd}
  & |[alias=T]| \bullet \ar[dr,"\sim"{description}]  & \\
\bullet \ar[rr,""{name=S},"\sim"{description}] \ar[ur,"\sim"{description}] &  & \bullet, \arrow[Rightarrow, from=T, to=S,shorten <= 5pt] \\
\end{tikzcd}
 \]

\noindent are all pushout of $S$, along the three different degeneracy morphisms $\Delta[3] \rightarrow \Delta[2]$.

The middle one, corresponding to the fact that marked arrows are stable under composition, will be denoted $C$.

\end{remark}

\begin{remark}
Product in the category of marked simplicial sets are simply given by $(X,\Ecal) \times (X',\Ecal') = (X \times X', \Ecal \times \Ecal')$ in particular they commutes to all colimits in each variables. In fact the category of marked simplicial sets is cartesian closed.
\end{remark}

\begin{lemma}\label{lem:MarkedCof}
An arrow between marked simplicial sets is an $I$-cofibration if and only if the underlying map of simplicial sets is a cofibration in the sense of Proposition~\ref{prop:simplicial_Cofibration}. In particular if $i$ and $i'$ are $I$-cofibration then $i \corner{\times} i'$ is an $I$-cofibration.
\end{lemma}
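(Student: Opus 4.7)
The plan is to handle the two implications separately. The forward direction is formal: both generators $\partial[n]$ and $\iota$ have underlying maps that are cofibrations of simplicial sets (tautologically in the first case, and because $\iota$ is the identity on underlying simplicial sets in the second). Since the forgetful functor $U\colon \widehat{\Delta}^m \to \widehat{\Delta}$ preserves colimits (these are computed at the level of underlying simplicial sets, with markings generated by the diagram), the saturated class generated by $I$ is sent into the class of simplicial cofibrations.

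For the converse, suppose $f\colon (X,\Ecal) \to (Y,\Ecal')$ has underlying map a cofibration of simplicial sets. I would factor it as $f = f_2 \circ f_1$ through the intermediate object $(Y, \Ecal_0)$, where $\Ecal_0 \coloneqq f(\Ecal) \cup \{s_0(y) : y \in Y_0\}$ is the minimal marking on $Y$ making $f_1$ a morphism of marked simplicial sets. The first map $f_1$ is built by re-running the proof of \cref{prop:simplicial_Cofibration} for the underlying cofibration $X \hookrightarrow Y$ inside the category $\widehat{\Delta}^m$: one obtains $Y$ from $X$ as a transfinite composition of pushouts along coproducts of the unmarked generators $\partial[n]$. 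Since these generators have no non-degenerate marked cells, every pushout produces exactly the degenerate markings required for $\Ecal_0$. The second map $f_2$ is the identity on the underlying simplicial set and is realized as a single pushout of the coproduct $\coprod_{y \in \Ecal'} \iota$ along the obvious attaching map; since $\Ecal'$ already contains every degenerate $1$-cell and all of $f(\Ecal)$, this pushout produces markings $\Ecal_0 \cup \Ecal' = \Ecal'$, as desired.

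The main technical point to get right will be the constructive handling of the indexing sets used in these constructions: in particular, $f(\Ecal)$ must be a complemented subset of $Y_1$ for $\Ecal_0$ to be a well-defined marking, which follows from the underlying map being a complemented monomorphism together with decidability of $\Ecal$ in $X_1$. Deliberately pushing out along all of $\Ecal'$ (rather than $\Ecal' \setminus \Ecal_0$) in the construction of $f_2$ sidesteps the more delicate question of deciding non-degeneracy, since pushing out along $\iota$ at an already-marked $1$-cell is a no-op.

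The corner-product statement is then immediate from the main equivalence: $U$ preserves finite products (products of marked simplicial sets are computed levelwise on the underlying simplicial sets) and colimits, so $U$ commutes with the corner-product construction, giving $U(i \corner{\times} i') = U(i) \corner{\times} U(i')$. If $i$ and $i'$ are $I$-cofibrations of marked simplicial sets, then their underlying maps are simplicial cofibrations, and \cref{prop:pushout_product_Simpcofibrations} provides that their corner-product is again a simplicial cofibration; by the main equivalence, $i \corner{\times} i'$ is then an $I$-cofibration.
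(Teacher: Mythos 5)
Your proof is correct and follows the paper's argument: the forward direction from $U$ preserving colimits, the converse by factoring through the minimally marked $(Y,\Ecal_0)$ and then adding markings by pushouts of $\iota$, and the corner-product statement from $U$ commuting with products and colimits together with \cref{prop:pushout_product_Simpcofibrations}. One caveat on your ``technical point'' paragraph: a marking in $\widehat{\Delta}^m$ is an \emph{arbitrary} subset of $Y_1$ containing the degenerate cells (no complementedness is required, and $\Ecal$ is not assumed decidable in $X_1$), so $\Ecal_0$ is a well-defined marking unconditionally; the only decidability your construction of $f_1$ actually uses is the one already packaged into the hypothesis that $U(f)$ is a cofibration in the sense of \cref{prop:simplicial_Cofibration}.
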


\begin{proof}
It immediately follows from the proof of Proposition~\ref{prop:simplicial_Cofibration} that the unmarked inclusion $\partial \Delta[n] \hookrightarrow \Delta[n]$ generates all maps $A \hookrightarrow B$ whose underlying simplicial map is a cofibration and such that only cells in $A$ and degenerate cells are marked. Taking further pushout by $\Delta[1] \hookrightarrow I$ has the effect of making any set of cells in $B$ marked. So any morphisms whose underlying simplicial map is a cofibration is an $I$-cofibration. The converse is immediate as the forgetful functor from marked simplicial sets to simplicial sets commutes to colimits.
The second part of the claim hence follows from the similar statement for cofibration of simplicial sets, proved in \ref{prop:pushout_product_Simpcofibrations}.\end{proof}

The following lemmas are due to Joyal in the unmarked case and Lurie in the case of marked simplicial sets, we give references to \cite{lurie2009higher}, but lots of the proof originally comes from \cite{joyal2008notesQcat}. They all have completely explicit combinatorial proofs and together will allows us to check the corner-product condition between anodyne morphisms and cofibrations.

\begin{lemma}\label{lem:marked_retract}

\begin{enumerate}

\item[]

\item $\lambda^n[n] \colon  \Lambda^n[n] \hookrightarrow \Delta^n[n]$ is a retract of $\lambda^1[1] \corner{\times} \lambda^n[n]$.

\item $\lambda^0[n] \colon  \Lambda^0[n] \hookrightarrow \Delta^0[n]$ is a retract of $\lambda^0[1] \corner{\times} \lambda^0[n]$.

\item $\partial[n] \corner{\times} \lambda^1[1]$ is an iterated pushout of maps of the form $\lambda^i[n+1]$ for $i>0$.

\item $\partial[n] \corner{\times} \lambda^0[1]$ is an iterated pushout of maps of the form $\lambda^i[n+1]$ for $i<n+1$.

\item $\lambda^i[n]\colon  \Lambda^i[n] \hookrightarrow \Delta[n]$ for $0<i <n$ is a retract of $\lambda^1[2] \corner{\times} \lambda^i[n]$.

\item $\partial[n] \corner{\times} \lambda^1[2]$ is a pushout of the coproduct of the $\lambda^i[n+2]$ for $0<i<n+2$.

\end{enumerate}

\end{lemma}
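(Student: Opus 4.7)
The plan is to produce explicit combinatorial witnesses for each of the six claims. All of them are classical statements about the product of $\Delta[1]$ or $\Delta[2]$ with a simplex, and the marked versions require only careful bookkeeping of which non-degenerate $1$-cells are marked at each stage. I would treat the retract claims (1), (2), (5) separately from the filtration claims (3), (4), (6).

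For the retracts, I would in each case exhibit an explicit section/retraction pair, always with $\pi_2$ as the retraction. For (1), define $s\colon \Delta^n[n]\to \Delta^1[1]\times \Delta^n[n]$ by $s(k)=(0,k)$ for $k<n$ and $s(n)=(1,n)$; the only non-degenerate marked edge $\{n-1,n\}$ of $\Delta^n[n]$ goes to the edge from $(0,n-1)$ to $(1,n)$ of the product, whose projections are the marked edge of $\Delta^1[1]$ and the marked edge of $\Delta^n[n]$, so it is marked in the product. Restricted to $\Lambda^n[n]$, the projection of $s$ to $\Delta[n]$ still lies in $\Lambda^n[n]$, so $s$ restricts to a map into the domain of $\lambda^1[1]\corner{\times}\lambda^n[n]$, completing the retract diagram. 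Case (2) is the mirror image. For (5), take $s(k)=(0,k)$ for $k<i$ and $s(k)=(1,k)$ for $k\geq i$: on the missing face $\partial^i[n]$ this projects to the edge $\{0,1\}\subset \Lambda^1[2]$ in $\Delta[2]$, while on every other face it projects into $\Lambda^i[n]$ in $\Delta[n]$, giving the retract into the domain of $\lambda^1[2]\corner{\times}\lambda^i[n]$.

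For the filtration claims (3), (4), (6), I would use the standard description of the non-degenerate simplices of $\Delta[k]\times\Delta[n]$ as monotone ``staircases'' in the grid $\{0,\dots,k\}\times\{0,\dots,n\}$, and enumerate those that are not yet in the domain of the corner-product in an order so that each successive staircase attaches along a unique horn. The combinatorial verification, which follows the proof in \cite{joyal2008notes} and its marked refinement in \cite{lurie2009higher}, produces horns $\lambda^i[n+1]$ with $i>0$ in case (3), with $i<n+1$ in case (4), and (working in $\Delta[2]\times\Delta[n]$) inner horns $\lambda^i[n+2]$ with $0<i<n+2$ in case (6), provided one orders the staircases so that the first ``up'' step appears as late as possible (for (3)) or as early as possible (for (4)), and analogously for (6).

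The main obstacle is not the combinatorics itself, which is entirely explicit, but keeping the markings straight throughout the filtration. For each pushout, one must verify that the distinguished edge of the target $\Delta^i[n+1]$ of the horn (namely $\{0,1\}$ when $i=0$, $\{n,n+1\}$ when $i=n+1$, or no distinguished edge when $0<i<n+1$) is already marked in the staircase being attached. This is automatic because the relevant edge is always the product of the marked edge of $\Delta^\epsilon[1]$ with a vertex of $\Delta[n]$, hence inherits the product marking. I would organize the write-up by first dispatching (1), (2), (5) by the explicit formulas above, then giving the ordering of staircases for (3) and (4) and deducing (6) by the same method applied to $\Delta[2]\times\Delta[n]$.
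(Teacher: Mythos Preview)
Your overall plan matches the classical arguments of Joyal and Lurie that the paper defers to (the paper's own proof is nothing more than a list of pointers into \cite{lurie2009higher}). The filtration sketch for (3), (4), (6) is the standard one and is fine.

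There is, however, a genuine gap in your treatment of the retracts (1), (2), (5): the second projection $\pi_2$ cannot serve as the retraction. Take (1): the domain of $\lambda^1[1]\corner{\times}\lambda^n[n]$ is $(\{1\}\times\Delta^n[n])\cup(\Delta^1[1]\times\Lambda^n[n])$, and $\pi_2$ carries $\{1\}\times\Delta^n[n]$ onto all of $\Delta^n[n]$, so $\pi_2$ restricted to the domain does not factor through $\Lambda^n[n]$ and the right square of the retract diagram cannot be completed. The same obstruction appears in (2) (with $\{0\}\times\Delta^0[n]$) and in (5) (with $\Lambda^1[2]\times\Delta[n]$).

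The fix is to keep your section but replace $\pi_2$ by a retraction that collapses the offending piece of the domain into the horn. For (1), with your $s$, one may take $r(0,j)=j$ for all $j$, $r(1,j)=j$ for $j\leq n-2$, and $r(1,n-1)=r(1,n)=n$. This is order-preserving, satisfies $r\circ s=\mathrm{id}$, sends $\{1\}\times\Delta^n[n]$ into $\partial^{n-1}\Delta[n]\subset\Lambda^n[n]$ and $\Delta^1[1]\times\Lambda^n[n]$ into $\Lambda^n[n]$, and preserves markings (the only nontrivial check is the marked edges $(0,j)\to(1,j)$, which go to degenerate edges for $j\neq n-1$ and to the marked edge $\{n-1,n\}$ for $j=n-1$). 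Case (2) is dual. For (5) the retraction $\Delta[2]\times\Delta[n]\to\Delta[n]$ is the one in the proof of \cite[Proposition~2.1.2.6]{lurie2009higher}; note also that your section for (5) should use all three vertices of $\Delta[2]$, e.g.\ $s(k)=(0,k)$ for $k<i$, $s(i)=(1,i)$, $s(k)=(2,k)$ for $k>i$, so that the image of the top cell does not already lie in the domain of the corner-product.
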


\begin{proof}
\begin{enumerate}
\item Proved in the proof of proposition 3.1.1.5 of \cite{lurie2009higher}.

\item This is dual to ($1$).

\item Proved in the proof of proposition 3.1.1.5 of \cite{lurie2009higher}.

\item This is dual to ($3$).

\item This only involves plain (unmarked) simplicial sets, and is proved in the proof of proposition $2.3.2.1$ of \cite{lurie2009higher}.

\item This only involves plain (unmarked) simplicial sets, and is proved in the proof of proposition $2.3.2.1$ of \cite{lurie2009higher}.

\end{enumerate}
\end{proof}

\begin{remark}\label{remark:POprod_marked_bijection}
If $f$ and $h$ are morphisms of marked simplicial sets, and $h$ is an isomorphism of the underlying simplicial sets then $f \corner{\times} h$ is also an isomorphism of the underlying simplicial sets. Indeed the forgetful functor to simplicial sets preserves corner-product and a corner-product by an isomorphisms is an isomorphism.

More precisely, if we have $f:X \to Y$ and $h:Z' \to Z$, then the the target of $f \corner{\times} h$ is $Z \times Y$, where marked cells are all pair of cells that are marked in $Y$ and in $Z$, while its domain has the same underlying simplicial sets, but the marked cells are only the pairs of cells marked in $Y$ and $Z'$, together with all pairs of a cell marked in $Z$ and the image of a cell marked in $X$.

In particular, if we additionally assume that any cell marked in $Y$ is the image of a cell marked in $X$, for example if $f: X \to Y$ is an inclusion of unmarked simplicial sets where all the $0$-cells of $Y$ are in $X$ (this is necessary so that the degenerate $1$-cells in $Y$ are indeed image of marked cell in $X$), then $f \corner{\times} g$ is an isomorphism.

\end{remark}

\begin{lemma}\label{lem:Pro_prod_iota_iota}

Let $\iota \colon \Delta[1] \rightarrow I$ as in \ref{Const:Marked_generator}. The morphism $\iota \corner{\times} \iota$ is a pushout of the morphism $C$ of Remark~\ref{remark:2_out_of_3_marked_generator}, in particular it is also a pushout of the morphism $S$ of Construction~\ref{Const:Marked_generator}.

\end{lemma}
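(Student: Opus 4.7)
The plan is to analyze the marking structure of $\iota \corner{\times} \iota$ cell by cell. First I would observe that since $\iota$ is the identity on underlying simplicial sets, and every $0$-cell of the target $I$ lies in the source $\Delta[1]$, Remark~\ref{remark:POprod_marked_bijection} implies that the underlying simplicial map of $\iota \corner{\times} \iota$ is the identity of $\Delta[1] \times \Delta[1]$. The proof thus reduces to identifying which non-degenerate $1$-cells acquire a marking on passing from the domain to the target.

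Next I would enumerate the five non-degenerate $1$-cells of $\Delta[1] \times \Delta[1]$: the four ``boundary'' edges of the square and the diagonal from $(0,0)$ to $(1,1)$. In the target $I \times I$, a $1$-cell is marked iff both projections are marked in $I$, and since every $1$-cell of $I$ is marked, all five non-degenerate $1$-cells of $I \times I$ are marked. In the domain $(\Delta[1] \times I) \cup_{\Delta[1] \times \Delta[1]} (I \times \Delta[1])$, a cell is marked iff it is marked in one of the two factors; a cell is marked in $\Delta[1] \times I$ iff its first projection is degenerate (since the non-degenerate $1$-cell of $\Delta[1]$ is unmarked), and symmetrically for $I \times \Delta[1]$. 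This marks the four boundary edges of the square but leaves the diagonal unmarked, since both of its projections are the non-degenerate $1$-cell of $\Delta[1]$.

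The final step is to embed the domain of $C$ as the lower $2$-simplex of $\Delta[1] \times \Delta[1]$, with vertices $(0,0), (1,0), (1,1)$, sending the composition sides $\{0,1\}$ and $\{1,2\}$ to the boundary edges $(0,0) \to (1,0)$ and $(1,0) \to (1,1)$ respectively, and the composite $\{0,2\}$ to the diagonal. The two marked sides of $C$'s domain land on already-marked edges of $\iota \corner{\times} \iota$'s domain, so this is a morphism of marked simplicial sets, and forming the pushout of $C$ along it adds precisely the marking of the diagonal without changing the underlying simplicial set. This identifies $I \times I$ as the pushout, so $\iota \corner{\times} \iota$ is a pushout of $C$. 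The final clause follows immediately, since by Remark~\ref{remark:2_out_of_3_marked_generator} the morphism $C$ is itself a pushout of $S$ along the degeneracy $\Delta[3] \to \Delta[2]$ collapsing the edge $\{1,2\}$.

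There is no serious obstacle: the argument is elementary combinatorics on the product of two $1$-simplices, and the only care required is in verifying that no cell other than the diagonal changes marking, which follows from the explicit description of markings in a product of marked simplicial sets.
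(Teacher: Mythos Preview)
Your argument is correct and follows essentially the same route as the paper: identify the domain of $\iota \corner{\times} \iota$ as $\Delta[1]\times\Delta[1]$ with every edge marked except the diagonal, the target as $\Delta[1]\times\Delta[1]$ with all edges marked, and then exhibit the map as a pushout of $C$ via one of the two non-degenerate $2$-simplices of the square. The paper notes in passing that either $2$-simplex works, whereas you pick the lower one explicitly, but this is the only difference.
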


\begin{proof}

The domain of $\iota \corner{\times} \iota$ is $\Delta[1] \times \Delta[1]$ where all $1$-cells have been marked except the one corresponding to the diagonal map $\Delta[1] \rightarrow \Delta[1] \times \Delta[1]$. Its target is $I \times I$ i.e. $\Delta[1] \times \Delta[1]$ with all arrows marked. So we can indeed realize it as a pushout of $C$ in two different ways, using either of the two non-degenerate cells $\Delta[2] \rightarrow \Delta[1]\times \Delta[1]$.\end{proof}

\begin{prop}\label{Prop:Po_prod_MarkedTrivCof}

The corner-product of a $J$-cofibration with an $I$-cofibration (as defined in \ref{Const:Marked_generator}) is a $J$-cofibration.

\end{prop}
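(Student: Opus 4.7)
The plan is to apply the Joyal-Tierney calculus (see Appendix~\ref{Subsection_Joyal_tierney_calculus}), which reduces the claim to showing that $j \corner{\times} i \in J$-cof for each pair of generators $j \in J = \{\lambda^k[n], S\}$ and $i \in I = \{\partial[m], \iota\}$. This leaves four cases to treat: \textbf{(a)} $\lambda^k[n] \corner{\times} \partial[m]$, \textbf{(b)} $\lambda^k[n] \corner{\times} \iota$, \textbf{(c)} $S \corner{\times} \partial[m]$, and \textbf{(d)} $S \corner{\times} \iota$. I would treat them in the order (c), (d), (b), (a), since the argument for (a) reduces via a retract to instances already established in (b).

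The key simplification for (b), (c), (d) is that both $\iota$ and $S$ are identities on the underlying simplicial sets. By Remark~\ref{remark:POprod_marked_bijection}, each of these three corner-products is therefore also an identity on underlying simplicial sets, and the task reduces to comparing marked $1$-cells on both sides and showing that the missing marks can be produced by pushouts of elements of $J$. Case (c) dispatches quickly: $S \corner{\times} \partial[m]$ equals the identity for $m \geq 1$ (since every pair with degenerate $\Delta[m]$-component already lies in the domain) and equals $S$ itself for $m=0$, both trivially in $J$-cof. For (b), the same analysis shows that $\lambda^k[n] \corner{\times} \iota$ is an identity as soon as $n \geq 2$, because any missing marking would demand a non-degenerate marked edge of $\Delta^k[n]$ failing to lie in $\Lambda^k[n]$ -- impossible when $n \geq 2$. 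The only non-trivial instances of (b) are thus $\lambda^0[1] \corner{\times} \iota$ and $\lambda^1[1] \corner{\times} \iota$, in each of which two missing markings in $\Delta[1] \times \Delta[1]$ are produced by two successive pushouts of the $2$-out-of-$3$ morphisms of Remark~\ref{remark:2_out_of_3_marked_generator} applied to the two non-degenerate $2$-simplices of the square.

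Case (d) is combinatorially the heaviest. The markings missing from $S \corner{\times} \iota$ are precisely the four diagonal $1$-cells $(e, \mathrm{id}_{\Delta[1]})$ of $\Delta[3] \times \Delta[1]$ with $e \in \{\{0,1\}, \{0,3\}, \{1,2\}, \{2,3\}\}$. For each such $e = \{i,j\}$, the $2$-simplex $(i,0) \to (j,0) \to (j,1)$ inside $\Delta[3] \times \Delta[1]$ has its two non-diagonal edges already marked in the domain (the horizontal one because $\Delta[3]^{\sharp} \times \Delta[1]$ marks every pair with degenerate second component, the vertical one because $\Delta[3]^{2/6} \times I$ marks every pair with degenerate first component). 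A single pushout of the composition variant $C$ of Remark~\ref{remark:2_out_of_3_marked_generator} -- itself a pushout of $S$ -- then marks the missing diagonal, so the four missing marks are obtained by four pushouts of pushouts of $S$.

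Finally, case (a) is handled by the retract argument of Lemma~\ref{lem:marked_retract}. For $k=n$, part (1) exhibits $\lambda^n[n]$ as a retract of $\lambda^1[1] \corner{\times} \lambda^n[n]$, so $\lambda^n[n] \corner{\times} \partial[m]$ is a retract of $\lambda^1[1] \corner{\times} C$ with $C = \lambda^n[n] \corner{\times} \partial[m]$, an $I$-cofibration by Proposition~\ref{prop:pushout_product_Simpcofibrations} and Lemma~\ref{lem:MarkedCof}. Combining part (3) of Lemma~\ref{lem:marked_retract} (which handles $\lambda^1[1] \corner{\times} \partial[m']$) with the $\lambda^1[1] \corner{\times} \iota$ instance from (b), a second pass of Joyal-Tierney on $I$-generators gives $\lambda^1[1] \corner{\times} C' \in J$-cof for every $I$-cofibration $C'$, whence $\lambda^n[n] \corner{\times} \partial[m] \in J$-cof as a retract. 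The subcases $k=0$ and $0 < k < n$ are entirely analogous, using parts (2),(4) and (5),(6) of the lemma, together with $\lambda^0[1] \corner{\times} \iota$ from (b) and the fact that $\lambda^1[2] \corner{\times} \iota$ is an isomorphism by the same vertex argument as above. The main obstacle is the combinatorial bookkeeping in (b) and (d), where one must pinpoint the $2$-simplices supplying the $2$-out-of-$3$ configurations needed to close up the markings; everything else is a clean application of the Joyal-Tierney machinery.
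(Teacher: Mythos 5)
Your proof is correct and follows essentially the same route as the paper's: reduction to generator pairs by Joyal--Tierney, disposing of $S\corner{\times}\partial[m]$ and $\lambda^k[n]\corner{\times}\iota$ for $n\geq2$ via Remark~\ref{remark:POprod_marked_bijection}, handling the $n=1$ horns and $S\corner{\times}\iota$ by explicit pushouts of the $2$-out-of-$3$ generators, and closing case (a) with the retract decompositions of Lemma~\ref{lem:marked_retract}. The one place you genuinely diverge is $S\corner{\times}\iota$: the paper deduces it from Lemma~\ref{lem:Pro_prod_iota_iota} by noting $S$ is itself an iterated pushout of $\iota$ and applying Joyal--Tierney to $\iota\corner{\times}\iota$, whereas you identify the four missing diagonals $(e,\mathrm{id})$ and mark each one by a pushout of $C$ along the degenerate $2$-simplex $(i,0)\to(j,0)\to(j,1)$; both arguments are sound, and your direct computation makes the combinatorics more visible at the cost of slightly more case-checking.
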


\begin{proof}

We first observe that all the morphisms of the form:

\[ \lambda^i[n] \corner{\times} \iota \]

\noindent are $J$-cofibrations. Indeed, for $n>1$ they are isomorphisms by Remark~\ref{remark:POprod_marked_bijection}, for $n=1$, the map $\lambda^0[1] \corner{\times} \iota$ is:

\[
\lambda^0[n] \corner{\times} \iota \colon  \left(\parbox{1.8cm}{\begin{tikzcd}[ampersand replacement=\&]
  \bullet  \ar[dr,""{name=M,description},phantom] \ar[dr] \ar[r,"\sim"{description}] \ar[d,"\sim"{description}] \& \bullet \ar[d,"\sim"{description}] \ar[Rightarrow, to=M] \\
\bullet \ar[r] \ar[Rightarrow, to=M] \& \bullet \\
\end{tikzcd}\vspace{-20pt}}\right) \rightarrow 
\left(\parbox{1.8cm}{\begin{tikzcd}[ampersand replacement =\&]
  \bullet  \ar[dr,""{name=M,description},phantom] \ar[dr,"\sim"{description}] \ar[r,"\sim"{description}] \ar[d,"\sim"{description}] \& \bullet \ar[d,"\sim"{description}] \ar[Rightarrow, to=M] \\
\bullet \ar[r,"\sim"{description}] \ar[Rightarrow, to=M] \& \bullet \\
\end{tikzcd}\vspace{-20pt}}\right)
\]

\noindent which is an iterated pushout of two of the morphisms in Remark~\ref{remark:2_out_of_3_marked_generator}, hence is a $J$-cofibration. This works similarly for $\lambda^1[1] \corner{\times} \iota$. Moreover by Lemma~\ref{lem:Pro_prod_iota_iota},  $S \corner{\times} \iota$ is an iterated pushout of $C$ because $S$ is an iterated pushout of $\iota$, hence it is a $J$-cofibration as well. So far we have proved that if $f$ is a $J$-cofibration then $\iota \corner{\times} f$ is also a  $J$-cofibration.

It follows from points $(3),(4)$ and $(6)$ of Lemma~\ref{lem:marked_retract} that if $j$ is $\lambda^1[2], \lambda^0[1]$ or $\lambda^1[1]$ then $\partial[n] \corner{\times} j$ is a $J$-cofibration. As this has been proved for $\iota \corner{\times} j$ above, it shows that for any $I$-cofibration $i$, and $j$ any of the three maps above $j \corner{\times} i$ is a $J$-cofibration.

It follows from point $(1),(2)$ and $(5)$ of Lemma~\ref{lem:marked_retract} that all generators $\lambda^i[n]$ are in the class generated by the $j \corner{\times} i$ for $j = \lambda^1[2], \lambda^0[1]$ or $\lambda^1[1]$ and $i$ an $I$-cofibration, but this class is clearly stable by corner-product by an $I$-cofibration (by associativity of the corner-product) and is included in $J$ as showed above, so this proves that $\lambda^i[n] \corner{\times} i$ is a $J$-cofibration when $i$ is an $I$-cofibration.

Finally, $S \corner{\times} \partial[n]$ is an isomorphism for all $n>1$ and is just $S$ when $n=0$, hence it is always a $J$-cofibration, which concludes the proof. \end{proof}

\begin{theorem}[Joyal-Lurie model structure]\label{Th:LurieModelStructure}

There is a weak model structure on the category of marked simplicial sets such that:

\begin{itemize}

\item The cofibrations are the $I$-cofibrations of Construction~\ref{Const:Marked_generator}, i.e. the morphisms that are cofibrations of the underlying simplicial sets.

\item The fibrations are the $J$-fibrations of Construction~\ref{Const:Marked_generator}.

\end{itemize}

\end{theorem}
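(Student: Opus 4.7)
The plan is to apply Theorem~\ref{Th_tensorWMS} to the cartesian monoidal structure on the category $\widehat{\Delta}^m$ of marked simplicial sets, taking $\Acal = \Ccal = \widehat{\Delta}^m$ with $I_{\Acal} = I$ and $J_{\Acal} = J$ as in Construction~\ref{Const:Marked_generator}. This reduces the theorem to checking the eight conditions of Theorem~\ref{Th_tensorWMS}.

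First I would note that $\widehat{\Delta}^m$ is complete and cocomplete (it is a category of presheaves with extra structure on $1$-cells, and in fact cartesian closed), so the cartesian product is divisible on both sides. The weak factorization systems $(I\textsc{-cof}, I\textsc{-fib})$ and $(J\textsc{-cof}, J\textsc{-fib})$ are obtained by the ``good'' version of the small object argument (the generating sets have finitely presentable domains and the relevant cell structures are levelwise). The inclusion $J \subset I\textsc{-cof}$ follows from Lemma~\ref{lem:MarkedCof}, since every map in $J$ has an underlying simplicial cofibration. The unit $\Delta[0]$ is $I$-cofibrant.

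Next I would verify the corner-product conditions. The condition $I \corner{\times} I \subset I\textsc{-cof}$ is exactly the second half of Lemma~\ref{lem:MarkedCof}. The condition that $I \corner{\times} J$ and $J \corner{\times} I$ have the left lifting property against all fibrations between fibrant objects is Proposition~\ref{Prop:Po_prod_MarkedTrivCof}, which establishes the stronger fact that these corner-products are themselves $J$-cofibrations. This is the only nontrivial input, and it is the main technical obstacle of the construction; fortunately it has already been proved, building on Lurie's combinatorial lemmas (Lemma~\ref{lem:marked_retract}) and the pushout description of $\iota \corner{\times} \iota$ (Lemma~\ref{lem:Pro_prod_iota_iota}).

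Finally, for the cylinder data of condition (viii) of Theorem~\ref{Th_tensorWMS}, I would use the simpler variant given by Remark~\ref{rk:SpanTrick_for_tensorWMS}. Take $X = \Delta[0]$ and $C = I = \Delta^0[1]$ (the marked interval). The map $\Delta[0] \coprod \Delta[0] \hookrightarrow I$ is an $I$-cofibration by Lemma~\ref{lem:MarkedCof}, and the two endpoint inclusions $\Delta[0] \rightrightarrows I$ are precisely the generators $\lambda^0[1]$ and $\lambda^1[1]$ in $J$, hence anodyne and in particular acyclic cofibrations. All hypotheses of Theorem~\ref{Th_tensorWMS} are thus satisfied, yielding a weak model structure whose fibrations between fibrant objects are the $J$-fibrations and whose cofibrations between cofibrant objects are the $I$-cofibrations. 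The characterization of cofibrations follows from Lemma~\ref{lem:MarkedCof}, which identifies them with the underlying simplicial cofibrations of Proposition~\ref{prop:simplicial_Cofibration}.
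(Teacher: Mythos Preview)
Your proof is correct and follows essentially the same approach as the paper: apply Theorem~\ref{Th_tensorWMS} to the cartesian structure on $\widehat{\Delta}^m$, with the corner-product conditions supplied by Lemma~\ref{lem:MarkedCof} and Proposition~\ref{Prop:Po_prod_MarkedTrivCof}, and the marked interval $I$ serving as cylinder for $\Delta[0]$. The only cosmetic difference is that you invoke the span-trick variant of Remark~\ref{rk:SpanTrick_for_tensorWMS} for condition~(viii), whereas the paper (following the pattern of the Kan--Quillen case) uses the strong cylinder $\Delta[0]\coprod\Delta[0]\hookrightarrow I\to\Delta[0]$ directly; since the retraction $I\to\Delta[0]$ exists, the two formulations coincide here.
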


It is easy to see, assuming a bit of theory of quasi-category that the fibrant objects are exactly the quasi-categories, where the marked cells are the equivalence, hence this model structure is exactly the same as Lurie's model structure on marked simplicial sets from \cite{lurie2009higher}.

\begin{proof}
We proceed exactly as for the proof of Theorem~\ref{th:KanQuillen_WMS}: We checked the corner-product condition in Lemma~\ref{lem:MarkedCof} and proposition  \ref{Prop:Po_prod_MarkedTrivCof}, and $I$ produces an interval for the cartesian unit $\Delta[0]$ which satisfies all the conditions of Theorem~\ref{Th_tensorWMS}.\end{proof}

\begin{remark}\label{remark:Unsaturated_Version_of_Lurie_MS}
 If in the definition of $J$ we remove $S$, and takes instead the three morphisms of Remark~\ref{remark:2_out_of_3_marked_generator} imposing the weaker $2$-out-of-$3$ condition instead of the $2$-out-of-$6$ condition, the proof of Proposition~\ref{Prop:Po_prod_MarkedTrivCof} and Theorem~\ref{Th:LurieModelStructure} remains completely unchanged, and we also obtain a weak model structure.

This weak\footnote{It can be shown to be a full model structure classically, for example using theorem 3.16 of \cite{olschok2011left}.} model structure is different from the Joyal-Lurie model structure: its fibrant objects are still quasi-categories but their marked cells are only forced to satisfies the $2$-out-of-$3$ condition. In particular it is no longer necessary that all equivalences are marked.

For example taking any quasi-category $\Xcal$ if we mark all the $1$-cells which are equal to an identity $1$-cell in the homotopy category of $\Xcal$, then the resulting object is indeed fibrant in this modified version of the model structure: it has the right lifting property against the inner horn inclusion because it is a quasi-category, the lifting property against the marked outer horn inclusion follows from the fact that the marked cells are in particular invertible and a classical lemma in quasi-category theory (see\footnote{We have not checked the constructivity of this claim explicitly.} proposition 1.2.4.3 of \cite{lurie2009higher}) , and the lifting property against the three maps of Remark~\ref{remark:2_out_of_3_marked_generator} follows from the fact that identities in any category satisfies the $2$-out-of-$3$ property. This shows that this model structure has strictly more fibrant objects than the Joyal-Lurie model structure.

More generally the fibrant objects of this second model structure are quasi-categories, together with a subgroupoid of their homotopy category of ``marked cell'' which contains all identity and satisfies $2$-out-of-$3$. And they are fibrant for the Joyal-Lurie model structure if and only if this subgroupoid is the maximal subgroupoid (contain all isomorphisms).

\end{remark}

\subsection{The weak Verity model structure for complicial sets}
\label{subsec:complicialsets}

In this section, we will discuss a weak model structure on ``stratified simplicial'' sets whose fibrant objects are the so-called (weak) complicial sets (see \cite{verity2008weak}). The classical version has been introduced by D.~Verity in \cite{verity2008weak} and is a Quillen model structure. This will be done using Theorem~\ref{Th_tensorWMS} with the monoidal structure given by the cartesian product of stratified simplicial sets.\footnote{We thank Harry Gindy and Viktoriya Ozornova who independently pointed out a mistake in my attempt at a simpler proof of the corner-product conditions in an earlier version.}

Intuitively, this model structure is supposed to model weak $(\infty,\infty)$-categories\footnote{More precisely, it is intended to model ``inductive'' $\infty$-categories, i.e. the projective limit of the tower of ``$(\infty,k)$-cat'' and functors that sends an $(\infty,k+1)$-category $\Ccal$ to the $(\infty,k)$-category obtained by drooping the non-invertible $k$-arrows of $\Ccal$.}. It also provides models for $(\infty,n)$-categories for any $n$, the case $n=0$ and $n=1$ corresponding exactly to the model structure constructed in Subsection~\ref{subsec:KanQuillenMS} and \ref{subsec:LurieMS}. In the sense of this definition, $(\infty,\infty)$-categories are supposed to be the fibrant objects, which are called ``complicial sets'' or ``weak complicial sets''.

\begin{definition}
A stratified simplicial set is a simplicial set $X$, together with a set of cells $tX \subset X$ called ``thin'' such that no $0$-cell is thin and all degenerate cell are thin. The category of stratified simplicial sets is denoted $\widehat{\Delta}^s$.

An $n$-stratified simplicial set is a stratified set where all $m$-cells for $m>n$ are thin.

Morphisms of stratified simplicial sets are the morphisms sending thin cells to thin cells.

\end{definition}

The reason we say ``stratified'' and ``thin'' instead of ``marked'' as in Subsection~\ref{subsec:LurieMS} is both to avoid confusion and because these are the standard terminology used by most text on complicial sets.

\begin{remark}\label{discus:intuitivecomplicialsets} Note that a $1$-stratified simplicial set is essentially the same as a marked simplicial set as in Subsection~\ref{subsec:LurieMS}. Similarly to what happened with the Joyal-Lurie model structure, the idea is that ``thin'' cells are invertible higher cells.
\end{remark}

\begin{construction}
Following \cite{verity2008weak}:

\begin{itemize}

\item $\Delta[n]$ denotes $\Delta[n]$ with no non-degenerate thin cells. $\Delta[n]_t$ is $\Delta[n]$ where only the unique non-degenerate $n$-cell is thin.

\item $\Delta^k[n]$ is $\Delta[k]$ where all the non-degenerate cells $\alpha\colon [r] \hookrightarrow [n]$ which contain $\{k-1,k,k+1\}$ are thin.

\item $\Lambda^k[n]$ is endowed with the stratification induced from $\Delta^k[n]$.

\item $\Delta^k[n]'$ is $\Delta^k[n]$ where in addition the faces $\partial^{k-1}[n], \partial^{k+1}[n] \colon  [n-1] \rightarrow [n]$ are thin.

\item $\Delta^k[n]''$ is $\Delta^k[n]'$ where in addition $\partial^k[n]$ is thin, i.e. it is $\Delta^k[n]$ where all the $(n-1)$-cells are made thin.

\end{itemize}

\end{construction}

\begin{construction}\label{const:generating_stratified_cofib}

The set of generating cofibrations $I$  is made of:
\begin{itemize}

\item The $\partial \Delta[n] \hookrightarrow \Delta[n]$, the unmarked boundary inclusion.

\item The $\Delta[n] \rightarrow \Delta[n]_t$.

\end{itemize}

Similarly to Lemma~\ref{lem:MarkedCof}, the $I$-cofibrations are just the morphisms whose underlying simplicial morphism is an $I$-cofibration. In particular, if $i$ and $i'$ are $I$-cofibrations then $i \corner{\times} i'$ is again an $I$-cofibration from the similar result for simplicial sets proved in \ref{prop:pushout_product_Simpcofibrations}.
\end{construction}

\begin{construction}
\label{Constr:generating_stratified_trivcof}Still following \cite{verity2008weak}, we take as generating anodyne maps of stratified simplicial sets the set $J^s = J^{Horn} \coprod J^{Thin}$ made of:

\begin{itemize}

\item The ``complicial horn inclusions'':

\[J^{Horn}=\{ \Lambda^k[n] \hookrightarrow \Delta^k[n] \} \]

\noindent for $n \geqslant 1$ and $k \in [n]$.

\item The ``complicial thinness extensions'':

\[ J^{Thin} = \{ \Delta^k[n]' \hookrightarrow \Delta^k[n]'' \} \]

For $n \geqslant 2$ and $k \in [n]$.

\end{itemize}

\end{construction}

All the maps in $J^s$ are clearly $I^s$-cofibrations.

\begin{definition}
A complicial set is a stratified simplicial set which have the right lifting property against all the generating anodyne map.
\end{definition}

Those are sometimes called weak complicial sets, the original definition of complicial sets being the stratified simplicial sets that have the unique right lifting property against maps in $J^s$.

In terms of the intuitive idea that thin cells corresponds to invertible higher cells, the lifting property against complicial thinness extensions is implementing properties like $2$-out-of-$3$ and the fact that cells that are actually invertible up to thin cell are themselves thin, while the lifting properties against the complicial horn inclusions are implementing weak composition operations (up to thin cells) and the fact that thin cells are actually invertible (up to higher thin cells).

The fact that the corner-product condition for complicial sets are satisfies have been proved by D.~Verity in \cite{verity2008weak} using an explicit combinatorial argument that seems constructive to us. Completely reproducing to show its constructivity in details seem to be outside of the scope of the present paper, hence we will admit the following:

\begin{prop}\label{prop:pushoutProduct_Comp_anodyneCof}
The corner-product $j \corner{\times} i$ of a $J$-cofibration (in the sense of Construction~\ref{Constr:generating_stratified_trivcof}) with an $I$-cofibration (in the sense of Construction~\ref{const:generating_stratified_cofib}) is again a $J$-cofibration.
\end{prop}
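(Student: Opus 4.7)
The plan is to follow Verity's combinatorial argument from \cite{verity2008weak} and verify that each step goes through constructively. By the Joyal-Tierney calculus recalled in Appendix~\ref{Subsection_Joyal_tierney_calculus}, the class of maps $j$ such that $j \corner{\times} i$ is a $J$-cofibration for every $I$-cofibration $i$ is closed under the operations generating a weak factorization system, and symmetrically in $i$. Hence it suffices to check the corner-product condition for pairs of generators, i.e.\ for $j$ one of the complicial horn inclusions $\Lambda^k[n] \hookrightarrow \Delta^k[n]$ or one of the thinness extensions $\Delta^k[n]' \hookrightarrow \Delta^k[n]''$, and $i$ one of the two types of generating cofibration $\partial\Delta[m] \hookrightarrow \Delta[m]$ or $\Delta[m] \to \Delta[m]_t$.

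First I would dispatch the easy cases. When $i$ is a thinning map $\Delta[m] \to \Delta[m]_t$, the corner-product $j \corner{\times} i$ is an isomorphism on underlying simplicial sets (by the analogue of Remark~\ref{remark:POprod_marked_bijection} for stratifications), so it only adds some thin cells, and one checks that each new thin cell fits as the ``top'' cell of some complicial thinness extension $\Delta^r[p]' \hookrightarrow \Delta^r[p]''$, allowing the map to be written as an iterated pushout of maps in $J^{Thin}$. Similarly, corner-products of two thinness extensions, or of a thinness extension with a boundary inclusion, are either isomorphisms on underlying simplicial sets or degenerate enough to reduce directly to pushouts of $J^{Thin}$ by a case analysis on which cells become thin.

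The substantial case is $(\Lambda^k[n] \hookrightarrow \Delta^k[n]) \corner{\times} (\partial\Delta[m] \hookrightarrow \Delta[m])$. Here I would analyze $\Delta[n] \times \Delta[m]$ through its standard shuffle decomposition: non-degenerate cells of dimension $n+m$ correspond to $(n,m)$-shuffles, and non-degenerate cells of intermediate dimension correspond to paths in the grid $[n] \times [m]$. The target of the corner-product is obtained from its domain by attaching these cells, and the plan is to attach them in a well-chosen order so that at every stage the newly attached cell, equipped with the stratification inherited from $\Delta^k[n] \times \Delta[m]$, is attached along either a complicial horn inclusion or a thinness extension. Concretely, one orders the shuffles lexicographically in a way compatible with the ``$k$-admissibility'' structure of $\Delta^k[n]$, so that the union of previously-attached cells with the ``missing'' faces outside the horn is exactly the appropriate $\Lambda^r[n+m]$ at each step; the thin faces of the ambient product then force the required extra stratification, so that the attaching map becomes a complicial horn or a complicial thinness extension.

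The main obstacle is precisely this bookkeeping: one must simultaneously track, for each shuffle, which faces are already present, which must remain absent so that the appropriate horn is reproduced, and which faces must be marked thin to match the stratification of $\Delta^k[n] \times \Delta[m]$. This is exactly what Verity does in \cite{verity2008weak}, using his notion of complicial horn and the explicit description of the product stratification. Since the entire argument proceeds by finite case analysis on maps between finite ordinals and on the combinatorics of shuffles, no classical logic is invoked; the only constructive adjustment is to replace ``every monomorphism is a cofibration'' by the constructive characterization of $I$-cofibrations in Proposition~\ref{prop:simplicial_Cofibration}, which causes no difficulty because all cells encountered are manifestly non-degenerate or manifestly degenerate. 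Thus the proof reduces to carefully transcribing Verity's combinatorial filtration.
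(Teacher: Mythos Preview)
Your approach is essentially the same as the paper's: reduce to generators via Joyal--Tierney calculus and then invoke Verity's combinatorial analysis (the paper cites it as Lemma~72 of \cite{verity2008weak}). The paper in fact explicitly declines to reproduce Verity's argument, stating just before the proposition that doing so ``seem[s] to be outside of the scope of the present paper'' and admitting the result; your proposal goes further by sketching the structure of Verity's shuffle filtration, but ultimately lands in the same place, concluding that ``the proof reduces to carefully transcribing Verity's combinatorial filtration.''

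One small point of care: your treatment of the ``easy cases'' is slightly loose. When $j$ is a complicial thinness extension and $i$ is a boundary inclusion, or when $j$ is a horn inclusion and $i$ is a thinning map, the corner-product is indeed an isomorphism on underlying simplicial sets, but showing that the resulting change of stratification is a $J$-cofibration still requires verifying that each newly thin cell sits in a configuration allowing it to be added by a pushout of some $\Delta^r[p]' \hookrightarrow \Delta^r[p]''$. This is not automatic and is part of what Verity's Lemma~72 actually checks; your phrase ``one checks that'' hides real work here. This does not affect the correctness of the overall plan, since you are ultimately deferring to Verity anyway, but it is worth being aware that these cases are not quite as trivial as the analogous ones in the marked setting.
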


\begin{proof}
As usual it is enough to check it on the generators, and this is done in \cite{verity2008weak} as lemma 72. Note that \cite{verity2008weak} use the symbol $\circledast$ to denote the cartesian product of stratified simplicial sets. This is to emphasize that it corresponds to the pseudo Gray tensor product.\end{proof}

\begin{theorem}\label{Th:WMS_stratifiedSimp}
There exists a weak model structure on the category of stratified simplicial sets such that:

\begin{itemize}

\item The cofibrant objects are those in which degeneracies of cell is decidable.

\item The cofibrations between cofibrant objects are the levelwise complemented monomorphisms.

\item Fibrant objects and fibrations are defined by the lifting property against the class $J^s$ of Construction~\ref{Constr:generating_stratified_trivcof}.

\item The acyclic fibrations between fibrant objects are the maps that detect thinness and whose underlying simplicial maps have the lifting property against all the $\partial \Delta[n] \hookrightarrow \Delta[n]$.

\end{itemize}

\end{theorem}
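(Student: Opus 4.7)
The strategy is to apply the monoidal criterion of Theorem~\ref{Th_tensorWMS} with $\Acal = \Ccal = \widehat{\Delta}^s$ endowed with its cartesian monoidal structure, and with $I = I_{\Acal}$ the generating cofibrations of Construction~\ref{const:generating_stratified_cofib} and $J = J_{\Acal} = J^s$ the generating anodyne maps of Construction~\ref{Constr:generating_stratified_trivcof}. The small object argument in its good version (\ref{EasySOA}) provides the weak factorization systems $(I\text{-cof}, I\text{-fib})$ and $(J\text{-cof}, J\text{-fib})$. Every map in $J^s$ is manifestly an $I$-cofibration since on underlying simplicial sets the horn and thinness-extension inclusions are levelwise complemented monomorphisms into cofibrant objects, and the extra thinness data only ``marks more cells''.

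For the corner-product axioms, the cofibration-by-cofibration case reduces, exactly as in Lemma~\ref{lem:MarkedCof}, to the observation that the forgetful functor $\widehat{\Delta}^s \to \widehat{\Delta}$ preserves corner-products and that $I$-cofibrations of stratified simplicial sets are precisely those whose underlying map is a simplicial cofibration; one then invokes Proposition~\ref{prop:pushout_product_Simpcofibrations}. The crucial corner-product condition, that $J^s \corner{\times} I \subseteq J\text{-cof}$, is exactly the content of Proposition~\ref{prop:pushoutProduct_Comp_anodyneCof}, which we cite from Verity's lemma~72 in \cite{verity2008weak}. This is the sole ``hard'' ingredient of the construction: it is a substantial combinatorial verification that the cartesian (pseudo-Gray) tensor product interacts correctly with the complicial horn and thinness generators, but the proof is explicit and extends constructively.

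For the unit and cylinder axioms (vii)--(viii) of Theorem~\ref{Th_tensorWMS}, note that the tensor unit is $\Delta[0]$, which is $I$-cofibrant. A strong cylinder for $\Delta[0]$ is given by
\[ \Delta[0] \coprod \Delta[0] \hookrightarrow \Delta[1]_t \to \Delta[0], \]
with reflexivity witness $D = \Delta[0]$: the thin $1$-simplex $\Delta[1]_t$ coincides with $\Delta^0[1]$ (and with $\Delta^1[1]$) in the notation of Construction~\ref{Constr:generating_stratified_trivcof}, so both endpoint inclusions $\Delta[0] \hookrightarrow \Delta[1]_t$ are pushouts of the generating anodyne maps $\lambda^0[1]$ and $\lambda^1[1]$, hence in particular are acyclic cofibrations. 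All hypotheses of Theorem~\ref{Th_tensorWMS} are therefore in place, so a weak model structure exists with the stated classes of fibrations and cofibrations.

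Finally, the characterizations of cofibrations and acyclic fibrations are routine consequences of the choice of generators. The description of cofibrant objects and of cofibrations between cofibrant objects follows from Proposition~\ref{prop:simplicial_Cofibration} applied to the underlying simplicial maps, using that pushouts along $\Delta[n] \to \Delta[n]_t$ only modify the stratification. For acyclic fibrations between fibrant objects: having the right lifting property against all cofibrations between cofibrant objects is, by the standard retract/small-object argument, equivalent to lifting against the generators $\partial\Delta[n] \hookrightarrow \Delta[n]$ (yielding the condition on underlying simplicial maps) and against $\Delta[n] \to \Delta[n]_t$ (which translates precisely into the map detecting thinness of $n$-cells).
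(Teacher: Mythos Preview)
Your proof is correct and follows essentially the same approach as the paper: apply Theorem~\ref{Th_tensorWMS} to the cartesian structure on $\widehat{\Delta}^s$, citing Construction~\ref{const:generating_stratified_cofib} and Proposition~\ref{prop:pushout_product_Simpcofibrations} for the cofibration corner-product, Proposition~\ref{prop:pushoutProduct_Comp_anodyneCof} for the anodyne corner-product, and $\Delta[1]_t$ as the interval. Your additional remarks on why both endpoint inclusions are anodyne (they are literally $\lambda^0[1]$ and $\lambda^1[1]$, not merely pushouts of them) and on the explicit unpacking of the acyclic-fibration description are accurate elaborations that the paper leaves implicit.
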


This model structure for complicial sets is the same as the one in \cite{verity2008weak}, or rather a ``weak'' version of it, but we now know it exists constructively, even predicatively.

\begin{proof}
We apply Theorem~\ref{Th_tensorWMS} to the category of stratified simplicial sets seen as a cartesian closed category (for the cartesian product as our bi-functor). The small object argument is immediately applicable. The corner-product conditions have been proved in \ref{prop:pushout_product_Simpcofibrations} for cofibrations and admitted in \ref{prop:pushoutProduct_Comp_anodyneCof} for acyclic cofibrations. The interval for the unit is given by $\Delta[0] \coprod \Delta[0] \rightarrow \Delta[1]_t \rightarrow \Delta[0]$.\end{proof}

It also immediately follows that:

\begin{theorem}\label{Th:WMS_n-stratifiedSimp}

There is a weak model structure on the category of $n$-stratified simplicial sets, whose cofibrations and fibrations are the morphisms that are cofibrations and fibrations as morphisms of stratified simplicial sets.

\end{theorem}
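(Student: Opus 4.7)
The plan is to apply Theorem~\ref{Th_tensorWMS} to the category $\widehat{\Delta}^s_n$ equipped with the cartesian monoidal structure, reducing the required conditions to those already verified in the proof of Theorem~\ref{Th:WMS_stratifiedSimp}. The key structural observation is that the inclusion $i\colon \widehat{\Delta}^s_n \hookrightarrow \widehat{\Delta}^s$ admits a left adjoint $L$, the reflector that additionally marks every cell of dimension strictly greater than $n$ as thin, and that the inclusion preserves all limits and colimits (a cell of dimension $>n$ in a (co)limit is automatically thin if it is so in each factor). A direct cell-by-cell check moreover shows that $L(X\times Y)=L(X)\times L(Y)$: for an $m$-cell $(x,y)$ both sides declare it thin iff either $m>n$ or both $x,y$ were already thin. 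Thus the cartesian product on $\widehat{\Delta}^s_n$ inherits from the one on $\widehat{\Delta}^s$, and $L$ is a strong monoidal left adjoint.

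For the application of Theorem~\ref{Th_tensorWMS}, I take as generating cofibrations $I_n := L(I^s)$ and as generating anodyne maps $J_n := L(J^s)$, with $I^s$ and $J^s$ from Constructions~\ref{const:generating_stratified_cofib} and \ref{Constr:generating_stratified_trivcof}. The domains of these generators are the same on the underlying simplicial sets as those of $I^s,J^s$, so the small object argument applies exactly as in the stratified case. Because $L$ preserves both products and pushouts, we have $L(f)\corner{\times} L(g)=L(f\corner{\times} g)$; combined with the fact that $L$ sends $I^s$-cofibrations to $L(I^s)$-cofibrations and $J^s$-cofibrations to $L(J^s)$-cofibrations (being a left adjoint preserving colimits), each corner-product condition of Theorem~\ref{Th_tensorWMS} for $L(I^s),L(J^s)$ reduces to its analogue in $\widehat{\Delta}^s$ already established in Construction~\ref{const:generating_stratified_cofib} and Proposition~\ref{prop:pushoutProduct_Comp_anodyneCof}. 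The inclusion $J^s\subset I^s\text{-cof}$ transfers in the same way. The interval $\Delta[0]\coprod\Delta[0]\hookrightarrow\Delta[1]_t\to\Delta[0]$ used in the proof of Theorem~\ref{Th:WMS_stratifiedSimp} already lies in $\widehat{\Delta}^s_n$ for every $n\geq 0$ and provides the cylinder required by condition~(\ref{Th_tensorWMS:Cond_cylinder}) of Theorem~\ref{Th_tensorWMS}.

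It remains to identify the resulting cofibrations and fibrations. Using $L\circ i=\operatorname{id}$ together with $L$ being a left adjoint, a morphism $f$ in $\widehat{\Delta}^s_n$ is an $L(I^s)$-cofibration iff $i(f)$ is an $I^s$-cofibration: one direction uses $L(I^s)\subset I^s\text{-cof}$ and closure under colimits (which agree between the two categories), the other rewrites $f=L(i(f))$ and applies $L$ to an $I^s$-cofibration of $\widehat{\Delta}^s$. The adjoint correspondence of lifting diagrams yields the analogous statement for $J_n$-fibrations. Hence the cofibrations and fibrations of the weak model structure produced by Theorem~\ref{Th_tensorWMS} on $\widehat{\Delta}^s_n$ are exactly the morphisms that are cofibrations or fibrations in $\widehat{\Delta}^s$, as claimed. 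The bulk of the work has already been carried out in the stratified case; the only genuinely new ingredient is the monoidality of $L$, and I expect no serious obstacle beyond bookkeeping between the two categories and the adjunction between them.
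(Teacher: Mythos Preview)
Your proof is correct and follows essentially the same approach as the paper: apply Theorem~\ref{Th_tensorWMS} using as generators the images under the reflector $L$ (the paper's $T_n$) of the stratified generators, and deduce the corner-product conditions from the fact that $L$ preserves products and pushouts. You simply supply more of the bookkeeping (the adjunction, colimit-preservation of the inclusion, and the explicit identification of cofibrations and fibrations) that the paper leaves implicit in its one-line argument.
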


\begin{proof}
Consider $T_n$ the functor from the category of stratified simplicial sets to the category of $n$-stratified simplicial sets that makes thin all the cells of dimension greater than $n$. We take as generating cofibrations and trivial cofibrations of this model structure the image by $T_n$ of the generators of the Verity model structure (from Theorem~\ref{Th:WMS_stratifiedSimp}) on stratified simplicial sets. We then apply Theorem~\ref{Th_tensorWMS} exactly as in the proof of~\ref{Th:WMS_stratifiedSimp}, the corner-product conditions follow from the fact that the functor $T_n$ preserves product and pushouts. \end{proof}

\begin{remark}
It is immediate to see that the case $n=0$ of Theorem~\ref{Th:WMS_n-stratifiedSimp} corresponds exactly to the weak model structure of Theorem~\ref{th:KanQuillen_WMS}: the image of the generators given in \ref{Constr:generating_stratified_trivcof} and \ref{const:generating_stratified_cofib} by the functors that makes everything thin are exactly the generators of the weak model structure of Theorem~\ref{th:KanQuillen_WMS}.

The case $n=1$ corresponds almost to the Joyal-Lurie model structure (Theorem~\ref{Th:LurieModelStructure}), but not quite.

Indeed, the functor $T_1$ that makes every cell above dimension $1$ thin, sends the $\Lambda^k[n] \hookrightarrow \Delta^k[n]$ of Construction~\ref{Constr:generating_stratified_trivcof} to the maps of construction~\ref{Const:Marked_generator}, the $\Delta^k[n]' \rightarrow \Delta^k[n]''$ of Construction~\ref{Constr:generating_stratified_trivcof} are sent to isomorphisms for $n>2$ and to the three maps of Remark~\ref{remark:2_out_of_3_marked_generator} for $n=2$, but the morphisms $S$ of \ref{Const:Marked_generator} is not obtained. So we do not obtain the Joyal-Lurie model structure but its ``unsaturated'' modification mentioned in Remark~\ref{remark:Unsaturated_Version_of_Lurie_MS}.
\end{remark}

\begin{remark}\label{rk:complicial_saturation}

It is also possible to modify the weak model structures of Theorem~\ref{Th:WMS_stratifiedSimp} and \ref{Th:WMS_n-stratifiedSimp} in order to add an analogue of this map $S$ to the generators. The fibrant objects are then called saturated complicial sets (or $1$-saturated complicial set depending on which generators we add), we refer the reader to section 3 of \cite{riehl2018complicial} for the details of this.

\end{remark}

\subsection{Semi-simplicial versions}
\label{Sec_ex_semi-simplicial sets}

In this section we construct versions of all the weak model structures constructed above on the category of semi-simplicial sets (see \ref{def:semi_simpsets}) instead of simplicial sets. The main advantages of these semi-simplicial versions is that even constructively, every object will be cofibrant.

It seems that the existence of these model structures have been overlooked in classical mathematics. This is probably due to the fact that it is well known that this type of model structure on semi-simplicial sets ``cannot exists'', in the sense that they are not Quillen model structures. They are at best right semi-model structures (see Remark~\ref{rk:ssWMS_are_not_Quillen}).

\begin{definition}\label{def:semi_simpsets}
Let $\Delta_+ \subset \Delta$ be the subcategory of finite non-empty ordinals and injective order preserving morphisms. A presheaf on $\Delta_+$ is called a semi-simplicial set. We denote by $\Delta_+[n]$ the representable semi-simplicial sets attached to the ordinal $[n]$. 
\end{definition}

Informally, a semi-simplicial set is a ``simplicial set without degeneracies''.

\begin{construction}
 The forgetful functor $\widehat{\Delta}\rightarrow \widehat{\Delta_+}$ has a left adjoint $X \mapsto \scomp{X}$, called \emph{simplicial completion} which ``freely adds degeneracies''.

$\scomp{X}$ admits an explicit description, which is very typical of the theory of Reedy categories:
\[ \scomp{X}_n = \{ (s,x) | s \colon  [n] \twoheadrightarrow [m] \text{ order preserving surjection} , x \in X_m \} \]

\noindent the functoriality on an order preserving map $f\colon  [n'] \rightarrow [n]$ is given by forming the composite $s \circ f$ and factoring it into a surjection $g$ followed by a monomorphism $i$:

\[\begin{tikzcd}
\left[ n' \right] \arrow[two heads]{d}{g} \arrow{r}{f} & \left[ n\right] \arrow[two heads]{d}{s} \\
\left[ m'\right] \arrow[hook]{r}{i} & \left[ m\right] \\ 
\end{tikzcd}\]

And we define $f^*(s,x) \coloneqq (g,i^* x)$.  In particular, in $\scomp{X}$ a pair $(s,x)$ is equal to $s^* (Id_{[k]},x)$ and as $x \mapsto (Id_{[k]},x)$ is the unit of adjunction $X \rightarrow \scomp{X}$, we will simply denote $(Id_{[k]},x)$ by $x$ and identifies $X$ with its image in $\scomp{X}$. Hence, $\scomp{X}$ contains $X$ as a sub-semi-simplicial set and a general cell of $\scomp{X}_n$ is of the form $s^* x$ for a unique $x \in X_m$ and a unique degeneracy map $s \colon  [n] \twoheadrightarrow [m]$.

In particular, if $X$ is a semi-simplicial set, it identifies naturally with the set of cells of $\scomp{X}$ which are non-degenerate. Moreover, in $\scomp{X}$ degeneracies are decidable (i.e. $\scomp{X}$ is cofibrant) and a face of a non-degenerate cell is always non-degenerate. Conversely, given a simplicial $Y$ set with these properties, then its subset of non-degenerate cells is a semi-simplicial set $X$, and because of the Eilenberg-Zilber lemma (\ref{Lem_EilenbergZibler}) there is a canonical isomorphism $Y \simeq \scomp{X}$. Putting all this together, we deduce:

\end{construction}

\begin{prop}\label{prop:semi_simp_are_simp}

The category of semi-simplicial sets is equivalent to the non-full subcategory of simplicial sets such that:

\begin{itemize}
\item Objects are the simplicial sets in which it is decidable if a cell is degenerate or not, and any face of a non-degenerate cell is non-degenerate.
\item Morphisms are the morphisms which send non-degenerate cells to non-degenerate cells.
\end{itemize}

\end{prop}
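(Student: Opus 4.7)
The plan is to define explicit functors in both directions and verify that they are mutually inverse equivalences, using the construction of $\scomp{X}$ already given together with the constructive Eilenberg-Zilber lemma (Lemma~\ref{Lem_EilenbergZibler}). Write $\Scal \subset \widehat{\Delta}$ for the non-full subcategory described in the statement. I would take $F : \widehat{\Delta_+} \rightarrow \Scal$ given on objects by $F(X) = \scomp{X}$ and on morphisms by $F(f)(s,x) = (s, f(x))$, and $G : \Scal \rightarrow \widehat{\Delta_+}$ given by letting $G(Y)_n$ be the set of non-degenerate $n$-cells of $Y$, with face maps obtained by restricting those of $Y$.

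The first substantial step is to check that $F$ actually lands in $\Scal$. From the explicit description, an $n$-cell $(s,x) \in \scomp{X}_n$ with $s : [n] \twoheadrightarrow [m]$ is non-degenerate if and only if $s = \mathrm{id}_{[n]}$, which is a decidable property of $s$; and the face of a non-degenerate cell $(\mathrm{id}_{[n]}, x) = x$ along $i : [n-1] \hookrightarrow [n]$ is $(\mathrm{id}_{[n-1]}, i^* x)$, again non-degenerate. The morphism $F(f)$ sends non-degenerate cells to non-degenerate cells by construction, so it is a morphism of $\Scal$. Symmetrically, $G$ is well-defined on objects because the non-degenerate cells of $Y$ form a sub-presheaf on $\Delta_+$ precisely by the hypothesis that faces of non-degenerate cells are non-degenerate, and well-defined on morphisms by the very definition of $\Scal$.

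Next I would exhibit the natural isomorphisms $G F \simeq \mathrm{id}$ and $F G \simeq \mathrm{id}$. The bijection $X_n \simeq G(F(X))_n$ is immediate: the non-degenerate $n$-cells of $\scomp{X}$ are exactly the $(\mathrm{id}_{[n]}, x)$ for $x \in X_n$, and this identification plainly commutes with restrictions along morphisms in $\Delta_+$. The isomorphism $F(G(Y)) \simeq Y$ is where Lemma~\ref{Lem_EilenbergZibler} is used: the map $\scomp{G(Y)}_n \rightarrow Y_n$ sending $(s, y)$ to $s^* y$ is surjective by part (iii) of the lemma, whose hypothesis is precisely the decidability of degeneracy in $Y$, and injective by part (ii).

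I expect the only slightly delicate point to be naturality of this last isomorphism in a morphism $h : Y \rightarrow Y'$ of $\Scal$: one needs $h(s^* z) = s^*(h(z))$, which holds because $h$ is a simplicial map, together with the fact that $h(z)$ is non-degenerate because $h$ is a morphism of $\Scal$, so that the Eilenberg-Zilber decomposition of $h(s^*z)$ in $Y'$ is indeed $(s, h(z))$. The remaining verifications are routine book-keeping on the explicit formulas for $\scomp{X}$, and no real obstacle is anticipated beyond carefully matching the Eilenberg-Zilber decomposition in $Y$ with the tautological decomposition of cells of $\scomp{G(Y)}$.
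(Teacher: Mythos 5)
Your proposal is correct and follows essentially the same route as the paper: pass between a semi-simplicial set $X$ and its simplicial completion $\scomp{X}$, identify the non-degenerate cells of $\scomp{X}$ with $X$ itself, and use the constructive Eilenberg--Zilber lemma (parts (ii) and (iii)) to obtain the inverse isomorphism $Y \simeq \scomp{G(Y)}$ for any $Y$ in the subcategory. The paper leaves most of these verifications implicit in the preceding construction block; your write-up simply spells them out.
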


\begin{construction}
\label{cons:semi-simplicial_Tensproduct}
The cartesian product of simplicial sets induces, through the identification of Proposition~\ref{prop:semi_simp_are_simp}, a monoidal structure on the category of semi-simplicial sets. We denote this tensor product by $X \otimes Y$ and it is characterized by the (functorial) identification:

\[ \overline{ X \otimes Y } = \overline{X} \times \overline{Y} \]

The tensor product of semi-simplicial sets always contains their cartesian product as semi-simplicial sets, but it is in general larger: indeed a couple $(x,y) \in \scomp{X} \times \scomp{Y}$ of two degenerate cells in $x \in \scomp{X}$ and $y \in \scomp{Y}$ can be non-degenerate in the product, and hence being a cell of the semi-simplicial tensor product without being a pair of cells of the semi-simplicial sets. It is not very hard to see that this monoidal structure on semi-simplicial sets is closed as it commutes to colimits in each variables.

\end{construction}

\begin{remark}\label{semi_simp_PartialDelta_Lambda}
The simplicial sets $\Delta[n]$, $\partial \Delta[n]$ and $\Lambda^k[n]$ (see \ref{constr:partiaDelta_Lambda}) all satisfies the conditions of Proposition~\ref{prop:semi_simp_are_simp}, hence they are the simplicial completion of semi-simplicial sets, which we denote:
\[ \Delta_+[n] \qquad \partial \Delta_+[n] \qquad \Lambda^k_+[n] \]

Note that the $\Delta_+[n]$ are exactly the representable semi-simplicial sets.

\end{remark}

Similarly we will also consider:

\begin{itemize}

\item The category $\widehat{\Delta_+}^m$ of marked semi-simplicial sets, which are semi-simplicial sets with a collection of $1$-cells called ``marked'' cells.

\item The category $\widehat{\Delta_+}^s$ of stratified semi-simplicial sets, which are semi-simplicial sets with a collection of cells (not containing any $0$-cells) called ``thin'' cells.

\end{itemize}

Note that if $X$ is a marked or stratified semi-simplicial set, then $\scomp{X}$ have a unique marking/stratification (as a simplicial sets) compatible to the one on $X$: non-degenerate cells are marked/thin if and only if they are marked/thin as cells of $X$ and all degenerate are marked/thin.

In particular, the identification of Proposition~\ref{prop:semi_simp_are_simp} extend to the marked and stratified case and identifies respectively the categories of marked or stratified semi-simplicial sets with the (non-full) subcategories of marked or stratified simplicial sets satisfying the conditions of Proposition~\ref{prop:semi_simp_are_simp}. In particular, exactly as in Construction~\ref{cons:semi-simplicial_Tensproduct} there is a unique closed monoidal structure on marked and stratified semi-simplicial sets, that makes simplicial completion into a monoidal functor.

Also, as in \ref{semi_simp_PartialDelta_Lambda} we denote by:

\[ \Delta^k_+[n] \qquad \Delta_+[n]_t \qquad \Delta^k_+[n]' \qquad \Delta^k_+[n]'' \]

\noindent the semi-simplicial versions of all the simplicial objects we introduce in the previous subsections. Their simplicial completions identify with the corresponding simplicial objects.

\begin{theorem}\label{th:semi-simp_WMS}

For each of the weak model structure constructed in \cref{th:KanQuillen_WMS,Th:LurieModelStructure,remark:Unsaturated_Version_of_Lurie_MS,Th:WMS_stratifiedSimp,rk:complicial_saturation} on the category of plain, marked or stratified simplicial sets, there is a weak model structure on the category of plain, marked or stratified semi-simplicial sets such that:

\begin{enumerate}[label=(\roman*)]

\item\label{th:semi-simp_WMS:generators} Its generating cofibrations and anodyne are same as the simplicial version, seen through the equivalence of Proposition~\ref{prop:semi_simp_are_simp}.

\item\label{th:semi-simp_WMS:cofibrations} Cofibrations are the levelwise complemented monomorphisms, i.e. the monomorphisms $f\colon X \rightarrow Y$ such that for all $x \in Y([n])$ we have $x \in X([n]) \vee x \notin X([n])$. In particular, every object is cofibrant.

\item\label{th:semi-simp_WMS:monoidal} The model structure is monoidal for the semi-simplicial tensor product of \ref{cons:semi-simplicial_Tensproduct}.

\item\label{th:semi-simp_WMS:equivalence} The forgetful functor from simplicial sets to semi-simplicial sets is both a left and a right Quillen equivalence. In particular, the simplicial completion functor $X \mapsto \scomp{X}$ is a left Quillen equivalence.

\end{enumerate}

\end{theorem}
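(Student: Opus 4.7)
The plan is to apply Theorem~\ref{Th_tensorWMS} directly to (plain, marked, or stratified) semi-simplicial sets equipped with the semi-simplicial tensor product of Construction~\ref{cons:semi-simplicial_Tensproduct}, taking as generating cofibrations $I$ and generating anodyne maps $J$ the semi-simplicial avatars of the simplicial generators from the corresponding subsection, i.e.\ the unique morphisms between semi-simplicial sets whose simplicial completion recovers the simplicial generator. The small object argument applies in its good form exactly as in the simplicial case, and the interval for $\Delta_+[0]$ is the obvious $\Delta_+[0] \coprod \Delta_+[0] \hookrightarrow \Delta_+[1] \to \Delta_+[0]$ (with the appropriate marking or thinness where relevant), whose endpoint inclusions are generating anodyne maps, delivering the cylinder axiom of Theorem~\ref{Th_tensorWMS}.

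The corner-product conditions I would transfer from the simplicial case via the simplicial completion functor $\scomp{(-)}\colon \widehat{\Delta_+} \to \widehat{\Delta}$ (and its marked/stratified variants). This functor is a fully faithful strong monoidal left adjoint: fully faithful because it is left Kan extension along the fully faithful inclusion $\Delta_+ \hookrightarrow \Delta$, and strong monoidal by the very definition $\scomp{X \otimes Y} = \scomp{X} \times \scomp{Y}$. Since $\scomp{(-)}$ preserves pushouts and sends each semi-simplicial generator to its simplicial counterpart, we have $\scomp{i \corner{\otimes} j} \cong \scomp{i} \corner{\times} \scomp{j}$, and the corner-product conditions from Proposition~\ref{prop:pushout_product_Simpcofibrations}, Lemma~\ref{lem:MarkedCof}, Proposition~\ref{Prop:Po_prod_MarkedTrivCof} and Proposition~\ref{prop:pushoutProduct_Comp_anodyneCof} can be pulled back through full faithfulness. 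Because $\Delta_+$ is a direct category, every semi-simplicial set is cofibrant (built skeletally by successive pushouts of $\partial \Delta_+[n] \hookrightarrow \Delta_+[n]$) and cofibrations are precisely the levelwise complemented monomorphisms, which gives items \ref{th:semi-simp_WMS:generators}, \ref{th:semi-simp_WMS:cofibrations}, and the monoidality \ref{th:semi-simp_WMS:monoidal}.

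For \ref{th:semi-simp_WMS:equivalence}, observe that the forgetful functor $U\colon \widehat{\Delta} \to \widehat{\Delta_+}$ has both the left adjoint $\scomp{(-)}$ and a right adjoint $R$ (right Kan extension), both of which are fully faithful. Direct inspection shows $U$ preserves cofibrations (a simplicial complemented monomorphism with decidable degeneracy remains a semi-simplicial complemented monomorphism) as well as fibrations and acyclic fibrations (by adjunction, since the generator classes match on both sides), so $U$ is simultaneously left and right Quillen. To conclude that both $\scomp{(-)} \dashv U$ and $U \dashv R$ are Quillen equivalences, Proposition~\ref{prop:QuillenEquiv}.(iv) reduces the task to verifying that the derived counit $\scomp{UX} \to X$ (and dually the derived unit $X \to RUX$) is an equivalence for fibrant $X$. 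The main obstacle is precisely this verification: informally, it asserts that freely re-adding degeneracies after forgetting them preserves the homotopy type. The plan is to exhibit $\scomp{UX} \to X$ as a transfinite composite of pushouts of generating anodyne maps, the idea being that the "extra" degenerate cells introduced by $\scomp{U(-)}$ can be contracted via horn fillings in $X$; the marked and stratified cases reduce to the plain one since markings and thinness are preserved on the nose by both $\scomp{(-)}$ and $U$, so no genuinely new combinatorics arise beyond what is already done for simplicial sets.
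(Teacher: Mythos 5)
Your proposal has several genuine gaps, each traceable to the combinatorial heart of the argument being skipped.

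\textbf{The interval construction does not exist.} You propose $\Delta_+[0] \coprod \Delta_+[0] \hookrightarrow \Delta_+[1] \rightarrow \Delta_+[0]$, but there is no morphism $\Delta_+[1] \rightarrow \Delta_+[0]$ of semi-simplicial sets: $\Delta_+$ has no degeneracy maps, so $\operatorname{Hom}_{\Delta_+}([1],[0]) = \emptyset$, hence $\operatorname{Hom}(\Delta_+[1],\Delta_+[0]) = \emptyset$. This is exactly why the paper invokes the (dual of the) self-composed span trick of Lemma~\ref{lem:Span_trick}: one only has the cospan $\Delta_+[0] \overset{\sim}{\hookrightarrow} \Delta_+[1]_t \overset{\sim}{\hookleftarrow} \Delta_+[0]$, with no retraction to $\Delta_+[0]$, and the span trick is what turns this into a weak cylinder.

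\textbf{``Pulling back the corner-product conditions through full faithfulness'' does not work.} The property you need to transfer is: $i \corner{\otimes} j$ has the left lifting property against all $J$-fibrations between $J$-fibrant \emph{semi-simplicial} sets. Fibrant semi-simplicial objects are not, in general, in the essential image of $\scomp{(-)}$, so full faithfulness of $\scomp{(-)}$ tells you nothing about lifting squares landing in such objects. Knowing that $\scomp{i \corner{\otimes} j} = \scomp{i} \corner{\times} \scomp{j}$ is anodyne in $\widehat{\Delta}$ is not by itself enough. What is needed is Corollary~\ref{cor:freeSimpSeeTrivCof}.\ref{cor:freeSimpSeeTrivCof:scomp_detect_acyclic_cof}: $\scomp{(-)}$ \emph{detects} acyclic cofibrations. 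That in turn rests on the hard combinatorial Proposition~\ref{prop:SSSunitEquiv} (the unit $X \rightarrow \scomp{X}$ is anodyne, and more generally $\scomp{X}\coprod_X Y \hookrightarrow \scomp{Y}$ is anodyne), which is the real content of the section and has no soft categorical shortcut.

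\textbf{The proposed final step for~\ref{th:semi-simp_WMS:equivalence} cannot succeed.} You want to exhibit the counit $\scomp{UX} \rightarrow X$ as a transfinite composite of pushouts of generating anodyne maps. But all generating anodyne morphisms are monomorphisms, and so are transfinite composites of pushouts of coproducts of them. The counit $\scomp{UX} \rightarrow X$, which sends $(s,x)$ to $s^*x$, is surjective but not injective (e.g.\ $(s,x)$ and $(\mathrm{id}, s^*x)$ have the same image whenever $s \neq \mathrm{id}$), so it cannot be built that way. The paper's argument goes the other direction: it shows the \emph{unit} of $\scomp{(-)} \dashv U$ is anodyne, combines this with the fact that the forgetful functor detects equivalences (via the factorization-and-lifting argument in the proof of Theorem~\ref{th:semi-simp_WMS}), and then uses Proposition~\ref{prop:QuillenEquiv}.(v). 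Your claim that $U$ is left Quillen also silently needs $U$ to preserve anodyne morphisms (equivalently $R$ to preserve fibrations), which is Corollary~\ref{cor:freeSimpSeeTrivCof}.\ref{cor:freeSimpSeeTrivCof:Forget_pres_anodyne}, again reducible to Proposition~\ref{prop:SSSunitEquiv}. In short, the whole proof hinges on a nontrivial cone/skeletal-induction argument (Constructions~\ref{constr:semi_simpe_cones}, Lemmas~\ref{lem:X->CX_anodyne}--\ref{lemma::coneDelta}, Proposition~\ref{prop:SSSunitEquiv}) that your proposal bypasses and cannot be replaced by formal properties of the adjunction.
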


\begin{proof}

First we observe that taking the maps $\partial \Delta_+[n] \hookrightarrow \Delta_+[n]$, and $\Delta_+[n] \hookrightarrow \Delta_+[n]_t$ (in the marked/stratified case) as specified in point $\ref{th:semi-simp_WMS:generators}$ as generating cofibrations gives the class of cofibrations described in point $\ref{th:semi-simp_WMS:cofibrations}$. In the case of plain semi-simplicial sets this is proved exactly as the proof of \ref{prop:simplicial_Cofibration} (ignoring the treatment of degeneracies), the extension to the marked/stratified case work exactly as in Lemma~\ref{lem:MarkedCof}.

It immediately follows that cofibrations satisfies the corner-product condition with respect to the tensor product of \ref{cons:semi-simplicial_Tensproduct} as corner-products of generating cofibrations clearly satisfies the condition of point $\ref{th:semi-simp_WMS:cofibrations}$.

The key results are Proposition~\ref{prop:SSSunitEquiv} and its Corollary~\ref{cor:freeSimpSeeTrivCof} below which allows us to deduce the corner-product conditions for the monoidal structure on (marked/stratified) semi-simplicial sets from the similar condition for (marked/stratified) simplicial sets:

If $f$ is a cofibration and $g$ is an acyclic cofibration of (marked/stratified) semi-simplicial sets then  $\scomp{f}$ and $\scomp{g}$ are respectively a cofibration and an acyclic cofibration of (marked/stratified) simplicial sets simply because this is true for the generators. As the simplicial completion functor $X \mapsto \scomp{X}$ is monoidal and preserves colimits, it satisfies:

 \[ \scomp{f \corner{\otimes} g  } = \scomp{f  } \corner{\times} \scomp{g \vphantom{f}} \]

\noindent and $\scomp{f} \corner{\times} \scomp{g \vphantom{f}}$ is an acyclic cofibration because of the corner-product conditions for (marked/stratified) simplicial sets. Finally Corollary~\ref{cor:freeSimpSeeTrivCof}.$\ref{cor:freeSimpSeeTrivCof:scomp_detect_acyclic_cof}$ implies that $f \corner{\otimes} g$ is an acyclic cofibrations of (marked/stratified) semi-simplicial sets because $\scomp{f \corner{\otimes} g  }$ is one.

$\Delta_+[0]$ is the unit for the monoidal product, and the dual of the self-composed span trick of \ref{lem:Span_trick} applied to $\Delta_+[1]_t$ provides a weak cylinder object for it. The small object argument applies to semi-simplicial sets in its ``good form'' (from \ref{EasySOA}) hence, Theorem~\ref{Th_tensorWMS}, proves the existence of a model structure satisfying points $\ref{th:semi-simp_WMS:generators}$,$\ref{th:semi-simp_WMS:cofibrations}$ and $\ref{th:semi-simp_WMS:monoidal}$.

It is clear that the simplicial completion functor is a left Quillen functor as it sends the generating (acyclic) cofibrations to the generating (acyclic) cofibrations. Moreover, Proposition~\ref{prop:SSSunitEquiv} applied to a cofibration $\emptyset \hookrightarrow X$ shows that the unit of adjunction $X \rightarrow \scomp{X}$ is anodyne for each (marked/stratified) semi-simplicial set $X$.

To conclude that the simplicial completion/forgetful functor is a Quillen equivalence we will use point $(v)$ of Proposition~\ref{prop:QuillenEquiv} and check that the forgetful functor detects equivalences between bifibrant objects.

Let $f\colon X \rightarrow Y$ be a morphism between two bifibrant (marked/stratified) simplicial sets such that its image by the forgetful functor is an equivalence.

We factor $f$ (in the category of simplicial sets) as $p \circ i$, with $i$ an anodyne morphism followed by a fibration $p$. Corollary~\ref{cor:freeSimpSeeTrivCof}.$\ref{cor:freeSimpSeeTrivCof:Forget_pres_anodyne}$ shows that $i$ is an equivalence both in simplicial set and in semi-simplicial sets, hence in both category $f$ is an equivalence if and only if $p$ is an equivalence, i.e. an acyclic fibration.

But being an acyclic fibration is characterized by the lifting property against maps in the image of the simplicial completion functor, so $p$ is an acyclic fibration if and only if its image by the forgetful functor is an acyclic fibration.

The forgetful functor preserve all limits and colimits so is also a right adjoint functors, and it also preserves cofibrations and anodyne morphisms, by Corollary~\ref{cor:freeSimpSeeTrivCof}.$\ref{cor:freeSimpSeeTrivCof:Forget_pres_anodyne}$, hence it is a right Quillen functor. It is already known to induce an equivalence on the homotopy category by its action on bifibrant objects, because it is a left Quillen equivalence, so it is a right Quillen equivalence. \end{proof}

\begin{remark}\label{rk:ssWMS_are_not_Quillen} None of the weak model structures given by Theorem~\ref{th:semi-simp_WMS} can be Quillen model structures: in all of them the map $\Delta_+[0] \coprod \Delta_+[0] \rightarrow \Delta_+[0]$ is a ``trivial fibrations'' (in the sense that is has the right lifting property against all cofibrations) that is not an equivalence. As all their objects are cofibrants, they are, at least classically, right semi-model categories (see \cite{henry2019CombWMS}). However, constructively it is not completely clear how a map whose target is not fibrant can be factored as a cofibration which is an equivalence followed by a ``fibration'' that have the lifting property against all cofibrations that are equivalences, while non-constructively, this factorization can be obtained as a ``left saturation'' in the sense of section 4 of \cite{henry2019CombWMS}.
\end{remark}

The end of the paper is about proving Proposition~\ref{prop:SSSunitEquiv} and Corollary~\ref{cor:freeSimpSeeTrivCof}, used in the proof above. We will focus on the case of stratified semi-simplicial sets, and the case of the weak model structure of Theorem~\ref{Th:WMS_stratifiedSimp} as it is the most general one and all the others cases easily follows from it. In particular when one says ``anodyne'' we refer to the class of maps generated by the semi-simplicial versions of the sets given in Construction~\ref{Constr:generating_stratified_trivcof}. If one is only interested in the unmarked case, this would simplify considerably the proof of lemmas \ref{lem:X->CX_anodyne} and \ref{lemma::coneDelta}, but leave the rest of the proof mostly unchanged.

\begin{remark} Another possible approach to prove Theorem~\ref{th:semi-simp_WMS} would be to rely on the proof that a semi-simplicial Kan complex can be endowed with choices of degeneracy maps making it into a simplicial set. This was originally proved in \cite{rourke1970delta} using topological methods. A combinatorial proof has been given in \cite{mcclure2013semisimplicial}, and a different combinatorial proof extending the result to the case of quasi-categories has been given in \cite{steimle2018degeneracies}. These results probably allows to give a different proof of Theorem~\ref{th:semi-simp_WMS} in the case of the Kan-Quillen and the Joyal-Lurie model structure,  bypassing the end of the paper for these cases. A version of this claim for the Verity model structure, while plausible, is unknown. Moreover we have not been able to make the proofs of \cite{mcclure2013semisimplicial} or \cite{steimle2018degeneracies} constructive, in fact we are very unsure whether the claim that semi-simplicial Kan complexes can be endowed with the structure of a simplicial sets has an interesting constructive content. Hence the rest of the paper seems necessary both for the semi-simplicial version of weak complicial sets and for the constructiveness of the semi-simplicial versions of the Kan-Quillen and Joyal-Lurie model structure. \end{remark}

Before moving to the proof of Proposition~\ref{prop:SSSunitEquiv} and Corollary~\ref{cor:freeSimpSeeTrivCof}, we need some preliminaries:

\begin{construction}
\label{constr:semi_simpe_cones}Let $X$ be a stratified semi-simplicial set. We define a stratified semi-simplicial $CX$, which is essentially a semi-simplicial version of the join of $X$ with $\Delta_+[0]$, this definition only serves a technical purpose and we do not want to develop the theory of the join, so we will give a very explicit definition of this object.

The cells of $CX$ are:

\begin{itemize}

\item For each $k$-cell $x$ of $X$, $x$ is also a $k$-cell of $CX$.

\item $*$ a cell of dimension $0$ of $CX$.

\item For each $k$-cell $x$ of $X$, $x^*$ is a $k+1$-cell of $CX$.

\end{itemize}

The face operations are defined as follows:

\begin{itemize}

\item $X$ is a subobject of $CX$, i.e. for a cell of the form $x$ for $x \in X$, face operations are as in $X$.
\item A face map $i\colon [k] \hookrightarrow [n]$ either factors through $[n-1]$, in which case it restricts to a map $i':[k] \to [n-1]$ or satisfies $i(k)=n$, in which case, as $i$ is injective, we can restrict it to a map $i':[k-1] \rightarrow [n-1]$. We define:

\[ \left\lbrace \begin{array}{cccl}
i^*(x^*) & = & i'^*(x) & \text{If $i$ restricts to $i':[k] \to [n-1]$.} \\
i^*(x^*) & = & * & \text{If $k=0$ and $i(0)=n$.}\\
i^*(x^*) & = & (i'^*(x))^* & \text{If $k \neq 0$, $i(k)=n$, and $i$ restricts} \\
        &    &            & \text{to $i':[k-1] \to [n-1]$.} \\
\end{array} \right. \]

\end{itemize}
The functoriality on $\Delta_+$ of this definition can be checked by a case by case analysis. Thinness in $CX$ is defined by the fact that $\alpha$ and $\alpha^*$ are thin in $CX$ if and only $\alpha$ is thin in $X$.

\end{construction}

\begin{example} If $X$ is $\Delta_+[n]$ then $CX$ is $\Delta_+[n+1]$ with the canonical morphism $X \rightarrow CX$ corresponding to the inclusion $[n] \subset [n+1]$. Indeed, the $0$-cell $*$ corresponds to $\{n+1\} \subset [n+1]$, if $\alpha \subset [n]$ is a cell of $\Delta_+[n]$ then the corresponding cell $\alpha$ of $\Delta_+[n+1]$ is simply $\alpha \subset [n] \subset [n+1]$, and the cell $\alpha^*$ is $\alpha \cup \{n+1\} \subset [n+1]$. It is relatively immediate to check that all face maps as defined above identifies with these of $\Delta_+[n+1]$.

For a case with markings, if $X = \Delta^k_+[n]$ then if $k<n$, $C X \simeq \Delta^k_+[n+1]$. Indeed a cell $\beta \subset [n+1]$ is marked in $\Delta^k_+[n+1]$ if and only if it contains $\{k-1,k,k+1\}$ which is a subset of $[n] \subset [n+1]$ and so cells of the form $\alpha$ or $\alpha^*$ for $\alpha \subset [n]$ are indeed thin in $\Delta^k_+[n+1]$ if and only if $\alpha$ is thin as a cell of $\Delta^k_+[n]$. 

In the case $k=n$,  $CX$ has more thin cells than $\Delta^k_+[n+1]$: the cells of $CX$ that are thin are exactly the cells that contain $\{n-1,n\}$, while thin cells of $\Delta^k_+[n+1]$ are these that contain $\{n-1,n,n+1\}$.  In particular, $C\Delta^n_+[n]$ can be described as the pushout:

\[
\begin{tikzcd}
  \Delta_+[n] \ar[d] \ar[dr,phantom,"\ulcorner"{very near end}] \ar[r] & \Delta^n_+[n+1]  \ar[d] \\
 \Delta_+^n[n] \ar[r] & C \Delta_+^n[n] \\
\end{tikzcd}
\]

\end{example}

\begin{remark}\label{rk:C_is_cocontinuous}  When seen as a functor from stratified semi-simplicial sets to \emph{pointed} stratified semi-simplicial sets (pointed by the cell $*$), $C$ commutes to all colimits, hence it is also a left adjoint functors. \end{remark}

\begin{lemma}\label{lem:X->CX_anodyne}
If $X \overset{\sim}{\hookrightarrow} Y$ is anodyne in $\widehat{\Delta_+}^s$ then:

\[ CX \coprod_X Y \rightarrow CY \]

\noindent is again anodyne.

\end{lemma}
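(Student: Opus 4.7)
The plan is to reduce the statement, by a standard Leibniz-type argument, to checking it on generating anodyne morphisms, and then to verify it explicitly for each such generator. Write $\widehat{C}(f) \colon CX \coprod_X Y \to CY$ for the corner construction applied to $f\colon X \to Y$. Because $C$, viewed as a functor to \emph{pointed} stratified semi-simplicial sets, is cocontinuous (Remark~\ref{rk:C_is_cocontinuous}), it preserves pushouts, coproducts and sequential colimits of unpointed diagrams whose connecting maps preserve basepoints, which is automatic in the situations we care about. A direct universal-property check then shows that $\widehat{C}$ sends a pushout, coproduct or transfinite composite of arrows to a pushout, coproduct or transfinite composite of their $\widehat{C}$-images, and it trivially preserves retracts. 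Hence the class of $f$ for which $\widehat{C}(f)$ is anodyne is closed under the small-object-argument operations generating the class of anodyne morphisms, and it suffices to check the statement when $f$ ranges over the generating anodyne morphisms of Construction~\ref{Constr:generating_stratified_trivcof}.

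For a complicial horn inclusion $\lambda^k_+[n] \colon \Lambda^k_+[n] \hookrightarrow \Delta^k_+[n]$ with $0 < k < n$, the examples computed after Construction~\ref{constr:semi_simpe_cones} identify $C\Delta^k_+[n]$ with $\Delta^k_+[n+1]$, and an analogous computation identifies $C\Lambda^k_+[n]$ with the sub-object of $\Delta^k_+[n+1]$ obtained by removing the top cell $[n+1]$ and the face of $[n+1]$ opposite to the vertex $k$. Adding back the two cells of $\Delta^k_+[n] \setminus \Lambda^k_+[n]$ produces exactly $\Lambda^k_+[n+1]$, so $\widehat{C}(\lambda^k_+[n])$ is the horn inclusion $\lambda^k_+[n+1]$, which lies in the generating set. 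In the boundary cases $k = 0$ or $k = n$, the same combinatorial identification holds at the level of underlying semi-simplicial sets, but the stratification on $C\Delta^k_+[n]$ is strictly larger than that of $\Delta^k_+[n+1]$ (see the second example after Construction~\ref{constr:semi_simpe_cones}); the map $\widehat{C}(\lambda^k_+[n])$ then factors as the horn inclusion $\lambda^k_+[n+1]$ followed by finitely many pushouts of generating thinness extensions declaring these additional cells thin, and is therefore still anodyne.

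For a complicial thinness extension $\Delta^k_+[n]' \hookrightarrow \Delta^k_+[n]''$, the underlying semi-simplicial map is an identity, so $\widehat{C}$ applied to it is an identity at the level of underlying semi-simplicial sets and merely declares some additional cells of $CY$ to be thin. A direct inspection of the stratification on $CY$ against that of $CX \coprod_X Y$ identifies these extra thin cells, and each can be realized as a pushout of a generating thinness extension $\Delta^{k'}_+[m]' \hookrightarrow \Delta^{k'}_+[m]''$ of suitable shape along a face inclusion; performed in a compatible order they assemble into a finite composite of anodyne maps.

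The main obstacle is the boundary-case analysis $k \in \{0, n\}$ of the horn generators, where the stratification of $C\Delta^k_+[n]$ is larger than expected and the additional thin cells must be absorbed through an explicit finite sequence of thinness extensions. The difficulty is combinatorial rather than conceptual, and is handled by enumerating the extra thin cells and realising each of them as the top cell of a subsimplex of $C\Delta^k_+[n]$ whose lower-dimensional thin faces have already been added at an earlier stage.
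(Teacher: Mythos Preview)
Your overall strategy---reduce to generators via cocontinuity of $C$, then analyse each generator---is exactly the paper's approach (cf.\ Remark~\ref{Rk::JTcalculus_transnat}). The interior horn case $0<k<n$ is handled correctly.

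There is, however, a concrete error in your boundary horn case. You claim that for $k\in\{0,n\}$ the map $\widehat{C}(\lambda^k_+[n])$ ``factors as the horn inclusion $\lambda^k_+[n+1]$ followed by finitely many pushouts of generating thinness extensions''. But the domain of $\widehat{C}(\lambda^k_+[n])$ is $C\Lambda^k_+[n]\coprod_{\Lambda^k_+[n]}\Delta^k_+[n]$, which (for $k=n$) already carries the enlarged stratification: a cell $\beta\subset[n+1]$ is thin there iff $\{n-1,n\}\subset\beta$, not merely iff $\{n-1,n,n+1\}\subset\beta$. So the domain is \emph{not} $\Lambda^n_+[n+1]$ with its standard stratification, and the factorisation you describe does not exist. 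What actually happens is simpler: the extra thin cells are already present in the domain (it carries the stratification induced from the target), so $\widehat{C}(\lambda^k_+[n])$ only adds the two missing cells $t^*$ and $(\partial^k t)^*$ and is a \emph{single pushout} of $\lambda^k_+[n+1]$, uniformly in $k$. The verification one has to do is that the classifying map $\Delta^k_+[n+1]\to C\Delta^k_+[n]$ preserves thinness, which the paper checks explicitly. Incidentally, only $k=n$ is genuinely special here: since $C$ joins a new top vertex, for $k=0$ (and indeed for all $k<n$) one has $C\Delta^k_+[n]\cong\Delta^k_+[n+1]$ on the nose.

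Your thinness-extension case is in the right direction but too vague. There is exactly one new thin cell, namely $(\partial^k t)^*$, and the paper exhibits the map as a single pushout of $\Delta^k_+[n+1]'\hookrightarrow\Delta^k_+[n+1]''$; the nontrivial point is again that the classifying map $\Delta^k_+[n+1]'\to C\Delta^k_+[n]'\coprod_{\Delta^k_+[n]'}\Delta^k_+[n]''$ preserves thinness, which needs to be checked rather than asserted.
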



\begin{proof}
By \cref{Rk::JTcalculus_transnat,rk:C_is_cocontinuous}, it is enough to check it in the case of the generating anodyne maps  $\Lambda_+^k [n] \hookrightarrow \Delta_+^k[n]$ and $\Delta_+^k[n]' \hookrightarrow \Delta_+^k[n]''$.

In the first case the resulting map:

\[ C \Lambda_+^k[n]  \coprod_{\Lambda_+^k[n]} \Delta_+^k[n] \rightarrow C \Delta_+^k[n] \]

 only misses cells of the form $x^*$ when $x$ is not in $\Lambda^k_+[n]$, that is the two cells: $t^*$ and $\partial^k t^*$ for $t \in \Delta_+^k[n]$ the top dimensional cell, and $\partial^k t^*$ its $k$-th face. They can be both added by a pushout of $\Lambda^k_+[n+1] \hookrightarrow \Delta^k[n+1]$, indeed $\partial^k(t^*) = (\partial^k t) ^*$, and for any $\alpha \colon  [v] \hookrightarrow [n+1]$ which contains $\{k-1,k,k+1\}$, $\alpha^* (t^*)$ is thin because if $\alpha$ factors into $[n]$, then this is $\alpha^* t$, which is thin as $\alpha$ contains $\{k-1,k,k+1\}$. If $n+1$ is in the image of $\alpha$, then this is equal to $\alpha'^*(t)^*$, where $\alpha'$ is the restriction of $\alpha$ missing $(n+1)$, this cell is thin if and only if $\alpha'^* t $ is thin in $\Delta_+^k[n]$ i.e. if $\alpha'$ contains $\{k-1,k,k+1\} \wedge [n]$ in its image, which is always the case.

In the case of $\Delta_+^k[n]' \hookrightarrow \Delta_+^k[n]''$, the resulting map 

\[ C \Delta_+^k[n]'  \coprod_{\Delta_+^k[n]'} \Delta_+^k[n]'' \rightarrow C \Delta_+^k[n]'' \]

\noindent is only making one additional cell thin ($(\partial^k t)^*$), and it is a pushout of a $\Delta_+^k[n+1]' \hookrightarrow \Delta_+^k[n+1]''$.
Indeed consider the cell $t^* \in C\Delta_+[n]$, which gives a morphism $\Delta_+[n+1] \rightarrow C\Delta_+[n]$ (in fact, an isomorphism). The corresponding map  $ \Delta_+^k[n+1]' \rightarrow C \Delta_+^k[n]' $ can be checked to preserve thinness, and taking the pushout of  $\Delta_+^k[n+1]' \hookrightarrow \Delta_+^k[n+1]''$ along the map  $\Delta_+^k[n+1]' \rightarrow C \Delta_+^k[n]'  \coprod_{\Delta_+^k[n]'} \Delta_+^k[n]''$ exactly makes the cell $(\partial^k t)^*$ thin. \end{proof}

\begin{lemma}\label{lemma::coneDelta}
The map $\Delta_+[n] \hookrightarrow \Delta_+^{n+1}[n+1]$ induced by the canonical inclusion $[n] \subset [n+1]$ is anodyne.
\end{lemma}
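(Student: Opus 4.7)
The plan is to exhibit the inclusion $\Delta_+[n]\hookrightarrow \Delta_+^{n+1}[n+1]$ as a finite composition of pushouts of the complicial horn inclusions $\Lambda^k_+[m]\hookrightarrow \Delta^k_+[m]$ from $J^{Horn}$; I do not see how to use Lemma~\ref{lem:X->CX_anodyne} in a clean way here, since the thinness produced by $C$ (cells containing $\{n-1,n\}$ in $C\Delta^n_+[n]$) is not that of the target (cells containing $\{n,n+1\}$), and no single cone of a stratified object $Y$ is isomorphic to $\Delta_+^{n+1}[n+1]$. The cells of $\Delta_+^{n+1}[n+1]$ that are missing from $\partial^{n+1}[n+1]=\Delta_+[n]$ are exactly those subsets of $[n+1]$ containing the vertex $n+1$; writing such a cell as $\sigma\cup\{n+1\}$ with $\sigma\subset[n]$, it is thin in the target precisely when $n\in\sigma$. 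This partitions the missing cells into pairs $(\sigma\cup\{n+1\},\,\sigma\cup\{n,n+1\}^\sim)$ indexed by subsets $\sigma\subset\{0,1,\dots,n-1\}$.

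For each such $\sigma$ with $m=|\sigma|$ I would attach a single pushout of $\Lambda^m_+[m+1]\hookrightarrow \Delta^m_+[m+1]$ whose attaching map sends the top cell of the source to $\sigma\cup\{n,n+1\}$ via the unique order-preserving bijection on vertices (so the vertex labelled $m$ goes to $n$ and the vertex labelled $m+1$ to $n+1$). With this identification the missing $m$-th face of the horn is exactly $\sigma\cup\{n+1\}$, so the pushout adds this face together with the top cell $\sigma\cup\{n,n+1\}$; moreover the cells of $\Delta^m_+[m+1]$ declared thin—those containing $\{m-1,m,m+1\}\cap[m+1]$—all map into cells of the target containing $\{n,n+1\}$, so the attaching map is compatible with the target stratification. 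The base case $\sigma=\emptyset$ is precisely the generating horn $\Lambda^0_+[1]\hookrightarrow \Delta^0_+[1]$ attached at the vertex $n$, which is also how the $n=0$ case of the lemma itself is proved.

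I would process the fills in any order that is increasing in $|\sigma|$. The key verification is that before the fill attached to $\sigma$ is performed, every proper face of $\sigma\cup\{n,n+1\}$ other than $\sigma\cup\{n+1\}$ is already present with the correct thinness. These faces are $\sigma\cup\{n\}$, which lies in $\Delta_+[n]$; the codimension-one faces $\sigma'\cup\{n,n+1\}$ for $\sigma'\subsetneq\sigma$, each of which occurred as the top cell (hence was marked thin) of a strictly earlier fill; and the various subfaces of these, produced by even earlier fills. A parallel check shows that every cell required to be thin in the horn source—those mapping to cells containing $\{\sigma_{\max},n,n+1\}$ in the target, where $\sigma_{\max}=\max\sigma$—was already marked thin by the earlier fill attached to a subset $\sigma'\subsetneq\sigma$ of the form $\tau\cup\{\sigma_{\max}\}$.

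The main obstacle I expect is the bookkeeping needed to confirm that after all fills are performed the resulting stratification coincides exactly with that of $\Delta_+^{n+1}[n+1]$, neither too few nor too many cells marked thin. This amounts to the symmetric observation that each fill marks as thin only cells containing $\{n,n+1\}$ (so we never mark spuriously) while conversely every cell of $\Delta_+^{n+1}[n+1]$ containing $\{n,n+1\}$ is of the form $\tau\cup\{n,n+1\}$ for some $\tau\subset\{0,\dots,n-1\}$ and is therefore marked thin as the top cell of the fill attached to $\tau$.
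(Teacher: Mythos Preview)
Your proof is correct and is essentially the same as the paper's: both exhibit the inclusion as a composite of pushouts of complicial horn inclusions $\Lambda^m_+[m{+}1]\hookrightarrow\Delta^m_+[m{+}1]$, adding the pair $(\sigma\cup\{n{+}1\},\,\sigma\cup\{n,n{+}1\})$ for each $\sigma\subset\{0,\dots,n{-}1\}$ in order of increasing $|\sigma|$. The paper packages this as an induction on the cells of an auxiliary object $DX=CCX$ (with thinness on the $x^{*+}$ cells) applied to $X=\Delta_+[n{-}1]$, so that your $\sigma$'s are exactly the cells $x$ of $X$ together with the base case $\sigma=\emptyset$, and your fill for $\sigma$ is precisely the paper's pushout of $\Lambda^{k+1}_+[k{+}2]\hookrightarrow\Delta^{k+1}_+[k{+}2]$ adding $x^+$ and $x^{*+}$; the combinatorial verifications (faces present, thinness compatible) are identical.
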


\begin{proof}
 For $X$ a semi-simplicial set (without marking or stratification), we consider the semi-simplicial set $C C X$ where $C $ is as constructed in \ref{constr:semi_simpe_cones}.
In order to distinguishes the cell ``$x^*$'' coming from the two applications of $C$ we will use the symbol $*$ for the first application and $+$ for the second, i.e. the cells of $CCX$ are $*$,$+$,$*^+$, $x$,$x^*$,$x^+$ and $x^{*+}$ for $x$ a cell of $X$.

We will define a stratified semi-simplicial set $DX$ whose underlying semi-simplicial set is $CCX$ and in which the thin cells are all the cells of the form $*^+$ and $x^{*+}$. And we consider the natural inclusion of $\eta_x\colon  C X \hookrightarrow D X$ sending the cells $*, \alpha$ or $\alpha^*$ to the cells with the same name.

We claim that for all semi-simplicial set $X$, the map $C X \hookrightarrow D X$ is anodyne. Applying this to $X = \Delta_+[n-1]$ (or $\emptyset$ for $n=0$) immediately gives the lemma.

This claim can be proved by induction on cells of $X$, indeed for $X = \emptyset$, $CX = \Delta_+[0]$ and $D X = \Delta_+^1[1]$ so that $\eta_{\emptyset}$ is one of our generating anodyne map. Everytime we add a $k$-cell $x$ to $X$ (to get a new semi-simplicial set $X'$), it adds two cells $x,x^*$ to $CX$ and two additional cells $x^+$ and $x^{*+}$ to $D X$. The map $C X'= CX \cup\{x,x^*\} \hookrightarrow D X \cup \{x,x^*\}$ is already known to be anodyne by induction, as it is a pushout of $CX \to DX$, so it remains to see that $D X \cup \{x,x^*\} \hookrightarrow D X' =  D X \cup\{x,x^*,x^+,x^{*+} \}$ is anodyne.

If $x$ is a $k$-cell, then $x^{*+}$ is a $(k+2)$-cell, and $\partial^{k+1} x^{*+} = x^+$, moreover any $\lambda \colon  [n] \rightarrow [k+2]$ which contains $\{k+1,k+2\}$ in its image satisfies $\lambda^*(x^{*+}) = (\lambda^{*}(x))^{*+}$ where $\lambda'$ is the restriction of $\lambda$ as a map $[n-2] \rightarrow [k]$, in particular $\lambda^*(x^{*+})$ is thin. This shows that the map $D X \cup \{x,x^*\} \hookrightarrow D X' = D X \cup\{x,x^*,x^+,x^{*+} \}$ is a pushout of $\Lambda^{k+1}[k+2] \overset{\sim}{\hookrightarrow} \Delta^{k+1}[k+2]$ and proves the lemma. \end{proof}

\begin{prop}\label{prop:SSSunitEquiv}
For any $i\colon X \hookrightarrow  Y$ a cofibration of stratified semi-simplicial sets, the map:

\[ \scomp{X} \coprod_X Y \hookrightarrow \scomp{Y} \]

\noindent is an anodyne map of stratified semi-simplicial sets.

\end{prop}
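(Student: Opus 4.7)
The plan is to proceed by cell-by-cell induction on the cofibration $i$, reduce the statement to the two types of generating cofibrations of Construction~\ref{const:generating_stratified_cofib}, and then handle each case using the cone construction of \cref{constr:semi_simpe_cones} together with the two preliminary lemmas just established.

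For the reduction, I would observe that both $Y \mapsto \scomp{Y}$ and, for fixed $X$, the functor $Y \mapsto \scomp{X} \cup_X Y$ on cofibrations under $X$ preserve colimits (the first as a left adjoint, the second because pushouts commute with colimits in each variable). Consequently, the natural transformation $\scomp{X} \cup_X Y \hookrightarrow \scomp{Y}$ turns any cellular presentation of $i$ into a cellular presentation of its corner map, and by closure of anodyne maps under pushouts, transfinite composition, and retracts, it suffices to treat the two generators $\partial \Delta_+[n] \hookrightarrow \Delta_+[n]$ and $\Delta_+[n] \hookrightarrow \Delta_+[n]_t$.

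The thinness generator $\Delta_+[n] \hookrightarrow \Delta_+[n]_t$ is the easier case: its corner map is the identity on underlying semi-simplicial sets, and only needs to make thin the non-trivial degeneracies $s^*(\mathrm{id}_{[n]})$ of the top cell. I would add these in order of increasing dimension, each via a pushout of a complicial thinness extension $\Delta_+^k[m]' \hookrightarrow \Delta_+^k[m]''$ chosen to target the degeneracy in question. For the boundary generator, the corner map identifies under \ref{prop:semi_simp_are_simp} with
\[ \partial \Delta[n] \cup_{\partial \Delta_+[n]} \Delta_+[n] \hookrightarrow \Delta[n], \]
and the cells to be added are the non-trivial degeneracies of the top $n$-cell. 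I would attach these in layers using the cone functor $C$: Lemma~\ref{lemma::coneDelta} provides a first anodyne extension $\Delta_+[n] \hookrightarrow C \Delta_+[n] = \Delta_+^{n+1}[n+1]$ that adjoins a cone's worth of new cells, and iterating Lemma~\ref{lem:X->CX_anodyne} along the resulting tower propagates anodyneness while accumulating further layers of degenerate cells, suitably identified with the layers of degeneracies of $\mathrm{id}_{[n]}$.

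The main obstacle is the combinatorial bookkeeping in this last step: iterated cones on $\Delta_+[n]$ are not literally $\scomp{\Delta_+[n]} = \Delta[n]$, so one must carve out of the cone tower the subcomplex corresponding to the actual degeneracy poset of $\Delta[n]$, and verify that the complementary cells are added via further anodyne pushouts, all while tracking the stratification carefully so that each pushout really is of an anodyne generator with the correct thinness data. Once this matching between cones and degeneracies is organized, the proposition follows by assembling the pieces into a transfinite composite of anodyne extensions.
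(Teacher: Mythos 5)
Your reduction to the two families of generating cofibrations is correct and matches the paper, but the treatment of both generators has problems, and the boundary case in particular is missing the key idea.

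For the thinness generator $\Delta_+[n]\hookrightarrow \Delta_+[n]_t$, there is nothing to do: the corner map is already an isomorphism of stratified semi-simplicial sets. The degeneracies $s^*(\mathrm{id}_{[n]})$ you propose to make thin by thinness extensions are degenerate cells of $\scomp{\Delta_+[n]}$, and the stratification of the simplicial completion by definition makes \emph{all} degenerate cells thin. Thus $\scomp{\Delta_+[n]}\coprod_{\Delta_+[n]}\Delta_+[n]_t$ and $\scomp{\Delta_+[n]_t}$ have identical underlying semi-simplicial sets and identical thin cells. Your plan of layering thinness extensions is superfluous, and this hints at a misreading of how the stratification on $\scomp{X}$ is defined.

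For the boundary generator, the gap is substantive. The map
\[ S := \partial\Delta[n]\coprod_{\partial\Delta_+[n]}\Delta_+[n] \hookrightarrow \Delta[n] \]
is \emph{not} a transfinite composite of pushouts of generating anodyne maps; it is only a retract of one. Your plan of starting from Lemma~\ref{lemma::coneDelta} and then ``iterating'' Lemma~\ref{lem:X->CX_anodyne} along a tower of cones cannot produce $\Delta[n]$ directly, as you yourself observe ($C^k\Delta_+[n]\cong\Delta_+[n+k]$, which bears no natural covering map to $\Delta[n]$), and ``carving out a subcomplex of the cone tower'' is the wrong picture. The paper instead constructs a single auxiliary stratified semi-simplicial set $T_n$ (with underlying semi-simplicial set $C\Delta[n]$, but a different stratification) and exhibits $\Delta[n]$ as a \emph{retract} of $T_n$; it then shows $S\hookrightarrow T_n$ is anodyne by factoring it as $S\hookrightarrow T_n^n\hookrightarrow T_n$, where $T_n^n\to T_n$ is a transfinite composite of complicial horn fillings along a carefully chosen filtration $T_n^{n}\subset T_n^{n+1}\subset\cdots$ indexed by $|\alpha^{-1}[n]|$, and $S\hookrightarrow T_n^n$ uses the inductive hypothesis on $\partial\Delta_+[n]\to\partial\Delta[n]$ together with a \emph{single} application of Lemma~\ref{lem:X->CX_anodyne} and Lemma~\ref{lemma::coneDelta}. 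The retract step and the $T_n^i$ filtration are not bookkeeping that can be deferred; they are the heart of the argument, and your proposal does not contain them.
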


\begin{proof}
Note that this map is indeed a cofibration (it is easy to check from the explicit formula $\scomp{X}$). As $X \mapsto \scomp{X}$ is a left adjoint functor, checking that the proposition is true for $\Delta_+[n] \hookrightarrow \Delta_+[n]_t$ and $\partial \Delta_+[k] \hookrightarrow \Delta_+[k]$ for all $k<n$ implies that it is automatically true for any cofibration $X \hookrightarrow Y$ of stratified simplicial set such that the cell in $Y$ not in $X$ are of dimension $<n$.

Note that in the case where the map $X \rightarrow Y$ is an isomorphism of the underlying semi-simplicial set (so that it is only a change of stratification) then the map $\scomp{X} \coprod_X Y \hookrightarrow \scomp{Y}$ is an isomorphism. Hence the proposition automatically holds for the  $\Delta_+[n] \hookrightarrow \Delta_+[n]_t$. 

\bigskip

We will prove this claim by induction, more precisely we assume that the result holds for all $\partial \Delta_+[k] \hookrightarrow \Delta_+[k]$ for $k<n$, and hence for any cofibration between objects of dimension $<n$, and we will show that it holds for $\partial \Delta_+[n] \hookrightarrow \Delta_+[n]$, i.e. that:

\[\partial \Delta [n] \coprod_{\partial \Delta_+[n]} \Delta_+[n] \hookrightarrow \Delta[n] \]

\noindent is anodyne, where $\Delta[n]$ and $\partial \Delta [n]$ are endowed with their stratification coming from the category $\widehat{\Delta}^s$ of stratified simplicial sets, i.e. all the degenerate cells are thin.

The $k$-cells of $\Delta[n]$ are all maps $[k] \rightarrow [n]$. The subobject 

\[ S= \partial \Delta [n] \coprod_{\partial \Delta_+[n]} \Delta_+[n] \]

\noindent corresponds to all non-surjective maps, and the identity of $[n]$. This map does not appears to be directly a (transfinite) composite of pushout of the generating cofibrations, but only a retract of such map, so we need to explicitly construct ``bigger'' objects these will be retract of.

We define $T_n$ the semi-simplicial set such that:
\[ T_n([k]) \coloneqq  \{ f\colon  [k] \rightarrow [n] \cup \{*\} | \text{ $f$ is order preserving and $f^{-1}\{*\} = \emptyset$ or $\{k\}$ } \} \]

(where ``$*$'' is added as a maximal element of $[n]$.)

$\Delta[n]$, seen as a semi-simplicial sets, naturally identify as a retract of $T_n$:

\[\Delta[n] \rightarrow T_n \rightarrow \Delta[n] \]

Where the first map corresponds to the inclusion of the sub-complex of cells such that $f^{-1}\{*\} = \emptyset$ and the second map send a cell $[k] \rightarrow [n] \cup \{*\}$ to its composite with the map sending $*$ to $n$. We endow $T_n$ with the stratification where a cell is thin if and only if its image in $\Delta[n]$ is thin, and this retraction is in the category of stratified semi-simplicial sets. 

In particular it is enough to show that the composite:

\[ \partial \Delta [n] \coprod_{\partial \Delta_+[n]} \Delta_+[n] \hookrightarrow \Delta[n] \hookrightarrow T_n \]

 is anodyne.

If $x$ is a cell of $\Delta[n]$ of dimension $k$, then we denote by $x^*$ the unique cell of $T_n$ of dimension $k+1$ which is not in $X$ and such that $\partial^{k+1}x^*=x$, i.e. $x^*$ is $x$ on $[k] \subset [k+1]$ and $*$ on $k+1$. The cells of $T_n$ are exactly the $x \in \Delta[n]$, the $x^* \in \Delta[n]$ and one additional cell of dimension $0$, denoted $*$. So as semi-simplicial sets $T_n=C \Delta[n]$ (but the stratification are not the same).

We now define for any $i\geqslant n$:
\[ T_n^i([k]) = \{ \alpha \in T_n([k]) | \alpha^{-1}[n] \rightarrow [n] \text{ is not surjective} \text{ or } |\alpha^{-1} [n] | \leqslant i\} \]

( $|\alpha^{-1} [n] |$ denote the cardinal of  $\alpha^{-1} [n]$).

We then check that $T^{i-1}_n \hookrightarrow T^i_n$ is a pushout of a coproduct of several copies of $\Lambda^{i+1}_+[i+1] \hookrightarrow \Delta^{i+1}_+[i+1]$. First, the cells of $T^i_n$ that are not in $T^{i-1}_n$ are exactly the $\alpha \in \Delta[n]$ which are surjective and of dimension $i$, and the $\alpha^*$ for such $\alpha$. For each such $\alpha$ we can add $\alpha^*$ and $\alpha$ together with a pushout of $\Lambda^{n+1}_+[i+1] \hookrightarrow \Delta^{i+1}_+[n+1]$. Indeed, $\alpha^*$ is a cell of dimension $i$, such that all its faces except its $(i+1)$-face are in $T^{i-1}_n$ and its $i+1$-face is $\alpha$. Moreover, $\alpha^*$ is always thin and for any $v\colon [u] \rightarrow [i+1]$ which contains $i$ and $i+1$ in its image, $v^*(a^*)$ is always thin, as its image in $\Delta^n$ will take the value $n$ at least twice (in $i$ and $i+1$) so is a non-injective cell.

This proves that $T^n_n \rightarrow T_n$ is anodyne. So it remains to show that

\[ \partial \Delta [n] \coprod_{\partial \Delta_+[n]} \Delta_+[n] \hookrightarrow T^n_n \]

\noindent is anodyne. Note that at the level of the underlying semi-simplicial sets $T^n_n$ is exactly $C ( \partial \Delta [n] \coprod_{\partial \Delta_+[n]} \Delta_+[n]) $, but endowed with a different stratification. More precisely, there is a morphism:

 \[ C \left( \partial \Delta [n] \coprod_{\partial \Delta_+[n]} \Delta_+[n] \right) \rightarrow T^n_n \]

Which makes thin the cells $a^*$ for $a \in \Delta_+[n]$ which contains $n$ in its image. Indeed the cells of $T^n_n$ are thin if and only if there image in $\Delta[n]$ (by the map sending $*$ to $n$) is thin, i.e. non injective, while a cell $a$ or $a^*$ in $ C ( \partial \Delta [n] \coprod_{\partial \Delta_+[n]} \Delta_+[n])$ is thin if and only if $a$ is non-injective. So the only case a cell can be non-thin in $C ( \partial \Delta [n] \coprod_{\partial \Delta_+[n]} \Delta_+[n])$ and thin in $T^n_n$ is if it is of the form $a^*$, with $a$ injective, but the image of in $\Delta[n]$ non-injective, hence, with $a \in \Delta_+[n]$ but containing $n$ in its image.

By our induction hypothesis, the map $\partial \Delta_+[n] \rightarrow \partial \Delta [n] = \scomp{\partial \Delta_+[n]}$ is anodyne, hence by Lemma~\ref{lem:X->CX_anodyne} applied to its pushout $\Delta_+[n] \hookrightarrow \partial \Delta [n] \coprod_{\partial \Delta_+[n]} \Delta_+[n]$, the map:

\begin{equation}\label{arrow1} \partial \Delta[n] \coprod_{\partial \Delta_+[n]} C( \Delta_+[n]) \hookrightarrow C \left( \partial \Delta [n] \coprod_{\partial \Delta_+[n]} \Delta_+[n] \right) \end{equation}

\noindent is also anodyne. Note that $C( \Delta_+[n])$ is exactly $\Delta_+[n+1]$, and making thin all the cells $\alpha^*$ for $\alpha \in \Delta_+[n]$ which contains $n$ in their image, exactly means making all the cells of $\Delta_+[n+1]$ which contains $n$ and $n+1$ thin, i.e. it is the marking of $\Delta^{n+1}_+[n+1]$. This means that: 

\[\partial \Delta[n] \coprod_{\partial \Delta_+[n]} \Delta^{n+1}_+[n+1] \hookrightarrow T^n_n \]

\noindent is anodyne as a pushout of the map (\ref{arrow1}) (the pushout just serving to make a few additional cells thin). Finally $\Delta_+[n] \hookrightarrow \Delta^{n+1}_+[n+1] $ is anodyne by Lemma~\ref{lemma::coneDelta} and hence this shows that 

\[\partial \Delta[n] \coprod_{\partial \Delta_+[n]} \Delta_+[n] \hookrightarrow T^n_n \]

\noindent is anodyne and concludes the proof. \end{proof}

\begin{cor}\label{cor:freeSimpSeeTrivCof}
\begin{enumerate}[label=(\roman*)]

\item[]

\item\label{cor:freeSimpSeeTrivCof:scomp_pres_anodyne} If $f \colon X \overset{\sim}{\hookrightarrow} Y$ is anodyne in $\widehat{\Delta_+}^s$, then $\scomp{f} \colon  \scomp{X} \rightarrow \scomp{Y}$ is also anodyne in $\widehat{\Delta_+}^s$.

\item\label{cor:freeSimpSeeTrivCof:Forget_pres_anodyne} If $f \colon X \rightarrow Y$ is anodyne in $\widehat{\Delta}^s$, then its image in $\widehat{\Delta_+}^s$ is also anodyne.

\item\label{cor:freeSimpSeeTrivCof:scomp_detect_acyclic_cof} If $f \colon  X \rightarrow Y$ is a cofibration in $\widehat{\Delta_+}^s$ and $\scomp{f}\colon \scomp{X} \rightarrow \scomp{Y}$ is an acylic cofibration in $\widehat{\Delta_+}^s$ or is anodyne in $\widehat{\Delta}^s$ then $f$ is an acyclic cofibration in  $\widehat{\Delta_+}^s$.

\end{enumerate}

\end{cor}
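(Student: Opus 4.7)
The whole argument rests on \cref{prop:SSSunitEquiv}. Specializing that proposition to the cofibration $\emptyset \hookrightarrow X$ gives, as a special case that will be used throughout, that every unit map $\eta_X\colon X \to \scomp{X}$ is anodyne in $\widehat{\Delta_+}^s$.

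For (i), the plan is to factor $\scomp{f}$ through a pushout:
\[ \scomp{X} \longrightarrow \scomp{X} \coprod_X Y \longrightarrow \scomp{Y}, \]
where the pushout is taken along $\eta_X$ and $f$. The first leg is a pushout of the anodyne map $f$ along $\eta_X$ and is therefore anodyne; the second is anodyne by \cref{prop:SSSunitEquiv} applied directly to the cofibration $f$.

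For (ii), I would first argue that the forgetful functor $U\colon \widehat{\Delta}^s \to \widehat{\Delta_+}^s$ preserves colimits and retracts. Preservation of retracts is purely diagrammatic, and preservation of colimits reduces to the corresponding fact for (semi-)simplicial sets together with the observation that any degenerate cell of a colimit $\operatorname{colim} X_i$ of stratified simplicial sets is already the image of a degenerate cell of some $X_i$, so the stratifications induced on both sides of the comparison map agree. The generating anodyne maps of $\widehat{\Delta}^s$ in \cref{Constr:generating_stratified_trivcof} are precisely the simplicial completions of the generating anodyne maps of $\widehat{\Delta_+}^s$, and by part (i) their images under $U$ are anodyne in $\widehat{\Delta_+}^s$. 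Cocontinuity of $U$ together with closure of the anodyne class under retracts then gives the statement for an arbitrary anodyne morphism.

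For (iii), part (ii) immediately reduces the case ``$\scomp{f}$ anodyne in $\widehat{\Delta}^s$'' to the case ``$\scomp{f}$ acyclic cofibration in $\widehat{\Delta_+}^s$'', so I may assume the latter. Given a lifting problem $(u,v)$ for $f$ against a fibration $p\colon A \twoheadrightarrow B$ between fibrant objects of $\widehat{\Delta_+}^s$, my plan is a four-step chain of extensions: (a) extend $v$ along $\eta_Y$ to $\tilde v\colon \scomp{Y} \to B$, using that $\eta_Y$ is anodyne and $B$ is fibrant; (b) extend $u$ along $\eta_X$ to $\tilde u\colon \scomp{X} \to A$ with $p\tilde u = \tilde v \circ \scomp{f}$, the relevant lifting square commuting thanks to the identity $\eta_Y \circ f = \scomp{f} \circ \eta_X$ and the original square; (c) lift the resulting square $(\tilde u,\tilde v)$ for $\scomp{f}$ against $p$ to obtain $\tilde w\colon \scomp{Y} \to A$; (d) take $w \coloneqq \tilde w \circ \eta_Y$ as the diagonal filler, the same adjunction identity together with the equalities satisfied by $\tilde u, \tilde v, \tilde w$ giving $w\circ f = u$ and $p\circ w = v$. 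The only real subtlety here is simply to perform the successive extensions in the right order so that all intermediate squares commute before any lift is invoked; the genuine work is already contained in \cref{prop:SSSunitEquiv}.
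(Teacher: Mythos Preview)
Your proof is correct. Parts (i) and (ii) match the paper's argument almost exactly; the only difference is that where you verify by hand that the forgetful functor $U\colon \widehat{\Delta}^s \to \widehat{\Delta_+}^s$ preserves colimits, the paper simply observes that $U$ is a left adjoint (it is left adjoint to right Kan extension along $\Delta_+ \hookrightarrow \Delta$), which is a cleaner justification for the same conclusion.

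For (iii) the approaches diverge slightly. You perform a four-step explicit lifting argument, successively extending along $\eta_Y$, $\eta_X$, and $\scomp{f}$ and then restricting back along $\eta_Y$. The paper instead argues via the naturality square
\[
\begin{tikzcd}
X \ar[r,hook,"f"] \ar[d,hook,"\sim"] & Y \ar[d,hook,"\sim"] \\
\scomp{X} \ar[r,hook,"\scomp{f}"] & \scomp{Y}
\end{tikzcd}
\]
that the composite $X \hookrightarrow Y \hookrightarrow \scomp{Y}$ equals $\scomp{f}\circ \eta_X$, hence is an acyclic cofibration, and then applies the cancellation clause of \cref{lem:acyclicCof_basics} (if $i$ and $i\circ j$ are acyclic cofibrations with $j$ a cofibration, then $j$ is acyclic) with $i=\eta_Y$ and $j=f$. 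Your argument is in effect an inlined proof of exactly that cancellation property specialized to this situation, so the two routes are equivalent in content; the paper's is shorter because it reuses an already-established lemma.
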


\begin{proof}

\begin{enumerate}[label=(\roman*)]

\item As $A \overset{\sim}{\hookrightarrow} B$ is anodyne, the map:

\[ \scomp{A} \overset{\sim}{\hookrightarrow} \scomp{A} \coprod_A B \]

\noindent is also anodyne, and by Proposition~\ref{prop:SSSunitEquiv}, the map 

\[ \scomp{A} \coprod_A B \hookrightarrow \scomp{B} \]

\noindent is anodyne, which proves the claim.

\item The forgetful functor from $\widehat{\Delta}^s$ to $\widehat{\Delta_+}^s$ is a left adjoint functor. Hence it is enough to check the result on generating anodyne map: $\Lambda^k[n] \hookrightarrow \Delta^k[n]$ and $\Delta^k[n] ' \hookrightarrow \Delta^k[n]''$, i.e. that these map are anodyne in $\scomp{\Delta_+}^s$. But this follows immediately from the previous point applied to $\Lambda^k_+ \hookrightarrow \Delta_+^k[n]$ and $\Delta_+^k[n] ' \hookrightarrow \Delta_+^k[n]''$.

\item We consider the square:

\[\begin{tikzcd}[ampersand replacement=\&]
X \arrow[hook]{r}{f} \arrow[hook]{d}{\sim} \& Y \arrow[hook]{d}{\sim}  \\
\scomp{X} \arrow[hook]{r}{\scomp{f}} \& \scomp{Y} 
\end{tikzcd}\]

\noindent in $\widehat{\Delta_+^s}$. Because of the previous, point if $\scomp{f}$ is anodyne in $\widehat{\Delta}^s$, then it is also anodyne in $\widehat{\Delta_+}^s$. So in both case, the composite:

\[ X \hookrightarrow Y \overset{\sim}{\hookrightarrow} \scomp{Y} \]

\noindent is acyclic in $\widehat{\Delta_+}^s$, hence this implies that $X \hookrightarrow Y$ is an acyclic cofibration (last point of Lemma~\ref{lem:acyclicCof_basics})

\end{enumerate}

\end{proof}

\appendix
\section{Setoids}
\label{section_setoids}
\subsection{Preliminaries on Setoids and Setoid-categories}
\label{section_prelim_setoids}
Setoids are a way to represent ``quotient sets'' without actually taking quotient. A setoid is given by an underlying set $X$ endowed with an equivalence relation, except that the equivalence relation does not have to be subset of $X \times X$, but only a set endowed with two maps to $X$:

 \[ X_R \rightrightarrows X.\]

So this is what we might want to call a ``proof relevant equivalence relation''. More precisely:

\begin{definition}\label{def:setoids}
A \emph{Setoid} $X$ is the data of:

\begin{itemize}
\item A set of elements $X$.
\item A set of relations $X_R$ with two maps $s,t\colon X_R \rightrightarrows X$. An element of $a\in X_R$ such that $s(a)=x$ and $t(a)=y$ is represented by $x \overset{a}{\Rightarrow} y$ or $a\colon  x \Rightarrow y$.

\item For each $x \in X$ there is a chosen relation $\refl{x}\colon  x \Rightarrow x$.
\item For each relation $a\colon  x \Rightarrow y$, there is a chosen relation $\inv{a}\colon  y \Rightarrow x$.
\item For each pair of ``composable'' relations: $a\colon  x \Rightarrow y$, $b\colon  y \Rightarrow z$ there is a composed relation $\comp{a}{b}\colon x \Rightarrow z$.
\end{itemize}

\end{definition}

But no other axioms  (``associativity'' of the composition, or compatibility between composition and inverse) are required.

We define moreover:
\begin{definition}\label{def:setoids_properties}
\begin{enumerate}[label=(\roman*)]

\item[]

\item A morphism of setoids $f\colon X \rightarrow Y$ is a morphism of the underlying graphs $(X,X_R) \rightarrow (Y,Y_R)$.

\item\label{def:setoids_properties:relation_morphisms} A relation $r\colon f \Rightarrow g$ between two morphisms $f,g\colon X \rightrightarrows Y $ is a function $r$ from $X$ to $Y_R$ such that for all $x$, $r(x)\colon f(x) \Rightarrow g(x)$.

\item A morphism $f \colon X \rightarrow Y$ of setoids is said to be an injection if for each relation $ r \colon  f(x) \Rightarrow f(y)$ in $Y$, there is a chosen relation $f^{inj}(r)\colon x \Rightarrow y$.

\item A morphism $f \colon X \rightarrow Y$ of setoids is said to be a surjection if for all $y \in Y$ there is a chosen $f^s(y)\in X$ and a chosen $f^{sw}(y)\colon y \Rightarrow f(f^s(y))$.

\item\label{def:setoids_properties:isomorphisms} A morphism of setoids is said to be an isomorphism if it is both a surjection and an injection.

\item If $X$, $Y$ and $Z$ are setoids, a 2-variable function $f\colon X \times Y \rightarrow Z$ means a function which to every $x \in X$ and $y \in Y$ associate $f(x,y) \in Z$, to every $\alpha\colon x_1 \Rightarrow x_2$ in $X_R$ and $y \in Y$ associate $f(\alpha,y) \colon  f(x_1,y) \Rightarrow f(x_2,y)$ and to every $\beta\colon y_1 \rightarrow y_2$ and $x \in X$ associate $f(x,\beta)\colon f(x,y_1) \Rightarrow f(x,y_2)$. 

\end{enumerate}
\end{definition}

Of course\footnote{if we are working in a regular category.} if $X$ is a setoid, then ``$\exists r\colon  x \Rightarrow y$'' is an equivalence relation on the sets of vertices of $R$, and for any setoid $X$ there is an associated a quotient set $|X|$. Assuming the axiom of choice, two setoids are isomorphic (in the sense of existence of an ``isomorphisms'' as above) if and only if there quotient set are isomorphic, and the categories of setoids (with equivalence class of maps between them) is equivalent to the category of set through this quotient set functor. But this statement is exactly equivalent to the axiom of choice.

There are essentially two reasons to introduce setoids:

\begin{itemize}

\item If we work in weaker logical framework where quotients and/or existential quantifications are not available (like in Martin-Löf type theory, or in the internal logic of a category with finite limits) then they actually replace the use of quotient.

\item If we work without the axiom of choice, then setoids keep track of more information than the quotient sets, and this information can sometimes be relevant.

\end{itemize}

In the present paper we are mostly interested by the second aspect: the use of this extra information that setoids carry will allows us to recover some constructive characterization of equivalences as the maps that ``induce bijections on all $\pi_n$'' where the $\pi_n$ will be defined as setoids. And it is known that a similar characterization in terms of $\pi_n$ defined as sets fail. We will also use setoids to define the homotopy category without referring to existential quantification or quotient sets, but the real reason we are doing this is because it makes the treatement of $\pi$-setoids smoother if the homotopy category is defined in terms of setoids.

We also emphasize that when talking about setoids we consider the precise data of the ``transitivity'', ``reflexivity'' and ``symmetry'' operations on its relation completely irrelevant. We only care about the fact that they exists and that each setoids comes with a canonical choice of these. This is made apparent in the fact that they do not play any role in the definition of morphisms, so that two different setoid structures on a graph are automatically isomorphic as setoids. In particular in the rest of the paper when we say that something is a setoid we will often not make the choice of these operations explicit, but we always mean that at least one explicit choice exists. Similarly for the ``structure'' of being an injection, a surjection or an isomorphism on a morphism of setoids.

\begin{remark}\label{rk:Setoid_ForExists_conventions} If we follow the convention explained in Section~\ref{sec:Logical_framework} that every statement of the form ``$\forall x \exists y $'' should be interpreted as the existence of a function attaching a $y$ to each $x$. Then the fact that a morphism of setoids is injective can be written more naively as ``if $f(x) \sim f(y)$ then $x \sim y$ '' (where $\sim$ means there is a relation between $x$ and $y$), i.e. $\forall r\colon f(x) \Rightarrow f(y), \exists r' \colon  x \Rightarrow y$). Similarly, surjectivity of $f\colon X \rightarrow Y$ can be rewritten as for all $y \in Y$ there is an $x \in X$ such that $f(x) \sim y$.
\end{remark}

The following easy lemma should be noted:

\begin{lemma}
A setoid morphism $f \colon X \rightarrow Y$ is an isomorphism if and only if it is invertible in the category of setoids and equivalence classes of morphisms, i.e. if there is a setoid morphism $g\colon Y \rightarrow X$ and relations $f \circ g \Rightarrow Id_Y$ and $g \circ f \Rightarrow Id_X$.
\end{lemma}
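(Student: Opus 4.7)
The plan is to unpack both directions directly from the operational definitions of injection, surjection, and the various setoid operations; no clever ideas are required, but one needs to be careful about what data must be produced.

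For the forward direction, assume $f$ comes equipped with an injection structure $f^{inj}$ and a surjection structure $(f^s, f^{sw})$. I would define the candidate inverse on objects by $g(y) \coloneqq f^s(y)$. The witness for $f \circ g \Rightarrow \mathrm{Id}_Y$ is then immediate: at each $y$ take $\inv{f^{sw}(y)} \colon f(f^s(y)) \Rightarrow y$. The witness for $g \circ f \Rightarrow \mathrm{Id}_X$ is obtained by applying the injection structure: at each $x$, the relation $f^{sw}(f(x)) \colon f(x) \Rightarrow f(f^s(f(x)))$ in $Y$ transports through $f^{inj}$ to $f^{inj}(f^{sw}(f(x))) \colon x \Rightarrow f^s(f(x)) = (g \circ f)(x)$, which we then invert. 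The slightly subtle point — the only step that deserves any care at all — is to check that $g$ is indeed a morphism of setoids, i.e.\ acts on relations. For $\beta \colon y_1 \Rightarrow y_2$ in $Y$, I would form the composite
\[
f(f^s(y_1)) \overset{\inv{f^{sw}(y_1)}}{\Rightarrow} y_1 \overset{\beta}{\Rightarrow} y_2 \overset{f^{sw}(y_2)}{\Rightarrow} f(f^s(y_2))
\]
in $Y$ and then apply $f^{inj}$ to obtain the required relation $g(y_1) \Rightarrow g(y_2)$.

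For the backward direction, assume we are given $g \colon Y \to X$ together with functions $\alpha \colon y \mapsto \alpha(y)$ with $\alpha(y) \colon f(g(y)) \Rightarrow y$ and $\beta \colon x \mapsto \beta(x)$ with $\beta(x) \colon g(f(x)) \Rightarrow x$. I would endow $f$ with a surjection structure by setting $f^s(y) \coloneqq g(y)$ and $f^{sw}(y) \coloneqq \inv{\alpha(y)}$. For the injection structure, given any $r \colon f(x) \Rightarrow f(y)$ in $Y$, apply $g$ to obtain $g(r) \colon g(f(x)) \Rightarrow g(f(y))$ in $X$ and define
\[
f^{inj}(r) \;\coloneqq\; \comp{\beta(y)}{\comp{g(r)}{\inv{\beta(x)}}} \colon x \Rightarrow y.
\]

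There is no genuine obstacle in this proof; the only thing to watch is that throughout we produce actual chosen data (functions, not just propositional claims) in accordance with the conventions of \ref{rk:Setoid_ForExists_conventions}, and that in particular the object assignment $g(y) = f^s(y)$ in the forward direction must be promoted to a graph morphism by the explicit formula above before one can even speak of $f \circ g$ and $g \circ f$. No axioms of associativity, unit, or symmetry-inversion compatibility are used, which is consistent with the remark that they play no role in the definition of setoid or morphism.
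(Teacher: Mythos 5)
Your proof is correct and is exactly the explicit translation the paper alludes to (the paper itself gives no details, only stating that it is "an immediate translation of the usual fact that an injective and surjective map is bijective" under the convention of Remark~\ref{rk:Setoid_ForExists_conventions}); you rightly identify the one non-automatic step, namely promoting $g = f^s$ to a graph morphism via $f^{inj}$ applied to a conjugated relation. The only nit is notational: the paper's $\comp{a}{b}$ is defined in diagrammatic order ($a\colon x \Rightarrow y$, $b\colon y \Rightarrow z$ gives $\comp{a}{b}\colon x \Rightarrow z$), so your $f^{inj}(r)$ should be written as $\comp{\comp{\inv{\beta(x)}}{g(r)}}{\beta(y)}$ rather than the reverse, though the intended composite relation is clear.
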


We mean here that given the structure of an isomorphism on $f$ we can construct explicitly such an inverse, and that conversely given the structure of such an inverse we can construct the structure of an isomorphism of $f$. The proof is an immediate translation of the usual fact that an injective and surjective map is bijective using the convention of Remark~\ref{rk:Setoid_ForExists_conventions}.

\begin{definition}

A setoid-category $C$ is the data of the following structure:

\begin{itemize}

\item A set of objects $C_o$.

\item For each pair of objects $x$, $y$ in $C_o$ a setoid of arrows $C(x,y)$ from $x$ to $y$.

\item For each object $x \in C_o$ a chosen arrow $Id_X \colon  x \rightarrow x$.

\item For each $x,y,z \in C_o$, a 2-variables composition morphism:
\[ \_ \circ \_ \colon  Hom(y,z) \times Hom(x,y) \rightarrow Hom(x,z). \]

\item For each arrow $f \colon  x \rightarrow y $ two chosen ``identity witnesses'': 

\[ l_f\colon  (f \circ Id_x) \Rightarrow f \quad \text{ and } \quad r_f\colon  (Id_y \circ f) \Rightarrow f. \]

\item For each triple of composable arrows $ f,g,h$ an associativity witness: 

\[ \alpha_{f,g,h} \colon  (f \circ g) \circ h \Rightarrow f \circ (g \circ h). \]

\end{itemize}

\end{definition}

Of course this definition is engineered so that if we take the quotient set of all the setoid of morphisms we get an ordinary category (the homotopy category in some sense).

Very similarly, and respecting the idea that everything that we need in the definition should be given by some operations, and not using any kind of existential or universal quantification, we also define the following notions:

\begin{itemize}

\item Functors between setoid-categories.

\item Presheaves of setoids on a setoid-category.

\item Invertible arrows in a setoid-category.

\item Fully faithful functors and essentially surjective functors.

\end{itemize}

And we can check that:

\begin{itemize}

\item Given two setoids, morphisms between them and relations between these morphisms form a setoid.

\item This makes the category of setoids into a setoid-category.

\item A presheaf is the same as a contravariant functor to the category of setoid.

\item we can define the Yoneda embedding and prove the Yoneda lemma.

\item A functor $F\colon  \Ccal \rightarrow \Dcal$ between setoid-categories is fully faithful and essentially surjective if and only if there is a functor $G \colon  \Dcal \rightarrow \Ccal$ and natural isomorphisms $\lambda\colon  G \circ F \rightarrow Id_{\Ccal}$ $\mu\colon  F \circ G \rightarrow Id_{\Dcal}$.

\end{itemize}

\subsection{$\pi$-Setoids}
\label{section_pisetoids}

The goal of this subsection is to show how we can get back the usual simpler characterization of equivalences in terms of ``bijection on all $\pi_n$''. If $\pi_n$ are defined as sets this cannot be constructive. But in a rather unexpected way, it appears that by defining the $\pi_n$ as setoids we do get such a characterization.

In all this section we fix $\Ccal$ a weak model category.

\begin{definition}
Let $i\colon A \hookrightarrow B$ be a cofibration with cofibrant domain, let $X$ be a fibrant object of $\Ccal$ and let $x\colon A \rightarrow X$ be any morphism. We define:

\[ \pi_{i}(X,x) \coloneqq  Hom_{Ho(A/\Ccal)}(B,X) \]

\noindent as a setoid.

\end{definition}

We will also use the notation $\pi_{B/A}(X,x)$. More explicitly, $\pi_i(X,x)$ is the setoid of maps from $B \rightarrow X$ which makes the triangle:

\[\begin{tikzcd}[ampersand replacement=\&]
A \arrow{r}{x} \arrow[hook]{d}{i} \& X \\
B \arrow[dotted]{ur} \\
\end{tikzcd}\]

\noindent commutes, and the relation is given by the homotopy relation in $A/\Ccal$, that is the homotopy relation relative to $A$, which is either parametrized by maps $I_A B \rightarrow X$ or maps from $B$ to $PX$ such that the restriction to $A$ is a trivial homotopy. The choice of the path or cylinder is irrelevant and it is a setoid.

\begin{remark}\label{rk:basic_functoriality_pi_setoid}
If $f\colon  X \rightarrow Y$ is any map between two fibrant objects there is a morphism of setoids:

\[ \pi_{i}(f,x) \colon  \pi_{i}(X,x) \rightarrow \pi_{i}(Y,f(x)) \]

If $f$ is an equivalence between two fibrant objects then all these maps $\pi_i(f,x)$ are isomorphisms of setoids because of the Hom-set definition of $\pi$-setoids.

Conversely, if all the $\pi_i(f,x)$, for all $i$ and all $x$, are bijections then $f$ is an equivalence: in fact only asking this for $i\colon \emptyset \hookrightarrow A$ already shows means that $Hom_{Ho(\Ccal)}(A,f)\colon  Hom_{Ho(\Ccal)}(A,X) \rightarrow Hom_{Ho(\Ccal)}(A,Y)$ are bijections for all cofibrant objects $A$ and as every object in the homotopy category is equivalent to a cofibrant object this immediately gives that $f$ is an isomorphism in the homotopy category (in fact it is enough to know it for $i\colon \emptyset \hookrightarrow X$ and $i\colon \emptyset \hookrightarrow Y$).

Our goal is to find  more convenient small set of cofibrations $i$ on which to test whether a map is an equivalence. For example, in the category of spaces we only want to test in the case of the maps $i \colon  \{* \} \hookrightarrow \Scal^n$ from a point to the $n$-sphere.

\end{remark}

\begin{example}\label{ex:surjectivity_on_Pi} Given a morphism $f\colon X \rightarrow Y$ saying that the induced morphisms:

\[ \pi_i(X,x) \overset{\pi_i(f,x)}{\rightarrow} \pi_i(Y,f(x)) \]

\noindent is a surjection of setoids means that given a square of the form:

\[
\begin{tikzcd}
  A \ar[d,hook,"i"] \ar[r,"x"] & X \ar[d,"f"] \\
B \ar[r,"y"] & Y \\
\end{tikzcd}
\]

Admit a diagonal filling such that the upper triangle commutes and the lower triangle commutes up to homotopy relative to $A$. Indeed such a square means that $y$ is an element of $\pi_i(Y,f(x))$, and surjectivity of $\pi_i(f,x)$ means that to each such square we can attach an element of $v \in \pi_i(X,x)$, i.e. a diagonal filling making the upper triangle commutes, and a relation in $\pi_i(Y,f(x))$ between $y$ and $f(v)$, i.e. a homotopy $h$ relative to $A$ making the lower triangle commutes. This filling can be represented as a diagram:

\[\begin{tikzcd}
A \arrow[hook]{dr}{i} \arrow{rr}{x} \arrow[hook]{ddd}{i} & &   X \arrow{ddd}{f}  \\
& B \arrow[hook]{d} \ar[ur,dotted,"v"] & \\
&  I_A B \ar[dr,dotted,"h"] & \\
B \arrow[hook]{ur} \arrow{rr}{y} & & Y \\
\end{tikzcd}\]

We say that the map $f$ has the \emph{weak right lifting property} against $i$.

\end{example}

 We start by some lemmas on invariance properties of the $\pi$-setoids.

\begin{lemma}\label{Lem_Pi_sets}
\begin{enumerate}[label=(\arabic*)]

\item[]

\item\label{Lem_Pi_sets:Inv_equiv_target} Any isomorphism $(B,i) \rightarrow (B',i')$ in $Ho(A/\Ccal^{\cofe})$ induces an isomorphism $\pi_{i}(X,x) \simeq \pi_{i'}(X,x)$, natural in $(X,x) \in A/\Ccal$, by pre-composition.

\item\label{Lem_Pi_sets:Inv_POdomain} Given a pushout square of cofibrant objects:

\[  \begin{tikzcd}
    A \ar[d,"g"] \ar[r,hook,"i"] \ar[dr,phantom,"\ulcorner"very near end] & B \ar[d,"g'"] \\
    A' \ar[r,hook,"i'"] & B' \\
  \end{tikzcd} \]

\noindent then for any map $x\colon A' \rightarrow X$, then pre-composition with $g'$ induces a natural (in $(X,x) \in A'/\Ccal$) isomorphism of setoids:

\[ \pi_{i'}(X,x) \overset{\sim}{\rightarrow} \pi_i(X,x \circ g) \]

\item\label{Lem_Pi_sets:Inv_homotopy_of_Basepoint} If $h\colon  IA \rightarrow X $ is a homotopy between two maps $x,x'\colon  A \rightrightarrows X$ then there is\footnote{See the proof below for its precise construction.} an isomorphism of setoids:

\[ \pi_{i}(X,x) \simeq \pi_i(X,x') \]

\noindent natural in $(X,h) \in IA/\Ccal$.

\item\label{Lem_Pi_sets:RLP=Epi_on_Pi_i} A fibration between fibrant objects $ p\colon  X \twoheadrightarrow Y$ has the right lifting property with respect to $i\colon A \hookrightarrow B$ if and only if the map $ \pi_i(X,x) \overset{\pi_i(p,x)}{\rightarrow} \pi_i(X,p(x))$ is surjective for all $x\colon A \rightarrow X$.

\end{enumerate}
\end{lemma}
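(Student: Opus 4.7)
The four statements all describe invariance properties of the setoid \(\pi_i(X,x) = \mathrm{Hom}_{Ho(A/\Ccal)}(B, X)\). My plan is to derive (1) and (2) from Hom-setoid functoriality in homotopy categories of slices, to reduce (3) to (1) and (2) via a slice-pushout construction, and to treat (4) as a direct lifting argument.

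For (1), an isomorphism \((B,i) \simeq (B',i')\) in \(Ho(A/\Ccal^{\cofe})\) is represented by morphisms \(\phi: B \to B'\) and \(\psi: B' \to B\) under \(A\) whose composites are homotopic rel \(A\) to the respective identities; pre-composition by \(\phi\) and \(\psi\) then yields mutually inverse setoid maps \(\pi_{i'}(X,x) \rightleftarrows \pi_i(X,x)\), natural in \((X,x)\), since homotopies rel \(A\) are exactly the defining relations of the \(\pi\)-setoids. For (2), pushout along \(g\) and restriction along \(g\) form a Quillen pair \(P_g: A/\Ccal \rightleftarrows A'/\Ccal: U_g\) of the type treated in Example~\ref{pullbackpushoutQuillenpair}, with \(P_g(B,i) = (B',i')\) by the pushout hypothesis and \(U_g(X,x) = (X, x \circ g)\) by definition; Proposition~\ref{Prop_QuillenFunctorHomotopy} then immediately produces the required natural isomorphism \(\pi_{i'}(X,x) \simeq \pi_i(X, x \circ g)\).

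For (3), form the pushouts \(P_\epsilon = IA \cup_A B\) of \(i\) along each endpoint inclusion \(i_\epsilon^A: A \to IA\), for \(\epsilon \in \{0,1\}\); applying (2) to each pushout square and to \((X, h) \in IA/\Ccal\) gives natural isomorphisms \(\pi_{P_\epsilon/IA}(X, h) \simeq \pi_{B/A}(X, h \circ i_\epsilon^A)\), i.e.\ \(\pi_i(X, x)\) and \(\pi_i(X, x')\) respectively. By (1), it then suffices to produce a canonical equivalence \((P_0, IA \to P_0) \simeq (P_1, IA \to P_1)\) in \(Ho(IA/\Ccal^{\cofe})\). I would construct this by first building a cylinder object \(IB\) for \(B\) extending \(IA\) --- concretely, by factoring the natural map \((B \coprod B) \cup_{A \coprod A} IA \to B\) (induced by the co-diagonal on \(B\) and the composite \(IA \to A \to B\) when \(IA\) is a strong cylinder) as a cofibration followed by an equivalence --- and then showing that the comparison maps \(P_\epsilon \to IB\) in \(IA/\Ccal\) are equivalences by 2-out-of-3 applied to \(B \to P_\epsilon \to IB\), using that both \(B \to P_\epsilon\) (as a pushout of the acyclic cofibration \(i_\epsilon^A\) along \(i\)) and the relevant endpoint inclusion \(B \to IB\) are acyclic cofibrations.

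For (4), the forward direction is exactly setoid-surjectivity combined with the RLP. For the converse, given \(y \in \pi_i(Y, p\circ x)\) as in Example~\ref{ex:surjectivity_on_Pi}, setoid-surjectivity of \(\pi_i(p,x)\) applied to \(y\) yields \(v: B \to X\) with \(vi = x\) together with a homotopy \(h: I_A B \to Y\) rel \(A\) from \(pv\) to \(y\). The first-leg inclusion \(\iota_0: B \overset{\sim}{\hookrightarrow} I_A B\) is an acyclic cofibration between cofibrant objects, so the lifting square with top edge \(v\), bottom edge \(h\), left edge \(\iota_0\), and right edge the fibration \(p\) admits a diagonal filler \(\tilde h: I_A B \to X\). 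Setting \(v' := \tilde h \circ \iota_1\) gives \(p v' = h \iota_1 = y\) and \(v' i = \tilde h \iota_1 i = \tilde h \iota_0 i = v i = x\) (using \(\iota_0 \circ i = \iota_1 \circ i\) as maps \(A \to I_A B\), since both factor through the identification in \(B \coprod_A B\)), the desired strict lift. The main obstacle is (3): building the compatible cylinder \(IB\) in the weak setting requires care when \(IA\) is only a weak cylinder and one does not have a direct retraction \(IA \to A\) but must pass through the reflexivity witness \(DA\); everything else in the proof is then formal.
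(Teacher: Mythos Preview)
Your proposal is correct and follows essentially the same route as the paper. Points (1), (2), and (4) match the paper's proof verbatim: Hom-setoid functoriality for (1), the Quillen adjunction $P_g \dashv U_g$ between slices for (2), and the lifting of the weak diagonal filler along the acyclic cofibration $B \overset{\sim}{\hookrightarrow} I_A B$ for (4). For (3), the paper does exactly what you outline: it forms the pushouts $P_\epsilon = IA \coprod_A B$, applies (2) to each, builds a compatible cylinder $IB$ containing both $P_\epsilon$, and invokes (1) via the equivalences $P_\epsilon \to IB$ in $IA/\Ccal$.

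One small technical correction: you factor the map $(B \coprod B)\cup_{A\coprod A} IA \to B$, but in a weak model category the factorization axiom only applies to arrows with \emph{fibrant} target, and $B$ need not be fibrant. The paper handles this by factoring instead the map $B \coprod_A IA \coprod_A B \to B^{\fibe}$ to a fibrant replacement of $B$; this simultaneously fixes the target issue and, via the reflexivity witness $DA \to B^{\fibe}$ (lifting along $A \overset{\sim}{\hookrightarrow} DA$), supplies the map $IA \to B^{\fibe}$ you flagged as the main obstacle in the weak-cylinder case. With that adjustment your argument is complete and identical to the paper's.
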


\begin{proof}
Point \ref{Lem_Pi_sets:Inv_equiv_target} is trivial from the definition in terms of homotopy Hom-setoids. Point \ref{Lem_Pi_sets:Inv_POdomain}, when formulated in terms of the homotopy Hom-setoid definition corresponds to the adjunction formula in the homotopy category of Proposition~\ref{Prop_QuillenFunctorHomotopy} for the Quillen pair: $P_f \colon  A/\Ccal \leftrightarrow A'/\Ccal \colon  U_f$. For the third we need to construct the isomorphism:

Given a cofibration $A \hookrightarrow B$ and a cylinder object $IA$ we construct a cylinder object $IB$ for $B$ such that there is a cofibration $IA \hookrightarrow IB$ compatible to the boundary inclusion. This can be done by factoring the map $B \coprod_A IA \coprod_A B \hookrightarrow IB \overset{\sim}{\twoheadrightarrow} B^f$ where $B^f$ is a fibrant replacement of $B$. Using \ref{Lem_Pi_sets:Inv_POdomain} we obtain a bijection:

\[ \pi_{i}(X,x) \overset{\simeq}{\rightarrow} \pi_{i'}(X,h) \]

\noindent where: $i'\colon IA \hookrightarrow B \coprod_A IA$. Now the map $B \coprod_A IA \rightarrow IB$ is a homotopy equivalence (in $A/\Ccal)$ hence by point $1.$ there is an isomorphism:

\[\pi_{i''}(X,h) \overset{\simeq}{\rightarrow} \pi_{i'}(X,h) \]

With $i''$ the cofibration $i''\colon IA \hookrightarrow IB$. The same construction for $x'$ gives us an isomorphism:

\[ \pi_{i}(X,x) \simeq \pi_{i''}(X,h) \simeq \pi_{i}(X,x') \]

As all the individual isomorphisms mentioned are natural in $X$, the total bijection is also natural in $X$.

For \ref{Lem_Pi_sets:RLP=Epi_on_Pi_i}, we have seen in Example~\ref{ex:surjectivity_on_Pi} that saying that $\pi_i(p,x)$ is surjective for all $x$, means that $p$ has the weak right lifting property against $i$. In particular, this will be the case if $p$ has the actual right lifting property against $i$. Conversely, if $p$ is a fibration with this weak lifting property, then any lifting problem against a cofibration $i$ can be, as in Example~\ref{ex:surjectivity_on_Pi}, extended into:
\[\begin{tikzcd}[ampersand replacement=\&]
A \arrow[hook]{dr}{i} \arrow{rr} \arrow[hook]{ddd}{i} \& \&   X \arrow[two heads]{ddd}{p}  \\
\& B \arrow[hook]{d}[swap]{\sim} \arrow{ur} \& \\
\&  I_A B \arrow{dr} \arrow[dotted]{uur} \& \\
B \arrow[hook]{ur} \arrow{rr} \& \& Y \\
\end{tikzcd}\]

Hence we can construct the dotted diagonal lift using that $p$ is a fibration and $B \hookrightarrow I_A B$ is an acyclic cofibration, and this gives a diagonal lift, which concludes the proof. \end{proof}

\begin{definition}
In a weak model category $\Ccal$, a set of cofibrations $I$ is said to be a pseudo-generating set of cofibrations if any fibration between fibrant objects which has the lifting property against all maps in $I$ is an acyclic fibration.
\end{definition}

\begin{theorem}\label{Th:Charac_WE_Pi_Setoids}
Let $\Ccal$ be a weak model category with $I$ a pseudo-generating set of cofibrations of $\Ccal$.

Then a map $f$ between fibrant objects is an equivalence if and only if it induces a surjection of setoids:

 \[ \pi_i(X,x) \rightarrow \pi_i(Y,f(x)) \]

\noindent for all $i\colon A \hookrightarrow B$ in $I$ and $x\colon A \rightarrow X$.

\end{theorem}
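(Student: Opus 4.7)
The forward direction is immediate from Remark~\ref{rk:basic_functoriality_pi_setoid}: an equivalence $f$ between fibrant objects induces isomorphisms $\pi_i(f,x)$ of setoids for every cofibration $i$ with cofibrant domain and every base point $x$, which are in particular surjections.

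For the converse, my plan is to reduce to proving that a certain fibration is acyclic, then verify this using the pseudo-generating hypothesis together with Lemma~\ref{Lem_Pi_sets}.\ref{Lem_Pi_sets:RLP=Epi_on_Pi_i}. First I pick a cofibrant replacement $q\colon X^c \overset{\sim}{\twoheadrightarrow} X$, which exists since $X$ is fibrant, and factor the composite $f\circ q\colon X^c \to Y$ as an acyclic cofibration $j\colon X^c \overset{\sim}{\hookrightarrow} Z$ followed by a fibration $p\colon Z \twoheadrightarrow Y$; the intermediate object $Z$ is then bifibrant. Since both $q$ and $j$ are equivalences, $2$-out-of-$3$ together with Proposition~\ref{Prop_triv=equiv}.\ref{Prop_triv=equiv:acycFib=equiv} reduces showing that $f$ is an equivalence to showing that $p$ is acyclic. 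By the pseudo-generating assumption on $I$ it suffices to show that $p$ has the right lifting property against every $i \in I$, and by Lemma~\ref{Lem_Pi_sets}.\ref{Lem_Pi_sets:RLP=Epi_on_Pi_i} this amounts to proving that $\pi_i(p,z)\colon \pi_i(Z,z) \to \pi_i(Y,p(z))$ is a surjection of setoids for every $i\colon A \hookrightarrow B$ in $I$ and every $z\colon A \to Z$.

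The heart of the argument will be transferring the hypothesis---surjectivity for base points living in $X$---to surjectivity for base points living in $Z$. Since $j$ is an acyclic cofibration between bifibrant objects, Proposition~\ref{Prop_trivCofFibAreinvertible} gives a homotopy inverse $r\colon Z \to X^c$; setting $x^c \coloneqq r\circ z$ we obtain a homotopy $j \circ x^c \simeq z$, and post-composing with $p$ gives a homotopy $f\circ q\circ x^c = p\circ j \circ x^c \simeq p\circ z$. Combining the functoriality of $\pi$-setoids in equivalences (Remark~\ref{rk:basic_functoriality_pi_setoid}, applied to the equivalences $q$ and $j$ between fibrant objects) with invariance under a homotopy of the base point (Lemma~\ref{Lem_Pi_sets}.\ref{Lem_Pi_sets:Inv_homotopy_of_Basepoint}), I will assemble isomorphisms of setoids
\[\pi_i(Z,z) \simeq \pi_i(Z, j(x^c)) \simeq \pi_i(X^c,x^c) \simeq \pi_i(X,q(x^c))\]
and
\[\pi_i(Y,p(z)) \simeq \pi_i(Y, f(q(x^c)))\]
fitting into a square with $\pi_i(p,z)$ on one side and $\pi_i(f,q(x^c))$ on the other. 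Since the latter map is a surjection by hypothesis, so is the former, which completes the proof.

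The main obstacle, and the only nontrivial step, is checking commutativity of the square of identifications: each of the isomorphisms above is natural either in the target object or in the homotopy of base points, so the verification reduces to tracking the base points through the construction, but one must be careful that the identifications used on the source and target sides are induced by the same maps $q$ and $j$ so that the square closes.
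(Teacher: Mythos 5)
Your proof is correct and follows essentially the same strategy as the paper: replace $X$ by a bifibrant/cofibrant object, factor through a bifibrant $Z$ via an acyclic cofibration followed by a fibration $p$, and deduce that $p$ is an acyclic fibration from Lemma~\ref{Lem_Pi_sets}.\ref{Lem_Pi_sets:RLP=Epi_on_Pi_i} together with the pseudo-generating hypothesis. The paper treats the base-point transfer step (surjectivity of $\pi_i(p,z)$ for base points $z$ in $Z$ from the hypothesis stated for base points in $X$) as obvious, whereas you spell it out via the homotopy inverse of $j$, the invariance Lemma~\ref{Lem_Pi_sets}.\ref{Lem_Pi_sets:Inv_homotopy_of_Basepoint}, and the naturality of these identifications; that verification is correct and is a useful expansion of the paper's terse argument.
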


See Proposition~\ref{prop:Charac_of_simp_equiv} for an example of how this theorem, combined with the various invariance properties of $\pi$-setoids proved in Lemma~\ref{Lem_Pi_sets} can be used to recover usual characterizations of equivalences in concrete model categories.

\begin{proof}
Let $\widetilde{X}$ be a bifibrant replacement of $X$ and consider an (acyclic cofibration,fibration) factorization of the composite map:

\[\begin{tikzcd}[ampersand replacement=\&]
\widetilde{X} \arrow[hook]{r}{\sim} \arrow[two heads]{d}{\sim} \&   V \arrow[two heads]{d}{p}  \\
X \arrow{r}{f} \& Y \\
\end{tikzcd}\]

As the top map and the left maps are equivalences between fibrant objects, the right map satisfies the same condition as $f$ of surjectivity on $\pi$-sets, and hence, as it is a fibration, by the last point of Lemma~\ref{Lem_Pi_sets} it has the right lifting property with respect to $I$, hence it is an acyclic fibration, hence an equivalence and hence $f$ is an equivalence. \end{proof}

\begin{remark}
Using Example~\ref{ex:surjectivity_on_Pi}, this theorem can be rephrased in a way not involving $\pi$-setoids explicitly. It says that a morphism between fibrant objects is a weak equivalence if and only if it has the weak right lifting property (as in Example~\ref{ex:surjectivity_on_Pi}) against a set of pseudo-generating cofibrations.

This is essentially the ``HELP lemma'' of R.~M.~Vogt in \cite{vogt2011help}, or the observation by J.~Bourke in \cite{bourke2017equipping} that the map $f$ between fibrant objects is an equivalence if and only if it is an injective object in the category of arrows against the arrow from $A \hookrightarrow B$ to $B \hookrightarrow I_A B$.
\end{remark}

\makeatletter 
\renewcommand{\thetheorem}{\thesection.\arabic{theorem}} 
\makeatother

\section{Corner-product and Joyal-Tierney calculus}
\label{Subsection_Joyal_tierney_calculus}

This appendix reviews the now well-known ``Joyal-Tierney calculus'' introduced in \cite{joyal2006quasi}, though lots of aspects involved here were known before.

Let $\Ecal_1$, $\Ecal_2$ and $\Ecal_3$ be three complete and cocomplete categories endowed with a functor:

\[ \begin{array}{c c c}
\Ecal_1 \times \Ecal_2 & \rightarrow & \Ecal_3 \\
(A,B) & \mapsto & A \odot B
\end{array}
\]

\begin{definition}
\label{DivisibleOnbothSide}

We say that $\odot$ is \emph{left divisible} if for all $X_1 \in \Ecal_1$ the functor $X_2 \mapsto X_1 \odot X_2$ has a right adjoint, denoted $X_3 \mapsto X_1 \backslash X_3$, and that it is right divisible if for all $X_2 \in \Ecal_2$ the functor $X_1 \mapsto X_1 \odot X_2$ has a right adjoint, denoted $X_3 \mapsto X_3/X_2$. That is, $\odot$ is divisible on both sides (we just say ``divisible'' in that case) if there are adjunction isomorphisms:

\[ Hom(X_1 \odot X_2,X_3) \simeq Hom(X_1,X_3/X_2) \simeq Hom(X_2,X_1 \backslash X_3) \]

\noindent for $X_i \in \Ecal_i$. Note that $/$ and $\backslash$ are automatically functors $\Ecal_3 \times \Ecal_2^{op} \rightarrow \Ecal_1$ and $\Ecal_1^{op} \times \Ecal_3 \rightarrow \Ecal_2$.

\end{definition}

\begin{example}
\label{Ex::divisiblebifunctor}We mostly have three types of divisible functor in mind here:

\begin{itemize}

\item $\Ecal_1=\Ecal_2=\Ecal_3$ is a monoidal closed category, $\odot$ is the tensor product, and $X \backslash Y$ and $Y / X$ correspond to the left and right internal Hom object.

\item $\Ecal_1$ is a monoidal category and $\Ecal_2 = \Ecal_3$ is a tensored and co-tensored $\Ecal_1$-enriched category. Then $Y /X$ corresponds to the $\Ecal_1$-valued Hom object, $\odot$ is the tensoring action of $\Ecal_1$ on $\Ecal_2$ and $X \backslash Y$ is the cotensor action.

\item If $\Ecal$ and $\Fcal$ are complete cocomplete categories, $\Ccal$ is a small category, and $\widehat{\Ccal}$ is the category of presheaves of sets over $\Ccal$. Then a divisible bi-functor $\widehat{\Ccal} \times \Ecal \rightarrow \Fcal$, is the same as a functor $c \mapsto \lambda_c$ from $\Ccal$ to the category of left adjoint functors from $\Ecal$ to $\Fcal$. Using ends and coends notation the correspondence is given by:

\[ \Scal \odot E = \int^{\Ccal} \Scal(c) \times \lambda_c(E) \qquad \Scal \backslash F = \int_{\Ccal} (\lambda_c^*(F))^{\Scal(c)}  \]
\[ F /E = \left( c \mapsto Hom_{\Fcal}(\lambda_c(E),F)\right) \]

\end{itemize}

However, the ``associativity'' properties present on the first two situations appear to play no role in what follows and it is convenient to work in this general setting (with all three categories possibly distinct) for better typing and symmetries. See for example the next lemma. This also allows to consider situations where there is a non-associative ``tensor product'', typically a tensor product that will be associative only up to homotopy, as for example the tensor product of Dendroidal sets.
\end{example}

\begin{lemma}\label{Lem_bifunctor_Sym}
Let $\odot\colon \Ecal_1 \times \Ecal_2 \rightarrow \Ecal_3$ be a divisible bi-functor. Then the two bi-functors:

\[\begin{array}{c c c c c c c}
\Ecal_1 \times \Ecal_3^{op} & \rightarrow  &\Ecal_2^{op} & & \Ecal_2 \times \Ecal_3^{op} & \rightarrow & \Ecal_1^{op}\\
(X_1,X_3) & \mapsto  & (X_1 \backslash X_3) & &  (X_2,X_3) &\mapsto &(X_3/X_2) 
\end{array}
\]

\noindent are both divisible on both sides.

\end{lemma}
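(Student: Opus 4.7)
The plan is to show that both asserted divisibilities for $\backslash$ and $/$ are nothing more than rereadings of the single three-way adjunction isomorphism
\[ \Hom_{\Ecal_3}(X_1 \odot X_2, X_3) \simeq \Hom_{\Ecal_1}(X_1, X_3/X_2) \simeq \Hom_{\Ecal_2}(X_2, X_1 \backslash X_3). \]
The only real content is bookkeeping about which categories have been opposed; there is no genuine obstacle, and once the correct functor is identified in each case the required natural isomorphism is immediate.

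First I would unpack the claim for the bifunctor $\backslash \colon \Ecal_1 \times \Ecal_3^{op} \to \Ecal_2^{op}$. Divisibility on the $\Ecal_1$-side means that for each fixed $X_1 \in \Ecal_1$ the functor $X_3 \mapsto X_1 \backslash X_3$ from $\Ecal_3^{op}$ to $\Ecal_2^{op}$ admits a right adjoint, i.e.\ viewed between $\Ecal_3$ and $\Ecal_2$ it is the right adjoint of some functor $\Ecal_2 \to \Ecal_3$. That left adjoint is precisely $X_2 \mapsto X_1 \odot X_2$: indeed
\[ \Hom_{\Ecal_2^{op}}(X_1 \backslash X_3, X_2) = \Hom_{\Ecal_2}(X_2, X_1 \backslash X_3) \simeq \Hom_{\Ecal_3}(X_1 \odot X_2, X_3) = \Hom_{\Ecal_3^{op}}(X_3, X_1 \odot X_2). \]
Divisibility on the $\Ecal_3^{op}$-side means that for each fixed $X_3$ the functor $X_1 \mapsto X_1 \backslash X_3$ from $\Ecal_1$ to $\Ecal_2^{op}$ admits a right adjoint $\Ecal_2^{op} \to \Ecal_1$, and this right adjoint is $X_2 \mapsto X_3/X_2$: chaining the adjunctions gives
\[ \Hom_{\Ecal_2^{op}}(X_1 \backslash X_3, X_2) = \Hom_{\Ecal_2}(X_2, X_1 \backslash X_3) \simeq \Hom_{\Ecal_3}(X_1 \odot X_2, X_3) \simeq \Hom_{\Ecal_1}(X_1, X_3/X_2). \]
Naturality in all variables is inherited from naturality of the original adjunctions.

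The statement for $/ \colon \Ecal_2 \times \Ecal_3^{op} \to \Ecal_1^{op}$ is then obtained either by repeating the same argument with indices $1$ and $2$ swapped (and $\odot$, $\backslash$, $/$ adjusted accordingly) or, more economically, by observing that swapping $\Ecal_1$ and $\Ecal_2$ turns the divisible bifunctor $\odot$ into a divisible bifunctor $\odot'\colon \Ecal_2 \times \Ecal_1 \to \Ecal_3$ with $X_2 \odot' X_1 = X_1 \odot X_2$, whose associated $\backslash'$ is our $/$, so the previous paragraph applies verbatim. In both cases the left adjoints are $X_1 \mapsto X_1 \odot X_2$ and $X_1 \mapsto X_1 \backslash X_3$, respectively.

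In summary, the entire content of the lemma is that the two displayed adjunction bijections are symmetric in the three variables up to the obvious opposite-category conventions, so no new hypothesis or construction beyond the original divisibility is required; the only care needed is to track the direction of morphisms when passing to $\Ecal_2^{op}$ or $\Ecal_1^{op}$.
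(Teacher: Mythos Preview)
Your proof is correct and follows essentially the same approach as the paper: both arguments amount to rewriting the three-way adjunction isomorphism after passing to opposite categories, and reading off the required right adjoints. You are slightly more explicit in separating the left- and right-divisibility checks and in naming the resulting adjoints, whereas the paper presents the same chain of isomorphisms in one line for each bifunctor, but there is no substantive difference.
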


\begin{proof}
As $\odot$ is divisible on both sides there are functorial isomorphisms: \[ Hom_{\Ecal_3} ( X_1 \odot X_2, X_3) \simeq Hom_{\Ecal_2}(X_2, X_1 \backslash X_3) \simeq Hom_{\Ecal_1}(X_1,X_3/X_2) \]

By just taking opposite categories, this gives functorial isomorphisms: \[ Hom_{\Ecal_3^{op}}( X_3, X_1 \odot X_2) \simeq Hom_{\Ecal_2^{op}}(X_1 \backslash X_3,X_2) \simeq Hom_{\Ecal_1}(X_1,X_3/X_2) \]

\noindent which shows that $X_1 \backslash X_3$ is divisible on both sides when seen as a functor $\Ecal_1 \times \Ecal_3^{op} \rightarrow \Ecal_2^{op}$. Similarly, we have:
 \[ Hom_{\Ecal_3^{op}}( X_3, X_1 \odot X_2) \simeq Hom_{\Ecal_2}(X_2,X_1 \backslash X_3) \simeq Hom_{\Ecal_1^{op}}(X_3/X_2,X_1) \]

\noindent which shows that $(X_3/X_2)$ is divisible on both sides when seen as a functor $\Ecal_3^{op} \times \Ecal_2 \rightarrow \Ecal_1^{op}$. \end{proof}

\begin{construction}
\label{pushout-productDef} Let $Ar(\Ecal_i)$ be the category of arrows of $\Ecal_i$, whose morphisms are the commutative squares. Following A.~Joyal and M.~Tierney in \cite{joyal2006quasi}, given a bi-functor $\odot\colon \Ecal_1 \times \Ecal_2 \rightarrow \Ecal_3$ we define a bi-functor:
\[ \corner{\odot}\colon  Ar(\Ecal_1) \times Ar(\Ecal_2) \rightarrow Ar(\Ecal_3) \]
called the ``pushout-product'' or ``corner-product''. For $f_1 \colon X_1 \rightarrow Y_1 \in \Ecal_1$ and $f_2\colon X_2 \rightarrow Y_2 \in \Ecal_2$ the map $ f_1 \corner{\odot} f_2 $ is the map:

\[ f_1 \corner{\odot} f_2 \colon  \left( X_1 \odot Y_2 \right) \coprod_{\left( X_1 \odot X_2 \right)} \left( Y_1 \odot  X_2 \right) \rightarrow Y_1 \odot Y_2 \]

\noindent induced by the square:

\[
\begin{tikzcd}[ampersand replacement=\&]
X_1 \odot X_2 \arrow{r}{X_1 \odot f_2 } \arrow{d}{f_1 \odot X_2 } \& X_1 \odot Y_2 \arrow{d}{f_1 \odot Y_2} \\
Y_1 \odot X_2 \arrow{r}{Y_1 \odot f_2} \& Y_1 \odot Y_2 \\
\end{tikzcd}
\]

If $\odot$ is left or right divisible, then $\corner{\odot}$ also is, with the division functors given by $\cornerl{ f_1 \backslash f_3 }$ and  $\cornerl{f_3 / f_2}$ defined as:

\begin{itemize}

\item For $f_1\colon X_1 \rightarrow Y_1 \in \Ecal_1$ and $f\colon X_3 \rightarrow Y_3 \in \Ecal_3$, we denote by $\cornerl{ f_1 \backslash f_3 }$ the map:

\[ \cornerl{ f_1 \backslash f_3 } \colon  Y_1 \backslash X_3 \rightarrow \left( Y_1 \backslash Y_3 \right) \fprod_{\left( X_1 \backslash Y_3 \right) } \left( X_1 \backslash X_3 \right)\]

\noindent induced by the square:

\[
\begin{tikzcd}[ampersand replacement=\&]
Y_1 \backslash X_3 \arrow{r}{f_1 \backslash X_3} \arrow{d}{Y_1 \backslash f_3} \& X_1 \backslash X_3 \arrow{d}{X_1 \backslash f_3} \\
Y_1 \backslash Y_3 \arrow{r}{f_1 \backslash Y_3} \& X_1 \backslash Y_3
\end{tikzcd}
\]

\item Dually, for $f_2\colon  X_2 \rightarrow Y_2 \in \Ecal_2$ and $f_3\colon X_3 \rightarrow Y_3 \in \Ecal_3$ the map $\cornerl{ f_3 / f_2}$ is the map:

\[ \cornerl{ f_3 / f_2} \colon  X_3/Y_2 \rightarrow \left( X_3/X_2 \right) \fprod_{(Y_3/X_2)} \left(Y_3/Y_2 \right)\]

\noindent induced by the square:

\[
\begin{tikzcd}[ampersand replacement=\&]
X_3/Y_2 \arrow{r}{X_3/f_2} \arrow{d}{f_3/Y_2} \& X_3/X_2 \arrow{d}{f_3/X_2} \\
Y_3/Y_2 \arrow{r}{Y_3/f_2} \& Y_3/X_2
\end{tikzcd}
\]

\end{itemize}

\end{construction}

\begin{example}
\label{exemplesOdotprime}Here are some important examples of values of $f \corner{\odot} g$. We are assuming that $\emptyset \odot E_2 \simeq E_1 \odot \emptyset \simeq  \emptyset$ where $\emptyset$ denotes the initial objects of the three categories $\Ecal_1$, $\Ecal_2$ and $\Ecal_3$. This is the case as soon as $\odot$ is divisible.

\begin{itemize}

\item $( 0 \rightarrow X_1 ) \corner{\odot} (0 \rightarrow X_2) = \left( 0 \rightarrow X_1 \odot X_2 \right)$

\item $( 0 \rightarrow X_1) \corner{\odot} (f\colon X_2 \rightarrow Y_2) = (X_1 \odot f \colon  X_1 \odot X_2 \rightarrow X_1 \odot Y_2)$.

\end{itemize}
\end{example}

\begin{remark}
If we consider $X_3/X_2$ and $X_1 \backslash X_3$ as divisible bi-functors $\Ecal_3^{op} \times \Ecal_2 \rightarrow \Ecal_1^{op}$ and $\Ecal_1 \times\Ecal_3^{op} \rightarrow \Ecal_2^{op}$ following Lemma~\ref{Lem_bifunctor_Sym} then their ``corner'' versions are simply $\cornerl{f_3 / f_2}$ and $\cornerl{f_1 \backslash f_3}$. This follows from the explicit formula for $\cornerl{f_3 / f_2}$ and $\cornerl{f_1 \backslash f_3}$ given in \ref{pushout-productDef}. 
\end{remark}

We also have the following easy but very important proposition (also observed by A.~Joyal and M.~Tierney in \cite{joyal2006quasi}):

\begin{prop}
If we denote by $f \pitchfork g$ the fact that $f$ has the left lifting property with respect to $g$, then we have the following equivalences:

\[ f_1 \corner{\odot} f_2 \pitchfork f_3 \Leftrightarrow f_1 \pitchfork \cornerl{ f_3 /f_2 } \]

\noindent as soon as $\odot$ is right divisible, and: 

\[ f_1 \corner{\odot} f_2 \pitchfork f_3 \Leftrightarrow f_2 \pitchfork \corner{ f_1 \backslash f_3 } \]

\noindent as soon as $\odot$ is left divisible.

\end{prop}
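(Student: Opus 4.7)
The plan is to prove the first equivalence by a direct unpacking argument, and then derive the second from the symmetry observed in Lemma~\ref{Lem_bifunctor_Sym} (or by an entirely analogous argument). So I focus on showing
\[ f_1 \corner{\odot} f_2 \pitchfork f_3 \Leftrightarrow f_1 \pitchfork \cornerl{ f_3 /f_2 }. \]
The whole proof is just a systematic application of the adjunction $Hom(X_1 \odot X_2, X_3) \simeq Hom(X_1, X_3/X_2)$, and the main task is to check that everything assembles correctly.

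First I would unpack what a lifting square for $f_1 \corner{\odot} f_2$ against $f_3$ really is. By the universal property of the defining pushout, such a square amounts to the data of three compatible maps out of $X_1 \odot Y_2$, $Y_1 \odot X_2$, and $Y_1 \odot Y_2$, landing respectively in $X_3$, $X_3$ (agreeing on $X_1 \odot X_2$), and $Y_3$, such that the two obvious triangles commute when post-composed with $f_3$. A diagonal filler is exactly the extra datum of a map $Y_1 \odot Y_2 \to X_3$ making both the upper and lower triangles commute. Dually, I would observe that a lifting square for $f_1$ against $\cornerl{f_3/f_2}$ amounts, by the universal property of the pullback defining the codomain of $\cornerl{f_3/f_2}$, to the data of three compatible maps out of $X_1$ and $Y_1$: one to $X_3/Y_2$, one to $Y_3/Y_2$, and one to $X_3/X_2$, with the two evident compatibility conditions; and a filler is a map $Y_1 \to X_3/Y_2$ making the two triangles commute.

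The bulk of the proof then consists in applying the adjunction isomorphism $Hom(A \odot X_2, C) \simeq Hom(A, C/X_2)$ (for varying $A \in \{X_1, Y_1\}$ and $C \in \{X_3, Y_3\}$, and also with $X_2$ replaced by $Y_2$) to each of the three constituent maps, and checking that the compatibility conditions and the relevant triangles match term-by-term. Naturality of the adjunction isomorphism in both variables gives this matching for free: the two compatibility conditions on the ``lifting square side'' translate precisely to the two compatibility conditions on the ``pullback side'', and the triangle identities witnessing a diagonal filler translate on the nose.

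I expect there to be no real obstacle beyond this bookkeeping; the only mildly delicate point is to verify that the process is reversible (so it gives a bijection, not merely a map), which follows from the fact that the adjunction isomorphism is a bijection and that the pushout/pullback universal properties are also bijections. Once the first equivalence is established, the second equivalence follows either by repeating the argument with $\backslash$ in place of $/$, or by invoking Lemma~\ref{Lem_bifunctor_Sym}: treating $X_1 \backslash X_3$ as a divisible bi-functor $\Ecal_1 \times \Ecal_3^{op} \to \Ecal_2^{op}$ and applying the first equivalence to this new bi-functor reduces one to the other.
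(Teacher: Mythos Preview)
Your approach is correct and essentially the same as the paper's, just more explicit. The paper's argument is compressed into a single remark: it records in Construction~\ref{pushout-productDef} that $\corner{\odot}$ is itself divisible with $\cornerl{\_/\_}$ and $\cornerl{\_\backslash\_}$ as its right adjoints on arrow categories, then observes that a lifting problem is precisely a morphism in the arrow category, so the adjunction transposes lifting problems to lifting problems and fillers to fillers bijectively. Your unpacking via the pushout/pullback universal properties and naturality of the underlying adjunction is exactly what lies beneath that one-line observation.
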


More precisely, if we think of a lifting problem (i.e. a square) as a morphism in the arrow category, then a given lifting problem $f_1 \corner{\odot} f_2 \rightarrow f_3$ has a solution if and only if its adjoint transpose $f_1 \rightarrow \cornerl{ f_3 /f_2}$ and $f_2 \rightarrow \cornerl{ f_1 \backslash f_3}$ have solutions, in fact there is even a bijection between the sets of solutions of these different lifting problems.

\begin{definition}\label{def_IfibCof}
\begin{itemize}

\item[]

\item If $I$ and $F$ are sets of maps we write $I \pitchfork F$ for\footnote{Following are usual convention, we mean the existence of a structure producing a solution of each lifting problem of an $i \in I$ against a $f\in F$.}  $i \pitchfork f$ for all $i \in I$ and all $f \in F$. 

\item If $I$ is a set of maps, an arrow $f$ is an $I$-fibration if and only if $I \pitchfork f$. We denote by $I\textsc{-fib}$ the class of $I$-fibrations.

\item An arrow $f$ is an $I$-cofibration if $f \pitchfork I\textsc{-fib}$. We denote by $I\textsc{-cof}$ the class of $I$-cofibrations.

\end{itemize}
\end{definition}

We clearly have $I\textsc{-cof} \pitchfork I\textsc{-fib}$. In situations where the small object argument applies (see Appendix~\ref{section_the_small_object_arguments}) $I$-cofibrations and $I$-fibrations form a weak factorization system. If we assume enough classical logic, or if we are in the ``good'' case of the small object arguments as in \ref{EasySOA}, then $I$-cofibrations are the retracts of transfinite compositions of pushouts (of coproducts) of maps in $I$.

\begin{remark}
Assuming divisibility of $\odot$, the equivalence:

\[ I_1 \corner{\odot} I_2 \pitchfork I_3 \Leftrightarrow I_1 \pitchfork \cornerl{ I_3/I_2} \Leftrightarrow I_2 \pitchfork \cornerl{ I_1 \backslash I_3} \]

\noindent holds as well for sets of maps. We also have the following easy equivalences:

\[ I \pitchfork F \Leftrightarrow F \subset I\textsc{-fib} \Leftrightarrow I\textsc{-cof} \pitchfork F \]

\[ J \subset I\textsc{-cof}  \Leftrightarrow  J \pitchfork I\textsc{-fib} \Leftrightarrow  J\textsc{-cof} \subset I\textsc{-cof}\]

\end{remark}

The following lemma follows formally from these relations:

\begin{lemma}\label{Lem_mainJTcalculus}
Let $\Ecal_1,\Ecal_2$ and $\Ecal_3$ be complete and cocomplete categories endowed with a divisible bi-functor $\odot$ as above, for each $i$ let $I_i$ be a class of arrows in $\Ecal_1$ and assume that $I_1 \corner{\odot} I_2 \subset I_3\textsc{-cof}$ then:

\begin{enumerate}[label=(\roman*)]

\item\label{Lem_mainJTcalculus:I1I2toI3} $I_1\textsc{-cof} \corner{\odot} I_2\textsc{-cof} \subset I_3\textsc{-cof}$

\item\label{Lem_mainJTcalculus:I1backI3toI2} $\cornerl{ I_1\textsc{-cof} \backslash I_3\textsc{-fib} } \subset I_2\textsc{-fib}$

\item\label{Lem_mainJTcalculus:I3/I2toI1} $\cornerl{ I_3\textsc{-fib}/ I_2\textsc{-cof} } \subset I_1\textsc{-fib}$

\end{enumerate}

\end{lemma}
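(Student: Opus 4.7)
The proof is entirely formal, consisting in iterated applications of the adjunction
\[ f_1 \corner{\odot} f_2 \pitchfork f_3 \iff f_1 \pitchfork \cornerl{f_3/f_2} \iff f_2 \pitchfork \cornerl{f_1 \backslash f_3} \]
together with the tautologies recalled just before the lemma (namely $I \pitchfork F \iff I\textsc{-cof} \pitchfork F \iff F \subset I\textsc{-fib}$, all three being interpreted at the level of sets of chosen lifts). The pattern will be to ``bounce'' the hypothesis back and forth across the adjunction, at each step using the right lifting class to upgrade $I_i$ to $I_i\textsc{-cof}$ on one side.

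First I would reformulate the hypothesis $I_1 \corner{\odot} I_2 \subset I_3\textsc{-cof}$ as $I_1 \corner{\odot} I_2 \pitchfork I_3\textsc{-fib}$. Applying the adjunction on the $\Ecal_1$ side gives $I_1 \pitchfork \cornerl{I_3\textsc{-fib} / I_2}$. By definition of $I_1\textsc{-cof}$, any class contained in $I_1\textsc{-fib}$ admits $I_1\textsc{-cof}$ on its left, hence $I_1\textsc{-cof} \pitchfork \cornerl{I_3\textsc{-fib} / I_2}$. Transposing back across the same adjunction yields $I_1\textsc{-cof} \corner{\odot} I_2 \pitchfork I_3\textsc{-fib}$, and transposing this across the $\Ecal_2$ side gives $I_2 \pitchfork \cornerl{I_1\textsc{-cof} \backslash I_3\textsc{-fib}}$, which is exactly statement~\ref{Lem_mainJTcalculus:I1backI3toI2}.

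Now I would repeat the same trick on the other side: from $I_2 \pitchfork \cornerl{I_1\textsc{-cof} \backslash I_3\textsc{-fib}}$ we get for free $I_2\textsc{-cof} \pitchfork \cornerl{I_1\textsc{-cof} \backslash I_3\textsc{-fib}}$, and transposing back across the $\Ecal_2$-side of the adjunction produces $I_1\textsc{-cof} \corner{\odot} I_2\textsc{-cof} \pitchfork I_3\textsc{-fib}$, which is~\ref{Lem_mainJTcalculus:I1I2toI3}. Finally, transposing this last relation across the $\Ecal_1$-side of the adjunction gives $I_1\textsc{-cof} \pitchfork \cornerl{I_3\textsc{-fib} / I_2\textsc{-cof}}$, so $\cornerl{I_3\textsc{-fib} / I_2\textsc{-cof}} \subset I_1\textsc{-fib}$, which is~\ref{Lem_mainJTcalculus:I3/I2toI1}.

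There is no real obstacle here — every step is the same formal move, and the only thing to be careful about in the constructive setting is that all these ``$\pitchfork$'' statements are to be interpreted as structures giving chosen lifts, but the bijection between solutions to $f_1 \corner{\odot} f_2 \pitchfork f_3$ and to $f_1 \pitchfork \cornerl{f_3/f_2}$ (and to $f_2 \pitchfork \cornerl{f_1 \backslash f_3}$) noted after the adjunction proposition is itself constructive, so the chain of implications above actually produces the required chosen lifts at each stage.
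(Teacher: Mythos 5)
Your proof is correct and is essentially the same argument the paper gives: bounce the hypothesis back and forth across the corner-product adjunction, using at each step that $X \subset I_i\textsc{-fib}$ is equivalent to $I_i\textsc{-cof} \pitchfork X$ to upgrade $I_i$ to $I_i\textsc{-cof}$. The only cosmetic difference is that you derive~\ref{Lem_mainJTcalculus:I3/I2toI1} by transposing~\ref{Lem_mainJTcalculus:I1I2toI3} at the end, whereas the paper obtains it from the dual of the chain leading to~\ref{Lem_mainJTcalculus:I1backI3toI2} (``by symmetry'') before assembling~\ref{Lem_mainJTcalculus:I1I2toI3}; the underlying formal moves are identical.
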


Note that the three stability properties correspond to the ``same'' stability property for the three ways of dualizing the bi-functors $\odot$ following Lemma~\ref{Lem_bifunctor_Sym} (and exchanging cofibrations and fibrations when dualizing a category). This being said, that does not make the proof of these three points symmetric as the assumptions of the lemma are not symmetric under these dualizations.

\begin{proof}
As $I_1 \corner{\odot} I_2 \subset I_3\textsc{-cof}$ we have $I_1 \corner{\odot} I_2 \pitchfork I_3\textsc{-fib}$ hence $I_2 \pitchfork \cornerl{ I_1 \backslash I_3\textsc{-fib} }$ which can be rewritten as  $\cornerl{ I_1 \backslash I_3\textsc{-fib} } \subset I_2\textsc{-fib}$. Similarly $\cornerl{ I_3\textsc{-fib} / I_2 } \subset I_1\textsc{-fib}$.

Now this in turn implies that $I_1\textsc{-cof} \pitchfork \cornerl{ I_3\textsc{-fib} / I_2}$, which is equivalent to $I_2 \pitchfork \cornerl{ I_1\textsc{-cof} \backslash I_3\textsc{-fib} }$ which exactly means that $  \cornerl{ I_1\textsc{-cof} \backslash I_3\textsc{-fib}} \subset I_2\textsc{-fib}$, i.e. $\ref{Lem_mainJTcalculus:I1backI3toI2}$. Point $\ref{Lem_mainJTcalculus:I3/I2toI1}$ follows symmetrically.

Finally, as $\cornerl{ I_1\textsc{-cof} \backslash I_3\textsc{-fib}} \subset I_2\textsc{-fib}$ we have $I_2\textsc{-cof} \pitchfork \cornerl{ I_1\textsc{-cof} \backslash I_3\textsc{-fib} } $, hence $I_1\textsc{-cof} \corner{\odot} I_2\textsc{-cof} \pitchfork I_3\textsc{-fib}$, which gives $\ref{Lem_mainJTcalculus:I1I2toI3}$. \end{proof}

\begin{remark}
\label{Rk::JTcalculus_transnat}A special case of this observation that will be useful later is when $\Ecal_1$ is the category of presheaves over the category $(a \overset{f}{\rightarrow} b)$, with $I_1 = \{f\}$.

This means that there are two left adjoint functors $\lambda_a,\lambda_b \colon  \Ecal_2 \rightrightarrows \Ecal_3$ and a natural transformation $f\colon \lambda_a \rightarrow \lambda_b$. Given an arrow $g\colon X \rightarrow Y \in \Ecal_2$,  $f \corner{\odot} g$ is the arrow:

\[ \lambda_a(Y) \coprod_{\lambda_a(X)} \lambda_a(X) \rightarrow \lambda_b(Y)\]

\noindent and Lemma~\ref{Lem_mainJTcalculus} above says that if the map $f \corner{\odot} i \in I_3\textsc{-cof}$ for all $i \in I_2$ then it also holds for any $i \in I_2\textsc{-cof}$. Applied to $X=0$ this shows in particular that in this case $f_Y\colon \lambda_a(Y) \rightarrow \lambda_b(Y)$ is an $I_3$-cofibration for any $I_2$-cofibrant object $Y$.
\end{remark}

Finally, as our framework of weak model categories suggests to look at lifting properties against only cofibrations between cofibrant objects it is important to know that those are also stable under corner-product:

\begin{lemma}\label{Lem_JTcalculusCofDom}
Let $\odot\colon \Ecal_1 \times \Ecal_2 \rightarrow \Ecal_3$ be a functor divisible on both sides and let $I_1,I_2$ and $I_3$ be classes of maps such that for any $I_1$-cofibration between $I_1$-cofibrant objects $i_1$ and any $I_2$-cofibration between $I_2$-cofibrant objects $i_2$, the arrow $i_1 \corner{\odot} i_2$ is an $I_3$-cofibration. Then for any two such maps $i_1$ and $i_2$, the map $i_1 \corner{\odot} i_2$ also has an $I_3$-cofibrant domain.

\end{lemma}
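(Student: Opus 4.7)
The plan is to identify the domain of $i_1 \corner{\odot} i_2$ as an iterated pushout of maps to which the hypothesis can be directly applied. Writing $i_1 \colon X_1 \hookrightarrow Y_1$ and $i_2 \colon X_2 \hookrightarrow Y_2$, the domain is the pushout $(X_1 \odot Y_2) \coprod_{X_1 \odot X_2} (Y_1 \odot X_2)$, so the task is to build a chain of $I_3$-cofibrations from $0$ to this pushout.

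First I would show that $X_1 \odot X_2$ is $I_3$-cofibrant. The initial object is $I_1$-cofibrant (the identity is always an $I_1$-cofibration, as it trivially has the left lifting property against everything), and likewise for $I_2$. Applying the hypothesis to the two cofibrations $(0 \hookrightarrow X_1)$ and $(0 \hookrightarrow X_2)$ between cofibrant objects, and using the computation $(0 \to X_1) \corner{\odot} (0 \to X_2) = (0 \to X_1 \odot X_2)$ from Example~\ref{exemplesOdotprime} (which uses the divisibility of $\odot$ to ensure $\odot$ preserves initial objects on both sides), I deduce that $X_1 \odot X_2$ is $I_3$-cofibrant. Similarly $Y_1 \odot X_2$ is $I_3$-cofibrant.

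Next I use the other half of Example~\ref{exemplesOdotprime}: the map $X_1 \odot X_2 \to X_1 \odot Y_2$ equals $(0 \to X_1) \corner{\odot} i_2$, which is an $I_3$-cofibration by the hypothesis (both $X_1$ and the domain/target of $i_2$ are cofibrant). Taking its pushout along $X_1 \odot X_2 \to Y_1 \odot X_2$ and using the stability of $I_3$-cofibrations under pushout, the induced map
\[ Y_1 \odot X_2 \longrightarrow (X_1 \odot Y_2) \coprod_{X_1 \odot X_2} (Y_1 \odot X_2) \]
is an $I_3$-cofibration. Composing it with the $I_3$-cofibration $0 \hookrightarrow Y_1 \odot X_2$ yields an $I_3$-cofibration from $0$ to the domain of $i_1 \corner{\odot} i_2$, which is exactly the desired conclusion.

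There is no real obstacle here; the only point that deserves care is verifying that the initial object is $I$-cofibrant and that $\odot$ preserves initial objects, both of which were observed in Example~\ref{exemplesOdotprime} as a consequence of divisibility. Everything else is a formal composition of the hypothesis with the usual stability of cofibrations under pushout and composition.
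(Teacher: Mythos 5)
Your proof is correct and follows essentially the same decomposition as the paper's: identify $0 \to Y_1 \odot X_2$ as $(0 \to Y_1) \corner{\odot} (0 \to X_2)$ and $X_1 \odot X_2 \to X_1 \odot Y_2$ as $(0 \to X_1) \corner{\odot} i_2$ using \cref{exemplesOdotprime}, then compose the first with a pushout of the second. The extra observations you make (that the initial object is $I$-cofibrant, and that $X_1 \odot X_2$ is $I_3$-cofibrant) are harmless but not needed for the conclusion.
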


\begin{proof}
Let $i_1\colon X_1 \rightarrow Y_1$ and $i_2\colon X_2 \rightarrow Y_2$ be as in the lemma, the domain of $i_1 \corner{\odot} i_2$ is:

\[(Y_1 \odot X_2) \coprod_{X_1 \odot X_2} (X_1 \odot Y_2) \]

The map $0 \rightarrow Y_1 \odot X_2$ is the same as $(0 \hookrightarrow Y_1) \corner{\odot} (0 \hookrightarrow X_2)$ (see \ref{exemplesOdotprime}), so it is an $I_3$-cofibration, the map $X_1 \odot X_2 \rightarrow X_1 \odot Y_2$ is $(0 \hookrightarrow X_1) \corner{\odot} (X_2 \hookrightarrow Y_2)$ (see also \ref{exemplesOdotprime}) so it is also an $I_3$-cofibration and the map from the initial object to the pushout above is just the composite of the first map with a pushout of the second, so it is indeed an $I_3$-cofibration, as $I_3$-cofibrations are stable under composition and pushout. \end{proof}

\section{The small object arguments in constructive mathematics}
\label{section_the_small_object_arguments}

The small object argument is the main technique to produce weak factorization systems, and the main reason why we always assumed we had weak factorization systems at our disposal.

\bigskip

It generally starts from a \emph{set} (and not a class) of maps $I$ in a cocomplete category $C$ and, under some conditions that are only there to ensure that some transfinite construction terminate, it shows that any map in $C$ can be factored into a ``$I$-cofibration'' followed by a ``$I$-fibration'' as in Definition~\ref{def_IfibCof}, hence producing a weak factorization system. It also tends to more precisely factor any map as a ``transfinite iterated pushout'' of maps in $I$ followed by an $I$-fibration, hence, using Lemma~\ref{retract_lemma}, it shows that any $I$-cofibration is a retract of such a transfinite composition of pushout of maps in $I$. Although this second aspect is less often true constructively than classically as we will see.

The status of the small object argument regarding constructivity is essentially the same as the special adjoint functor theorem: its not really possible to make it constructive in full generality, but it is for example always true in the internal logic of a Grothendieck topos, or if the category $C$ is a finitely presentable category and the set of maps $I$ are maps between finitely presentable objects then it can be made constructive under mild assumption on the natural number object. In fact it is equivalent to the special adjoint functor theorem, in the sense that any instance of each can be translated into an instance of the other.

The general idea is that we start with a map $f\colon X \rightarrow Y$ and we would like to factorize it as an $I$-cofibration followed by an $I$-fibration. In order do that we consider the set of all possible squares:

\begin{equation}\label{eq:LiftingSquareSOA}\begin{tikzcd}
A_i \ar[r,"a_i"] \arrow{d}{i \in I} & X \arrow{d}{f} \\
B_i  \ar[r,"b_i"] & Y \\
\end{tikzcd} \end{equation}

And we force all these lifting problems to have solutions by defining a new object $X_1$ in which the solutions exist: More precisely, we define $X_1$ to be the object obtained by gluing on $X$ all these maps $A \rightarrow B$, which is achieved by taking a pushout:

\[
\begin{tikzcd}
  \displaystyle \left(  \coprod A_i  \right) \ar[r,"(a_i)"] \ar[d,"\coprod i"]  \ar[dr,phantom,"\ulcorner"{very near end}] &  X \ar[d] \ar[dr,"f"] \\
\displaystyle \left( \coprod B_i\right) \ar[r] \ar[rr,bend right=30,"(b_i)"{description}] & X_1 \ar[r,dotted] & Y  \\
\end{tikzcd}
\]


 of the coproduct of the maps $A_i \rightarrow B_i$ indexed by the set of all square as in (\ref{eq:LiftingSquareSOA}).


This construction gives us a first factorization of $X \rightarrow X_1 \rightarrow Y$. The map $X \rightarrow X_1$ is an $I$-cofibration: In order to construct a diagonal filler in a square:

\[\begin{tikzcd}[ampersand replacement=\&]
X \arrow{d} \arrow{r} \& U \arrow{d}{p \in I\textsc{-Fib}} \\
X_1 \arrow[dotted]{ur} \arrow{r} \& V
\end{tikzcd}\]

\noindent we exactly need to chose a solution to all the lifting problems of $A_i \overset{i}{\rightarrow} B_i$ against $p$ for all the $i$ appearing in the definition of $X_1$. As $p$ is assumed to have chosen lift against all maps in $I$ this is automatic. Moreover the map $X_1 \rightarrow Y$ is ``closer'' to be a fibration in the sense that, by construction, each diagram of the form:

\[\begin{tikzcd}[ampersand replacement=\&]
 \& X \arrow{d} \\
A \arrow{ur} \arrow{r} \arrow{d}{\in I} \& X_1 \arrow{d}{f} \\
B \arrow[dotted]{ur}  \arrow{r} \& Y \\
\end{tikzcd}\]

\noindent has a canonical filling, given by canonical maps $B \rightarrow X_1$ corresponding to the outer square. The idea is then to iterate this construction (possibly through a transfinite construction), if we do this a sufficient ordinal number $\lambda$ of time, and if $Hom(A, \_)$ commutes\footnote{This is why this is called the small object argument. The key assumption is that the object $A$ have to be ``small'' in some sense, like $\lambda$-presentable or $\lambda$-compact.} to co-limits of $\lambda$-chain then any maps from $A$ to $X_{\lambda}$ will factors through one of the $X_{\lambda'}$ for $\lambda' < \lambda$ and hence we will be able to construct diagonal filler of any square, this should make the map $X \rightarrow X_{\lambda} \rightarrow Y$ into a factorization as an $I$-cofibration followed by an $I$-fibration. There are however some issue with constructivity, and some details to be careful of. We Distinguish essentially two, maybe three, version of this construction:

\begin{QuillenSOA} This corresponds to the version described above: we just iterate the construction described above and we stop at some large enough limits ordinal which we will call $\infty$. If the domain of all the arrows in $I$ are finitely presentable object, then  $\infty = \omega$ is a good place to stop. In classical mathematics this works fine, but constructively this is often insufficient: we always get that $X_{\infty} \rightarrow Y$ has the ``existential'' lifting property with respect to all maps in $I$, but not always a chosen lift: the choice of a diagonal filling is completely determined by the choice of a lifting of the map $A \rightarrow X_{\infty}$ to one of the $X_{\alpha}$ but such lifting are not always unique, or canonical:

\begin{itemize}

\item It might not be possible to decide for which level there is lifting $A \rightarrow X_{\alpha}$, so it is not always possible\footnote{Constructively, the fact that every inhabited subset of $\mathbb{N}$ has a smallest elements only holds for complemented (decidable) subsets.} to find a smallest level such that the lifting exists, nor to say that at each stage we only want to take pushout for maps that do not already have a lifting.

\item If the maps $X_i \rightarrow X_{i+1}$ are not monomorphism there might be several lifting $A \rightarrow X_{\alpha}$ at a given level.

\end{itemize}

\noindent and constructively there is in general no way to make the choice of a lift for each map $A \rightarrow X_{\infty}$.

But on the other hand this construction has a big advantage: the map $X \rightarrow X_{\infty}$ is explicitly constructed as a transfinite composition of pushouts of coproducts of maps in $I$. By Lemma~\ref{retract_lemma}, this implies in particular that any $I$-cofibration is a retract of a transfinite composition of pushouts of coproducts of arrows in $I$.

Note that assuming the axiom choice any pushout of a coproduct of maps in $I$ can be seen as a transfinite composition of pushout of maps in $I$ by choosing a well ordering on the indexing set of the coproduct and doing each pushout one after the other. Constructively this is of course not always possible which is why at many places in the paper we talk about ``transfinite composition of pushouts of coproducts of maps in $I$'' where classical references only talk about ``transfinite composition of pushouts of maps in $I$''.

\end{QuillenSOA}

There are essentially two ways to fix this problem in the constructive theory:

\begin{GarnerSOA} This was introduced \cite{garner2009understanding}. This construction differs from the one above in the fact that at each stage we additionally collapse together the maps $B \rightarrow X_{\alpha}$ that comes from squares:

\[\begin{tikzcd}[ampersand replacement=\&]
A \arrow{d}{\in I} \arrow{r} \& X_{\alpha'} \arrow{d} \\
B \arrow{r} \&  Y
\end{tikzcd}\]

\noindent at an earlier stage $\alpha' < \alpha$ and for which the maps $A \rightarrow X_{\alpha}$ coincide. We refer to \cite{garner2009understanding} for the technical details of the construction, but a short way to explain it is that it corresponds to the special adjoint functor theorem applied to construct a left adjoint functor to the forgetful functor from the category of arrows in $\Ccal$ equipped with chosen diagonal filling for each lifting problem against a map in $I$ (with morphisms the square preserving those chosen diagonal filling) to the category of arrows in $\Ccal$.

\label{requierement_GarnerSOA}This version of the construction works constructively as soon as we are able to talk about ordinal large enough so that the process stabilizes (and that it is possible to construct sets by induction on these ordinals). For the case of interest to us, we need:

\begin{enumerate}[label=(\roman*)]
\item There is a natural number object $\mathbb{N}$.
\item In $\Ccal$, pushout of coproduct of maps in $I$ exists\footnote{We can make sense of the ``pushout of a coproduct'' even if the coproduct itself do not exists if needed.}, equalizer exists, and colimit of $\mathbb{N}$-chain exists.
\item For any domain $A$ of an arrow in $I$, the functor $Hom(A, \_)$ commutes to colimits of $\mathbb{N}$-chain. 
\item The induction principle for the natural number object can be used to construct $\mathbb{N}$-chain of objects of $\Ccal$, using a colimit at each step. This is for example the case if the category $\Ccal$ has chosen colimits and we can use the induction principle of the natural number object with value in the set of objects $C$ (which is nontrivial if $C$ is not small). Or $C$ does not have chosen colimits, but we either have the axiom of dependent choice, or the ability to use the induction principle for $\mathbb{N}$ in an ``up to isomorphisms'' version.
\end{enumerate}

This applies to absolutely all the examples mentioned in the paper, as soon as we add the existence and requirement on the natural number object mentioned above, and sometimes the existence of quotient sets (in order to construct pushout) to our framework.

This version of the small object argument has lots of good categorical properties that Quillen's version does not have, but it has one big drawback: it no longer exhibit the map $X \rightarrow X_{\infty}$ as an iterated pushout, as there is also the need to collapse some maps at each stage, and it no longer proves that any $I$-cofibration is a retract of an iterated pushout of coproducts of maps in $I$.

\end{GarnerSOA}

\begin{EasySOA}\label{EasySOA} This corresponds essentially to the situation where the two versions of the small object argument become equivalent.  We add the requirement that for any pushout of coproduct of maps in $I$  (as in the construction of $X \rightarrow X_1$) and any object $A$ the source of one of the maps in $I$ the map of sets:

\[ Hom(A,X) \rightarrow Hom(A,X_1) \]

\noindent is a complemented monomorphism, i.e. it exhibits $Hom(A,X)$ as a complemented (decidable) sub-object of $Hom(A,X_1)$.

This is the case in all the examples treated in the paper. In each case, the reason for this is that these pushout are complemented monomorphisms on the underlying sets, and the objects $A$ are always ``finitely generated'' (in an appropriate sense dependings on the case under consideration), so that the question of whether a map from $A$ to $X_1$ factor in $X$ can be decided\footnote{a finite conjunction of decidable propositions is decidable.} by testing separately for each generators of $A$ if its image in $X_1$ is in $X$ or not.

Under this condition, a map $A \rightarrow X_{\infty}$ admits a unique lift to one of the $X_n$ with $n$ minimal for this property. And so the problem we had with Quillen small objects argument disappears and it can be applied constructively without problems. In this case this gives a constructive proof that cofibrations are retract of (transfinitely) iterated pushout of coproducts of maps in $I$.

Also in this case we can modify Quillen small object argument by saying that at each (finite) step $X_n$, we take the co-product only for the squares for which the map $A \rightarrow X_n$ does not factors into $A_{n-1}$. If we do that, then this version of the small object argument becomes equivalent to Garner's small object argument.

When this special case applies, very similarly to \ref{requierement_GarnerSOA}, the only additional requirement on $\Ccal$ are that:

\begin{enumerate}[label=(\roman*)]

\item $\Ccal$ has a natural number object.
\item Pushout of coproducts of maps in $I$ exists, and colimits of $\mathbb{N}$-indexed chains, whose transition maps are pushouts of coproducts of maps in $I$, exists.
\item If $A$ is the domain of one of the map in $I$ then $Hom(A,\_)$ sends pushout of coproducts of maps in $I$ to decidable inclusion and commutes to colimits of $\mathbb{N}$-chains of the form of the previous point.
\item We can construct objects of $\Ccal$ by induction on the natural number object, with a pushout of a coproduct of maps in $I$ at each step. (see the discussion of condition (iv) in \ref{requierement_GarnerSOA}).
\end{enumerate}

\end{EasySOA}

\begin{remark} In fact, we expect that most instances of the small object argument we use in this paper (in fact, all of them except maybe the one of Section~\ref{subsec:ChainCplex}, which might also require quotient), can be formalized in (the internal logic of) just a cartesian category with parametrized list objects. This is based on the fact that in this case the element of the object obtained by forming the factorization have a (unique) ``syntactic'' description, and it should be possible to formalize such a description using only list objects. But proving this directly requires a lot of work, outside the scope of the present paper. I am hoping to find a more conceptual way to prove such claims in a future work. In the meantime, a more precise account of the formalization of the small object argument internally to a category with enough structure can be found in \cite{swan2018w}.
\end{remark}

\bibliography{/home/henry/Dropbox/Latex/Biblio}{}
\bibliographystyle{plain}

\end{document}